\DeclareFontFamily{OT1}{pzc}{}
\DeclareFontShape{OT1}{pzc}{m}{it}{<-> s * [1.10] pzcmi7t}{}
\DeclareMathAlphabet{\mathpzc}{OT1}{pzc}{m}{it}
\newcommand{\lebesgue}{\ensuremath{\lambda\!\!\!\!\>\lambda}\xspace}
\newcommand{\kibitz}[2]{\ifnum\Comments=1\textcolor{#1}{#2}\fi}
\newcommand\Item[1][]{%
  \ifx\relax#1\relax  \item \else \item[#1] \fi
  \abovedisplayskip=0pt\abovedisplayshortskip=0pt~\vspace*{-\baselineskip}}
\newtheoremstyle{normal}
{2ex}               
{3ex}               
{}                  
{}                  
{\bfseries} 
{}                  
{2pt}   
{\thmname{#1}\thmnumber{ #2.} \thmnote{(#3)}}
\newtheoremstyle{italic}
{2ex}
{3ex}
{\itshape}
{}
{\bfseries} 
{}
{2pt}
{\thmname{#1}\thmnumber{ #2.} \thmnote{(#3)}}
\theoremstyle{normal}
\newtheorem{definition}{Definition}[section]
\newtheorem{remark}[definition]{Remark}
\newtheorem{example}[definition]{Example}
\newtheorem{condition}[definition]{Condition}
\theoremstyle{italic}
\newtheorem{theorem}[definition]{Theorem}
\newtheorem{lemma}[definition]{Lemma}
\newtheorem{proposition}[definition]{Proposition}
\newtheorem{corollary}[definition]{Corollary}
\newcommand\R{\mathbb{R}}
\DeclareMathOperator{\Var}{Var}
\DeclareMathOperator{\Cov}{Cov}
\begin{document}

\title{The null hypothesis of common jumps in case of irregular and asynchronous observations}

\author{Ole Martin and Mathias Vetter\thanks{Christian-Albrechts-Universit\"at zu Kiel, Mathematisches Seminar, Ludewig-Meyn-Str.\ 4, 24118 Kiel, Germany.
{E-mail:} martin@math.uni-kiel.de/vetter@math.uni-kiel.de} \bigskip \\
{Christian-Albrechts-Universit\"at zu Kiel}
}

\maketitle

\begin{abstract}
This paper proposes novel tests for the absence of jumps in a univariate semimartingale and for the absence of common jumps in a bivariate semimartingale. Our methods rely on ratio statistics of power variations based on irregular observations, sampled at different frequencies. We develop central limit theorems for the statistics under the respective null hypotheses and apply bootstrap procedures to assess the limiting distributions. Further we define corrected statistics to improve the finite sample performance. Simulations show that the test based on our corrected statistic yields good results and even outperforms existing tests in the case of regular observations.
\end{abstract}


 \textit{Keywords and Phrases:} Asynchronous observations; common jumps; \\high-frequency statistics; It\^o semimartingale; stable convergence

\smallskip

 \textit{AMS Subject Classification:} 62G10, 62M05 (primary); 60J60, 60J75 (secondary)


\section{Introduction}
\def\theequation{1.\arabic{equation}}
\setcounter{equation}{0}

A key issue of the statistical analysis in continuous time is to understand the fine structure of the underlying processes based on discrete observations. Tools are needed in order to answer fundamental questions about suitable models for these processes, and probably the most basic ones are concerned with whether the processes evolve continuously in time or exhibit jumps. Within the class of semimartingales several statistical tests on this topic have been proposed in recent years, both in one and in several (usually two) dimensions and under various null hypotheses. For an overview see for example Sections 11.4 and 15.3 in \cite{JacPro12} or Chapters 10 and 14 in \cite{AitJac14}. 

In this work we are interested in the null hypothesis that common jumps of a bivariate process exist. In fact this question was tackled and solved prominently in \cite{JacTod09}, but only under the additional assumption that observations come in at equidistant and synchronous times. Both assumptions are rarely justified in practice, which is why practitioners usually ignore a lot of observations (and then work with one-minute observations, for example) in order to artificially construct a sampling scheme which is close enough to justify the assumptions of regularity. Both from a theoretical and a practical point of view this is unsatisfactory, and one would like to understand what happens when the underlying observations come in randomly at irregular and asynchronous times. This is precisely what we aim at in this paper, and our study complements the previous work \cite{MarVet17} in which a test for the presence of common jumps (so under the opposite null hypothesis of no common jumps) was constructed. 

It has turned out that the construction of a test in two dimensions is difficult unless one properly understands the univariate situation. For this reason we provide two statistical procedures: First, we want to decide whether or not jumps are present in a realized path of a stochastic process in one dimension, based on discrete irregular observations of the process and under the null hypothesis that jumps exist. Second, we want to decide whether common jumps are present or not in two realized processes, based on irregular and asynchronous observations and under the null hypothesis that there exist common jumps. Thus, this work can not only be understood as a generalization of the results in \cite{JacTod09}, but it also complements the univariate discussion from \cite{aitjac2009}. 

In the following let $\widetilde{X}$ be a one-dimensional process observed at times $\tilde{t}_{i,n}$ and let $X=(X^{(1)},X^{(2)})$ be a bivariate process where $X^{(l)}$ is observed at times $t_{i,n}^{(l)}$, $l=1,2$, all over the time interval $[0,T]$. Then for some $k \geq 2$ our tests are based on the ratio statistics
\begin{align*}
\frac{\sum_{i\geq k:\tilde{t}_{i,n} \leq T} (\widetilde{X}_{\tilde{t}_{i,n}}-\widetilde{X}_{\tilde{t}_{i-k,n}})^4}{k\sum_{i\geq 1:\tilde{t}_{i,n} \leq T} (\widetilde{X}_{\tilde{t}_{i,n}}-\widetilde{X}_{\tilde{t}_{i-1,n}})^4}
\end{align*}
and
\begin{align*}
\frac{\sum_{i,j\geq k:t_{i,n}^{(1)} \wedge t_{j,n}^{(2)} \leq T} (X^{(1)}_{t_{i,n}^{(1)}}-X^{(1)}_{t_{i-k,n}^{(1)}})^2(X^{(2)}_{t_{j,n}^{(2)}}-X^{(2)}_{t_{j-k,n}^{(2)}})^2\mathds{1}_{\{(t_{i-k,n}^{(1)},t_{i,n}^{(1)}]\cap (t_{j-k,n}^{(2)},t_{j,n}^{(2)}]\neq \emptyset\}}}{k^2\sum_{i,j\geq 1:t_{i,n}^{(1)} \wedge t_{j,n}^{(2)} \leq T} (X^{(1)}_{t_{i,n}^{(1)}}-X^{(1)}_{t_{i-1,n}^{(1)}})^2(X^{(2)}_{t_{j,n}^{(2)}}-X^{(2)}_{t_{j-1,n}^{(2)}})^2\mathds{1}_{\{(t_{i-1,n}^{(1)},t_{i,n}^{(1)}]\cap (t_{j-1,n}^{(2)},t_{j,n}^{(2)}]\neq \emptyset\}}}.
\end{align*}
The common feature of both statistics is that they have a different asymptotic behaviour, depending on whether, for the first one, $\widetilde X$ has jumps in $[0,T]$ or not, or whether, for the second one, $X^{(1)}$ and $X^{(2)}$ have common jumps in $[0,T]$ or not. In this paper we investigate the asymptotics of these statistics both under the null hypothesis that (common) jumps are present and under the alternative that (common) jumps do not exist. We will see that the limits are similar in structure to those in the setting of equidistant observation times. Further we will develop central limit theorems using techniques from \cite{BibVet15} and \cite{MarVet17} for both statistics under the null hypothesis that (common) jumps are present. Unlike in the setting of equidistant observation times the limiting variables in the central limit theorem are not mixed normal but have a more complicated distribution. We use the bootstrap method introduced in \cite{MarVet17} to estimate the quantiles of this limit distribution and to construct a feasible testing procedure.

Finally we conduct a simulation study to check the finite sample performance of our tests. As in \cite{MarVet17} we use the models from \cite{JacTod09} to compare our results with the results from similar tests in the setting of equidistant observations. Compared to these results we find that the finite sample performance of our tests is rather poor, especially under the null hypothesis, which is due to a rather large contribution of terms which vanish asymptotically in the central limit theorem. To improve the performance of our tests we therefore construct an estimator which (partially) corrects for those asymptotically vanishing terms. Using this estimator we get a huge improvement in the performance of our tests. In particular, our results become even better than the corresponding results for the tests from \cite{aitjac2009}, \cite{JacTod09} and \cite{AitJac14} in the setting of equidistant observation times. 

The remainder of the paper is organized as follows: As the structure of the results and the formal arguments are simpler in the setting where we test for jumps in a one-dimensional process we first derive in Section \ref{Test_J} two statistical tests for jumps within a stochastic process. The formal setting and the estimator $\Phi_{k,T,n}^{(J)}$ is introduced in Section \ref{sec:setting_J}. In Section \ref{sec:cons_J} we derive the consistency of this estimator under both hypotheses, we cover the asymptotics of the estimator in the form of a central limit theorem in Section \ref{sec:clt_J}, and in Section \ref{sec:testproc_J} we use a bootstrap method to derive two feasible tests, one using the original statistic $\Phi_{k,T,n}^{(J)}$ and one using a corrected estimator $\widetilde{\Phi}_{k,T,n}^{(J)}(\rho)$ based on the central limit theorem. Building on the results from Section \ref{Test_J} we proceed similarly in Section \ref{Test_CoJ} to derive two statistical tests for deciding whether or not two processes jump at a common time. In Section \ref{sec:simul} we examine the finite sample properties of the tests derived in Sections \ref{Test_J} and \ref{Test_CoJ} by means of a simulation study. The appendix containing the proofs is split into two parts: Section \ref{sec:proof_struc} contains the main structure of the proofs, leaving out most technical arguments. Section \ref{sec:proof_detail}, available online, contains all proofs in details and thus fills the gaps left in Section \ref{sec:proof_struc}.

\section{A test for the absence of jumps}\label{Test_J}
\def\theequation{2.\arabic{equation}}
\setcounter{equation}{0}

In this section we will derive a statistical test based on high-frequency observations which allows to decide whether an observed path of a process contains jumps or not.

\subsection{Settings and test statistic}\label{sec:setting_J}
First we specify the mathematical model for the process and the observation times. Let $X$ be a one-dimensional It\^o semimartingale on the probability space $(\Omega,\mathcal{F},\mathbb{P})$ of the form
\begin{multline}\label{ItoSemimart}
X_t= X_0 + \int \limits_0^t b_s ds + \int \limits_0^t \sigma_s dW_s + \int \limits_0^t \int \limits_{\mathbb{R}} \delta(s,z)\mathds{1}_{\{|\delta(s,z)|\leq 1\}} (\mu - \nu)(ds,dz) \\
+ \int \limits_0^t \int \limits_{\mathbb{R}} \delta(s,z) \mathds{1}_{\{|\delta(s,z)|> 1\}} \mu(ds,dz).
\end{multline}
Here, $W$ denotes a standard Brownian motion, $\mu$ is a Poisson random measure on $\mathbb{R}^+ \times \mathbb{R}$ whose predictable compensator satisfies $\nu(ds,dz)=ds \otimes \lambda(dz)$ for some $ \sigma$-finite measure $\lambda$ on $\mathbb{R}$ endowed with the Borelian $\sigma$-algebra, $b$ and $\sigma$ are adapted processes and $\delta$ is a predictable function on $\Omega \times \mathbb{R}^+ \times \mathbb{R}$. For a more detailed discussion of the components of $X$ consult Section 2.1 of \cite{JacPro12}. We write $\Delta X_s=X_s-X_{s-}$ with $X_{s-}=\lim_{t \nearrow s} X_t$ for a possible jump of $X$ in $s$.

Further we define a sequence of observation schemes $(\pi_n)_{n \in \mathbb{N}}$ via
\begin{align*}
\pi_n=\left(t_{i,n}\right)_{i \in \mathbb{N}_0} ,~n \in \mathbb{N}, 
\end{align*}
where the $\left(t_{i,n}\right)_{i \in \mathbb{N}_0}$ are increasing sequences of stopping times with $t_{0,n}=0$. By
\[
|\pi_n|_T=\sup \left\{t_{i,n} \wedge T -t_{i-1,n} \wedge T\middle|i\geq 1 \right\}
\]
we denote the mesh of the observation times up to some fixed time horizon $T \geq 0$. 

Formally we will develop a statistical test which allows to decide whether a realized path $X(\omega)$ contains jumps in a given time interval $[0,T]$ or not. Specifically we want to decide based on the observations $(X_{t_{i,n}}(\omega))_{i \in \mathbb{N}_0}$ to which of the following two subsets of $\Omega$
\begin{align*}
&\Omega^{(J)}_T=\left\{\exists t \in [0,T]:\Delta X_t \neq 0 \right\},
\\&\Omega^{(C)}_T=\big(\Omega^{(J)}_T\big)^c
\end{align*}
$\omega$ belongs. Here, $\Omega^{(J)}_T$ is the set of all $\omega$ for which the path of $X$ up to $T$ has at least one jump and $\Omega^{(C)}_T$ is the set of all $\omega$ for which the path of $X$ is continuous on $[0,T]$.

All our test statistics are based on the increments
\begin{align*}
\Delta_{i,k,n}X=X_{t_{i,n}}-X_{t_{i-k,n}},\quad \Delta_{i,n}X=\Delta_{i,1,n}X.
\end{align*}
and we denote by $\mathcal{I}_{i,k,n}=\left(t_{i-k,n}, t_{i,n}\right],~i\geq k\geq 1$, $\mathcal{I}_{i,n}=\mathcal{I}_{i,1,n}$, the corresponding observation intervals. For convenience we set $\mathcal{I}_{i,k,n}=\emptyset$ and accordingly $\Delta_{i,k,n}X=0$ for $i<k$.

Further we define for $k \in \mathbb{N}$ and a function $h:\mathbb{R} \rightarrow \mathbb{R}$ the functionals
\begin{align*}
&V(h,[k],\pi_n)_T=\sum_{i\geq k:t_{i,n} \leq T} h(\Delta_{i,k,n}X),
\\&V(h,\pi_n)_T=V(h,[1],\pi_n)_T.
\end{align*}

Considering these functionals for $g:x \mapsto x^4$ and $k \geq 2$ we build the statistic
\begin{align*}
\Phi_{k,T,n}^{(J)}=\frac{V(g,[k],\pi_n)_T}{k V(g,\pi_n)_T}
\end{align*}
whose asymptotics we use to construct a statistical tests for the absence of jumps.

\begin{remark}\label{remark_equi_which_k_1d}
In the setting of equidistant observation times $t_{i,n}=i/n$ our statistic becomes
\begin{align}\label{remark_equi_all_k}
\Phi_{k,T,n}^{(J)}=\frac{\sum_{i=k}^{\lfloor nT \rfloor} g(\Delta_{i,k,n}X)}{k\sum_{i=1}^{\lfloor T/n \rfloor} g(\Delta_{i,n}X)}.
\end{align}
On the other hand in \cite{aitjac2009} a test is constructed based on the statistic
\begin{align}\label{remark_equi_spec_k}
\Phi_{k,T,n}^{(J)}=\frac{\sum_{i=1}^{\lfloor nT/k \rfloor} g(\Delta_{ik,k,n}X)}{\sum_{i=1}^{\lfloor T/n \rfloor} g(\Delta_{i,n}X)}
\end{align}
where at the lower observation frequency $n/k$ only increments over certain observation intervals $\mathcal{I}_{ik,k,n}$ enter the estimation. Intuitively it seems that using the statistic \eqref{remark_equi_all_k} should be better than using \eqref{remark_equi_spec_k}, because in \eqref{remark_equi_all_k} we utilize the available data more exhaustively by using all increments at the lower observation frequency. This intuition is confirmed by Proposition 10.19 in \cite{AitJac14} where central limit theorems are developed for both \eqref{remark_equi_all_k} and \eqref{remark_equi_spec_k} and it is shown that \eqref{remark_equi_all_k} has a smaller asymptotic variance.
\end{remark}

\subsection{Consistency}\label{sec:cons_J}

In order to derive results on the asymptotic behaviour of $\Phi_{k,T,n}^{(J)}$ we need to impose certain structural assumptions on the It\^o semimartingale $X$ and the observation scheme. Further we introduce the notation
\begin{align*}
G_{k,n}(t)=\frac{n}{k^2}\sum_{i\geq k:t_{i,n} \leq t} \left|\mathcal{I}_{i,k,n} \right|^2
\end{align*}
and abbreviate $G_n(t)=G_{1,n}(t)$.
\begin{condition}\label{cond_cons_J}
The process $b_s$ is locally bounded and the process $\sigma_s$ is c\`adl\`ag. Furthermore, there exists a locally bounded process $\Gamma_s$ with $|\delta(\omega,s,z)| \leq \Gamma_s(\omega) \gamma(z)$ for some deterministic bounded function $\gamma$ which satisfies $\int ( \gamma(z)^2 \wedge 1)\lambda(dz) < \infty$, and the process $\sigma$ fulfills $\int_0^T |\sigma_s| ds>0$ almost surely. Additionally the following assumptions on the observation scheme hold: 
\begin{enumerate}[leftmargin=*, align=LeftAlignWithIndent, itemsep=-0.1cm, font=\normalfont, label=(\roman*)]
\item
The sequence of observation schemes $(\pi_n)_n$ is exogenous, i.e.\ independent of the process $X$ and its components, and fulfills
\[
|\pi_n|_T \overset{\mathbb{P}}{\longrightarrow} 0.
\]
\item
The functions $G_n(t)$, $G_{k,n}(t)$ convergence pointwise on $[0,T]$ in probability to strictly increasing functions $G,G_k:[0,T] \rightarrow [0,\infty)$.
\end{enumerate}
\end{condition}
The structural assumptions on $b,\sigma,\delta$ are not very restrictive and occur elsewhere in the literature in similar form. The assumption that $\sigma$ almost surely does not vanish on $[0,T]$ and Condition \ref{cond_cons_J}(ii) are only needed to derive the asymptotic behaviour of $\Phi_{k,T,n}^{(J)}$ on $\Omega_T^{(C)}$. $\Phi_{k,T,n}^{(J)}$ converges on $\Omega_T^{(J)}$ also without these assumptions.

We will see in the proof of Theorem \ref{theo_cons_J} that 
\begin{align*}
&V(g,\pi_n)_T \overset{\mathbb{P}}{\longrightarrow} B_T^{(J)}=\sum_{t \leq T}\left(\Delta X_t \right)^4,
\\&V(g,[k],\pi_n)_T \overset{\mathbb{P}}{\longrightarrow} kB_T^{(J)}
\end{align*}
which yields the asymptotic behaviour of $\Phi_{k,T,n}^{(J)}$ on $\Omega_T^{(J)}$. On $\Omega_T^{(C)}$ we have $B_T^{(J)}=0$ and we expand the fraction by $n$ to get an asymptotic result. To describe the limit in that case we define
\begin{align*}
C_{T}^{(J)}=\int_0^T 3\sigma_s^4 dG(s), \quad C_{k,T}^{(J)}=\int_0^T 3\sigma_s^4 dG_k(s).
\end{align*}

\begin{theorem}\label{theo_cons_J}

Under Condition \ref{cond_cons_J} it holds
\begin{align}
\Phi_{k,T,n}^{(J)} \overset{\mathbb{P}}{\longrightarrow} 
\begin{cases}
 1, &\text{on } \Omega_T^{(J)}, \\
\frac{k C_{k,T}^{(J)}}{C_{T}^{(J)}}, &\text{on } \Omega_T^{(C)}.
\end{cases} 
\end{align}
\end{theorem}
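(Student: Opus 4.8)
The plan is to analyze the numerator and denominator of $\Phi_{k,T,n}^{(J)}$ separately, treating the two sets $\Omega_T^{(J)}$ and $\Omega_T^{(C)}$ differently. The key observation is that the behaviour depends on whether the dominant contribution to $V(g,\pi_n)_T$ comes from the jumps of $X$ (on $\Omega_T^{(J)}$) or from the continuous martingale part (on $\Omega_T^{(C)}$), since $g(x)=x^4$ and fourth powers of Brownian-type increments are of smaller order than fourth powers of jump-sized increments.

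First I would establish the convergence $V(g,\pi_n)_T \overset{\mathbb{P}}{\longrightarrow} B_T^{(J)}=\sum_{t \leq T}(\Delta X_t)^4$ and $V(g,[k],\pi_n)_T \overset{\mathbb{P}}{\longrightarrow} kB_T^{(J)}$, as announced in the text. By a standard localization argument I may assume $b,\sigma,\Gamma$ are bounded and that $X$ has finitely many "large" jumps on $[0,T]$; decompose $X = X' + X''$ where $X''$ collects the large jumps and $X'$ has only small jumps plus the continuous part. For the increments containing one of the finitely many large jumps of $X''$, one shows $h(\Delta_{i,k,n}X) \to h(\Delta X_t)$ for the relevant jump time $t$ (using $|\pi_n|_T \to 0$ so that eventually at most one large jump falls in each $\mathcal{I}_{i,k,n}$, and no two large jumps coincide a.s.); for $k$ consecutive increments each large jump is picked up exactly $k$ times in the $[k]$-version, which explains the factor $k$. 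For the remaining increments (those of $X'$ only) one needs a negligibility estimate: $\sum_i \Exp[|\Delta_{i,k,n}X'|^4 \wedge 1] \to 0$, which follows from the fact that small increments of an Itô semimartingale with only small jumps satisfy $\Exp[|\Delta_{i,k,n}X'|^4 | \mathcal{F}_{t_{i-k,n}}] = O(|\mathcal{I}_{i,k,n}|^2)$ together with $\sum_i |\mathcal{I}_{i,k,n}|^2 \leq |\pi_n|_T \cdot kT \to 0$. This immediately gives $\Phi_{k,T,n}^{(J)} \to kB_T^{(J)}/(kB_T^{(J)}) = 1$ on $\Omega_T^{(J)}$, where $B_T^{(J)}>0$.

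On $\Omega_T^{(C)}$ we have $B_T^{(J)}=0$ and both numerator and denominator tend to zero, so I would multiply both by $n$. The task becomes to show $n V(g,\pi_n)_T \overset{\mathbb{P}}{\longrightarrow} C_T^{(J)} = \int_0^T 3\sigma_s^4\, dG(s)$ and similarly $n V(g,[k],\pi_n)_T \overset{\mathbb{P}}{\longrightarrow} C_{k,T}^{(J)}$; the conclusion $\Phi_{k,T,n}^{(J)} \to k C_{k,T}^{(J)}/C_T^{(J)}$ then follows. Restricting attention to $\Omega_T^{(C)}$, where locally there are no jumps at all, the increment $\Delta_{i,n}X$ is (after removing the negligible drift) approximately $\sigma_{t_{i-1,n}}(W_{t_{i,n}}-W_{t_{i-1,n}})$, so $\Exp[(\Delta_{i,n}X)^4 | \mathcal{F}_{t_{i-1,n}}] \approx 3\sigma_{t_{i-1,n}}^4 |\mathcal{I}_{i,n}|^2$. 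Summing, $n V(g,\pi_n)_T \approx \sum_i 3\sigma_{t_{i-1,n}}^4 \cdot \frac{n}{k^2}|\mathcal{I}_{i,n}|^2 \cdot k^2$ — with $k=1$ here — i.e.\ a Riemann–Stieltjes sum against the measure $dG_n$. Using the c\`adl\`ag property of $\sigma$, the pointwise convergence $G_n \to G$ from Condition \ref{cond_cons_J}(ii), and a Riemann-sum convergence argument (dominated convergence plus the strict monotonicity of $G$ to control the mesh), this converges to $\int_0^T 3\sigma_s^4\, dG(s)$. The $[k]$-version is identical with $G_{k,n}$ in place of $G_n$ and the normalizing $n/k^2$ built into the definition of $G_{k,n}$. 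A fluctuation/$L^2$-type bound shows the difference between $n V(g,\pi_n)_T$ and its conditional-expectation approximation vanishes.

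The main obstacle I anticipate is the Riemann-sum convergence on $\Omega_T^{(C)}$: one must pass from the conditional second-moment sums to the deterministic integrals $\int_0^T 3\sigma_s^4\, dG(s)$ using only \emph{pointwise in probability} convergence of $G_n$ (not uniform, and $G$ merely continuous and strictly increasing), while the integrand $3\sigma_s^4$ is random and only c\`adl\`ag. This requires care: one typically approximates $\sigma$ by a simple process, handles the jump times of $\sigma$ separately (they form a countable set and carry no $G$-mass since $G$ is continuous), and uses Condition \ref{cond_cons_J}(ii) together with a Skorokhod-type argument or a direct estimate via the strict monotonicity of $G$ to guarantee that intervals of vanishing mesh carry vanishing $G$-mass. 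A secondary technical point is controlling the error terms from replacing $X$ by $\sigma_{t_{i-1,n}}\Delta_{i,n}W$ inside a fourth power uniformly in the irregular partition, which is handled by the standard moment inequalities for Itô semimartingales combined with $|\pi_n|_T \to 0$.
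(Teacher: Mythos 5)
Your approach matches the paper's in all essentials: you split the argument by $\Omega_T^{(J)}$ and $\Omega_T^{(C)}$, obtain the factor $k$ from each jump falling into $k$ of the overlapping $[k]$-windows (the paper obtains the same factor by partitioning the $[k]$-sum into $k$ disjoint sub-grids and applying Theorem 3.3.1 of \cite{JacPro12} to each; these are equivalent), and on $\Omega_T^{(C)}$ normalize by $n$ to interpret both power variations as Riemann--Stieltjes sums against $dG_n$, $dG_{k,n}$ converging to $C_T^{(J)}$ and $kC_{k,T}^{(J)}$ (the paper recognizes these two convergences as an instance of Proposition A.2 of \cite{MarVet17} and then proves separately that replacing $X$ by its continuous martingale part introduces a negligible error).

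One step in your negligibility estimate is stated imprecisely and, as written, would not deliver the claim. You assert $\mathbb{E}[|\Delta_{i,k,n}X'|^4\,|\,\mathcal{F}_{t_{i-k,n}}] = O(|\mathcal{I}_{i,k,n}|^2)$ for the ``small-jump'' part $X'$ of the semimartingale. This quadratic bound is valid only for the drift and the continuous martingale; for the compensated small-jump martingale $M(q)$ the sharp conditional moment bound is $\mathbb{E}[|\Delta_{i,k,n}M(q)|^4\,|\,\mathcal{F}_{t_{i-k,n}}] \leq K\,|\mathcal{I}_{i,k,n}|\,e_q$, \emph{linear} in the interval length with $e_q \to 0$ only as the jump cut-off $1/q \to 0$; see Lemma \ref{elem_ineq} in the appendix. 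Summing over $i$ then gives a term of order $kT e_q$, which does not vanish with $n$ for fixed $q$: one must take the secondary limit $q\to\infty$, as is done in the proof of Theorem 3.3.1 of \cite{JacPro12}. This iterated limit is what the paper invokes implicitly by citing that theorem; your sketch omits it, and the bound as stated cannot substitute for it. The overall route and the conclusion are nevertheless correct.
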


\begin{remark}\label{remark_limit_in_1_k}
We obtain $$G_n(t)-G_n(s) +O(|\pi_n|_t)\leq k G_{k,n}(t)-k G_{k,n}(s) \leq k G_n(t)-kG_n(s)$$ for all $t \geq s \geq 0$ from the series of elementary inequalities
\begin{align}\label{elementary_estimate}
\sum_{i=1}^k a_i^2 \leq \Big( \sum_{i=1}^k a_i\Big)^2 \leq k \sum_{i=1}^k a_i^2
\end{align}
which holds for any $a_1,\ldots, a_k \geq 0$, $k \in \mathbb{N}$. Here, the second inequality follows from the Cauchy-Schwarz inequality. Equality in \eqref{elementary_estimate} holds for $a_1\geq 0$, $a_2=\ldots=a_k=0$ respectively $a_1=\ldots=a_k>0$. The relations of $G_n$ and $G_{k,n}$ are preserved in the limit which yields $$G(t)-G(s) \leq k G_k(t)-k G_k(s) \leq kG(t)-kG(s)$$ for all $t \geq s\geq 0$. Hence we get
\begin{align*}
\frac{k C_{k,T}^{(J)}}{C_{T}^{(J)}} \in [1,k].
\end{align*}
\end{remark}

Based on the fact that $\Phi_{k,T,n}^{(J)}\overset{\mathbb{P}}{\longrightarrow} 1$ on $\Omega_T^{(J)}$ and that $\Phi_{k,T,n}^{(J)}$ converges on $\Omega_T^{(C)}$ to a random variable which is strictly greater than $1$ if ${kC_{k,T}^{(J)}>C_T^{(J)}}$ by Remark \ref{remark_limit_in_1_k}, we will construct a test with critical region
\begin{align}\label{critical_region_J}
\mathcal{C}_{k,T,n}^{(J)}=\{\Phi_{k,T,n}^{(J)}>1+\mathpzc{c}_{k,T,n}^{(J)}\}
\end{align}
for an appropriate series of decreasing random positive numbers $\mathpzc{c}_{k,T,n}^{(J)}$, $n \in \mathbb{N}$.\\

In the following we illustrate the result from Theorem \ref{theo_cons_J} by looking at two prominent observation schemes: first Poisson sampling which is truely random and asynchronous and second equidistant sampling for which results exist in the literature and which will serve as a kind of benchmark.

\begin{example}\label{example_poisson1}
Consider the observation scheme where $t_{i,n}-t_{i-1,n}$ are i.i.d.\ $Exp(n\lambda )$, $\lambda>0$, distributed. We will call this observation scheme \textit{Poisson sampling} as the observation times $t_{i,n}$ correspond to the jump times of a Poisson process with intensity $n \lambda$. In this setting Condition \ref{cond_cons_J}(ii) is fulfilled as shown in Proposition 1 from \cite{HayYos08} with 
\begin{align*}
G(t)=\frac{2}{\lambda}t,
\end{align*}
and as proved in Section \ref{sec:proof_detail_poiss} with
\begin{align}\label{lln_gamma}
G_k(t)=\frac{k+1}{k\lambda}t.
\end{align}
This yields that the limit under the alternative is $(k+1)/2$.
\end{example}

\begin{remark}
In the case of equidistant synchronous observations, i.e.\ $t_{i,n}=i/n$, it holds ${G(t)=G_{k,n}(t)= t}$ which yields
\begin{align*}
\frac{k C_{k,T}^{(J)}}{C_T^{(J)}}=k.
\end{align*}
Hence in this setting $\Phi_{k,T,n}^{(J)}$ converges to a known deterministic limit on $\Omega_T^{(C)}$ as well which also allows to construct a test using $\Phi_{k,T,n}^{(J)}$ for the null hypothesis of no jumps (compare Section 10.3 in \cite{AitJac14}). This is not immediately possible in the irregular setting, unless the law of the generating mechanism is known to the statistician.
\end{remark}

\subsection{Central limit theorem}\label{sec:clt_J}
In this section we derive a central limit theorem for $\Phi_{k,T,n}^{(J)}$ which holds on $\Omega_T^{(J)}$. Denote by $i_n(s)$ the index of the interval $\mathcal{I}_{i,n}$ associated with $s \in \mathcal{I}_{i,n} $ and define
\begin{align*}
\xi_{k,n,-}(s)=n\sum_{j=1}^{k-1} (k-j)^2|\mathcal{I}_{i_n(s)-j,1,n}|, \quad \xi_{k,n,+}(s)=n\sum_{j=1}^{k-1} (k-j)^2|\mathcal{I}_{i_n(s)+j,1,n}|.
\end{align*}
$\xi_{k,n,-}(s)+\xi_{k,n,+}(s)$ is the $\sigma(\pi_n:n \in \mathbb{N})$-conditional variance of
\begin{align*}
n\sum_{j= -(k-1),j \neq 0}^{k-1} |k-j| \Delta_{i_n(s)+j,1,n} W
=n \sum_{j=0}^{k-1} \Delta_{i_n(s)+j,k,n} W-nk \Delta_{i_n(s),1,n}W.
\end{align*}
This identity is illustrated in Figure \ref{fig_xi}.

\begin{figure}[bt]
\centering
\hspace{-0.6cm}
\begin{tikzpicture}
\draw[dashed] (0,0) -- (1,0)
			(11,0) -- (12,0)
			(5,-0.2) -- (5,0.2);
\draw (1,0) -- (11,0)
      (1,-0.2) -- (1,0.2)
      (3,-0.2) -- (3,0.2)
      (4.7,-0.2) -- (4.7,0.2)
      (6.5,-0.2) -- (6.5,0.2)
      (8.6,-0.2) -- (8.6,0.2)
      (11,-0.2) -- (11,0.2);
\draw	(1,-0.5) node{$t_{i_n(s)-3,n}$}
		(3,-0.5) node{$t_{i_n(s)-2,n}$}
		(4.7,-0.5) node{$t_{i_n(s)-1,n}$}
		(6.5,-0.5) node{$t_{i_n(s),n}$}
		(8.5,-0.5) node{$t_{i_n(s)+1,n}$}
		(11,-0.5) node{$t_{i_n(s)+2,n}$}
		(5,0.4) node{$s$};
\draw[decorate,decoration={brace,amplitude=12pt}]
	(1,0.5)--(6.5,0.5) node[midway, above,yshift=10pt,]{$|\mathcal{I}_{i_n(s),3,n}|$};
\draw[decorate,decoration={brace,amplitude=12pt}]
	(3,1.5)--(8.6,1.5) node[midway, above,yshift=10pt,]{$|\mathcal{I}_{i_n(s)+1,3,n}|$};
\draw[decorate,decoration={brace,amplitude=12pt}]
	(4.7,2.5)--(11,2.5) node[midway, above,yshift=10pt,]{$|\mathcal{I}_{i_n(s)+2,3,n}|$};
\draw[decorate,decoration={brace,amplitude=12pt}]
	(6.5,-1)--(4.7,-1) node[midway, below,yshift=-10pt,]{$3|\mathcal{I}_{i_n(s),1,n}|$};
\end{tikzpicture}
\caption{Illustrating the origin of $\xi_{k,n,-}(s),\xi_{k,n,+}(s)$ for $k=3$.}\label{fig_xi}
\end{figure}
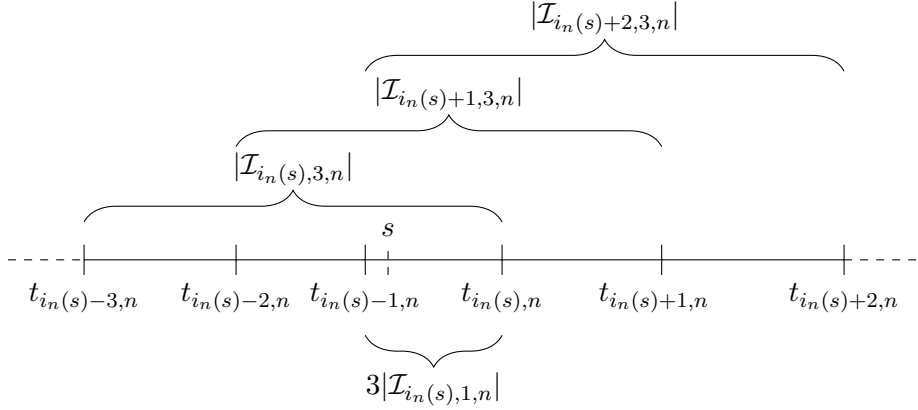

\FloatBarrier

The following condition summarizes the assumptions we need additionally to Condition \ref{cond_cons_J} to derive a central limit theorem.

\begin{condition} \label{cond_clt_J} The process $X$ and the sequence of observation schemes $(\pi_n)_n$ fulfill Condition \ref{cond_cons_J}. Further the following additional assumptions on the observation schemes hold:
\begin{enumerate}[leftmargin=*, align=LeftAlignWithIndent, itemsep=-0.1cm, font=\normalfont, label=(\roman*)]
\item
We have
\begin{align}
|\pi_n|_T=o_{\mathbb{P}}(n^{-1/2}).
\end{align}
\item
The integral
\begin{align*}
\int_{[0,T]^P} g(x_1,\dots,x_P) \mathbb{E} \Big[ \prod_{p=1}^{P} h_p \left(\xi_{k,n,-}(x_p),\xi_{k,n,+}(x_p)\right)\Big] dx_1 \dots dx_P
\end{align*}
converges for $n \rightarrow \infty$ to
\begin{align*}
\int_{[0,T]^P} g(x_1,\dots,x_P)\prod_{p=1}^{P} \int_{\mathbb{R}} h_p \left(y\right)\Gamma(x_p,dy)
 dx_1 \dots dx_P
\end{align*}
for all bounded continuous functions $g:\mathbb{R}^{P} \rightarrow \mathbb{R}$, $h_p:\mathbb{R}^2\rightarrow \mathbb{R}$, ${p=1,\ldots,P}$, and any $P \in \mathbb{N}$. Here $\Gamma(\cdot,dy)$ is a family of probability measures on $[0, T]$ with uniformly bounded first moments and $\int_0^T\Gamma(x,\{(0,0)\})dx=0$.
\end{enumerate}
\end{condition}

Part (i) of Condition \ref{cond_clt_J} guarantees that $|\pi_n|_T$ vanishes sufficiently fast, while part (ii) of \ref{cond_clt_J} yields that the $(\xi_{k,n,-}(s),\xi_{k,n,+}(s))$ converge in law in a suitable sense.

Because of the exogeneity of the observation times we may assume in the following that the probability space has the form
\begin{align}\label{prob_space}
(\Omega,\mathcal{F},\mathbb{P})=(\Omega_\mathcal{X} \times \Omega_\mathcal{S},\mathcal{X} \otimes \mathcal{S},\mathbb{P}_\mathcal{X} \otimes \mathbb{P}_\mathcal{S})
\end{align}
where $\mathcal{X}$ denotes the $\sigma$-algebra generated by $X$ and its components and $\mathcal{S}$ denotes the $\sigma$-algebra generated by the observation scheme $(\pi_n)_n$. 

To describe the limit in the upcoming central limit theorem we define
\begin{align}\label{def_FkT_J}
F_{k,T}^{(J)}=4\sum_{p:S_p \leq T} (\Delta X_{S_p})^3\sqrt{(\sigma_{S_p-})^2\xi_{k,-}(S_p)+(\sigma_{S_p})^2 \xi_{k,+}(S_p)} U_{S_p}.
\end{align}
Here, $(S_p)_{p \geq 0}$ denotes an enumeration of the jump times of $X$ and the $(\xi_{k,-}(s),\xi_{k,+}(s))$, $s \in [0,T]$, are independent random variables which are distributed according to $\Gamma(s,dy)$ and the $U_s$, $s \in [0,T]$, are i.i.d.\ standard normal distributed random variables. Both the $(\xi_{k,-}(s),\xi_{k,+}(s))$ and the $U_s$ are independent of $X$ and its components and defined on an extended probability space $(\widetilde{\Omega},\widetilde{\mathcal{F}},\widetilde{\mathbb{P}})$. Note that $F_{k,T}^{(J)}$ is well-defined because the sum in \eqref{def_FkT_J} is almost surely absolutely convergent and independent of the choice for the enumeration $(S_p)_{p \geq 0}$; compare Proposition 4.1.3 in \cite{JacPro12}.
\begin{theorem}\label{theo_clt_J}
If Condition \ref{cond_clt_J} holds, we have the $\mathcal{X}$-stable convergence
\begin{align}\label{theo_clt_J_1}
\sqrt{n} \left(\Phi_{k,T,n}^{(J)}-1\right) \overset{\mathcal{L}-s}{\longrightarrow} \frac{F_{k,T}^{(J)}}{k B_T^{(J)}}
\end{align}
on $\Omega_T^{(J)}$.
\end{theorem}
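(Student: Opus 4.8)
The plan is to localize so that $b$, $\sigma$, $\Gamma$ and the jump activity are bounded, and to split the jumps of $X$ into the finitely many ``big'' jumps of size exceeding $\eta$ and the remaining small jumps. On $\Omega_T^{(J)}$ the denominator $kV(g,\pi_n)_T$ converges in probability to $kB_T^{(J)}>0$ by Theorem~\ref{theo_cons_J} and the computations preceding it, so by the usual argument (stable convergence is preserved under multiplication by a factor converging in probability to a strictly positive $\mathcal{X}$-measurable limit, together with Slutsky) it suffices to prove the $\mathcal{X}$-stable convergence
\begin{align*}
\sqrt{n}\big(V(g,[k],\pi_n)_T-kV(g,\pi_n)_T\big)\overset{\mathcal{L}-s}{\longrightarrow}F_{k,T}^{(J)}
\end{align*}
on $\Omega_T^{(J)}$, and in fact only the contribution of the big jumps matters.

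Next I would expand, around each big jump time $S_p$, the increments $\Delta_{i,k,n}X$ for the $2k-1$ observation intervals whose closure meets $S_p$. Writing $X=X^c+\sum\Delta X$ locally (the small-jump martingale part contributes only lower-order terms), for the interval containing $S_p$ one has $\Delta_{i_n(S_p),k,n}X=\Delta X_{S_p}+\Delta_{i_n(S_p),k,n}X^c+o_{\mathbb{P}}(n^{-1/2})$, and similarly for the neighbouring intervals that straddle $S_p$ the ``jump part'' is again $\Delta X_{S_p}$. Using $g(x)=x^4$ and a first-order Taylor expansion $g(a+h)=a^4+4a^3h+O(h^2)$ with $a=\Delta X_{S_p}$, the difference $g(\Delta_{i,k,n}X)$ summed over the $k$ straddling intervals at lag $k$ minus $k$ times $g(\Delta_{i_n(S_p),n}X)$ at lag $1$ produces, to leading order, $4(\Delta X_{S_p})^3$ times
\begin{align*}
\sum_{j=0}^{k-1}\Delta_{i_n(S_p)+j,k,n}X^c-k\,\Delta_{i_n(S_p),1,n}X^c
=\sum_{j=-(k-1),\,j\neq0}^{k-1}|k-j|\,\Delta_{i_n(S_p)+j,1,n}X^c,
\end{align*}
which is exactly the combination of Brownian increments whose conditional variance was identified as $\xi_{k,n,-}(S_p)+\xi_{k,n,+}(S_p)$ in the paragraph around Figure~\ref{fig_xi} (with the $\sigma$ before $S_p$ multiplying the $\xi_{k,n,-}$ part and $\sigma$ after $S_p$ the $\xi_{k,n,+}$ part, since $\sigma$ is c\`adl\`ag and $|\pi_n|_T\to0$). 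Rescaling by $\sqrt{n}$ turns this into $4(\Delta X_{S_p})^3$ times an approximately Gaussian variable with variance $(\sigma_{S_p-})^2\xi_{k,n,-}(S_p)+(\sigma_{S_p})^2\xi_{k,n,+}(S_p)$.

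The last step is to pass to the joint stable limit. Conditionally on $\mathcal{X}$ and on the observation schemes $\mathcal{S}$, the rescaled Brownian combinations attached to distinct big jump times are asymptotically independent centered Gaussians (their supports are eventually disjoint because $|\pi_n|_T\to0$ and the big jumps are finitely many and a.s.\ distinct), so one gets conditional convergence to $4\sum_p(\Delta X_{S_p})^3\sqrt{(\sigma_{S_p-})^2\xi_{k,n,-}(S_p)+(\sigma_{S_p})^2\xi_{k,n,+}(S_p)}\,U_{S_p}$ with $U_{S_p}$ i.i.d.\ standard normal; then Condition~\ref{cond_clt_J}(ii) is invoked to replace the deterministic-given-$\mathcal{S}$ pair $(\xi_{k,n,-}(S_p),\xi_{k,n,+}(S_p))$ by a draw from $\Gamma(S_p,dy)$ in the limit, using that a big jump time $S_p$ has a density-type occupation so that averaging $h_p$ against the law of $S_p$ is what the stated integral convergence controls, and that $\Gamma(x,\{(0,0)\})=0$ for a.e.\ $x$ guarantees the limiting variance is a.s.\ strictly positive. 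This yields $F_{k,T}^{(J)}$ as in \eqref{def_FkT_J}.

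I expect the main obstacle to be the rigorous justification of the second step together with this last passage: controlling all the remainder terms (the small-jump compensated martingale, the drift, the quadratic Taylor remainder, and the increments over the ``short'' pieces $\mathcal{I}_{i_n(S_p),1,n}$ near $S_p$) uniformly so that they are genuinely $o_{\mathbb{P}}(n^{-1/2})$ after summation over all intervals, and matching the asymptotic conditional law of the family of Brownian combinations to the family $\Gamma(S_p,\cdot)$ via Condition~\ref{cond_clt_J}(ii) — the delicate point being that the $S_p$ are random, so one must argue (e.g.\ by conditioning on the jump measure and using the exogeneity \eqref{prob_space} and the integral convergence with test functions $g$ localizing near the $S_p$) that the convergence in \ref{cond_clt_J}(ii) transfers to expectations evaluated at the random points $S_p$. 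This is precisely where the techniques from \cite{BibVet15} and \cite{MarVet17} enter.
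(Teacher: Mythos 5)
Your plan is essentially the paper's proof. The two-sentence version: reduce to the stable convergence of $\sqrt{n}\,(V(g,[k],\pi_n)_T-kV(g,\pi_n)_T)$, identify the leading term as $4\sum_p(\Delta X_{S_p})^3$ times a weighted sum of neighbouring one-step Brownian increments, and pass to the limit by the stable CLT for these increments together with Condition~\ref{cond_clt_J}(ii). That is exactly Steps 1--3 of the paper's argument around \eqref{proof_clt_J1b}--\eqref{proof_clt_J11}.

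The one genuine (but cosmetic) difference is how the leading term is obtained. You do a local first-order Taylor expansion $g(a+h)=a^4+4a^3h+O(h^2)$ around each big jump and collect the $k$ straddling lag-$k$ intervals against $k$ copies of the central lag-$1$ interval. The paper instead expands $g(\Delta_{i,k,n}X)$ exactly by the multinomial theorem (identity \eqref{proof_clt_J2}), sums over $i$, and reads off the dominant cross-term $4(\Delta_{i,n}X)^3\Delta_{i\pm j,n}X$ with weight $(k-j)$; only then does it replace the cube by $(\Delta_{i,n}N(q))^3$ and the linear factor by $\Delta_{i\pm j,n}C$. These give the same leading object, but the exact multinomial expansion makes the estimates on the discarded terms (the $(2,2)$, $(2,1,1)$, $(1,1,1,1)$ powers and the boundary terms in \eqref{proof_clt_J3}) systematic, which is exactly the error control you correctly flag as the main remaining work. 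The paper also makes the "$\sigma$ c\`adl\`ag and $|\pi_n|_T\to 0$" intuition precise through the discretizations $\sigma(r)$, $\tilde\sigma(r,q)$ and the diagonal limit $n\to\infty$, $r\to\infty$, $q\to\infty$; you gesture at this but would need the $q,r$-approximation argument to make it rigorous. One small algebra slip, inherited from the paper's own displayed text: the weight in $\sum_{j=-(k-1),j\neq 0}^{k-1}$ should be $k-|j|$, not $|k-j|$ (the definition of $\xi_{k,n,\pm}$ with coefficients $(k-j)^2$, $j\ge 1$, confirms this).
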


Here, the limit $F_{k,T}^{(J)}/(k B_T^{(J)})$ in \eqref{theo_clt_J_1} is defined on the extended probability space $(\widetilde{\Omega},\widetilde{\mathcal{F}},\widetilde{\mathbb{P}})$. Further the statement of the $\mathcal{X}$-stable convergence on $\Omega_T^{(J)}$ means that we have
\[
\mathbb{E} \big[g\big(\sqrt{n} (\Phi_{k,T,n}^{(J)}-1)\big) Y\mathds{1}_{\Omega_T^{(J)}}\big] \rightarrow \widetilde{\mathbb{E}}\big[g\big(F_{k,T}^{(J)}/(k B_T^{(J)}) \big) Y\mathds{1}_{\Omega_T^{(J)}}\big]
\] 
for all bounded and continuous functions $g$ and all $\mathcal{X}$-measurable bounded random variables $Y$. For more background information on stable convergence in law we refer to \cite{JacPro12}, \cite{JacShi02} and \cite{PodVet10}.

\begin{example}\label{example_poisson2}
Condition \ref{cond_clt_J} is fulfilled in the setting of Poisson sampling introduced in Example \ref{example_poisson1}. Part (i) is fulfilled by Lemma 8 from \cite{HayYos08} which states
\begin{align}\label{poiss_mesh}
\mathbb{E}\left[(|\pi_n|_T)^q\right]=o(n^{-\alpha})
\end{align}
for all $q \geq 1$ and $\alpha < q$. That \ref{cond_clt_J}(ii) is fulfilled is proved in Section \ref{sec:proof_poiss_clt}.
\end{example}

\subsection{Testing procedure}\label{sec:testproc_J}

In this section we develop a statistical test for testing the null hypothesis that $t \mapsto X_t(\omega)$ has jumps in $[0,T]$ (i.e.\ $\omega \in \Omega_T^{(J)}$) against the alternative that $t \mapsto X_t(\omega)$ is continuous on $[0,T]$ (i.e.\ $\omega \in \Omega_T^{(C)}$). To employ Theorem \ref{theo_clt_J} for this purpose we have to estimate the distribution of the limiting variable $F_{k,T}^{(J)}$ and therefore the distribution of $\xi_k(S_p)$ for the jump times $S_p$. Because the distribution of $\xi_k(S_p)$ depends on the unknown observation scheme around $S_p$, of which we observe only a single realization, we use a bootstrap method to estimate the distribution from the realization of the observation scheme close to $S_p$. For this method to work we need some sort of local homogeneity which will be guaranteed by Condition \ref{cond_testproc_CoJ}(i).

To formalize the bootstrap method let $L_n$ and $M_n$ be sequences of natural numbers which tend to infinity. Set
\begin{equation}\label{estimators_xi}
\begin{aligned}
&\hat{\xi}_{k,n,m,-}(s)=n\sum_{j=1}^{k-1} (k-j)^2|\mathcal{I}_{i_n(s)+V_{n,m}(s)-j,1,n}|,
\\&\hat{\xi}_{k,n,m,+}(s)=n\sum_{j=1}^{k-1} (k-j)^2|\mathcal{I}_{i_n(s)+V_{n,m}(s)+j,1,n}|
\end{aligned}
\end{equation}
for $m=1,\ldots,M_n$ where the random variable $V_{n,m}(s)$ attains values in $\{-L_n,\ldots,L_n\}$ with probabilities
\begin{align}\label{weights_testing}
\mathbb{P}(V_{n,m}(s) =l|\mathcal{S})=|\mathcal{I}_{i_n(s)+l,n}|\Big(\sum_{j=-L_n}^{L_n} |\mathcal{I}_{i_n(s)+j,n}| \Big)^{-1} ,~l \in \{-L_n,\ldots,L_n\}.
\end{align}
Here, $(\hat{\xi}_{k,n,m,-}(s),\hat{\xi}_{k,n,m,-}(s))$ is chosen from the $(\xi_{k,n,-}(t_{i_n(s)+i,n}),\xi_{k,n,-}(t_{i_n(s)+i,n}))$, $i=-L_n,\ldots,L_n$, which make up the $2L_n+1$ realizations of $(\hat{\xi}_{k,n,-}(t),\hat{\xi}_{k,n,-}(t))$ which lie 'closest' to $s$, with probability proportional to the interval length $|\mathcal{I}_{i_n(s)+i,n}|$. This corresponds to the probability with which a random variable which is uniformly distributed on the union of these intervals $\mathcal{I}_{i_n(s)+i,n}$, $i=-L_n,\ldots,L_n$, but otherwise independent from the observation scheme would fall into the interval $\mathcal{I}_{i_n(s)+i,n}$. Due to the structure of the predictable compensator $\nu$ the jump times $S_p$ of the It\^o semimartingale $X$ are also evenly distributed in time. This explains why we choose such a random variable $V_{n,m}(s)$ for the estimation of the law of $(\hat{\xi}_{k,n,m,-}(S_p),\hat{\xi}_{k,n,m,-}(S_p))$.

Using the estimators \eqref{estimators_xi} for realizations of $\xi_k(s)$ we build the following estimators for realizations of $F_{k,T}^{(J)}$
\begin{align}\label{est_F_J}
&\widehat{F}_{k,T,n,m}^{(J)}= 4 \sum_{i:t_{i,n} \leq T} \left(\Delta_{i,n} X \right)^3 \mathds{1}_{\{|\Delta_{i,n} X|>\beta |\mathcal{I}_{i,n}|^\varpi\}} \nonumber
\\&~~~~~~\times \sqrt{(\hat{\sigma}_n(t_{i,n},-))^2\hat{\xi}_{k,n,m,-}(t_{i,n})
+(\hat{\sigma}_n(t_{i,n},+))^2 \hat{\xi}_{k,n,m,+}(t_{i,n})}U_{n,i,m},
\end{align}
$m=1,\ldots,M_n$, where $\beta>0$ and $\varpi \in (0,1/2)$. Here an increment which is large compared to a given threshold is identified as a jump and the local volatility is estimated by
\begin{align}\label{est_sigma}
\begin{aligned}
&\left(\hat{\sigma}_n(s,-)\right)^2=\frac{1}{b_n}
\sum_{i:\mathcal{I}_{i,n} \subset [s-b_n,s)} \left(\Delta_{i,n} X\right)^2,
\\ &\left(\hat{\sigma}_n(s,+)\right)^2=\frac{1}{b_n}
\sum_{i:\mathcal{I}_{i,n} \subset [s,s+b_n]} \left(\Delta_{i,n} X\right)^2
\end{aligned}
\end{align}
for a sequence $(b_n)_n$ with $b_n \rightarrow 0 $ and $|\pi_n|_T/ b_n \rightarrow 0$. Further the $U_{n,i,m}$ are i.i.d.\ standard normal distributed random variables which are independent of $\mathcal{F}$ and defined on the extended probability space $(\widetilde{\Omega},\widetilde{\mathcal{F}},\widetilde{\mathbb{P}})$. 

Denote by
\begin{align*}
\widehat{Q}_{k,T,n}^{(J)}(\alpha)=\widehat{Q}_\alpha\big(\big\{
\widehat{F}_{k,T,n,m}^{(J)} | m=1,\ldots,M_n
\big\}\big)
\end{align*}
the $\lfloor \alpha M_n\rfloor $-th largest element of the set $\big\{
\widehat{F}_{k,T,n,m}^{(J)} | m=1,\ldots,M_n
\big\}$.

We will see that $\widehat{Q}_{k,T,n}^{(J)}(\alpha)$ converges on $\Omega_T^{(J)}$ under appropriate conditions to the $\mathcal{X}$-conditional $\alpha$-quantile $Q_k^{(J)}(\alpha)$ of $F_{k,T}^{(J)} $ which is defined via
\begin{align}
\widetilde{\mathbb{P}}\big(F_{k,T}^{(J)} \leq Q_k^{(J)}(\alpha) \big| \mathcal{X} \big)(\omega)=\alpha,~~ \omega \in \Omega_T^{(J)},
\end{align}
and we set $\big(Q_k^{(J)}(\alpha)\big)(\omega)=0$, $\omega \in (\Omega_T^{(J)})^c$. Such a random variable $Q_k^{(J)}(\alpha)$ exists if Condition \ref{cond_testproc_J} is fulfilled because the $\mathcal{X}$-conditional distribution of $F_{k,T}^{(J)}$ will be almost surely continuous on $\Omega_T^{(J)}$ under Condition \ref{cond_testproc_J}.

\begin{condition}\label{cond_testproc_J}
Assume that the process $X$ and the sequence of observation schemes $(\pi_n)_n$ satisfy Condition \ref{cond_clt_J} and that the set $\{s \in [0,T]:\sigma_s=0\}$ is almost surely a Lebesgue null set. Further, let the sequence $(b_n)_n$ fulfill $|\pi_n|_T/b_n \overset{\mathbb{P}}{\longrightarrow}0$ and suppose that $(L_n)_n$ and $(M_n)_n$ are sequences of natural numbers converging to infinity and $ L_n/n\rightarrow 0$. Additionally,
\begin{enumerate}[leftmargin=*, align=LeftAlignWithIndent, itemsep=-0.1cm, font=\normalfont, label=(\roman*)]
\Item
\begin{multline*}
\widetilde{\mathbb{P}}\Big(\big|\widetilde{\mathbb{P}}((\hat{\xi}_{k,n,1,-}(s_p),\hat{\xi}_{k,n,1,+}(s_p))\leq x_p,~p=1,\ldots,P | \mathcal{S} ) 
\\-
 \prod_{p=1}^P \widetilde{\mathbb{P}}((\xi_{k,-}(s_p),\xi_{k,+}(s_p)) \leq x_p) \big|>\varepsilon \Big) \rightarrow 0
\end{multline*}
as $n \rightarrow \infty$, for all $\varepsilon>0$ and any $x \in \mathbb{R}^{2 \times P}$, $P \in \mathbb{N}$, and $s_p \in (0,T)$, $p=1,\ldots,P$.
\item The volatility process $\sigma$ is itself an It\^o semimartingale, i.e.\ a process of the form \eqref{ItoSemimart}.
\item
On $\Omega_T^{(C)}$ we have $k C_{k,T}^{(J)} >C_T^{(J)}$ almost surely.
\end{enumerate}
\end{condition}

Part (i) of Condition \ref{cond_testproc_J} guarantees that the bootstrapped realizations $$(\hat{\xi}_{k,n,m,-}(s),\hat{\xi}_{k,n,m,+}(s))$$ consistently estimate the distribution of $(\xi_{k,-}(s),\xi_{k,+}(s))$ and thereby that $\widehat{Q}_{k,T,n}^{(J)}(\alpha)$ yields a valid estimator for $Q_k^{(J)}(\alpha)$ on $\Omega_T^{(J)}$. Part (ii) is needed for the convergence of the volatility estimators $\hat{\sigma}_{n}(S_p,-)$, $\hat{\sigma}_{n}(S_p,+)$ for jump times $S_p$, and part (iii) guarantees that $\Phi_{k,T,n}^{(J)}$ converges under the alternative to a value different from $1$, which is the limit under the null hypothesis.

\begin{theorem}\label{test_theo_J}
If Condition \ref{cond_testproc_J} is fulfilled, the test defined in \eqref{critical_region_J} with
\begin{align}
\mathpzc{c}_{k,T,n}^{(J)}= \frac{\widehat{Q}_{k,T,n}^{(J)}(1-\alpha)}{\sqrt{n}kV(g,\pi_n)_T},~\alpha \in [0,1],
\end{align}
has asymptotic level $\alpha$ in the sense that we have
\begin{align}\label{test_theo_level}
\widetilde{\mathbb{P}}\big(\Phi_{k,T,n}^{(J)} > 1+\mathpzc{c}_{k,T,n}^{(J)} \big| F^{(J)} \big) \rightarrow \alpha 
\end{align}
for all $F^{(J)} \subset \Omega_T^{(J)}$ with $\mathbb{P}(F^{(J)})>0$. 

The test is consistent in the sense that we have
\begin{align}\label{test_theo_power}
\widetilde{\mathbb{P}}\big(\Phi_{k,T,n}^{(J)} > 1+\mathpzc{c}_{k,T,n}^{(J)} \big| F^{(C)} \big) \rightarrow 1 
\end{align}
for all $F^{(C)} \subset \Omega_T^{(C)}$ with $\mathbb{P}( F^{(C)})>0$.
\end{theorem}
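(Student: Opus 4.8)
The plan is to derive both assertions from the central limit theorem (Theorem~\ref{theo_clt_J}) on $\Omega_T^{(J)}$ and from the consistency result (Theorem~\ref{theo_cons_J}) on $\Omega_T^{(C)}$, with the main work being the proof that the bootstrapped quantiles $\widehat{Q}_{k,T,n}^{(J)}(\alpha)$ converge to the correct $\mathcal{X}$-conditional quantiles $Q_k^{(J)}(\alpha)$ of $F_{k,T}^{(J)}$ on $\Omega_T^{(J)}$. First I would write
\[
\Phi_{k,T,n}^{(J)} > 1 + \mathpzc{c}_{k,T,n}^{(J)} \iff \sqrt{n}\,k\,V(g,\pi_n)_T\big(\Phi_{k,T,n}^{(J)}-1\big) > \widehat{Q}_{k,T,n}^{(J)}(1-\alpha),
\]
and observe that the left-hand side equals $\sqrt{n}\big(V(g,[k],\pi_n)_T - kV(g,\pi_n)_T\big)$, which by the proof of Theorem~\ref{theo_clt_J} converges $\mathcal{X}$-stably on $\Omega_T^{(J)}$ to $F_{k,T}^{(J)}$. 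So the test rejects exactly when this asymptotically-$F_{k,T}^{(J)}$ quantity exceeds $\widehat{Q}_{k,T,n}^{(J)}(1-\alpha)$.

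The key intermediate step is a joint convergence statement: one needs that $\big(\sqrt n\,(V(g,[k],\pi_n)_T - kV(g,\pi_n)_T),\ \widehat{Q}_{k,T,n}^{(J)}(1-\alpha)\big)$ converges jointly (stably) to $\big(F_{k,T}^{(J)}, Q_k^{(J)}(1-\alpha)\big)$ on $\Omega_T^{(J)}$, where $Q_k^{(J)}(1-\alpha)$ is $\mathcal{X}$-measurable. For the second coordinate I would argue as follows: by Condition~\ref{cond_testproc_J}(i) the bootstrapped pairs $(\hat\xi_{k,n,m,-}(s_p),\hat\xi_{k,n,m,+}(s_p))$ reproduce, conditionally on $\mathcal{S}$, the law of the $(\xi_{k,-}(s_p),\xi_{k,+}(s_p))$ appearing in \eqref{def_FkT_J}; localizing the jump times $S_p$, using Condition~\ref{cond_testproc_J}(ii) to get $\hat\sigma_n(t_{i,n},\pm)^2 \to \sigma_{S_p\mp}^2$ (respectively $\sigma_{S_p}^2$) at detected-jump indices, and using the threshold $\mathds 1_{\{|\Delta_{i,n}X|>\beta|\mathcal I_{i,n}|^\varpi\}}$ to asymptotically pick out exactly the summands with $t_{i,n}$ near a jump time while discarding the continuous part, one gets that the empirical distribution of $\{\widehat F_{k,T,n,m}^{(J)} : m=1,\dots,M_n\}$ converges weakly (conditionally on $\mathcal X$, in probability) to the $\mathcal X$-conditional law of $F_{k,T}^{(J)}$. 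Since that conditional law is almost surely continuous on $\Omega_T^{(J)}$ (continuity of $\Gamma(s,\cdot)$-mixtures and $\int_0^T\Gamma(x,\{(0,0)\})dx=0$, plus the $U_{S_p}$ being normal and $\{\sigma_s=0\}$ a null set), the empirical $(1-\alpha)$-quantile converges to $Q_k^{(J)}(1-\alpha)$; this is a standard quantile-convergence fact (continuity of the quantile functional at a strictly increasing, continuous distribution function). Combining with $V(g,\pi_n)_T \overset{\mathbb P}\to B_T^{(J)}>0$ on $\Omega_T^{(J)}$ gives $\mathpzc{c}_{k,T,n}^{(J)} = \widehat Q^{(J)}_{k,T,n}(1-\alpha)/(\sqrt n\, k V(g,\pi_n)_T) \to 0$, and the joint stable convergence together with the continuous mapping theorem and the fact that the limit distribution function of $F_{k,T}^{(J)}/(kB_T^{(J)})$ has no atom at $Q_k^{(J)}(1-\alpha)/(kB_T^{(J)})$ yields \eqref{test_theo_level}. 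One has to be slightly careful that the conditioning set $F^{(J)}$ in \eqref{test_theo_level} is an arbitrary subset of $\Omega_T^{(J)}$ with positive probability, but since all convergences above are stable / conditional on $\mathcal X \supset \sigma(\mathds 1_{F^{(J)}})$ when $F^{(J)} \in \mathcal X$, and by a further approximation argument for general $F^{(J)}$, this follows.

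For the consistency statement \eqref{test_theo_power} on $\Omega_T^{(C)}$ the argument is much simpler: by Theorem~\ref{theo_cons_J} and Condition~\ref{cond_testproc_J}(iii) we have $\Phi_{k,T,n}^{(J)} \overset{\mathbb P}\to kC_{k,T}^{(J)}/C_T^{(J)} > 1$ on $\Omega_T^{(C)}$, while one needs only that $\mathpzc{c}_{k,T,n}^{(J)} \overset{\mathbb P}\to 0$ on $\Omega_T^{(C)}$; for this one shows $\widehat Q_{k,T,n}^{(J)}(1-\alpha)$ stays stochastically bounded (or at least is $o_{\mathbb P}(\sqrt n)$) — here on $\Omega_T^{(C)}$ there are no jumps, the threshold kills almost all increments, and $nV(g,\pi_n)_T$ is of order $C_T^{(J)}>0$ (since $nV(g,\pi_n)_T \overset{\mathbb P}\to C_T^{(J)}$ by the expansion used in Theorem~\ref{theo_cons_J}), so the denominator $\sqrt n\, k V(g,\pi_n)_T$ is of order $\sqrt n$ and dominates. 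Hence $1+\mathpzc{c}_{k,T,n}^{(J)} \to 1 < kC_{k,T}^{(J)}/C_T^{(J)}$ and the rejection probability tends to $1$.

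The hard part will be the quantile-consistency of the bootstrap, i.e.\ turning Condition~\ref{cond_testproc_J}(i) into genuine weak convergence (conditionally on $\mathcal X$) of the empirical distribution of the $\widehat F_{k,T,n,m}^{(J)}$ to the conditional law of $F_{k,T}^{(J)}$: one must control the interplay of three randomizations (the index shifts $V_{n,m}(s)$, the multipliers $U_{n,i,m}$, and the sampling scheme), show that the threshold exactly identifies the finitely many jumps in the limit while the many small ($O_{\mathbb P}(|\mathcal I_{i,n}|^{1/2})$) continuous increments contribute negligibly to $\widehat F_{k,T,n,m}^{(J)}$, and show that the localized volatility estimators $\hat\sigma_n(t_{i,n},\pm)$ converge at the detected-jump times — the last point being exactly where Condition~\ref{cond_testproc_J}(ii) enters. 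These are precisely the steps deferred to Sections~\ref{sec:proof_struc} and~\ref{sec:proof_detail}, so in the main text I would state the reduction above and cite the analogous arguments from \cite{MarVet17}.
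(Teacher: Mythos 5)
Your overall reduction matches the paper's: rewrite the rejection event as $\sqrt n\big(V(g,[k],\pi_n)_T-kV(g,\pi_n)_T\big)>\widehat Q^{(J)}_{k,T,n}(1-\alpha)$, invoke the CLT on $\Omega_T^{(J)}$, prove consistency of the bootstrapped quantile (the paper's Proposition~\ref{lemma_test_J} and the sandwich argument for \eqref{conv_Q_J}), and on $\Omega_T^{(C)}$ combine the consistency $\Phi^{(J)}_{k,T,n}\to kC^{(J)}_{k,T}/C^{(J)}_T>1$ (your citation of Condition~\ref{cond_testproc_J}(iii) here is the correct one; the paper's text has a typo) with $\mathpzc c^{(J)}_{k,T,n}\to 0$.

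There is, however, a genuine order-of-magnitude error in your power argument. On $\Omega_T^{(C)}$ the denominator in $\mathpzc c^{(J)}_{k,T,n}=\widehat Q^{(J)}_{k,T,n}(1-\alpha)/(\sqrt n\,kV(g,\pi_n)_T)$ is \emph{small}, not large: since $nV(g,\pi_n)_T\overset{\mathbb P}\to C^{(J)}_T$, we have $V(g,\pi_n)_T=O_\mathbb{P}(n^{-1})$ and hence $\sqrt n\,kV(g,\pi_n)_T=O_\mathbb{P}(n^{-1/2})$, not $O(\sqrt n)$ as you claim. Consequently, stochastic boundedness of $\widehat Q^{(J)}_{k,T,n}(1-\alpha)$, or even $o_\mathbb{P}(\sqrt n)$, is far from enough; you actually need $\widehat Q^{(J)}_{k,T,n}(1-\alpha)=o_{\widetilde{\mathbb P}}(n^{-1/2})$ on $\Omega_T^{(C)}$. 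Equivalently (as the paper does) one shows $\sqrt n\,\widehat Q^{(J)}_{k,T,n}(1-\alpha)=o_{\widetilde{\mathbb P}}(1)$ by bounding $\mathbb E[\sqrt n|\widehat F^{(J)}_{k,T,n,m}|\mathds 1_{\Omega_T^{(C)}}\mid\mathcal S]$, where the crucial input is that the threshold $\mathds 1_{\{|\Delta_{i,n}X|>\beta|\mathcal I_{i,n}|^\varpi\}}$ applied to a purely continuous path contributes a probability factor of order $|\mathcal I_{i,n}|^{p(1/2-\varpi)/4}$ with $p$ chosen large enough to beat the $\sqrt n$ in front. That estimate is nontrivial and is exactly the content of the paper's proof of \eqref{test_power_klein_o_J}; as stated, your sketch would not close this step.
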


Note that to carry out the test introduced in Theorem \ref{test_theo_J} the unobservable variable $n$ is not explicitly needed, even though $\sqrt n$ occurs in the definition of $\mathpzc{c}_{k,T,n}^{(J)}$.  This factor actually cancels as it also enters as a linear factor in $\widehat{Q}_{k,T,n}^{(J)}(1-\alpha)$. What remains is the dependence of $b_n$ and $L_n$ on $n$, though, but for these auxiliary variables only a rough idea of the magnitude of $n$ usually is sufficient. Similar observations hold for all tests constructed later on as well. 

The simulation results in Section \ref{sec:simul_J} show that the convergence in \eqref{test_theo_level} is rather slow, because certain terms in $\sqrt{n}(\Phi_{k,T,n}^{(J)}-1)$ which vanish in the limit contribute significantly in the small sample. Our goal is to diminish this effect by including estimates for those terms in the testing procedure. The asymptotically vanishing terms stem from the continuous part which is mostly captured in the small increments. To estimate their contribution we define 
\begin{multline*}
A_{k,T,n}^{(J)}
=n\sum_{i\geq k:t_{i,n}\leq T} (\Delta_{i,k,n} X)^4
 \mathds{1}_{\{|\Delta_{i,k,n} X\leq \beta |\mathcal{I}_{i,k,n}|^\varpi\}} 
\\- k n
\sum_{i\geq 1:t_{i,n}\leq T} (\Delta_{i,n} X)^4 \mathds{1}_{\{|\Delta_{i,n} X|\leq \beta |\mathcal{I}_{i,n}^{(l)}|^\varpi\}}.
\end{multline*}
using the same $\beta,\varpi$ as in \eqref{est_F_J}. It can be shown that $A_{k,T,n}^{(J)}$ is a consistent estimator for $k^2C_{k,T}^{(J)}-kC_{1,T}^{(J)}$. We then define for $\rho \in (0,1)$ the adjusted estimator
\begin{align*}
\widetilde{\Phi}_{k,T,n}^{(J)}(\rho)=\Phi_{k,T,n}^{(J)}-\rho\frac{n^{-1} A_{k,T,n}^{(J)}}{kV(g,\pi_n)_T}
\end{align*}
where we partially correct for the contribution of the asymptotically vanishing terms.

\begin{corollary}\label{test_cor_J} 
Let $\rho \in (0,1)$. If Condition \ref{cond_testproc_J} is fulfilled, it holds with the notation from Theorem \ref{test_theo_J} 
\begin{align}\label{test_cor_level_J}
\widetilde{\mathbb{P}}\big(\widetilde{\Phi}_{k,T,n}^{(J)}(\rho) > 1+\mathpzc{c}_{k,T,n}^{(J)} \big| F^{(J)} \big) \rightarrow \alpha 
\end{align}
for all $F^{(J)} \subset \Omega_T^{(J)}$ with $\mathbb{P}(F^{(J)})>0$ and
\begin{align}\label{test_cor_power_J}
\widetilde{\mathbb{P}}\big(\widetilde{\Phi}_{k,T,n}^{(J)}(\rho) > 1+\mathpzc{c}_{k,T,n}^{(J)} \big| F^{(C)} \big) \rightarrow 1 
\end{align}
for all $F^{(C)} \subset \Omega_T^{(C)}$ with $\mathbb{P}(F^{(C)})>0$. 
\end{corollary}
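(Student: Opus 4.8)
The plan is to regard $\widetilde{\Phi}_{k,T,n}^{(J)}(\rho)=\Phi_{k,T,n}^{(J)}-\rho\,n^{-1}A_{k,T,n}^{(J)}/(kV(g,\pi_n)_T)$ as a perturbation of $\Phi_{k,T,n}^{(J)}$ and to deduce both claims from Theorem \ref{test_theo_J}. The one preliminary fact needed is the convergence announced before the corollary, namely
\[
A_{k,T,n}^{(J)}\ \overset{\mathbb{P}}{\longrightarrow}\ k^2C_{k,T}^{(J)}-kC_{1,T}^{(J)}
\]
on all of $\Omega$, where $C_{1,T}^{(J)}=C_T^{(J)}$. This is a routine truncated-power-variation estimate: with $\varpi\in(0,1/2)$ the threshold $\beta|\mathcal{I}_{i,k,n}|^{\varpi}$ asymptotically removes precisely the increments straddling a jump of $X$, so that after truncation only the contribution of the continuous martingale part survives; using Condition \ref{cond_cons_J}(ii), the definitions of $G_n,G_{k,n}$ and $\mathbb{E}[(\Delta W)^4]=3(\Delta t)^2$, one gets that the first summand of $A_{k,T,n}^{(J)}$ converges to $k^2C_{k,T}^{(J)}$ and the second to $kC_{1,T}^{(J)}$, and subtraction gives the claim. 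In particular $A_{k,T,n}^{(J)}=O_{\mathbb{P}}(1)$, and by Remark \ref{remark_limit_in_1_k} its limit $k(kC_{k,T}^{(J)}-C_T^{(J)})$ is nonnegative, and strictly positive on $\Omega_T^{(C)}$ by Condition \ref{cond_testproc_J}(iii).

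To prove \eqref{test_cor_level_J}, write
\[
\sqrt{n}\bigl(\widetilde{\Phi}_{k,T,n}^{(J)}(\rho)-1\bigr)=\sqrt{n}\bigl(\Phi_{k,T,n}^{(J)}-1\bigr)-\frac{\rho\,A_{k,T,n}^{(J)}}{\sqrt{n}\,kV(g,\pi_n)_T}.
\]
On $\Omega_T^{(J)}$ we have $V(g,\pi_n)_T\overset{\mathbb{P}}{\longrightarrow}B_T^{(J)}>0$, so the second term is of order $O_{\mathbb{P}}(1)/\sqrt{n}$ divided by a quantity bounded away from $0$, hence $o_{\mathbb{P}}(1)$. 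Since adding an $o_{\mathbb{P}}(1)$ term does not affect $\mathcal{X}$-stable convergence, Theorem \ref{theo_clt_J} yields that $\sqrt{n}(\widetilde{\Phi}_{k,T,n}^{(J)}(\rho)-1)$ converges $\mathcal{X}$-stably on $\Omega_T^{(J)}$ to $F_{k,T}^{(J)}/(kB_T^{(J)})$, the same limit as $\sqrt{n}(\Phi_{k,T,n}^{(J)}-1)$. From here the argument runs exactly as in the proof of Theorem \ref{test_theo_J}: the rejection event equals $\{\sqrt{n}(\widetilde{\Phi}_{k,T,n}^{(J)}(\rho)-1)>\widehat{Q}_{k,T,n}^{(J)}(1-\alpha)/(kV(g,\pi_n)_T)\}$, the threshold converges on $\Omega_T^{(J)}$ to the $\mathcal{X}$-measurable limit $Q_k^{(J)}(1-\alpha)/(kB_T^{(J)})$ (consistency of $\widehat{Q}_{k,T,n}^{(J)}$ being established there), and the almost sure continuity of the $\mathcal{X}$-conditional law of $F_{k,T}^{(J)}$ under Condition \ref{cond_testproc_J} turns the stable convergence into $\widetilde{\mathbb{P}}(\widetilde{\Phi}_{k,T,n}^{(J)}(\rho)>1+\mathpzc{c}_{k,T,n}^{(J)}\mid\mathcal{X})\mathds{1}_{\Omega_T^{(J)}}\overset{\mathbb{P}}{\longrightarrow}\alpha\mathds{1}_{\Omega_T^{(J)}}$, which implies \eqref{test_cor_level_J}.

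To prove \eqref{test_cor_power_J}, we work at the scale of the statistic itself. By Theorem \ref{theo_cons_J}, $\Phi_{k,T,n}^{(J)}\overset{\mathbb{P}}{\longrightarrow}L:=kC_{k,T}^{(J)}/C_T^{(J)}$ on $\Omega_T^{(C)}$, and since $nV(g,\pi_n)_T\overset{\mathbb{P}}{\longrightarrow}C_T^{(J)}>0$ there, the preliminary step gives
\[
\frac{n^{-1}A_{k,T,n}^{(J)}}{kV(g,\pi_n)_T}=\frac{A_{k,T,n}^{(J)}}{n\,kV(g,\pi_n)_T}\ \overset{\mathbb{P}}{\longrightarrow}\ \frac{k^2C_{k,T}^{(J)}-kC_{1,T}^{(J)}}{kC_T^{(J)}}=L-1,
\]
whence $\widetilde{\Phi}_{k,T,n}^{(J)}(\rho)\overset{\mathbb{P}}{\longrightarrow}L-\rho(L-1)=(1-\rho)L+\rho>1$, using $L>1$ (Condition \ref{cond_testproc_J}(iii)) and $\rho\in(0,1)$. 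As $\mathpzc{c}_{k,T,n}^{(J)}$ does not depend on the correction and satisfies $\mathpzc{c}_{k,T,n}^{(J)}\overset{\mathbb{P}}{\longrightarrow}0$ on $\Omega_T^{(C)}$ (shown in the consistency part of the proof of Theorem \ref{test_theo_J}, where in fact $\widehat{Q}_{k,T,n}^{(J)}(1-\alpha)$ vanishes with probability tending to $1$ because no increment exceeds its threshold on a continuous path), the rejection probability tends to $1$. The only step requiring genuine work is the preliminary convergence of $A_{k,T,n}^{(J)}$; granting it, the corollary is a short perturbation of Theorem \ref{test_theo_J}.
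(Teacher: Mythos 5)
Your proposal is correct and follows essentially the same route as the paper's proof: both rest on the consistency $A_{k,T,n}^{(J)}\overset{\mathbb{P}}{\longrightarrow}k^2C_{k,T}^{(J)}-kC_T^{(J)}$, transfer the stable CLT to $\widetilde\Phi$ on $\Omega_T^{(J)}$ by noting the correction term is $o_{\mathbb{P}}(n^{-1/2})$ there, and on $\Omega_T^{(C)}$ deduce $\widetilde\Phi\to(1-\rho)\tfrac{kC_{k,T}^{(J)}}{C_T^{(J)}}+\rho>1$ while $\mathpzc{c}_{k,T,n}^{(J)}\to 0$. The only cosmetic difference is that the paper cites \cite{MarVet17} for the convergence of $A_{k,T,n}^{(J)}$ rather than sketching it, and establishes $\mathpzc{c}_{k,T,n}^{(J)}=o_{\widetilde{\mathbb{P}}}(1)$ via a conditional-expectation bound rather than your parenthetical "no increment exceeds its threshold" heuristic — but the conclusion invoked is identical.
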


The closer $\rho$ is to $1$ the faster is the convergence in \eqref{test_cor_level_J}, but also the slower is the convergence in \eqref{test_cor_power_J}. Hence an optimal $\rho$ should be chosen somewhere in between. Our simulation results in Section \ref{sec:simul_J} show that it is possible to pick a $\rho$ very close to $1$ without significantly worsening the power compared to the test from Theorem \ref{test_theo_J}.

\begin{example}\label{example_poiss_test_1d}
The assumptions on the observation scheme in Condition \ref{cond_testproc_J} are fulfilled in the Poisson setting. That part (iii) is fulfilled has been shown in Example \ref{example_poisson1} and that part (i) is fulfilled is proved in Section \ref{sec:proof_poiss_test}.
\end{example}

In fact for our testing procedure to work in the Poisson setting we do not need the weighting from \eqref{weights_testing}. All intervals could also be picked with equal probability. This is due to the fact that the interval lengths $(n|\mathcal{I}_{i_n(s)+V_{n,m}(s)+j,n}|)_{j=-(k-1),\ldots,-1,1,\ldots,k-1}$ and hence $\xi_{k,n,m,-}(s)$, $\xi_{k,n,m,+}(s)$ are (asymptotically) independent of ${n|\mathcal{I}_{i_n(s)+V_{n,m}(s),n}|}$. However the weighting is important if the interval lengths of consecutive intervals are dependent as illustrated in the following example.

\begin{example}\label{example_alpha_test_1d}
Define an observation scheme by $t_{2i,n}=2i/n$ and $t_{2i+1,n}=(2i+1+\alpha)/n$, $i \in \mathbb{N}_0$, with $\alpha \in (0,1)$ (compare Example 33 in \cite{BibVet15}). Let us consider the case $k=2$. The observation scheme is illustrated in Figure \ref{figure_example_alpha}. It can be easily checked that Condition \ref{cond_cons_J} holds with $G(t)=(1+\alpha^2)t$ and $G_2(t)=t$. Further it can be shown similarly as in \cite{BibVet15} that Condition \ref{cond_clt_J} is fulfilled for $\Gamma$ defined via
\begin{align*}
\Gamma(s,\{(1+\alpha,1+\alpha)\})=\frac{1-\alpha}{2}, \quad \Gamma(s,\{(1-\alpha,1-\alpha)\})=\frac{1+\alpha}{2}
\end{align*}
for all $s>0$. Hence in order for the distribution of $\hat{\xi}_{k,n,1}(s)$ to approximate $\Gamma(s,\cdot)$ the variable $i_n(s)+V_{n,m}(s)$ has to pick the intervals of length $(1+\alpha)/n$ with higher probability than those with length $(1-\alpha)/n$, because it holds
\begin{align*}
&n|\mathcal{I}_{i_n(s)+V_{n,m}(s),n}|=1+\alpha \Rightarrow \hat{\xi}_{2,n,m,-}(s)=\hat{\xi}_{2,n,m,+}(s)=1-\alpha,
\\&n|\mathcal{I}_{i_n(s)+V_{n,m}(s),n}|=1-\alpha \Rightarrow \hat{\xi}_{2,n,m,-}(s)=\hat{\xi}_{2,n,m,+}(s)=1+\alpha.
\end{align*}
\end{example}

\begin{figure}[bt]
\centering
\hspace{-0.6cm}
\begin{tikzpicture}
\draw[dashed] (2,-0.2) -- (2,0.2)
			(6,-0.2) -- (6,0.2)
			(10,-0.2) -- (10,0.2);
\draw (0,0) -- (12.5,0)
      (0,-0.3) -- (0,0.3)
      (4,-0.3) -- (4,0.3)
      (8,-0.3) -- (8,0.3)
      (12,-0.3) -- (12,0.3)
      (2.3,-0.3) -- (2.3,0.3)
      (6.3,-0.3) -- (6.3,0.3)
      (10.3,-0.3) -- (10.3,0.3);
      \draw	(2,0.3) node{\tiny $1/n$}
		(6,0.3) node{\tiny $3/n$}
		(10,0.3) node{\tiny $5/n$};
\draw	(0,-0.6) node{$t_{0,n}=0$}
		(2.3,-0.6) node{$t_{1,n}$}
		(4,-0.6) node{$t_{2,n}=2/n$}
		(6.3,-0.6) node{$t_{3,n}$}
		(8,-0.6) node{$t_{4,n}=4/n$}
		(10.3,-0.6) node{$t_{5,n}$}
		(12,-0.6) node{$t_{6,n}=6/n$};
\draw[decorate,decoration={brace,amplitude=12pt}]
	(0,0.35)--(2.3,0.35) node[midway, above,yshift=10pt,]{$(1+\alpha)/n$};
\draw[decorate,decoration={brace,amplitude=12pt}]
	(2.3,0.35)--(4,0.35) node[midway, above,yshift=10pt,]{$(1-\alpha)/n$};
	\draw[decorate,decoration={brace,amplitude=12pt}]
	(4,0.35)--(6.3,0.35) node[midway, above,yshift=10pt,]{$(1+\alpha)/n$};
	\draw[decorate,decoration={brace,amplitude=12pt}]
	(6.3,0.35)--(8,0.35) node[midway, above,yshift=10pt,]{$(1-\alpha)/n$};
	\draw[decorate,decoration={brace,amplitude=12pt}]
	(8,0.35)--(10.3,0.35) node[midway, above,yshift=10pt,]{$(1+\alpha)/n$};
	\draw[decorate,decoration={brace,amplitude=12pt}]
	(10.3,0.35)--(12,0.35) node[midway, above,yshift=10pt,]{$(1-\alpha)/n$};
\end{tikzpicture}
\caption{The sampling scheme from Example \ref{example_alpha_test_1d}}\label{figure_example_alpha}
\end{figure}
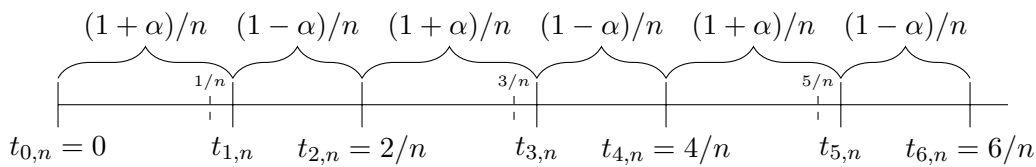

\FloatBarrier

\section{A test for the absence of common jumps}\label{Test_CoJ}
\def\theequation{3.\arabic{equation}}
\setcounter{equation}{0}

In this section we will derive a statistical test based on high-frequency observations which allows to decide whether two processes jump at a common time or not. The methods we use are similar to those in Section \ref{Test_J}. However, the form of the occuring variables and the proofs will be different because of special effects due to the asynchronicity of the data.

\subsection{Settings and test statistic}\label{sec:setting_CoJ}

As a model for the stochastic process we again consider an It\^o semimartingale $X =(X^{(1)},X^{(2)})^*$, which is now two-dimensional, defined on the probability space $(\Omega,\mathcal{F},\mathbb{P})$ and of the form
\begin{multline}
\label{ItoSemimart_2d}
X_t = X_0 + \int \limits_0^t b_s ds + \int \limits_0^t \sigma_s dW_s + \int \limits_0^t \int \limits_{\R^2} \delta(s,z)\mathds{1}_{\{\|\delta(s,z)\|\leq 1\}} (\mu - \nu)(ds,dz) \\
+ \int \limits_0^t \int \limits_{\R^2} \delta(s,z) \mathds{1}_{\{\|\delta(s,z)\|> 1\}} \mu(ds,dz),
\end{multline}
where $W=(W^{(1)},W^{(2)})^*$ is a two-dimensional standard Brownian motion and $\mu$ is a Poisson random measure on $\mathbb{R}^+ \times\mathbb{R}^2$ whose predictable compensator satisfies \mbox{$\nu(ds,dz)=ds \otimes \lambda(dz)$} for some $\sigma$-finite measure $\lambda$ on $\mathbb{R}^2$ endowed with the Borelian $\sigma$-algebra. $b$ is a two-dimensional adapted process,
\begin{align}\label{sigma_matrix}
\sigma_s=\left(\begin{array}{cc}
 \sigma_s^{(1)}& 0
 \\ \rho_s \sigma_s^{(2)} & \sqrt{1-\rho_s^2} \sigma_s^{(2)}
\end{array} \right)
\end{align}
is a $(2 \times 2)$-dimensional process and $\delta$ is a two-dimensional predictable process on $\Omega \times \mathbb{R}^+ \times \mathbb{R}^2$. $\sigma_s^{(1)},\sigma_s^{(2)} \geq 0$ and $\rho_s \in [-1,1]$ are all univariate adapted.

The observation scheme 
\begin{align}\label{obs_sch_2d}
\pi_n=\big\{(t_{i,n}^{(1)})_{i \in \mathbb{N}_0},(t_{i,n}^{(2)})_{i \in \mathbb{N}_0} \big\},~n \in \mathbb{N}, 
\end{align}
here consists of two (in general different) increasing sequences of stopping times $(t_{i,n}^{(l)})_{i \in \mathbb{N}_0}$, ${l=1,2}$, with $t_{0,n}^{(l)}=0$. By
\[
|\pi_n|_T=\sup \big\{t_{i,n}^{(l)} \wedge T -t_{i-1,n}^{(l)} \wedge T\big|i\geq 1,l=1,2 \big\}
\]
we again denote the mesh of the observation times up to $T \geq 0$.

Formally we will develop a statistical test which allows to decide to which of the following two subsets of $\Omega$ the $\omega$ which generated the observed path $t \mapsto X_t(\omega)$ belongs
\begin{align*}
&\Omega^{(CoJ)}_T=\big\{\exists t \in [0,T]:\Delta X^{(1)}_t \Delta X^{(2)}_t \neq 0 \big\},
\\&\Omega^{(nCoJ)}_T=\big(\Omega_T^{(CoJ)}\big)^c.
\end{align*}

We denote the observation intervals of $X^{(l)}$ by $\mathcal{I}_{i,k,n}^{(l)}=(t_{i-k,n}^{(l)}, t_{i,n}^{(l)}],~i\geq k\geq 1$, $l=1,2$, $\mathcal{I}^{(l)}_{i,n}=\mathcal{I}^{(l)}_{i,1,n}$, and by
\begin{align*}
\Delta_{i,k,n}^{(l)}X^{(l)}=X_{t_{i,n}}^{(l)}-X_{t_{i-k,n}}^{(l)}, \quad\Delta_{i,n}^{(l)}X^{(l)}=\Delta_{i,1,n}^{(l)}X^{(l)}
\end{align*}
the increments of $X^{(l)}$ over those intervals. As in Section \ref{sec:setting_J} we set $\mathcal{I}_{i,k,n}^{(l)}= \emptyset$ and $\Delta_{i,k,n}^{(l)}X^{(l)}=0$ for $k<i$.

Further we define the following functionals for $k \in \mathbb{N}$ and $h:\mathbb{R}^2 \rightarrow \mathbb{R}$
\begin{align*}
&V(h,[k],\pi_n)_T=\sum_{i,j\geq k:t_{i,n}^{(1)} \wedge t_{j,n}^{(2)} \leq T} h(\Delta_{i,k,n}^{(1)} X^{(1)},\Delta_{j,k,n}^{(2)} X^{(2)})\mathds{1}_{\{\mathcal{I}_{i,k,n}^{(1)} \cap \mathcal{I}_{j,k,n}^{(2)} \neq \emptyset\}},
\\&V(h,\pi_n)_T=V(h,[1],\pi_n)_T
\end{align*}
which are generalizations of the famous estimator for the quadratic covariation based on asynchronous observations from \cite{hayyos05}. Computing these functionals for $f:(x_1,x_2) \mapsto (x_1x_2)^2$ we build the statistic
\begin{align*}
\Phi_{k,T,n}^{(CoJ)}=\frac{V(f,[k],\pi_n)_T}{k^2 V(f,\pi_n)_T}
\end{align*}
whose asymptotic behaviour for $k \geq 2$ is the foundation for the upcoming statistical test. 

By
\begin{align*}
B_T^{(CoJ)}=\sum_{t \leq T}(\Delta X^{(1)}_t )^2(\Delta X^{(2)}_t )^2
\end{align*}
we denote the sum of the squared co-jumps of $X^{(1)}$ and $X^{(2)}$.

\begin{remark}\label{remark_equi_which_k_2d}
In the setting of equidistant observation times $t_{i,n}^{(l)}=i/n$, $l=1,2$, the statistic $\Phi_{k,T,n}^{(CoJ)}$ is equal to
\begin{align}\label{remark_equi_allall_k_2d}
\frac{\sum_{i,j=k}^{\lfloor nT\rfloor} (X^{(1)}_{i/n}-X^{(1)}_{(i-k)/n})^2(X^{(2)}_{j/n}-X^{(2)}_{(j-k)/n})^2\mathds{1}_{\{((i-k)/n,n]\cap((j-k)/n,j/n] \neq \emptyset\}}}{k^2\sum_{i=1}^{\lfloor nT\rfloor} (X^{(1)}_{i/n}-X^{(1)}_{(i-1)/n})^2(X^{(2)}_{i/n}-X^{(2)}_{(i-1)/n})^2}.
\end{align}
In \cite{JacTod09} a test for common jumps is constructed based on the statistic
\begin{align}\label{remark_equi_spec_k_2d}
\frac{\sum_{i=1}^{\lfloor nT/k\rfloor} (X^{(1)}_{ki/n}-X^{(1)}_{k(i-1)/n})^2(X^{(2)}_{ki/n}-X^{(2)}_{k(i-1)/n})^2}{\sum_{i=1}^{\lfloor nT\rfloor} (X^{(1)}_{i/n}-X^{(1)}_{(i-1)/n})^2(X^{(2)}_{i/n}-X^{(2)}_{(i-1)/n})^2}
\end{align}
where at the lower observation frequency $n/k$ only increments over the intervals $\mathcal{I}_{ki,k,n}^{(l)}$, $l=1,2$, enter the estimation. Further in Section 14.1 of \cite{AitJac14} a test for common jumps based on 
\begin{align}\label{remark_equi_all_k_2d}
\frac{\sum_{i=k}^{\lfloor nT\rfloor} (X^{(1)}_{i/n}-X^{(1)}_{(i-k)/n})^2(X^{(2)}_{i/n}-X^{(2)}_{(i-k)/n})^2}{k\sum_{i=1}^{\lfloor nT\rfloor} (X^{(1)}_{i/n}-X^{(1)}_{(i-1)/n})^2(X^{(2)}_{i/n}-X^{(2)}_{(i-1)/n})^2}
\end{align}
is discussed. As argued in Remark \ref{remark_equi_which_k_1d} it seems advantegeous to use the statistic \eqref{remark_equi_all_k_2d} over \eqref{remark_equi_spec_k_2d}. However, in the asynchronous setting it is a priori not clear which observation intervals should be best paired, because there is no one-to-one correspondence of observation intervals in one process to observation intervals in the other process as there is in the synchronous situation. To use the available data as exhaustively as possible we therefore decided to include products of squared increments over all overlapping observation intervals at the lower observation frequency in the numerator of $\Phi_{k,T,n}^{(CoJ)}$. 
\end{remark}

\subsection{Consistency}\label{sec:cons_CoJ}
In this section we investigate under which conditions $\Phi^{(CoJ)}_{k,T,n}$ converges to a certain limit. The following structural assumptions which are similar to those in Condition \ref{cond_cons_J} are needed to obtain an asymptotic result for $\Phi^{(CoJ)}_{k,T,n}$ on $\Omega^{(CoJ)}_T$.

\begin{condition}\label{cond_cons_CoJ}
The process $b_s$ is locally bounded and the processes $\sigma_s^{(1)},\sigma_s^{(2)},\rho_s$ are c\`adl\`ag. Furthermore, there exists a locally bounded process $\Gamma_s$ and a deterministic bounded function $\gamma$ which satisfies ${\int ( \gamma(z)^2 \wedge 1)\lambda(dz) < \infty}$ such that ${\|\delta(\omega,s,z)\| \leq \Gamma_s(\omega) \gamma(z)}$. The sequence of observation schemes $(\pi_n)_n$ fulfills
\[
|\pi_n|_T \overset{\mathbb{P}}{\longrightarrow} 0.
\]
\end{condition}

\begin{theorem}\label{theo_cons_CoJ1}
Under Condition \ref{cond_cons_CoJ} we have on $\Omega_T^{(CoJ)}$
\begin{align}\label{theo_cons_CoJ1_conv}
\Phi_{k,T,n}^{(CoJ)} \overset{\mathbb{P}}{\longrightarrow} 1.
\end{align}
\end{theorem}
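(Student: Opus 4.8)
The plan is to proceed as in the proof of Theorem~\ref{theo_cons_J} and establish the two limits $V(f,\pi_n)_T \overset{\mathbb{P}}{\longrightarrow} B_T^{(CoJ)}$ and $V(f,[k],\pi_n)_T \overset{\mathbb{P}}{\longrightarrow} k^2 B_T^{(CoJ)}$ separately. On $\Omega_T^{(CoJ)}$ there is at least one $S_p\le T$ with $\Delta X^{(1)}_{S_p}\Delta X^{(2)}_{S_p}\neq0$, so $B_T^{(CoJ)}>0$ there, and dividing gives $\Phi_{k,T,n}^{(CoJ)}=V(f,[k],\pi_n)_T/(k^2 V(f,\pi_n)_T)\overset{\mathbb{P}}{\longrightarrow} 1$ on $\Omega_T^{(CoJ)}$. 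A standard localization argument (see, e.g., \cite{JacPro12}) lets us assume $b,\sigma^{(1)},\sigma^{(2)},\rho,\Gamma$ bounded and $\|\delta(\omega,s,z)\|\le\gamma(z)$; in particular $\sum_{s\le T}\|\Delta X_s\|^2<\infty$ and $B_T^{(CoJ)}<\infty$ almost surely. For $\eps>0$ I would split $X^{(l)}=X(\eps)^{(l)}+B(\eps)^{(l)}$ with $B(\eps)^{(l)}_t=\sum_{s\le t}\Delta X^{(l)}_s\ind_{\{\|\Delta X_s\|>\eps\}}$, so $B(\eps)^{(l)}$ has finitely many jumps on $[0,T]$ and $X(\eps)^{(l)}$ has jumps bounded by $\eps$.

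Next I isolate the leading term. Since the exogenous observation times almost surely avoid the jump times of $X$ and, for $n$ large, no interval $\mathcal{I}_{i,k,n}^{(l)}$ contains two of the finitely many big-jump times, each time $S_p\le T$ with $\|\Delta X_{S_p}\|>\eps$ at which \emph{both} components jump lies in exactly $k$ of the intervals $\mathcal{I}_{i,k,n}^{(1)}$ and exactly $k$ of the intervals $\mathcal{I}_{j,k,n}^{(2)}$. For each of the $k^2$ resulting index pairs both intervals contain $S_p$, so $\mathcal{I}_{i,k,n}^{(1)}\cap\mathcal{I}_{j,k,n}^{(2)}\neq\emptyset$ and the indicator equals $1$; moreover $\Delta_{i,k,n}^{(1)}X^{(1)}=\Delta X^{(1)}_{S_p}+\Delta_{i,k,n}^{(1)}X(\eps)^{(1)}$ with the latter tending to $0$ because $X(\eps)^{(1)}$ is continuous at $S_p$ and the intervals shrink, and likewise for the second component. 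Hence the contribution of these matched pairs to $V(f,[k],\pi_n)_T$ converges in probability to $k^2\sum_p(\Delta X^{(1)}_{S_p})^2(\Delta X^{(2)}_{S_p})^2\ind_{\{\|\Delta X_{S_p}\|>\eps,\,\Delta X^{(1)}_{S_p}\Delta X^{(2)}_{S_p}\neq0\}}$, which increases to $k^2 B_T^{(CoJ)}$ as $\eps\downarrow0$; the case $k=1$ gives the analogous statement for $V(f,\pi_n)_T$.

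The remaining terms are where the work lies. Expanding $(\Delta^{(1)}X^{(1)})^2(\Delta^{(2)}X^{(2)})^2$ through $X^{(l)}=X(\eps)^{(l)}+B(\eps)^{(l)}$ leaves, besides the matched pairs, (a) ``big--big'' pairs whose two big jumps sit at \emph{different} times, (b) pairs carrying a big jump in exactly one of the two components, (c) pairs with no big jump at all, together with the cross terms from squaring $\Delta X(\eps)^{(l)}+\Delta B(\eps)^{(l)}$. Term (a) vanishes for $n$ large: big-jump times are finitely many and isolated while $|\pi_n|_T\to0$, so no interval of $X^{(1)}$ straddling one of them can overlap an interval of $X^{(2)}$ straddling a different one. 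For (c) and (b) the key estimate is that, using \eqref{elementary_estimate} to dominate width-$k$ increments by width-$1$ ones, $\Exp\big[\sum_{j:\mathcal{I}_{j,k,n}^{(2)}\subset J}(\Delta_{j,k,n}^{(2)}X(\eps)^{(2)})^2\big]\le C|J|$ with $C$ independent of $\eps$ and $n$ (and symmetrically for $X^{(1)}$), while $\max_i|\Delta_{i,k,n}^{(1)}X(\eps)^{(1)}|\to0$ by the modulus of continuity of $X(\eps)^{(1)}$ over shrinking intervals. Bounding (c) by $\big(\max_i(\Delta_{i,k,n}^{(1)}X(\eps)^{(1)})^2\big)\sum_{(i,j)}(\Delta_{j,k,n}^{(2)}X(\eps)^{(2)})^2\ind_{\{\mathcal{I}_{i,k,n}^{(1)}\cap\mathcal{I}_{j,k,n}^{(2)}\neq\emptyset\}}$ and noting that all $X^{(2)}$-intervals overlapping a fixed $X^{(1)}$-interval lie inside an enlargement of it of length $O(|\pi_n|_T)$ gives $o_{\mathbb{P}}(1)\cdot O_{\mathbb{P}}(1)$; for (b) the big jump pins one index to finitely many values, the corresponding increment of $B(\eps)^{(l)}$ is bounded, and the sum over overlapping intervals of the other component is $O_{\mathbb{P}}(|\pi_n|_T)$ by the same enlarged-interval bound, so again $o_{\mathbb{P}}(1)$; the cross terms follow from one application of the Cauchy--Schwarz inequality against the quantities just controlled. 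Letting first $n\to\infty$ and then $\eps\downarrow0$ by the usual argument yields the two desired limits.

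I expect the main obstacle to be precisely this last bookkeeping. In the asynchronous setting there is no pairing between the two observation grids, so a single interval of one component may overlap many intervals of the other, and --- unlike in the synchronous/equidistant situation treated in \cite{JacTod09} and \cite{AitJac14} --- the negligibility estimates cannot be carried out interval by interval but must be organised through sums of squared increments over a common enlarged interval, after which the argument runs parallel to the one-dimensional proof of Theorem~\ref{theo_cons_J}.
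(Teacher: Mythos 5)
Your plan is genuinely different from the paper's. The paper's proof is a one-liner: it rewrites
\[
V(f,[k],\pi_n)_T=\sum_{l_1,l_2=0}^{k-1}\;\sum_{i,j} f\big(\Delta_{ki+l_1,k,n}^{(1)}X^{(1)},\Delta_{kj+l_2,k,n}^{(2)}X^{(2)}\big)\mathds{1}_{\{\mathcal{I}_{ki+l_1,k,n}^{(1)}\cap\mathcal{I}_{kj+l_2,k,n}^{(2)}\neq\emptyset\}},
\]
observes that each inner sum is exactly the one-lag Hayashi--Yoshida functional $V(f,\cdot)_T$ over the subsampled grids $(t^{(1)}_{ki+l_1,n})_i$ and $(t^{(2)}_{kj+l_2,n})_j$, and applies the already-proven statement $V(f,\pi_n)_T\overset{\mathbb{P}}{\longrightarrow}B_T^{(CoJ)}$ from (2.2) of \cite{MarVet17} to each of the $k^2$ pieces. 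All the heavy lifting is subcontracted to that earlier result. You instead try to prove both the $k=1$ convergence and the $k\geq2$ convergence from scratch by isolating the large co-jumps and bounding the remainder directly. That is a sensible alternative route, but there is a genuine gap in the negligibility argument.

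The problem is your bound for term (c): you dominate it by
$\big(\max_i(\Delta_{i,k,n}^{(1)}X(\eps)^{(1)})^2\big)\sum_{(i,j)}(\Delta_{j,k,n}^{(2)}X(\eps)^{(2)})^2\,\mathds{1}_{\{\mathcal{I}_{i,k,n}^{(1)}\cap\mathcal{I}_{j,k,n}^{(2)}\neq\emptyset\}}$
and declare this $o_{\mathbb{P}}(1)\cdot O_{\mathbb{P}}(1)$. Neither factor behaves the way you claim. First, $X(\eps)^{(1)}$ still carries jumps of size up to $\eps$, so $\max_i|\Delta_{i,k,n}^{(1)}X(\eps)^{(1)}|$ does \emph{not} go to zero as $n\to\infty$; its $\limsup$ is only $\leq\eps$, and you can only remove it at the very end with $\eps\downarrow0$. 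Second, and more seriously, the sum is not $O_{\mathbb{P}}(1)$ under Condition~\ref{cond_cons_CoJ} alone. Writing it as $\sum_j N_j(\Delta_{j,k,n}^{(2)}X(\eps)^{(2)})^2$ with $N_j=|\{i:\mathcal{I}_{i,k,n}^{(1)}\cap\mathcal{I}_{j,k,n}^{(2)}\neq\emptyset\}|$, the multiplicity $N_j$ is unbounded when the $X^{(1)}$-grid is locally much finer than the $X^{(2)}$-grid --- Condition~\ref{cond_cons_CoJ} only says $|\pi_n|_T\to0$ and puts no balance constraint on the two grids. For instance, $t^{(1)}_{i,n}=i/n^2$ and $t^{(2)}_{j,n}=j/n$ give $N_j\asymp n$, so the sum scales like $n$ and the product $\max\cdot\sum$ does not vanish for fixed $\eps$.

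The correct control of (c) requires decomposing each increment over the overlapping and non-overlapping parts of the pair $(\mathcal{I}_{i,k,n}^{(1)},\mathcal{I}_{j,k,n}^{(2)})$ and exploiting conditional orthogonality of martingale increments. The diagonal piece, living on $\mathcal{I}_{i,k,n}^{(1)}\cap\mathcal{I}_{j,k,n}^{(2)}$, has $\mathcal{S}$-conditional fourth moment of order $e_q|\mathcal{I}_{i,k,n}^{(1)}\cap\mathcal{I}_{j,k,n}^{(2)}|+K|\mathcal{I}_{i,k,n}^{(1)}\cap\mathcal{I}_{j,k,n}^{(2)}|^2$ (small-jump constant $e_q\to0$ plus Brownian part); summing over $(i,j)$ and using $\sum_{i,j}|\mathcal{I}_{i,k,n}^{(1)}\cap\mathcal{I}_{j,k,n}^{(2)}|\leq k^2(T+k|\pi_n|_T)$ yields something of size $O(e_q)+O(|\pi_n|_T)$. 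The off-diagonal pieces sit on disjoint sub-intervals, so iterated conditioning factors the conditional expectation into a product of lengths, and $\sum_{i,j}|\mathcal{I}_{i,k,n}^{(1)}||\mathcal{I}_{j,k,n}^{(2)}|\mathds{1}_{\{\ldots\}}=O(|\pi_n|_T)$ finishes those off. This is exactly the mechanism in Lemma~\ref{lemma_hayyos_L2_mart} and in the proof of (2.2) in \cite{MarVet17}, and exactly what the paper's decomposition-plus-citation is designed to avoid re-doing. So either adopt the paper's $k^2$-subsampled-grid reduction, or replace your $\max\cdot\sum$ shortcut by the overlap/non-overlap splitting.
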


As in Section \ref{sec:clt_J} the asymptotic behaviour of $\Phi^{(CoJ)}_{k,T,n}$ on $\Omega^{(nCoJ)}_T$ is more complicated than on $\Omega^{(CoJ)}_T$. First we introduce the following functions
\begin{align*}
&\widetilde{G}_{k',n}(t)=\frac{n}{(k')^3}\sum_{i,j\geq k':t_{ i,n}^{(1)} \wedge t_{ j,n}^{(2)} \leq t} |\mathcal{I}_{ i,k',n}^{(1)}\cap \mathcal{I}_{ j,k',n}^{(2)} |^2, 
\\ &H_{k',n}(t)=\frac{n}{(k')^3}\sum_{i,j\geq k':t_{ i,n}^{(1)} \wedge t_{ j,n}^{(2)} \leq t} |\mathcal{I}_{ i,k',n}^{(1)}|| \mathcal{I}_{ j,k',n}^{(2)} | 
\mathds{1}_{\{\mathcal{I}_{i,k',n}^{(1)} \cap \mathcal{I}_{j,k',n}^{(2)}\neq \emptyset\}},
\end{align*}
for $k'=1,k$. Set $\overline{W}_t=(W^{(1)}_t,\rho_t W^{(1)}_t +\sqrt{1-(\rho_t)^2} W^{(2)}_t)^*$ such that $\overline{W}^{(l)}$ is the Brownian motion driving $X^{(l)}$, $l=1,2$. Further denote by $i_n^{(l)}(s)$ the index of the interval $\mathcal{I}^{(l)}_{i,n}$ associated with $s \in \mathcal{I}_{i,n}^{(l)} $, $l=1,2$, and set
\begin{align*}
&\eta^{(l)}_{k',n,-}(s)=\sum_{j\geq k':t_{j,n}^{(3-l)} \leq T }\mathds{1}_{\{s \in \mathcal{I}_{j,k',n}^{(3-l)}\}} \sum_{i:\mathcal{I}_{i,k',n}^{(l)} \cap \mathcal{I}_{j,k',n}^{(3-l)} \neq \emptyset} (\Delta_{i,k',n}^{(l)} \overline{W}^{(l)})^2 \mathds{1}_{\{i<i_n^{(l)}(s)\}},
\\& \eta^{(l)}_{k',n,+}(s)=\sum_{j\geq k':t_{j,n}^{(3-l)} \leq T }\mathds{1}_{\{s \in \mathcal{I}_{j,k',n}^{(3-l)}\}} \sum_{i:\mathcal{I}_{i,k',n}^{(l)} \cap \mathcal{I}_{j,k',n}^{(3-l)} \neq \emptyset} (\Delta_{i,k',n}^{(l)} \overline{W}^{(l)})^2 \mathds{1}_{\{i \geq i_n^{(l)}(s)+k'\}},
\\&\Lambda_{k',n}^{(l)}(s)= \big(\eta^{(l)}_{k',n,-}(s),\Delta_{i_n^{(l)}(s)-k'+1,n}^{(l)}\overline{W}^{(l)},\ldots,\Delta_{i_n^{(l)}(s)-1,n}^{(l)}\overline{W}^{(l)},s- t^{(l)}_{i_n^{(l)}(s)-1,n} ,
\\&~~~~~~~~~~~~~~~~~~~~~~~~~~~t^{(l)}_{i_n^{(l)}(s),n}-s , \Delta_{i_n^{(l)}(s)+1,n}^{(l)}\overline{W}^{(l)},\ldots,\Delta_{i_n^{(l)}(s)+k'-1,n}^{(l)}\overline{W}^{(l)}, \eta^{(l)}_{k',n,+}(s)\big)
\end{align*}
for $k'=1,k$.

\begin{condition}\label{cond_cons_CoJ2}
The assumptions from Condition \ref{cond_cons_CoJ} hold and $\sigma$ almost surely fulfills $\int_0^T|\sigma_s^{(1)}\sigma_s^{(2)}|ds>0$. Further:
\begin{enumerate}[leftmargin=*, align=LeftAlignWithIndent, itemsep=-0.1cm, font=\normalfont, label=(\roman*)]
\item
It holds
\begin{align*}
|\pi_n|_T=o_\mathbb{P}(n^{-1/2}).
\end{align*}
\item
The functions $\widetilde{G}_{1,n}(t),\widetilde{G}_{k,n}(t), H_{1,n}(t), H_{k,n}(t)$ converge pointwise on $[0,T]$ in probability to strictly increasing functions $\widetilde{G}_1,\widetilde{G}_k,H_1,H_k:[0,\infty) \rightarrow [0,\infty)$.
\item The integral
\begin{multline*}
\int_{[0,T]^{P_1+P_2}} g(x_1,\dots,x_{P_1},x_1',\dots,x_{P_2}') \mathbb{E} \Big[ \prod_{p=1}^{P_1} h^{(1)}_p \big(n \Lambda_{1,n}^{(1)}(x_p),\frac{n}{k^2}\Lambda_{k,n}^{(1)}(x_p)\big)
\\ \times \prod_{p=1}^{P_2} h^{(2)}_p \big(n \Lambda_{1,n}^{(2)}(x_p'),\frac{n}{k^2}\Lambda_{k,n}^{(2)}(x_p')\big)\Big] dx_1 \dots dx_{P_1}dx_1' \dots dx_{P_2}'
\end{multline*}
converges for $n \rightarrow \infty$ to
\begin{multline*}
\int_{[0,T]^{P_1+P_2}} g(x_1,\dots,x_{P_1},x_1',\dots,x_{P_2}')\prod_{p=1}^{P_1} \int_{\mathbb{R}^2} h^{(1)}_p \big(y\big)\Gamma^{(1)}(x_p,dy)
\\ \times \prod_{p=1}^{P_2} \int_{\mathbb{R}^2} h^{(2)}_p \big(y'\big)\Gamma^{(2)}(x_p',dy')dx_1 \dots dx_{P_1}dx_1' \dots dx_{P_2}'
\end{multline*}
for all bounded continuous functions $g:\mathbb{R}^{P_1+P_2} \rightarrow \mathbb{R}$, $h^{(l)}_p:\mathbb{R}^{2(k+1)}\rightarrow \mathbb{R}$, $p=1,\ldots,P_l$, and any $P_l \in \mathbb{N}$, $l=1,2$. Here $\Gamma^{(l)}(\cdot,dy)$, $l=1,2,$ are families of probability measures on $[0, T]$ where $(\Lambda_1^{(l)}(x),\Lambda_k^{(l)}(x))\sim \Gamma^{(l)}(\cdot,x)$ has first moments which are uniformly bounded in $x$. Further the components of $\Lambda_k^{(l)}(x)$ which correspond to the $\Delta^{(l)}_{i_n^{(l)}(x)+j,n}\overline{W}^{(l)}$ have uniformly bounded second moments.
\end{enumerate}
\end{condition}

In order to describe the limit of $\Phi^{(CoJ)}_{k,T,n}$ on $\Omega^{(nCoJ)}_T$ we define
\begin{align*}
&C_{k',T}^{(CoJ)}=\int_0^T 2(\rho_s\sigma_s^{(1)}\sigma_s^{(2)})^2 d\widetilde{G}_{k'}(s)+\int_0^T(\sigma_s^{(1)}\sigma_s^{(2)})^2 dH_{k'}(s), \quad k'=1,k,
\\&D_{{k'},T}^{(CoJ)}=\sum_{p:S_p \leq T} \big((\Delta X^{(1)}_{S_p})^2 R_{k'}^{(2)}(S_p)+(\Delta X^{(2)}_{S_p})^2 R_{k'}^{(1)}(S_p)\big),\quad k'=1,k.
\end{align*}
Here $(S_p)_{p \geq 0}$ is an enumeration of the jump times of $X$ and $R_{k'}^{(l)}(s)$ is defined by
\begin{multline*}
R_{k'}^{(l)}(s)=(\sigma_{s-}^{(l)})^2 \eta_{{k'},-}^{(l)}(s)+
\sum_{i=1}^{k'} \Big(\sigma_{s-}^{(l)}\sum_{j=-{k'}+i}^{-1} \chi_j +\sigma_{s-}^{(l)}\sqrt{\delta_-(s)}U_-(s)
\\+\sigma_{s}^{(l)}\sqrt{\delta_+(s)}U_+(s)+\sigma_{s}^{(l)}\sum_{j=1}^{i-1}\chi_j \Big)^2
+(\sigma_{s}^{(l)})^2 \eta_{{k'},+}^{(l)}(s),~~ l=1,2,~~k'=1,k,
\end{multline*}
where 
\begin{multline*}
(\Lambda_1^{(l)}(s),\Lambda_k^{(l)}(s))=\big((\eta_{1,-}^{(l)}(s),\delta_-(s),\delta_+(s),\eta_{1,+}^{(l)}(s)),\\(\eta_{k,-}^{(l)}(s),\chi_{-k+1},\ldots, \chi_{-1},\delta_-(s),\delta_+(s),\chi_1,\ldots,\chi_{k-1},\eta_{k,+}^{(l)}(s))\big)
\end{multline*}
are random variables defined on an extended probability space $(\widetilde{\Omega},\widetilde{\mathcal{F}},\widetilde{\mathbb{P}})$ whose distribution is given by
\begin{align*}
\widetilde{\mathbb{P}}^{(\Lambda_1^{(l)}(s),\Lambda_k^{(l)}(s))}(dy) = \Gamma^{(l)}(s,dy),~~ l=1,2.
\end{align*}
The $(\Lambda_1^{(l)}(s),\Lambda_k^{(l)}(s))$, $s \in [0,T]$, are independent of each other and independent of the process $X$ and its components. The random variables $U_-(s),U_+(s)$ are i.i.d.\ standard normal and defined on $(\widetilde{\Omega},\widetilde{\mathcal{F}},\widetilde{\mathbb{P}})$ as well. They are also independent of $\mathcal{F}$ and of $(\Lambda^{(l)}(s),\Lambda_k^{(l)}(s))$.

\begin{theorem}\label{theo_cons_CoJ2} 
Under Condition \ref{cond_cons_CoJ2} we have the $\mathcal{X}$-stable convergence 
\begin{align}\label{theo_cons_CoJ2_conv}
\Phi_{k,T,n}^{(CoJ)} \overset{\mathcal{L}-s}{\longrightarrow} 
\frac{D_{k,T}^{(CoJ)}+k C_{k,T}^{(CoJ)} }{D_{1,T}^{(CoJ)}+C_{1,T}^{(CoJ)}}
\end{align}
on $\Omega_T^{(nCoJ)}$.
\end{theorem}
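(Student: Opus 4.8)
The plan is to decompose both the numerator $V(f,[k],\pi_n)_T$ and the denominator $V(f,\pi_n)_T$ of $\Phi_{k,T,n}^{(CoJ)}$ into a ``jump contribution'' and a ``Brownian/mixed contribution'' localized around the jump times, and to show that these two contributions converge jointly and stably to $D_{k',T}^{(CoJ)}$ and $k' C_{k',T}^{(CoJ)}$ respectively (for $k'=1,k$), while all remaining cross and error terms are $o_\mathbb{P}(1)$. By the usual localization argument we may assume that $b,\sigma^{(1)},\sigma^{(2)},\rho,\Gamma$ are bounded and that $X$ has finitely many jumps on $[0,T]$; we then split $X^{(l)}=X'^{(l)}+X''^{(l)}$ into a continuous-plus-small-jump part $X'^{(l)}$ and a finite-activity large-jump part $X''^{(l)}=\sum_{p:S_p\le\cdot}\Delta X_{S_p}^{(l)}\mathds{1}_{\{\cdot\ge S_p\}}$, with threshold depending on the localization parameter. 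Expanding $f(\Delta^{(1)}X^{(1)},\Delta^{(2)}X^{(2)})=(\Delta^{(1)}X^{(1)}\Delta^{(2)}X^{(2)})^2$ into the sixteen products of $X'$ and $X''$ increments, one checks by now-standard estimates (as in \cite{JacTod09}, \cite{MarVet17}) that on $\Omega_T^{(nCoJ)}$ the only terms that do not vanish are: (a) the pure large-jump--large-jump term, which however vanishes on $\Omega_T^{(nCoJ)}$ since there are no common jumps; (b) the ``one jump, one continuous'' terms, say $(\Delta^{(1)}X''^{(1)})^2(\Delta^{(2)}X'^{(2)})^2$ and symmetric, which produce $D_{k',T}^{(CoJ)}$; and (c) the pure continuous term $(\Delta^{(1)}X'^{(1)}\Delta^{(2)}X'^{(2)})^2$, which after multiplication by $n$ produces $C_{k',T}^{(CoJ)}$.

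For the continuous contribution (c), the argument follows the consistency proof on $\Omega_T^{(C)}$ from Theorem \ref{theo_cons_J} adapted to the bivariate asynchronous functional $V(f,\cdot)$: one replaces increments of $X'^{(l)}$ by increments of $\sigma^{(l)}\overline W^{(l)}$, uses the càdlàg property of $\sigma$ to freeze volatility on each small block, computes the conditional expectation of $(\Delta_{i,k',n}^{(1)}\overline W^{(1)})^2(\Delta_{j,k',n}^{(2)}\overline W^{(2)})^2$ on overlapping intervals — which yields $|\mathcal{I}_{i,k',n}^{(1)}|\,|\mathcal{I}_{j,k',n}^{(2)}| + 2\rho_s^2|\mathcal{I}_{i,k',n}^{(1)}\cap\mathcal{I}_{j,k',n}^{(2)}|^2$ — and then invokes Condition \ref{cond_cons_CoJ2}(ii) (convergence of $\widetilde G_{k',n},H_{k',n}$) to obtain $\frac1n\cdot n\cdot(\dots)\to C_{k',T}^{(CoJ)}$; the variance of this block sum is controlled by $|\pi_n|_T\to0$. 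For the mixed contribution (b), the key is that near a jump time $S_p$ the increment $\Delta_{i,k',n}^{(l)}X''^{(l)}$ equals $\Delta X_{S_p}^{(l)}$ for exactly those indices $i$ whose interval $\mathcal{I}_{i,k',n}^{(l)}$ contains $S_p$, so the sum collapses to $(\Delta X_{S_p}^{(l)})^2$ times a localized sum of squared increments of $\overline W^{(3-l)}$ over the overlapping intervals in the other coordinate; rescaling by $n$ and appealing to Condition \ref{cond_cons_CoJ2}(iii) (joint law convergence of the $n\Lambda_{k',n}^{(l)}(S_p)$ to $\Gamma^{(l)}$, with the $\overline W$-components contributing the $\chi_j$, $\sqrt{\delta_\pm}U_\pm$ Gaussian pieces and the $\eta_{\pm}$ boundary pieces) identifies the limit as $R_{k'}^{(l)}(S_p)$, hence the whole contribution as $D_{k',T}^{(CoJ)}$. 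The independence structure of the limit variables and the $\mathcal{X}$-stability come precisely from Condition \ref{cond_cons_CoJ2}(iii), exactly as in the univariate Theorem \ref{theo_clt_J}.

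Having established joint stable convergence of the pair $\big(V(f,[k],\pi_n)_T,\,nV(f,[k],\pi_n)_T\ \text{cont. part},\dots\big)$ — more precisely, of $\big(V(f,[k],\pi_n)_T,\,V(f,\pi_n)_T\big)$ to $\big(D_{k,T}^{(CoJ)}+kC_{k,T}^{(CoJ)},\,D_{1,T}^{(CoJ)}+C_{1,T}^{(CoJ)}\big)$ (note the denominator carries a factor $k^2$ in $\Phi_{k,T,n}^{(CoJ)}$ which cancels the $k'^2=1$ versus the $k^2$ normalization in $\widetilde G,H$), the result follows by the continuous mapping theorem for stable convergence applied to the ratio, provided the denominator limit is a.s.\ strictly positive on $\Omega_T^{(nCoJ)}$ — which holds since $C_{1,T}^{(CoJ)}>0$ a.s.\ by the assumption $\int_0^T|\sigma_s^{(1)}\sigma_s^{(2)}|ds>0$ together with strict monotonicity of $H_1$. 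I expect the main obstacle to be the mixed term (b): carefully bookkeeping which observation intervals of $X^{(l)}$ overlap a given interval of $X^{(3-l)}$ straddling $S_p$, showing that the Brownian increments ``far'' from $S_p$ in the right coordinate organize into the $\eta_{k',\pm}^{(l)}$ terms while the $2k'-1$ increments adjacent to $S_p$ organize into the squared sum $\big(\sigma_{S_p-}^{(l)}\sum\chi_j+\dots\big)^2$ appearing in $R_{k'}^{(l)}$, and proving that the error from approximating $X'^{(l)}$ by $\sigma^{(l)}\overline W^{(l)}$ and from the small jumps is negligible uniformly — this is where Condition \ref{cond_clt_J}-type arguments from \cite{BibVet15} and the asynchronicity handling from \cite{MarVet17} must be combined, and where $|\pi_n|_T=o_\mathbb{P}(n^{-1/2})$ is essential.
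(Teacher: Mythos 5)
Your proposal matches the paper's approach: the paper also reduces the claim to the joint $\mathcal{X}$-stable convergence of the scaled pair $\big(\tfrac{n}{k^2}V(f,[k],\pi_n)_T,\, nV(f,\pi_n)_T\big)$ to $\big(D_{k,T}^{(CoJ)}+kC_{k,T}^{(CoJ)},\, D_{1,T}^{(CoJ)}+C_{1,T}^{(CoJ)}\big)$, establishes that limit by the same jump/continuous decomposition (there citing the analogous Theorem 3.2 of \cite{MarVet17}), and concludes by the continuous mapping theorem using $C_{1,T}^{(CoJ)}>0$ a.s. The only slip is a presentational one: in your summary sentence you write convergence of $\big(V(f,[k],\pi_n)_T,V(f,\pi_n)_T\big)$, but both components must carry the factor $n$ (as you correctly note elsewhere), and the extra $k$ in $kC_{k,T}^{(CoJ)}$ comes from the $k'^3$ (not $k'^2$) normalization in $\widetilde G_{k'},H_{k'}$ against the $1/k^2$ in $\Phi$.
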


From Theorem \ref{theo_cons_CoJ1} we know that $\Phi_{k,T,n}^{(CoJ)}$ converges to $1$ under the null hypothesis and from Theorem \ref{theo_cons_CoJ2} we can conclude that $\Phi_{k,T,n}^{(CoJ)}$ converges under the alternative to a limit which is almost surely different from $1$ if $k C_{k,T}^{(CoJ)} \neq C_{1,T}^{(CoJ)}$, or under mild additional conditions if there is at least one jump in one of the components of $X$ on $[0,T]$. Indeed if we have $k C_{k,T}^{(CoJ)} \neq C_{1,T}^{(CoJ)}$, $\Phi_{k,T,n}^{(CoJ)}$ has to be different from $1$ because $C_{k,T}^{(CoJ)},C_{1,T}^{(CoJ)}$ are $\mathcal{F}$-conditionally constant and $D_{k,T}^{(CoJ)}$ and $D_{1,T}^{(CoJ)}$ are either zero or their $\mathcal{F}$-conditional distributions admit densities. If $k C_{k,T}^{(CoJ)} = C_{1,T}^{(CoJ)}$ holds mild conditions guarantee $D_{k,T}^{(CoJ)} \neq D_{1,T}^{(CoJ)}$ almost surely which also yields $\Phi_{k,T,n}^{(CoJ)} \neq 1$. Hence we will construct a test with critical region
\begin{align}\label{critical_region_CoJ}
\mathcal{C}_{k,T,n}^{(CoJ)}=\big\{|\Phi_{k,T,n}^{(CoJ)}-1|>\mathpzc{c}_{k,T,n}^{(CoJ)}\big\}
\end{align}
for a (possibly random) sequence $(\mathpzc{c}_{k,T,n}^{(CoJ)})_{n \in \mathbb{N}}$.\\

As in Section \ref{Test_J} we will consider the situation where the observation times are generated by Poisson processes as an example for a random and irregular sampling scheme which fulfills the conditions we need for the testing procedure to work.

\begin{example}\label{example_poisson_2d} Consider the extension of the setting from Example \ref{example_poisson1} to two dimensions where the observation times of $X^{(1)}$ and $X^{(2)}$ originate from the jump times of two independent Poisson processes, i.e.\ the $t_{i,n}^{(l)}-t_{i-1,n}^{(l)}$ are $Exp(n \lambda_l)$-distributed with $\lambda_l>0$ and independent for $i,n \geq 1$ and $l=1,2$. 

That Condition \ref{cond_cons_CoJ2}(i) is fulfilled in this setting follows from \eqref{poiss_mesh}. The convergences of $G_{1,n}$ and $H_{1,n}$ have been shown in Proposition 1 of \cite{HayYos08}, the convergences of $G_{k,n}$ and $H_{k,n}$ are proved in Lemma \ref{G_k_H_k_cons}. That Condition \ref{cond_cons_CoJ2}(iii) is fulfilled in the Poisson setting is proved in Section \ref{sec:proof_poiss_clt}.
\end{example}

\subsection{Central limit theorem}\label{sec:clt_CoJ}

Before we start to derive a central limit theorem for $\Phi_{k,T,n}^{(CoJ)}$ on $\Omega_T^{(CoJ)}$, we restrict ourselves to the case $k=2$. This simplifies notation, and also from the simulation results in Section \ref{sec:simul_J} for the test from Section \ref{Test_J} it seems to be optimal to choose $k$ as small as possible.

\begin{figure}[bt]
\centering
\hspace{-0.6cm}
\begin{tikzpicture}
\draw[dashed] (0,1) -- (12,1)
			(0,-1) -- (12,-1)
			(6,-1.2) -- (6,1.2);
\draw (1.5,1) -- (11,1)
	(1,-1) -- (10,-1)
(1.5,0.8) -- (1.5,1.2)
	(5,0.8) -- (5,1.2)
	(7,0.8) -- (7,1.2)
	(11,0.8) -- (11,1.2)
	(1,-0.8) -- (1,-1.2)
	(3.5,-0.8) -- (3.5,-1.2)
	(8,-0.8) -- (8,-1.2)
	(10,-0.8) -- (10,-1.2);
\draw	(-0.5,1) node{$X^{(1)}$}
		(-0.5,-1) node{$X^{(2)}$}
		(6.4,0) node{$s$};
\draw[decorate,decoration={brace,amplitude=12pt}]
	(5,1.25)--(7,1.25) node[midway, above,yshift=10pt,]{$\big|\mathcal{I}^{(1)}_{i_n^{(1)}(s),n}\big|$};
\draw[decorate,decoration={brace,amplitude=12pt}]
	(8,-1.25)--(3.5,-1.25) node[midway, below,yshift=-10pt,]{$\big|\mathcal{I}^{(2)}_{i_n^{(2)}(s),n}\big|$};
	
\draw[decorate,decoration={brace,amplitude=12pt}]
	(1.5,1.25)--(5,1.25) node[midway, above,yshift=10pt,]{$n^{-1}\mathcal{L}_n^{(1)}(s)$};
\draw[decorate,decoration={brace,amplitude=12pt}]
	(7,1.25)--(11,1.25) node[midway, above,yshift=10pt,]{$n^{-1}\mathcal{R}_n^{(1)}(s)$};
	
\draw[decorate,decoration={brace,amplitude=12pt}]
	(3.5,-1.25)--(1,-1.25) node[midway, below,yshift=-10pt,]{$n^{-1}\mathcal{L}_n^{(2)}(s)$};
\draw[decorate,decoration={brace,amplitude=12pt}]
	(10,-1.25)--(8,-1.25) node[midway, below,yshift=-10pt,]{$n^{-1}\mathcal{R}_n^{(2)}(s)$};
	
\draw[decorate,decoration={brace,amplitude=12pt}]
	(3.5,0.3)--(1.5,0.3) node[midway, below,yshift=-10pt,]{$n^{-1}\mathcal{L}_n(s)$};
\draw[decorate,decoration={brace,amplitude=12pt}]
	(10,0.3)--(8,0.3) node[midway, below,yshift=-10pt,]{$n^{-1}\mathcal{R}_n(s)$};
\end{tikzpicture}
\caption{Illustration of the terms $\mathcal{L}_n^{(l)}(s),\mathcal{R}_n^{(l)}(s)$, $l=1,2$, and $\mathcal{L}_n(s),\mathcal{R}_n(s)$.}\label{fig_xi2}
\end{figure}
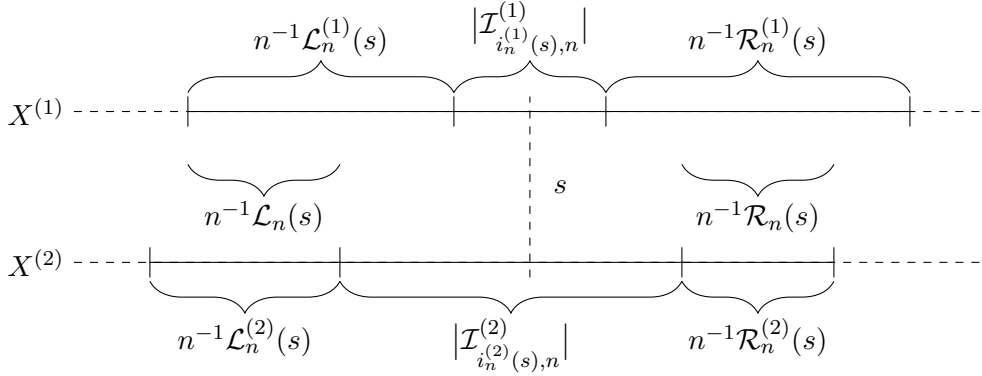

First we introduce the following notation to describe intervals around some time $s$ at which a common jump might occur
\begin{align*}
&\mathcal{L}_n^{(l)}(s)=n|\mathcal{I}^{(l)}_{i_n^{(l)}(s)-1,n}|,
~~\mathcal{R}_n^{(l)}(s)=n|\mathcal{I}^{(l)}_{i_n^{(l)}(s)+1,n}|,\quad l=1,2,
\\ &\mathcal{L}_n(s)=n|\mathcal{I}^{(1)}_{i_n^{(1)}(s)-1,n}\cap \mathcal{I}^{(2)}_{i_n^{(2)}(s)-1,n}|,~~
\mathcal{R}_n(s)=n|\mathcal{I}^{(1)}_{i_n^{(1)}(s)+1,n}\cap \mathcal{I}^{(2)}_{i_n^{(2)}(s)+1,n}|
\end{align*}
which are illustrated in Figure \ref{fig_xi2} and
\begin{align*}
Z_n(s)=\big(\mathcal{L}_n^{(1)},\mathcal{R}_n^{(1)},\mathcal{L}_n^{(2)},\mathcal{R}_n^{(2)}, \mathcal{L}_n,\mathcal{R}_n\big)(s).
\end{align*}
Limits of these variables will occur in the central limit theorem. To ensure convergence of the $Z_n(s)$ we need to impose the following assumption on the observation scheme.

\FloatBarrier

\begin{condition}\label{cond_clt_coJ}
The process $X$ and the sequence of observation schemes $(\pi_n)_n$ fulfill Condition \ref{cond_cons_CoJ}, \ref{cond_cons_CoJ2}(i)-(ii). Further the integral
\begin{align*}
\int_{[0,T]^P} g(x_1,\dots,x_P) \mathbb{E} \Big[ \prod_{p=1}^{P} h_p (Z_n(x_p)) \Big] dx_1 \dots dx_P
\end{align*}
converges for $n \rightarrow \infty$ to
\begin{align*}
\int_{[0,T]^P} g(x_1,\dots,x_P)\prod_{p=1}^{P} \int_{\mathbb{R}} h_p \big(y \big)\Gamma(x_p,dy)
 dx_1 \dots dx_P
\end{align*}
for all bounded continuous functions $g:\mathbb{R}^{P} \rightarrow \mathbb{R}$, $h_p:\mathbb{R}^6\rightarrow \mathbb{R}$ and any $P \in \mathbb{N}$. Here $\Gamma(\cdot,dy)$ is a family of probability measures on $[0, T]$ with uniformly bounded first moments and $\int_0^T\Gamma(x,\{0\}^6)dx=0$.
\end{condition}

Using the limit distribution of $Z_n(s)$ implicitly defined in Condition \ref{cond_clt_coJ} we set
\begin{align*}
&F_{2,T}^{(CoJ)}=4\sum_{p:S_p \leq T} \Delta X^{(1)}_{S_p} \Delta X^{(2)}_{S_p}
\\&~~~~~~~~~ \times\Big[\Delta X^{(2)}_{S_p}\Big( \sigma_{S_p-}^{(1)}\sqrt{ 
\mathcal{L}({S_p})}U_{S_p}^{(1),-}+\sigma_{S_p}^{(1)}
\sqrt{\mathcal{R}({S_p})} U_{S_p}^{(1),+}
\\&~~~~~~~~~~~~~~~~~~~~~~~+\sqrt{(\sigma_{S_p-}^{(1)})^2(\mathcal{L}^{(1)}-\mathcal{L})({S_p})
+(\sigma_{S_p}^{(1)})^2(\mathcal{R}^{(1)}-\mathcal{R})({S_p}) } U_{S_p}^{(2)}\Big) 
\\&~~~~~~~~~~~+\Delta X^{(1)}_{S_p} \Big(\sigma_{S_p-}^{(2)}\rho_{S_p-}\sqrt{\mathcal{L}({S_p}) }U_{S_p}^{(1),-}+\sigma_{S_p}^{(2)}\rho_{S_p}\sqrt{\mathcal{R}({S_p}) }U_{S_p}^{(1),+}
\\&~~~~~~~~~~~~~~~~~~~~~~~+\sqrt{(\sigma_{S_p-}^{(2)})^2(1-(\rho_{S_p-})^2)\mathcal{L}({S_p})+(\sigma_{S_p}^{(2)})^2(1-(\rho_{S_p})^2)\mathcal{R}({S_p}) }U_{S_p}^{(3)}
\\&~~~~~~~~~~~~~~~~~~~~~~~+\sqrt{(\sigma_{S_p-}^{(2)})^2(\mathcal{L}^{(2)}-
\mathcal{L})({S_p})+(\sigma_{S_p}^{(2)})^2(\mathcal{R}^{(2)}-
\mathcal{R})({S_p})}U_{S_p}^{(4)}\Big)\Big].
\end{align*}
Here, $(S_p)_{p \geq 0}$ is an enumeration of the common jump times of $X^{(1)}$ and $X^{(2)}$, $\big(\mathcal{L}^{(1)},\mathcal{R}^{(1)},\mathcal{L}^{(2)},\mathcal{R}^{(2)},\mathcal{L},\mathcal{R}\big)(s) $ is distributed according to $\Gamma(s,\cdot)$ and the vectors $\big(U^{(1),-}_s, U^{(1),+}_s, U^{(2)}_s, U^{(3)}_s,U^{(4)}_s\big)$ are standard normally distributed and independent for different values of $s$. Similarly as for \eqref{def_FkT_J} we obtain that the infinite sum in the definition of $F_{2,T}^{(CoJ)}$ has a well-defined limit. Using $F_{2,T}^{(CoJ)}$ we are able to state the following central limit theorem.

\begin{theorem}\label{theo_clt_CoJ}
If Condition \ref{cond_clt_coJ} is fulfilled, we have the $\mathcal{X}$-stable convergence
\begin{align}\label{theo_clt_CoJ1}
\sqrt{n}\left( \Phi_{2,T,n}^{(CoJ)}-1\right) \overset{\mathcal{L}-s}{\longrightarrow} \frac{F_{2,T}^{(CoJ)}}{4 B_{T}^{(CoJ)}}
\end{align}
on the set $\Omega_T^{(CoJ)}$.
\end{theorem}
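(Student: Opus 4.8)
The plan is to follow the localization-and-decomposition strategy that underlies Theorem \ref{theo_clt_J}, adapted to the asynchronous bivariate setting. First I would localize: by a standard argument one may strengthen Condition \ref{cond_clt_coJ} to assume that $b,\sigma^{(1)},\sigma^{(2)},\rho,\Gamma$ are bounded, that $\delta$ is bounded, and that $X$ has finitely many jumps on $[0,T]$, all of which occur at the common jump times $S_p$ or at times where only one component jumps. Under $\Omega_T^{(CoJ)}$ the dominant contribution to $V(f,[k],\pi_n)_T$ and $V(f,\pi_n)_T$ (with $k=2$ and $f(x_1,x_2)=(x_1x_2)^2$) comes from those pairs of intervals $(\mathcal{I}^{(1)}_{i,k',n},\mathcal{I}^{(2)}_{j,k',n})$ that straddle a common jump time $S_p$; the denominator converges in probability to $B_T^{(CoJ)}$ by Theorem \ref{theo_cons_CoJ1}, so by the delta method it suffices to prove $\sqrt n\,(V(f,[2],\pi_n)_T - 4 V(f,\pi_n)_T) \to F_{2,T}^{(CoJ)}$ $\mathcal{X}$-stably, plus the fact that $\sqrt n\,(V(f,\pi_n)_T - B_T^{(CoJ)})$ is asymptotically negligible relative to the numerator (or, more precisely, that the joint limit collapses because the leading term of $V(f,\pi_n)_T$ already appears inside $V(f,[2],\pi_n)_T$).

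Next I would expand, for each common jump time $S_p$, the relevant increments around $S_p$. Write $X^{(l)} = X'^{(l)} + J^{(l)}$ where $J^{(l)}$ collects the finitely many jumps; over a small window around $S_p$ the only jump of $X^{(l)}$ is $\Delta X^{(l)}_{S_p}$. For an interval $\mathcal{I}^{(l)}_{i,k',n}$ containing $S_p$ one has $\Delta^{(l)}_{i,k',n}X^{(l)} = \Delta X^{(l)}_{S_p} + \Delta^{(l)}_{i,k',n}X'^{(l)}$, and for intervals near but not containing $S_p$ one has just the continuous increment. Squaring and multiplying the two components, then subtracting the $k=1$ analogue summed with weight $4$, the $(\Delta X^{(1)}_{S_p})^2(\Delta X^{(2)}_{S_p})^2$-terms cancel exactly because of the combinatorial identity built into $\Phi^{(CoJ)}$ (the same mechanism that makes the limit $1$ under the null); what survives at order $n^{-1/2}$ is a cross term of the form $2\,\Delta X^{(l)}_{S_p}\Delta^{(l)}_{\cdot}X'^{(l)}\cdot(\Delta X^{(3-l)}_{S_p})^2$ summed over the overlapping partner intervals in the other component. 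Since the continuous martingale part dominates the drift and small-jump parts over a shrinking window (here Condition \ref{cond_cons_CoJ2}(i), $|\pi_n|_T = o_\mathbb{P}(n^{-1/2})$, is essential), these increments may be replaced by Brownian increments $\sigma^{(l)}_{S_p\mp}\Delta \overline W^{(l)}$, with the sign depending on whether the subinterval lies before or after $S_p$. The delicate bookkeeping is to track which subintervals of $X^{(l)}$ at frequency $n/2$ overlap $S_p$'s interval in $X^{(3-l)}$ and to sum the corresponding Brownian increments; this is exactly what produces the six interval-length variables $\mathcal{L}^{(l)}_n,\mathcal{R}^{(l)}_n,\mathcal{L}_n,\mathcal{R}_n$ of $Z_n(S_p)$ and the five Gaussian components $U^{(1),\mp}_{S_p},U^{(2)}_{S_p},U^{(3)}_{S_p},U^{(4)}_{S_p}$ in the definition of $F_{2,T}^{(CoJ)}$ (the decomposition of $\overline W^{(2)}$ into its $\rho$-part aligned with $\overline W^{(1)}$ on the overlap $\mathcal{L},\mathcal{R}$ and the independent remainder accounts for the three separate square-roots multiplying $\Delta X^{(1)}_{S_p}$).

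Finally I would establish the $\mathcal{X}$-stable convergence of the resulting sum over $p$. Conditionally on $\mathcal{X}$ (equivalently, on the process $X$ and its jumps), the summand at $S_p$ is, asymptotically, a deterministic functional of $\Delta X^{(1)}_{S_p},\Delta X^{(2)}_{S_p},\sigma^{(1)}_{S_p\mp},\sigma^{(2)}_{S_p\mp},\rho_{S_p\mp}$ times the pair $(Z_n(S_p),$ Brownian increments around $S_p)$, and different $S_p$ involve disjoint windows hence asymptotically independent contributions. The Brownian increments, normalized by $\sqrt n$, converge to centered Gaussians with the stated covariance structure; the interval-length vector $Z_n(S_p)$ converges in law to $(\mathcal{L}^{(1)},\mathcal{R}^{(1)},\mathcal{L}^{(2)},\mathcal{R}^{(2)},\mathcal{L},\mathcal{R})(S_p)\sim\Gamma(S_p,\cdot)$ by Condition \ref{cond_clt_coJ}; and the jump times $S_p$ are "evenly spread" so that evaluating $Z_n$ at $S_p$ rather than at a fixed deterministic point is legitimate — this is where Condition \ref{cond_clt_coJ} is invoked in its integrated form, with the test functions $g,h_p$ chosen to match the moments appearing in the sum (this step parallels the corresponding argument for Theorem \ref{theo_clt_J} and can be quoted from \cite{MarVet17} and \cite{BibVet15} in structure). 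Putting the finite-jump case together and removing the localization gives the claim. The main obstacle, I expect, is the second step: correctly enumerating the overlapping subintervals at the coarse frequency and verifying that the cross terms assemble into precisely the six-variable/five-Gaussian expression for $F_{2,T}^{(CoJ)}$ — in particular, separating the part of $\Delta\overline W^{(2)}$ that is shared with $\Delta\overline W^{(1)}$ on the common overlap (giving the $U^{(1),\mp}$ terms with $\rho$) from the independent pieces (giving $U^{(3)},U^{(4)}$), while simultaneously handling the $X^{(1)}$-side increments over $X^{(2)}$'s interval (giving $U^{(2)}$). The asynchronicity means there is no canonical pairing of coarse intervals across the two components, so this combinatorial reduction, rather than any single analytic estimate, is the heart of the proof.
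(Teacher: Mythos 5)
Your plan follows the same architecture as the paper's proof: reduce to a stable CLT for the numerator $\sqrt{n}\,(V(f,[2],\pi_n)_T - 4V(f,\pi_n)_T)$ and divide by $V(f,\pi_n)_T \overset{\mathbb{P}}{\longrightarrow} B_T^{(CoJ)}$; expand each coarse squared increment as $(\Delta_{i-1,1,n}^{(l)}X^{(l)}+\Delta_{i,1,n}^{(l)}X^{(l)})^2$ so that the quartic jump products cancel against $4V(f,\pi_n)_T$ up to non-overlap errors of order $H_{2,n}(T)/\sqrt n$; keep the surviving cross terms $2\Delta_{i-1,1,n}^{(l)}X^{(l)}\Delta_{i,1,n}^{(l)}X^{(l)}\big(\Delta_{j,2,n}^{(3-l)}X^{(3-l)}\big)^2$, which straddle a common jump and carry one jump and one continuous factor; and invoke Condition~\ref{cond_clt_coJ} for the joint stable limit of the Brownian increments and the interval-length vector $Z_n(S_p)$. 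The combinatorial reduction you flag as the heart of the argument --- overlap versus non-overlap pieces of coarse intervals, and the orthogonal split of $\Delta\overline W^{(2)}$ into a $\rho$-aligned component over $\mathcal{L}(S_p),\mathcal{R}(S_p)$ plus independent remainders yielding $U^{(1),\mp}_{S_p},U^{(2)}_{S_p},U^{(3)}_{S_p},U^{(4)}_{S_p}$ --- is precisely what the paper carries out (see the derivation of $\widetilde R^{(l)}(n,q)$ and the stable convergence \eqref{theo_clt_CoJ_proof8b}).

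The one step that would fail as stated is the opening localization. Boundedness of $\Gamma$ and $\delta$ does not give finitely many jumps on $[0,T]$; one cannot localize a $\sigma$-finite Poisson measure down to a finite one. The paper instead truncates: it decomposes $X = X_0 + B(q) + C + M(q) + N(q)$, works with the finitely many big jumps of $N(q)$ for fixed $q$, discretizes $\sigma$ at scale $2^{-r}$ (this is exactly your "replace $\Delta X'^{(l)}$ by $\sigma^{(l)}_{S_p\mp}\Delta\overline W^{(l)}$" step), and proves
\begin{align*}
\lim_{q\to\infty}\limsup_{r\to\infty}\limsup_{n\to\infty}\mathbb{P}\big(\big|\sqrt{n}\,(V(f,[2],\pi_n)_T - 4V(f,\pi_n)_T) - \widetilde R(n,q,r)\big|>\varepsilon\big)=0,
\end{align*}
which consumes the hypothesis $|\pi_n|_T = o_\mathbb{P}(n^{-1/2})$ through concrete moment estimates, before computing the stable limit $F_T^{(CoJ)}(q,r)$ of $\widetilde R(n,q,r)$ and finally removing the $(q,r)$-approximation in the limiting variable. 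If you replace your ``finitely many jumps by localization'' premise with this truncation-plus-iterated-limit scheme, your plan becomes the paper's proof.
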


\begin{example}\label{example_poisson_2d_2}
Condition \ref{cond_clt_coJ} is fulfilled in the Poisson setting. The convergence of the integrals follows as Condition \ref{cond_cons_CoJ2}(iii) from Lemma \ref{lemma_poiss_cond_clt} which is stated and proved in Section \ref{sec:proof_poiss_clt}. 
\end{example}

\begin{example}
Condition \ref{cond_clt_coJ} is also fulfilled in the setting of equidistant observation times $t_{i,n}^{(1)}=t_{i,n}^{(2)}=i/n$. In that case we have $$\big(\mathcal{L}^{(1)},\mathcal{R}^{(1)},\mathcal{L}^{(2)},\mathcal{R}^{(2)},\mathcal{L},\mathcal{R}\big)(s)  =(1,1,1,1,1,1)$$ for any $s \in (0,T]$. Hence we get
\begin{align*}
F_{2,T}^{(CoJ)}&=4\sum_{p:S_p \leq T} \Delta X^{(1)}_{S_p} \Delta X^{(2)}_{S_p}
\Big[\Delta X^{(2)}_{S_p}\big( \sigma_{S_p-}^{(1)}U_{S_p}^{(1),-}+\sigma_{S_p}^{(1)} U_{S_p}^{(1),+} \big)
\\&~~~~~~~~~~~+\Delta X^{(1)}_{S_p} \big(\sigma_{S_p-}^{(2)}\rho_{S_p-}U_{S_p}^{(1),-}+\sigma_{S_p}^{(2)}\rho_{S_p} U_{S_p}^{(1),+}
\\&~~~~~~~~~~~~~~~~~~~~~~~+\sqrt{(\sigma_{S_p-}^{(2)})^2(1-(\rho_{S_p-})^2)+(\sigma_{S_p}^{(2)})^2(1-(\rho_{S_p})^2) }U_{S_p}^{(3)}
\big)\Big]
\end{align*}
and $F_{2,T}^{(CoJ)}$ is $\mathcal{X}$-conditionally normally distributed with mean zero and variance
\begin{align*}
&16\sum_{p:S_p \leq T} (\Delta X^{(1)}_{S_p})^2 (\Delta X^{(2)}_{S_p})^2
\Big[ \big( \Delta X^{(2)}_{S_p}\sigma_{S_p-}^{(1)}+\Delta X^{(1)}_{S_p} \sigma_{S_p-}^{(2)}\rho_{S_p-}\big)^2
\\&~~~~~~~~~~~~~~~+ \big( \Delta X^{(2)}_{S_p}\sigma_{S_p}^{(1)}+\Delta X^{(1)}_{S_p} \sigma_{S_p}^{(2)}\rho_{S_p}\big)^2
\\&~~~~~~~~~~~~~~~+(\Delta X^{(1)}_{S_p})^2\big((\sigma_{S_p-}^{(2)})^2(1-(\rho_{S_p-})^2)+(\sigma_{S_p}^{(2)})^2(1-(\rho_{S_p})^2)\big) \Big]
\end{align*}
which is similar to the result in Theorem 4.1(a) of \cite{JacTod09}.
\end{example}

\subsection{Testing procedure}\label{sec:testproc_CoJ}
In this section we will develop a testing procedure based on Theorem \ref{theo_clt_CoJ} for which we have to estimate the law of the limiting variable $F_{2,T}^{(CoJ)}$. Therefore we need to estimate the law of 
\begin{align*}
Z(S_p)=\big(\mathcal{L}^{(1)},\mathcal{R}^{(1)},\mathcal{L}^{(2)},\mathcal{R}^{(2)},\mathcal{L},\mathcal{R}\big)(S_p).
\end{align*}
for a time $S_p$ at which a common jump occurs. As in Section \ref{sec:testproc_J} we will apply a bootstrap method for this purpose.

Let $(L_n)_n$ and $(M_n)_n$ be sequences of natural numbers which tend to infinity. Set
\begin{align}\label{test_CoJ_Ln_Rn}
\widehat{\mathcal{L}}_{n,m}^{(l)}(s)=n\big|\mathcal{I}^{(l)}_{i_n^{(l)}(s)+V^{(l)}_{n,m}(s)-1,n}\big|,~
\widehat{\mathcal{R}}_{n,m}^{(l)}(s)=n\big|\mathcal{I}^{(l)}_{i_n^{(l)}(s)+V^{(l)}_{n,m}(s)+1,n}\big|,
\end{align}
and define $\widehat{\mathcal{L}}_{n,m}(s),\widehat{\mathcal{R}}_{n,m}(s)$ as the lengths of the overlapping parts of $\widehat{\mathcal{L}}_n^{(l)}(s)$, $l=1,2$, respectively $\widehat{\mathcal{R}}_n^{(l)}(s)$, $l=1,2$, where $V_{n,m}(s)=(V^{(1)}_{n,m}(s),V^{(2)}_{n,m}(s)) \in \{-L_n,\ldots,L_n\} \times \{-L_n,\ldots,L_n\}$ and 
\begin{multline}\label{weights_testing_CoJ}
\mathbb{P}(V_{n,m}(s) =(l_1,l_2)|\mathcal{S}) \nonumber
\\=|\mathcal{I}^{(1)}_{i^{(1)}_n(s)+l_1,n}\cap \mathcal{I}^{(2)}_{i^{(2)}_n(s)+l_2,n}|\Big(\sum_{j_1,j_2=-L_n}^{L_n} |\mathcal{I}^{(1)}_{i^{(1)}_n(s)+j_1,n}\cap \mathcal{I}^{(2)}_{i^{(2)}_n(s)+j_2,n}| \Big)^{-1},
\end{multline}
$l_1,l_2 \in \{-L_n,\ldots,L_n\}$. Because we are in a two-dimensional setting, we have to simultaneously choose observation intervals of $X^{(1)}$ and $X^{(2)}$ around which the intervals used for simulating $Z(s)$ lie. Here we choose the intervals $(\mathcal{I}_{i,n}^{(1)},\mathcal{I}_{j,n}^{(2)})$ by a probability which is proportional to the length of the overlapping parts of those two intervals. As in Section \ref{sec:testproc_J} this corresponds to the probability with which a jump time $S_p$ uniformly distributed in time would fulfill $S_p \in \mathcal{I}_{i,n}^{(1)}$ and $S_p \in \mathcal{I}_{j,n}^{(2)}$.

Based on \eqref{test_CoJ_Ln_Rn} we define via
\begin{align*}
\widehat{Z}_{n,m}(s)=\big(\widehat{\mathcal{L}}_{n,m}^{(1)},\widehat{\mathcal{R}}_{n,m}^{(1)},\widehat{\mathcal{L}}_{n,m}^{(2)},\widehat{\mathcal{R}}_{n,m}^{(2)},\widehat{\mathcal{L}}_{n,m},\widehat{\mathcal{R}}_{n,m}\big)(s), \quad m=1,\ldots,M_n,
\end{align*}
estimators for realizations of $Z(s)$. Further we define using the components of $\widehat{Z}_{n,m}(s)$ and by considering increments which are larger than a certain threshold as jumps
\begin{align*}
&\widehat{F}_{T,n,m}^{(CoJ)}=4\sum_{i,j:t_{i,n}^{(1)} \wedge t_{j,n}^{(2)} \leq T} \Delta_{i,n}^{(1)} X^{(1)} \Delta_{j,n}^{(2)} X^{(2)}\mathds{1}_{\{|\Delta_{i,n}^{(1)} X^{(1)} |> \beta |\mathcal{I}_{i,n}^{(1)}|^{\varpi} \}}\mathds{1}_{\{|\Delta_{j,n}^{(2)} X^{(2)} |> \beta |\mathcal{I}_{j,n}^{(2)}|^{\varpi} \}}
\\&~~~~~~ \times\Big[\Delta_{j,n}^{(2)} X^{(2)}\Big( \hat{\sigma}^{(1)}(t_{i,n}^{(1)},-) 
\sqrt{\widehat{\mathcal{L}}_{n,m}({\tau_{i,j,n}})}U_{n,(i,j),m}^{(1,2),-}
\\&~~~~~~~~~~~~~~~~~~~~~~~~~~~~~~~~+\hat{\sigma}^{(1)}(t_{i,n}^{(1)},+)
\sqrt{\widehat{\mathcal{R}}_{n,m}({\tau_{i,j,n}})} U_{n,(i,j),m}^{(1,2),+}
\\&~~~~~~~~~~~~~~~~~~~~+\Big((\hat{\sigma}^{(1)}(t_{i,n}^{(1)},-))^2 (\widehat{\mathcal{L}}_{n,m}^{(1)}-\widehat{\mathcal{L}}_{n,m})(\tau_{i,j,n})
\\&~~~~~~~~~~~~~~~~~~~~~~~~~~~~~~~~+(\hat{\sigma}^{(1)}(t_{i,n}^{(1)},+))^2 (\widehat{\mathcal{R}}_{n,m}^{(1)}-\widehat{\mathcal{R}}_{n,m})({\tau_{i,j,n}}) \Big)^{1/2} U_{n,i,m}^{(1)}\Big)
\\&~~~~~~~~+\Delta_{i,n}^{(1)} X^{(1)} \Big(\hat{\sigma}^{(2)}(t_{j,n}^{(2)},-)\hat{\rho}(\tau_{i,j,n},-)\sqrt{\widehat{\mathcal{L}}_{n,m}({\tau_{i,j,n}}) }U_{n,(i,j),m}^{(1,2),-}
\\&~~~~~~~~~~~~~~~~~~~~~~~~~~~~~~~~+\hat{\sigma}^{(2)}(t_{j,n}^{(2)},+)\hat{\rho}(\tau_{i,j,n},+)\sqrt{\widehat{\mathcal{R}}_{n,m}(\tau_{i,j,n}) }U_{n,(i,j),m}^{(1,2),+}
\\&~~~~~~~~~~~~~~~~~~~~+\Big((\hat{\sigma}^{(2)}(t_{j,n}^{(2)},-))^2(1-(\hat{\rho}(\tau_{i,j,n},-))^2)\widehat{\mathcal{L}}_{n,m}(\tau_{i,j,n})
\\&~~~~~~~~~~~~~~~~~~~~~~~~~~~~~~~~+(\hat{\sigma}^{(2)}(t_{j,n}^{(2)},+))^2(1-(\hat{\rho}(\tau_{i,j,n},+))^2)\widehat{\mathcal{R}}_{n,m}({\tau_{i,j,n}}) \Big)^{1/2} U_{n,j,m}^{(2)}
\\&~~~~~~~~~~~~~~~~~~~~+\Big((\hat{\sigma}^{(2)}(t_{j,n}^{(2)},-))^2 (\widehat{\mathcal{L}}_{n,m}^{(2)}-
\widehat{\mathcal{L}}_{n,m})(\tau_{i,j,n})
\\&~~~~~~~~~~~~~~~~~~~~~~~~~~~~~~~~+(\hat{\sigma}^{(2)}(t_{j,n}^{(2)},+))^2 (\widehat{\mathcal{R}}_{n,m}^{(2)}-
\widehat{\mathcal{R}}_{n,m})(\tau_{i,j,n})\Big)^{1/2} U_{n,j,m}^{(3)}\Big)\Big]
\\&~~~~~~ \times\mathds{1}_{\{\mathcal{I}_{i,n}^{(1)}\cap \mathcal{I}_{j,n}^{(2)}\neq \emptyset \}}
\end{align*}
with $\tau_{i,j,n}=t_{i,n}^{(1)}\wedge t_{j,n}^{(2)}$. Here, we use \eqref{est_sigma} for the estimation of $\sigma_{s-}^{(l)}$, $\sigma_s^{(l)}$ and estimators for $\rho_{s-}$, $\rho_s$ defined by
\begin{align*}
\hat{\rho}_n(s,-)=\frac{\hat{\kappa}_n(s,-)}{\hat{\sigma}_n^{(1)}(s,-)\hat{\sigma}_n^{(2)}(s,-)}, \quad \hat{\rho}_n(s,+)=\frac{\hat{\kappa}_n(s,+)}{\hat{\sigma}_n^{(1)}(s,+)\hat{\sigma}_n^{(2)}(s,+)},
\end{align*}
where
\begin{align*}
&\hat{\kappa}_n(s,-)=\frac{1}{b_n}
\sum_{i,j:\mathcal{I}_{i,n}^{(1)} \cup \mathcal{I}_{j,n}^{(2)} \subset [s-b_n,s)} \Delta_{i,n}^{(1)} X^{(1)}\Delta_{i,n}^{(2)} X^{(2)} 
\mathds{1}_{\{\mathcal{I}_{i,n}^{(1)}\cap \mathcal{I}_{j,n}^{(2)}\neq \emptyset \}}
\\&\hat{\kappa}_n(s,+)=\frac{1}{b_n}
\sum_{i,j:\mathcal{I}_{i,n}^{(1)} \cup \mathcal{I}_{j,n}^{(2)} \subset [s,s+b_n]} \Delta_{i,n}^{(1)} X^{(1)}\Delta_{i,n}^{(2)} X^{(2)} 
\mathds{1}_{\{\mathcal{I}_{i,n}^{(1)}\cap \mathcal{I}_{j,n}^{(2)}\neq \emptyset \}},
\end{align*}
for a sequence $(b_n)_n$ with $b_n \rightarrow 0 $ and $|\pi_n|_T/ b_n \rightarrow 0$. The $U_{n,(i,j),m}^{(1,2),-}$, $U_{n,(i,j),m}^{(1,2),+}$, $U_{n,i,m}^{(1)}$, $U_{n,j,m}^{(2)}$, $U_{n,j,m}^{(3)}$ are i.i.d.\ standard normal and we have $\beta>0$ and $\varpi \in (0,1/2)$. 
Similarly as in Section \ref{sec:testproc_J} we denote by
\begin{align*}
\widehat{Q}_{T,n}^{(CoJ)}(\alpha)=\widehat{Q}_\alpha\big(\big\{
|\widehat{F}_{T,n,m}^{(CoJ)}| \big| m=1,\ldots,M_n
\big\}\big)
\end{align*}
the $\lfloor \alpha M_n \rfloor $-th largest element of the set $\{
|\widehat{F}_{T,n,m}^{(CoJ)}| | m=1,\ldots,M_n
\}$ and we will see that $\widehat{Q}_{T,n}^{(CoJ)}(\alpha)$ converges on $\Omega_T^{(CoJ)}$ under appropriate conditions to the $\mathcal{X}$-conditional $\alpha$-quantile $Q^{(CoJ)}(\alpha)$ of $|F_{2,T}^{(CoJ)} |$ which is defined via
\begin{align}
\widetilde{\mathbb{P}}\big(|F_{2,T}^{(CoJ)}| \leq Q^{(CoJ)}(\alpha) \big| \mathcal{X} \big)(\omega)=\alpha,~~ \omega \in \Omega_T^{(CoJ)},
\end{align}
and we set $Q^{(CoJ)}(\alpha)(\omega)=0$, $\omega \in (\Omega_T^{(CoJ)})^c$. Such a random variable $Q^{(CoJ)}(\alpha)$ exists because Condition \ref{cond_testproc_CoJ} will guarantee that the $\mathcal{X}$-conditional distribution of $F_{2,T}^{(CoJ)}$ is almost surely continuous on $\Omega_T^{(CoJ)}$.

\begin{condition}\label{cond_testproc_CoJ}
The process $X$ and the sequence of observation schemes $(\pi_n)_n$ fulfill Condition \ref{cond_cons_CoJ2} and \ref{cond_clt_coJ}. Further the set $\{s \in [0,T]:\|\sigma_s\|=0\}$ is almost surely a Lebesgue null set. $(L_n)_n$ and $(M_n)_n$ are sequences of natural numbers converging to infinity with $ L_n/n\rightarrow 0$ and the sequence $(b_n)_n$ fulfills $|\pi_n|_T/b_n \overset{\mathbb{P}}{\longrightarrow}0$. Additionally,
\begin{enumerate}[leftmargin=*, align=LeftAlignWithIndent, itemsep=-0.1cm, font=\normalfont, label=(\roman*)]
\Item
\begin{align}\label{vert_kon1_CoJ}
\widetilde{\mathbb{P}}\Big(\big|\widetilde{\mathbb{P}}(\widehat{Z}_{n,1}(s_p)\leq x_p,~p=1,\ldots,P | \mathcal{S} ) - \prod_{p=1}^P \widetilde{\mathbb{P}}(Z(s_p) \leq x_p ) \big|>\varepsilon \Big) \rightarrow 0
\end{align}
as $n \rightarrow \infty$ for all $\varepsilon>0$ and any $x_p \in \mathbb{R}^{6}$, $s_p \in (0,T)$, $p=1,\ldots,P$, $P \in \mathbb{N}$, with $s_i\neq s_j$ for $i \neq j$.
\item It holds
\begin{align*}
\sqrt{n} \sum_{i,j:t_{i,n}^{(l)} \wedge t_{j,n}^{(3-l)}\leq T} |\mathcal{I}_{i,n}^{(l)}|^{1/2} |\mathcal{I}_{j,n}^{(3-l)}| \mathds{1}_{\{\mathcal{I}_{i,n}^{(l)} \cap \mathcal{I}_{j,n}^{(3-l)} \neq \emptyset \}}=O_\mathbb{P}(1), \quad l=1,2.
\end{align*}
\item The process $\sigma$ is a $2 \times 2$-dimensional matrix-valued It\^o semimartingale.
\item It holds $2 C_{2,T}^{(CoJ)}\neq C_{1,T}^{(CoJ)}$ or $D_{2,T}^{(CoJ)} \neq D_{1,T}^{(CoJ)}$ almost surely.
\end{enumerate}
\end{condition}

As for Condition \ref{cond_testproc_J} part (i) of Condition \ref{cond_testproc_CoJ} yields that $\widehat{Q}_{T,n}^{(CoJ)}(\alpha)$ consistently estimates $Q^{(CoJ)}(\alpha)$. Part (ii) is a technical condition needed in the proof of Theorem \ref{test_theo_CoJ}. Further part (iii) provides that $\hat{\sigma}_n^{(l)}(S_p,-)$, $\hat{\sigma}_n^{(l)}(S_p,+)$, $\hat{\rho}_n(S_p,-)$, $\hat{\rho}_n(S_p,+)$ are consistent estimators for $\sigma^{(l)}_{S_p-}$, $\sigma^{(l)}_{S_p}$, $\rho_{S_p-}$, $\rho_{S_p}$, and part (iv) guarantees that the limit under the alternative is almost surely different from $1$.

Denote by
\begin{align*}
\Omega_T^{(DisJ)}=\{\exists s \in [0,T] : \Delta X^{(1)}_s \neq 0 \vee \Delta X^{(2)}_s \neq 0\} \cap \Omega_T^{(CoJ)}
\end{align*}
the subset of $\Omega$ where either $X^{(1)}$ or $X^{(2)}$ has a jump in $[0,T]$ but there exists no common jump. This is the subset of $\Omega_T^{(nCoJ)}$ on which $D_{1,T}^{(CoJ)}$, $D_{2,T}^{(CoJ)}$ do not vanish under Condition \ref{cond_testproc_CoJ}(i)--(ii).

\begin{theorem}\label{test_theo_CoJ}
Let Condition \ref{cond_testproc_CoJ} be fulfilled. The test defined in \eqref{critical_region_CoJ} with
\begin{align}
\mathpzc{c}_{2,T,n}^{(CoJ)}= \frac{\widehat{Q}_{T,n}^{(CoJ)}(1-\alpha)}{\sqrt{n}4V(f,\pi_n)_T},~\alpha \in [0,1],
\end{align}
has asymptotic level $\alpha$ in the sense that we have
\begin{align}\label{test_theo_level_CoJ}
\widetilde{\mathbb{P}}\big(|\Phi_{2,T,n}^{(CoJ)}-1| > \mathpzc{c}_{2,T,n}^{(CoJ)} \big| F^{(CoJ)} \big) \rightarrow \alpha 
\end{align}
for all $F^{(CoJ)} \subset \Omega_T^{(CoJ)}$ with $\mathbb{P}(F^{(CoJ)})>0$. 

Further the test is consistent in the sense that we have
\begin{align}\label{test_theo_power_CoJ}
\widetilde{\mathbb{P}}\big(|\Phi_{2,T,n}^{(CoJ)}-1| > \mathpzc{c}_{2,T,n}^{(CoJ)} \big| F \big) \rightarrow 1 
\end{align}
for all $F \subset \Omega_T^{(DisJ)}$ with $\mathbb{P}(F)>0$, and even for all $F \subset \Omega_T^{(nCoJ)}$ with $\mathbb{P}(F)>0$ if we have $2 C_{2,T}^{(CoJ)} > C_T^{(CoJ)}$ almost surely. 
\end{theorem}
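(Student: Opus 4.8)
The plan is to reduce both claims to three ingredients already available: the central limit theorem of Theorem~\ref{theo_clt_CoJ}, the consistency results of Theorems~\ref{theo_cons_CoJ1}--\ref{theo_cons_CoJ2} (whose hypotheses are contained in Condition~\ref{cond_testproc_CoJ}), and a single auxiliary bootstrap statement, namely that on $\Omega_T^{(CoJ)}$ one has $\widehat{Q}_{T,n}^{(CoJ)}(1-\alpha)\overset{\mathbb{P}}{\longrightarrow}Q^{(CoJ)}(1-\alpha)$. The overall architecture mirrors the proof of Theorem~\ref{test_theo_J}, the new difficulties stemming entirely from the asynchronicity of the data. To prove the bootstrap statement I would proceed in two stages. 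In the first, using Condition~\ref{cond_testproc_CoJ}(i)--(iii), one shows that the $\mathcal{S}$-conditional law of a single draw $\widehat{F}_{T,n,1}^{(CoJ)}$ converges weakly to the $\mathcal{X}$-conditional law of $F_{2,T}^{(CoJ)}$: the thresholds $\mathds{1}_{\{|\Delta_{i,n}^{(l)}X^{(l)}|>\beta|\mathcal{I}_{i,n}^{(l)}|^{\varpi}\}}$ with $\varpi<1/2$ and vanishing mesh isolate exactly the pairs of intervals carrying a common jump (one-sided jumps and spurious detections being discarded); the local estimators $\hat{\sigma}_n^{(l)}(t_{i,n}^{(l)},\pm)$ and $\hat{\rho}_n(\tau_{i,j,n},\pm)$ are consistent for $\sigma_{S_p-}^{(l)},\sigma_{S_p}^{(l)},\rho_{S_p-},\rho_{S_p}$ on those pairs thanks to $|\pi_n|_T/b_n\to0$ and the It\^o-semimartingale structure of $\sigma$; the bootstrapped overlap-length vectors $\widehat{Z}_{n,m}(S_p)$ have $\mathcal{S}$-conditional laws converging to $\Gamma(S_p,\cdot)$ by Condition~\ref{cond_testproc_CoJ}(i); Condition~\ref{cond_testproc_CoJ}(ii) absorbs the error from the continuous parts of the jump-carrying increments; and the i.i.d.\ standard normal variables reproduce the Gaussian factors of $F_{2,T}^{(CoJ)}$ term by term. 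In the second stage, since $M_n\to\infty$ and the $\mathcal{X}$-conditional law of $F_{2,T}^{(CoJ)}$ is continuous on $\Omega_T^{(CoJ)}$ (a non-degenerate mixture of centred Gaussians, because $\{s:\|\sigma_s\|=0\}$ is a Lebesgue null set and at least one common jump is present), the empirical $(1-\alpha)$-quantile of $\{|\widehat{F}_{T,n,m}^{(CoJ)}|\}_{m\le M_n}$ converges to $Q^{(CoJ)}(1-\alpha)$.

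For the level assertion, rewrite the rejection event on $\Omega_T^{(CoJ)}$ as $\{\sqrt{n}\,|\Phi_{2,T,n}^{(CoJ)}-1|>\widehat{Q}_{T,n}^{(CoJ)}(1-\alpha)/(4V(f,\pi_n)_T)\}$. From the proof of Theorem~\ref{theo_cons_CoJ1} one has $V(f,\pi_n)_T\overset{\mathbb{P}}{\longrightarrow}B_T^{(CoJ)}>0$ on $\Omega_T^{(CoJ)}$; combined with the bootstrap statement this gives $\sqrt{n}\,\mathpzc{c}_{2,T,n}^{(CoJ)}\overset{\mathbb{P}}{\longrightarrow}Q^{(CoJ)}(1-\alpha)/(4B_T^{(CoJ)})$ there, a limit which is $\mathcal{X}$-measurable. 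Joining this with the $\mathcal{X}$-stable convergence $\sqrt{n}(\Phi_{2,T,n}^{(CoJ)}-1)\to F_{2,T}^{(CoJ)}/(4B_T^{(CoJ)})$ of Theorem~\ref{theo_clt_CoJ} yields the joint $\mathcal{X}$-stable convergence of the pair $(\sqrt{n}(\Phi_{2,T,n}^{(CoJ)}-1),\sqrt{n}\,\mathpzc{c}_{2,T,n}^{(CoJ)})$ to $(F_{2,T}^{(CoJ)}/(4B_T^{(CoJ)}),Q^{(CoJ)}(1-\alpha)/(4B_T^{(CoJ)}))$ on $\Omega_T^{(CoJ)}$; since the diagonal $\{|y|=c\}$ is a null set for this limit on $\Omega_T^{(CoJ)}$ (continuity of the conditional law), the portmanteau theorem gives, for any $F^{(CoJ)}\subset\Omega_T^{(CoJ)}$ with $\mathbb{P}(F^{(CoJ)})>0$,
\[
\widetilde{\mathbb{P}}\big(|\Phi_{2,T,n}^{(CoJ)}-1|>\mathpzc{c}_{2,T,n}^{(CoJ)}\,,\,F^{(CoJ)}\big)\ \to\ \widetilde{\mathbb{E}}\big[\mathds{1}_{F^{(CoJ)}}\,\widetilde{\mathbb{P}}\big(|F_{2,T}^{(CoJ)}|>Q^{(CoJ)}(1-\alpha)\,\big|\,\mathcal{X}\big)\big]\ =\ \alpha\,\mathbb{P}(F^{(CoJ)}),
\]
which is \eqref{test_theo_level_CoJ}.

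For the power assertion I would show $\mathpzc{c}_{2,T,n}^{(CoJ)}\overset{\mathbb{P}}{\longrightarrow}0$ on $\Omega_T^{(nCoJ)}$ together with convergence of $|\Phi_{2,T,n}^{(CoJ)}-1|$ to a strictly positive limit. On $\Omega_T^{(nCoJ)}$ there are no common jumps, so by standard threshold arguments (using $\varpi<1/2$ and Condition~\ref{cond_cons_CoJ2}(i)) one has, with probability tending to one, that no continuous increment exceeds its threshold and that no interval carrying a jump of $X^{(1)}$ overlaps an interval carrying a jump of $X^{(2)}$; on this event every summand of $\widehat{F}_{T,n,m}^{(CoJ)}$ vanishes, so $\widehat{Q}_{T,n}^{(CoJ)}(1-\alpha)=0$ and $\mathpzc{c}_{2,T,n}^{(CoJ)}=0$. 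By Theorem~\ref{theo_cons_CoJ2}, $\Phi_{2,T,n}^{(CoJ)}\to(D_{2,T}^{(CoJ)}+2C_{2,T}^{(CoJ)})/(D_{1,T}^{(CoJ)}+C_{1,T}^{(CoJ)})$; on $\Omega_T^{(DisJ)}$ the variables $D_{k',T}^{(CoJ)}$ do not vanish and have continuous $\mathcal{F}$-conditional distributions (cf.\ the discussion after Theorem~\ref{theo_cons_CoJ2}), so Condition~\ref{cond_testproc_CoJ}(iv) forces this limit to be a.s.\ different from $1$; and when moreover $2C_{2,T}^{(CoJ)}>C_T^{(CoJ)}$ a.s., the same holds on $\Omega_T^{(nCoJ)}\setminus\Omega_T^{(DisJ)}$, where $D_{1,T}^{(CoJ)}=D_{2,T}^{(CoJ)}=0$ and the limit equals $2C_{2,T}^{(CoJ)}/C_{1,T}^{(CoJ)}>1$. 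Consequently $\mathds{1}_{\{|\Phi_{2,T,n}^{(CoJ)}-1|>\mathpzc{c}_{2,T,n}^{(CoJ)}\}}\to1$ almost surely on the relevant $F$, and dominated convergence gives \eqref{test_theo_power_CoJ}.

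The hardest step is the first stage of the bootstrap statement: controlling jointly the detected jump locations, the four local volatility/correlation estimators and the bootstrapped overlap-length vector $\widehat{Z}_{n,m}$, and proving negligibility of the contributions of the continuous parts of the jump-carrying increments and of all one-sided or spurious jump pairs --- this is exactly where Condition~\ref{cond_testproc_CoJ}(ii) is needed, and where the asynchronicity makes the combinatorics of overlapping observation intervals substantially more involved than in Theorem~\ref{test_theo_J}. A secondary but non-trivial point, required both for $Q^{(CoJ)}(\alpha)$ to be well defined and for the portmanteau step, is establishing the continuity of the $\mathcal{X}$-conditional law of $F_{2,T}^{(CoJ)}$ on $\Omega_T^{(CoJ)}$, which rests on the non-degeneracy of $\sigma$ and the presence of a Gaussian factor in every term of $F_{2,T}^{(CoJ)}$.
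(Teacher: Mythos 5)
Your architecture for the level assertion \eqref{test_theo_level_CoJ} — joint stable convergence of $\sqrt n(\Phi_{2,T,n}^{(CoJ)}-1)$ and $\sqrt n\,\mathpzc{c}_{2,T,n}^{(CoJ)}$, continuity of the $\mathcal{X}$-conditional law of $F_{2,T}^{(CoJ)}$, portmanteau — matches the paper's argument, and your two-stage decomposition of the bootstrap consistency statement is essentially how the paper proceeds via Proposition~\ref{lemma_test_CoJ} and Lemma~\ref{lemma_conv_sigma_CoJ}. The gap is in the power argument.

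You assert that on $\Omega_T^{(nCoJ)}$, with probability tending to one, ``no interval carrying a jump of $X^{(1)}$ overlaps an interval carrying a jump of $X^{(2)}$'', and conclude that every summand of $\widehat F_{T,n,m}^{(CoJ)}$ vanishes, so $\widehat Q_{T,n}^{(CoJ)}(1-\alpha)=0$ and hence $\mathpzc c_{2,T,n}^{(CoJ)}=0$. This is false for the general It\^o-semimartingale model the paper works in, which allows infinite jump activity. The truncation $|\Delta^{(l)}_{i,n}X^{(l)}|>\beta|\mathcal I^{(l)}_{i,n}|^\varpi$ only screens out increments whose jump content is smaller than order $n^{-\varpi}$; as $n\to\infty$ the number of ``detected'' idiosyncratic jumps of each component grows without bound, and the expected number of detected $X^{(1)}$-jumps that land in an observation interval overlapping one that contains a detected $X^{(2)}$-jump does \emph{not} in general go to zero (for a process of Blumenthal--Getoor index $\beta'$ this number is of order $n^{2\varpi\beta'-1}$, which diverges once $\varpi>1/(2\beta')$). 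So $\widehat F_{T,n,m}^{(CoJ)}$ is genuinely nonzero on $\Omega_T^{(DisJ)}$ and the bootstrap quantile does not collapse to zero.

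What actually makes the power argument work — and this is what \eqref{test_power_klein_o_CoJ} in the paper establishes — is the much weaker statement $\mathpzc c_{2,T,n}^{(CoJ)}\mathds 1_{\Omega_T^{(nCoJ)}}=o_{\mathbb P}(1)$, i.e.\ that $\sqrt n\,\widehat F_{T,n,m}^{(CoJ)}\mathds 1_{\Omega_T^{(nCoJ)}}\to 0$ in probability, which then divides out the $\sqrt n$ in the denominator $\sqrt n\cdot 4V(f,\pi_n)_T$. Proving it requires controlling the $\mathcal F$-conditional expectation of $\sqrt n|\widehat F_{T,n,m}^{(CoJ)}|\mathds 1_{\Omega_T^{(nCoJ)}}$, splitting off the truncated continuous contributions (order $(|\pi_n|_T)^{(1/2-\varpi)/2}H_{1,n}(T)$), the cross-terms between one component's jumps and the other's diffusive part, and the terms coming from the bootstrapped overlap lengths $\widehat{\mathcal L}^{(l)}_{n,m},\widehat{\mathcal R}^{(l)}_{n,m}$. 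This is exactly where Condition~\ref{cond_testproc_CoJ}(ii) is consumed, but in your write-up you assign that condition to the bootstrap stage rather than to the power bound, which is another sign the power step has been short-circuited. To repair your proof you would need to replace the ``every summand vanishes'' claim with an explicit $o_{\mathbb P}(n^{-1/2})$ bound on $\widehat Q_{T,n}^{(CoJ)}(1-\alpha)\mathds 1_{\Omega_T^{(nCoJ)}}$ along the lines of the paper's estimate.
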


To improve the performance of our test in the finite sample we adjust the estimator $\Phi_{2,T,n}^{(CoJ)}$ similarly as for the test in Corollary \ref{test_cor_J} by (partially) subtracting terms that contribute in the finite sample but vanish asymptotically. Therefore we define for $\beta,\varpi$ as above the quantity
\begin{align*}
&A_{T,n}^{(CoJ)}
=n\sum_{i,j:t_{i,n}^{(1)} \wedge t_{j,n}^{(2)}\leq T} (\Delta_{i,2,n}^{(1)} X^{(1)})^2(\Delta_{j,2,n}^{(2)} X^{(2)})^2\mathds{1}_{\{\mathcal{I}_{i,2,n}^{(1)}\cap \mathcal{I}_{j,2,n}^{(2)}\neq \emptyset\}}
\\& ~~~~~~~~~~~~~~~~~~~~~~~~~~~~~~~~~~~~~~~~~~~~~~~~\times \mathds{1}_{\{|\Delta_{i,2,n}^{(1)} X^{(1)}|\leq \beta |\mathcal{I}_{i,2,n}^{(1)}|^\varpi \vee |\Delta_{j,2,n}^{(2)} X^{(2)}|\leq \beta |\mathcal{I}_{j,2,n}^{(2)}|^\varpi\}} 
\\&~~~~~~~~~~~~~~~~~~~~- 4 n\sum_{i,j:t_{i,n}^{(1)} \wedge t_{j,n}^{(2)}\leq T} (\Delta_{i,n}^{(1)} X^{(1)})^2(\Delta_{j,n}^{(2)} X^{(2)})^2\mathds{1}_{\{\mathcal{I}_{i,n}^{(1)}\cap \mathcal{I}_{j,n}^{(2)}\neq \emptyset\}}
\\& ~~~~~~~~~~~~~~~~~~~~~~~~~~~~~~~~~~~~~~~~~~~~~~~~\times \mathds{1}_{\{|\Delta_{i,n}^{(1)} X^{(1)}|\leq \beta |\mathcal{I}_{i,n}^{(1)}|^\varpi \vee |\Delta_{j,n}^{(2)} X^{(2)}|\leq \beta |\mathcal{I}_{j,n}^{(2)}|^\varpi\}} .
\end{align*}
Using this expression we then define for $\rho \in (0,1)$ by
\begin{align*}
\widetilde{\Phi}_{2,T,n}^{(CoJ)}(\rho)=\Phi_{2,T,n}^{(CoJ)}-\rho\frac{n^{-1}A_{T,n}^{(CoJ)}}{4 V(f,\pi_n)_T}
\end{align*}
the adjusted estimator.

\begin{corollary}\label{test_cor_CoJ} 
Let $\rho \in (0,1)$. If Condition \ref{cond_testproc_CoJ} is fulfilled, it holds with the notation of Theorem \ref{test_theo_CoJ}
\begin{align}\label{test_cor_level_CoJ}
\widetilde{\mathbb{P}}\big(|\widetilde{\Phi}_{2,T,n}^{(CoJ)}(\rho)-1| > \mathpzc{c}_{2,T,n}^{(CoJ)} \big| F^{(CoJ)} \big) \rightarrow \alpha 
\end{align}
for all $F^{(CoJ)} \subset \Omega_T^{(CoJ)}$ with $\mathbb{P}(F^{(CoJ)})>0$ and
\begin{align}\label{test_cor_power_CoJ}
\widetilde{\mathbb{P}}\big(|\widetilde{\Phi}_{2,T,n}^{(CoJ)}(\rho)-1| > \mathpzc{c}_{2,T,n}^{(CoJ)} \big| F \big) \rightarrow 1 
\end{align}
for all $F \subset \Omega_T^{(DisJ)}$ with $\mathbb{P}(F)>0$, and even for all $F \subset \Omega_T^{(nCoJ)}$ with $\mathbb{P}(F)>0$ if we have $2 C_{2,T}^{(CoJ)}>C_T^{(CoJ)}$ almost surely. 
\end{corollary}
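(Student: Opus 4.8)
The plan is to deduce Corollary~\ref{test_cor_CoJ} from Theorem~\ref{test_theo_CoJ} by showing that the correction $\rho\,n^{-1}A_{T,n}^{(CoJ)}/(4V(f,\pi_n)_T)$ is negligible under the null and harmless under the alternative. The one extra ingredient needed beyond Section~\ref{Test_CoJ} is the two-dimensional analogue of the fact stated before Corollary~\ref{test_cor_J}: $A_{T,n}^{(CoJ)}$ is a consistent estimator for $4C_{2,T}^{(CoJ)}-C_{1,T}^{(CoJ)}$, and in particular $A_{T,n}^{(CoJ)}=O_{\mathbb{P}}(1)$ both on $\Omega_T^{(CoJ)}$ and on $\Omega_T^{(nCoJ)}$. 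This is proved exactly as in one dimension: the truncation in $A_{T,n}^{(CoJ)}$ removes the co-jump contributions from the power variations, and the law of large numbers underlying Theorem~\ref{theo_cons_CoJ1} (on $\Omega_T^{(CoJ)}$), respectively the stable limit underlying Theorem~\ref{theo_cons_CoJ2} (on $\Omega_T^{(nCoJ)}$), identifies the remaining continuous and mixed terms.

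\emph{Asymptotic level.} On $\Omega_T^{(CoJ)}$ we have $V(f,\pi_n)_T\overset{\mathbb{P}}{\longrightarrow}B_T^{(CoJ)}>0$ (proof of Theorem~\ref{theo_cons_CoJ1}), so with $A_{T,n}^{(CoJ)}=O_{\mathbb{P}}(1)$,
\[
\sqrt{n}\big(\widetilde{\Phi}_{2,T,n}^{(CoJ)}(\rho)-\Phi_{2,T,n}^{(CoJ)}\big)=-\rho\,\frac{A_{T,n}^{(CoJ)}}{4\sqrt{n}\,V(f,\pi_n)_T}=O_{\mathbb{P}}\big(n^{-1/2}\big)\overset{\mathbb{P}}{\longrightarrow}0 .
\]
Adding this $o_{\mathbb{P}}(1)$ term to $\sqrt{n}(\Phi_{2,T,n}^{(CoJ)}-1)$ preserves stable convergence, so $\sqrt{n}(\widetilde{\Phi}_{2,T,n}^{(CoJ)}(\rho)-1)$ has on $\Omega_T^{(CoJ)}$ the same $\mathcal{X}$-stable limit $F_{2,T}^{(CoJ)}/(4B_T^{(CoJ)})$ as in Theorem~\ref{theo_clt_CoJ}. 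Since the critical value $\mathpzc{c}_{2,T,n}^{(CoJ)}$ is unchanged, \eqref{test_cor_level_CoJ} now follows by the verbatim argument used for \eqref{test_theo_level_CoJ}: rewrite $|\widetilde{\Phi}_{2,T,n}^{(CoJ)}(\rho)-1|>\mathpzc{c}_{2,T,n}^{(CoJ)}$ as $\sqrt{n}|\widetilde{\Phi}_{2,T,n}^{(CoJ)}(\rho)-1|>\widehat{Q}_{T,n}^{(CoJ)}(1-\alpha)/(4V(f,\pi_n)_T)$, combine the $\mathcal{X}$-stable convergence with $\widehat{Q}_{T,n}^{(CoJ)}(1-\alpha)\overset{\mathbb{P}}{\longrightarrow}Q^{(CoJ)}(1-\alpha)$ (Condition~\ref{cond_testproc_CoJ}(i)) and $V(f,\pi_n)_T\overset{\mathbb{P}}{\longrightarrow}B_T^{(CoJ)}$ — the latter two limits being $\mathcal{X}$-measurable, so the joint convergence is again $\mathcal{X}$-stable — and use the a.s.\ continuity of the $\mathcal{X}$-conditional law of $F_{2,T}^{(CoJ)}$ on $\Omega_T^{(CoJ)}$ (Condition~\ref{cond_testproc_CoJ}) together with the defining property of the conditional quantile.

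\emph{Consistency.} Let $F\subset\Omega_T^{(DisJ)}$ with $\mathbb{P}(F)>0$; the case $F\subset\Omega_T^{(nCoJ)}$ under $2C_{2,T}^{(CoJ)}>C_T^{(CoJ)}$ is identical. By Theorem~\ref{theo_cons_CoJ2}, $\Phi_{2,T,n}^{(CoJ)}\overset{\mathcal{L}-s}{\longrightarrow}\ell:=(D_{2,T}^{(CoJ)}+2C_{2,T}^{(CoJ)})/(D_{1,T}^{(CoJ)}+C_{1,T}^{(CoJ)})$ on $\Omega_T^{(nCoJ)}$, with $D_{1,T}^{(CoJ)}+C_{1,T}^{(CoJ)}>0$ a.s.\ and $\widetilde{\mathbb{P}}(\ell=1\mid F)=0$ by the discussion following Theorem~\ref{theo_cons_CoJ2}. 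On $\Omega_T^{(nCoJ)}$ the truncation in $A_{T,n}^{(CoJ)}$ discards only pairs of overlapping intervals on which both increments exceed their thresholds, i.e.\ asymptotically only co-jump pairs, of which there are none; hence $A_{T,n}^{(CoJ)}=nV(f,[2],\pi_n)_T-4\,nV(f,\pi_n)_T+o_{\mathbb{P}}(1)$. Writing $L_1,L_2$ for the $\mathcal{X}$-stable limits of $nV(f,\pi_n)_T$ and $nV(f,[2],\pi_n)_T$ on $\Omega_T^{(nCoJ)}$ (so $L_1>0$ and $L_2=4\ell L_1$ by Theorem~\ref{theo_cons_CoJ2}) we get
\[
\frac{n^{-1}A_{T,n}^{(CoJ)}}{4V(f,\pi_n)_T}=\frac{A_{T,n}^{(CoJ)}}{4\,nV(f,\pi_n)_T}\overset{\mathcal{L}-s}{\longrightarrow}\frac{L_2-4L_1}{4L_1}=\ell-1 ,
\]
so $\widetilde{\Phi}_{2,T,n}^{(CoJ)}(\rho)\overset{\mathcal{L}-s}{\longrightarrow}\ell-\rho(\ell-1)=(1-\rho)\ell+\rho$, which since $\rho\in(0,1)$ equals $1$ only if $\ell=1$, a $\widetilde{\mathbb{P}}$-null event on $F$. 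On the other hand $\mathpzc{c}_{2,T,n}^{(CoJ)}\overset{\mathbb{P}}{\longrightarrow}0$ on $\Omega_T^{(nCoJ)}$: as in the proof of Theorem~\ref{test_theo_CoJ}, with probability tending to one the index set of overlapping interval pairs whose two increments both exceed the thresholds is empty, so $\widehat{Q}_{T,n}^{(CoJ)}(1-\alpha)=0$, whereas $n^{1/2}\sqrt{n}\,V(f,\pi_n)_T=nV(f,\pi_n)_T$ is bounded away from zero. Combining, $\widetilde{\mathbb{P}}(|\widetilde{\Phi}_{2,T,n}^{(CoJ)}(\rho)-1|>\mathpzc{c}_{2,T,n}^{(CoJ)}\mid F)\to1$, which is \eqref{test_cor_power_CoJ}.

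\emph{Main obstacle.} The only genuinely new step is the precise control of $A_{T,n}^{(CoJ)}$: tightness under the null (so that the correction is $O_{\mathbb{P}}(n^{-1/2})$ and the central limit theorem is left intact), and the fact that under the alternative its truncation removes exactly the co-jump part, yielding the identity $n^{-1}A_{T,n}^{(CoJ)}/(4V(f,\pi_n)_T)\to\ell-1$ and hence a limit $(1-\rho)\ell+\rho\neq1$. Both facts mirror the one-dimensional claim recorded before Corollary~\ref{test_cor_J}, but have to be re-derived with the asynchronous Hayashi--Yoshida-type functionals and the conditional-variance decomposition already prepared for Theorems~\ref{theo_cons_CoJ2} and~\ref{theo_clt_CoJ}; tracking the disjoint-jump contributions to $D_{k,T}^{(CoJ)}$ through the truncation is the delicate point.
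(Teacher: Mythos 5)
Your proof follows essentially the same route as the paper: establish the asymptotic behaviour of the correction term $\rho\,n^{-1}A_{T,n}^{(CoJ)}/(4V(f,\pi_n)_T)$ on each hypothesis set and plug it into the proof of Theorem \ref{test_theo_CoJ}. The key identity you derive, $n^{-1}A_{T,n}^{(CoJ)}/(4V(f,\pi_n)_T)\overset{\mathcal{L}-s}{\longrightarrow}\ell-1$ on $\Omega_T^{(nCoJ)}$ with $\widetilde{\Phi}_{2,T,n}^{(CoJ)}(\rho)\overset{\mathcal{L}-s}{\longrightarrow}(1-\rho)\ell+\rho\neq1$ a.s., coincides with the paper's \eqref{theo_cor_proof1} and the display at the end of the paper's proof. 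Two points, though, need correcting.

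First, the statement that ``$A_{T,n}^{(CoJ)}$ is a consistent estimator for $4C_{2,T}^{(CoJ)}-C_{1,T}^{(CoJ)}$'' is wrong in two ways: the paper's \eqref{theo_cor_proof1} gives $\mathcal{X}$-\emph{stable} convergence (not convergence in probability, since the $D$-terms are $\mathcal{F}$-conditionally random) to $4(2C_{2,T}^{(CoJ)}-C_{1,T}^{(CoJ)})+4(D_{2,T}^{(CoJ)}-D_{1,T}^{(CoJ)})$. The $D$-terms come from idiosyncratic jumps crossed with the continuous part; they are not truncated away because the truncation in $A_{T,n}^{(CoJ)}$ only discards pairs where \emph{both} increments are large. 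This slip does not damage your argument — you only invoke $O_{\mathbb{P}}(1)$ for the level, and for the power you rederive the limit correctly via $L_1,L_2$ — but as stated it is false, and in a setting with disjoint jumps ($\Omega_T^{(DisJ)}$) the $D$-terms cannot be omitted: they are what makes the limit $\ell$ random and the condition \ref{cond_testproc_CoJ}(iv) meaningful.

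Second, and more seriously, the justification you give for $\mathpzc{c}_{2,T,n}^{(CoJ)}\overset{\mathbb{P}}{\longrightarrow}0$ on $\Omega_T^{(nCoJ)}$ is incorrect. You assert that ``with probability tending to one the index set of overlapping interval pairs whose two increments both exceed the thresholds is empty, so $\widehat{Q}_{T,n}^{(CoJ)}(1-\alpha)=0$.'' That is not true: on $\Omega_T^{(nCoJ)}$ the two processes may have idiosyncratic jumps at distinct but nearby times $s_1\neq s_2$, and once $|\pi_n|_T$ is smaller than the time separation is irrelevant — the two jumps can land in intervals $\mathcal{I}_{i,n}^{(1)}$ and $\mathcal{I}_{j,n}^{(2)}$ that overlap, both increments then exceed their thresholds, and the indicator is nonzero. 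Moreover, the thresholds $\beta|\mathcal{I}_{i,n}^{(l)}|^\varpi$ shrink with $n$, so with infinite-activity jump components ever more (small) idiosyncratic jumps are flagged. The set is therefore not eventually empty; the bound $\widehat{Q}_{T,n}^{(CoJ)}(1-\alpha)=o_{\widetilde{\mathbb{P}}}(\sqrt{n})$ on $\Omega_T^{(nCoJ)}$ requires the more careful moment estimates of \eqref{test_power_klein_o_CoJ}, already established inside the proof of Theorem \ref{test_theo_CoJ}. You should simply cite \eqref{test_power_klein_o_CoJ}, as the paper does, rather than re-argue it with a claim that fails.
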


\begin{example}\label{example_poisson_2d_3}
The assumptions on the observation scheme from Condition \ref{cond_testproc_CoJ} are fulfilled in the setting of Poisson sampling introduced in Example \ref{example_poisson2}. That part (i) holds is shown in Section \ref{sec:proof_poiss_test} and that Condition \ref{cond_clt_coJ} is fulfilled is proven in Section \ref{sec:proof_poiss_clt}. That part (ii) of Condition \ref{cond_testproc_CoJ} is fulfilled can be proven in the same way as $\widetilde{G}_{k,n}(t) \overset{\mathbb{P}}{\longrightarrow} \widetilde{G}_k$ and $\widetilde{H}_{k,n}(t) \overset{\mathbb{P}}{\longrightarrow} \widetilde{H}_k$ were shown; compare Lemma \ref{G_k_H_k_cons}.
\end{example}

Similarly as discussed in Section \ref{sec:testproc_J} after Example \ref{example_poiss_test_1d} we can omit the weighting in \eqref{weights_testing_CoJ} for obtaining a working testing procedure also within the two-dimensional Poisson setting. However, in the setting where both processes $X^{(1)}$ and $X^{(2)}$ are observed at the observation times introduced in Example \ref{example_alpha_test_1d} with different $\alpha_1 \neq \alpha_2$ it can be easily verified that the weighting in \eqref{weights_testing_CoJ} is necessary and leads to a correct estimation of the distribution of $F_{2,T}^{(CoJ)}$.

\section{Simulation results}\label{sec:simul}
We conduct simulation studies to verify the effectivity and to study the finite sample properties of the developed tests.

\subsection{Testing for jumps}\label{sec:simul_J}
In our simulation the observation times originate from a Poisson process with intensity $n$ which corresponds to $\lambda=1$ in Example \ref{example_poisson1} and yields on average $n$ observations in the time interval $[0,1]$. We simulate from the model
\begin{equation*}
dX_t=X_t \sigma d W_t+\alpha \int_{\mathbb{R}} X_{t-}x \mu(dt,dx)
\end{equation*}
where $X_0=1$ and the Poisson measure $\mu$ has the predictable compensator 
\[
\nu(dt,dx)=\kappa\frac{\mathds{1}_{[-h,-l]\cup [l,h]}(x)}{2(h-l)}dtdx.
\]
This model is a one-dimensional version of the model used in Section 6 of \cite{JacTod09}. 

We consider the parameter settings displayed in Table \ref{par_settings_J} with $\sigma^2= 8 \times 10^{-5}$ in all cases.
\begin{table}[h]
\centering
\begin{tabular}{lcccc}
\hline 
 & \multicolumn{4}{c}{Parameters} \\
\cline{2-5}
Case & $\alpha$ & $\kappa$ & $l$ & $h$ \\ 
\hline 
I-j & $0.01$ & $1$ & $0.05$ & $0.7484$ \\ 

II-j & $0.01$ & $5$ & $0.05$ & $0.3187$ \\ 

III-j & $0.01$ & $25$ & $0.05$ & $0.1238$ \\ 

Cont & $0.00$ & & & \\ 

\hline 
\end{tabular} 
\caption[]{Parameter settings for the simulation.}
\label{par_settings_J}
\end{table}
We choose $n=100$, $n=\numprint{1600}$ and $n=\numprint{25600}$. In a trading day of $6.5$ hours this corresponds to observing $X^{(1)}$ and $X^{(2)}$ on average every $4$ minutes, every $15$ seconds and every second. Further we set $\beta=0.03$, $\omega=0.49$ and use $b_n= 1/\sqrt{n}$ for the local interval in the estimation of $\sigma_s$ and $L_n=\lfloor \ln (n)\rfloor$, $M_n=\lfloor 10 \sqrt{n} \rfloor$ in the simulation of the $\hat{\xi}_{k,n,m,-}(s)$, $\hat{\xi}_{k,n,m,+}(s)$. For the choice of these parameters see Section 5 of \cite{MarVet17}. The cases \texttt{I-j} to \texttt{III-j} correspond to the presence of jumps of diminishing size. When there are smaller jumps we choose a situation where there are more jumps such that the overall contribution of the jumps to the quadratic variation is roughly the same in all three cases. The fraction of the quadratic variation which originates from the jumps matches the one estimated in real financial data from \cite{TodTau11}. In all three cases where the model allows for jumps we only use paths in the simulation study where jumps were realized. In the fourth case \texttt{Cont} we consider a purely continuous model. The parameter values are taken from \cite{JacTod09}.\\

We applied the two testing procedures from Theorem \ref{test_theo_J} and Corollary \ref{test_cor_J} for $k=2,3,5$ and the results are displayed in Figures \ref{figures_J} and \ref{figures_J_corr}. In Figure \ref{figures_J} the results from Theorem \ref{test_theo_J} are presented. In the left column the empirical rejection rates are plotted against the theoretical value of the test and in the right column we show estimated density plots based on the simulated values of $\Phi_{k,T,n}^{(J)}$. In Figure \ref{figures_J_corr} we present the results from the test in Corollary \ref{test_cor_J} for $\rho=\numprint{0.9}$ (for the choice of $\rho=\numprint{0.9}$ see Figure \ref{figure_jump_choice_rho}) in the same way. The density plots here show the estimated density of $\widetilde{\Phi}_{k,T,n}^{(J)}(\rho)$.

In Figure \ref{figures_J} we observe in the case \texttt{Cont} that the power of the test from Theorem \ref{test_theo_J} is very good for $n=\numprint{1600}$ and $n=\numprint{25600}$. Further the empirical rejection rates match the asymptotic values rather well for all considered values of $k$ in the cases \texttt{I-j} and \texttt{II-j} at least for the highest observation frequency corresponding to $n=\numprint{25600}$. However in the case \texttt{III-j} there is a severe over-rejection even for $n=\numprint{25600}$. In general we observe over-rejection in all cases. The empirical rejection rates match the asymptotic values better in the cases where there are on average larger jumps. Further the results are better the smaller $k$ is. Note that for $n=100$ the cases \texttt{III-j} and \texttt{Cont} are not distinguishable using our test as the rejection curves for $n=100$ in those two cases are almost identical. The density plots show the convergence of $\Phi_{k,T,n}^{(J)}$ to $1$ in the presence of jumps and to $(k+1)/2$ under the alternative as predicted from Example \ref{example_poisson1}.

In Figure \ref{figures_J_corr} we immediately see that the observed rejection rates from the test from Corollary \ref{test_cor_J} match the asymptotic values much better than those from Theorem \ref{test_theo_J}. Hence adjusting the estimator has a huge effect on the finite sample performance of the test. Here the observed rejection rates match the asymptotic values quite well in the case \texttt{III-j} at least for $n=\numprint{25600}$ and in the cases \texttt{I-j} and \texttt{II-j} we get already for $n=\numprint{1600}$ very good results. The results in the case \texttt{Cont} show that the power remains to be very good. The density plots show that under the presence of jumps $\widetilde{\Phi}^{(J)}_{k,T,n}(\rho)$ is more centered around $1$ than $\Phi_{k,T,n}^{(J)}$. Under the alternative $\widetilde{\Phi}^{(J)}_{k,T,n}(\rho)$ clusters around the value $1+(1-\rho)(k-1)/2$ which is much closer to $1$ than $(k+1)/2$, but the observed values of $\widetilde{\Phi}^{(J)}_{k,T,n}(\rho)$ still seem to be large enough such that they can be well distinguished from $1$ as can be seen from the high empirical rejection rate.

Figure \ref{figure_jump_choice_rho} illustrates how the performance of the test from Corollary \ref{test_cor_J} depends on the choice of the parameter $\rho$. For this purpose we investigate for $k=2$ the empirical rejection rates in the cases \texttt{III-j} and \texttt{Cont} with $n=\numprint{25600}$ for the test with level $\alpha=5\%$ in dependence of $\rho$. We plot for $\rho \in [0,1]$ the empirical rejection rate under the null hypothesis in the case \texttt{III-j} which serves as a proxy for the type-I error of the test together with one minus the empirical rejection rate under the alternative hypothesis in the case \texttt{Cont} which serves as a proxy for the type-II error of the test. Finally we plot the sum of both error proxies to obtain an indicator for the overall performance of the test in dependence of $\rho$. As expected we observe a decrease in the type-I error as $\rho$ increases and an increase in the type-II error. While we observe an approximately linear decrease in the type-I error, the type-II error is equal to zero until $\numprint{0.8}$, then slightly increases and starts to steeply increase at $\rho=\numprint{0.9}$. As expected from the theory the type-II error converges to $1-\alpha$ for $\rho \rightarrow 1$. In this example the overall error is minimized for a relatively large value of $\rho$ close to $\rho=\numprint{0.9}$.

Further we carried out simulations for the same four parameter settings based on equidistant observation times $t_{i,n}=i/n$. In this specific setting the test from Theorem \ref{test_theo_J} coincides with tests discussed in \cite{aitjac2009} and in Chapter 10.3 of \cite{AitJac14}. The simulation results are presented in Figures \ref{figures_equi_J}, \ref{figures_equi_J_corr} and \ref{figure_jump_choice_rho_equi} in the same fashion as in Figures \ref{figures_J}, \ref{figures_J_corr} and \ref{figure_jump_choice_rho} for the irregular observations. We observe that the results both from Theorem \ref{test_theo_J} and Corollary \ref{test_cor_J} based on irregular observations are not significantly worse than those obtained in the simpler setting of equidistant observation times. Especially we can conclude that the adjustment technique introduced for Corollary \ref{test_cor_J} cannot only be used to improve the finite sample performance of our test based on irregular observations, but also can be used to improve existing tests in the literature which are based on equidistant observations.

\begin{figure}[!htb]
\centering
\includegraphics[width=10cm]{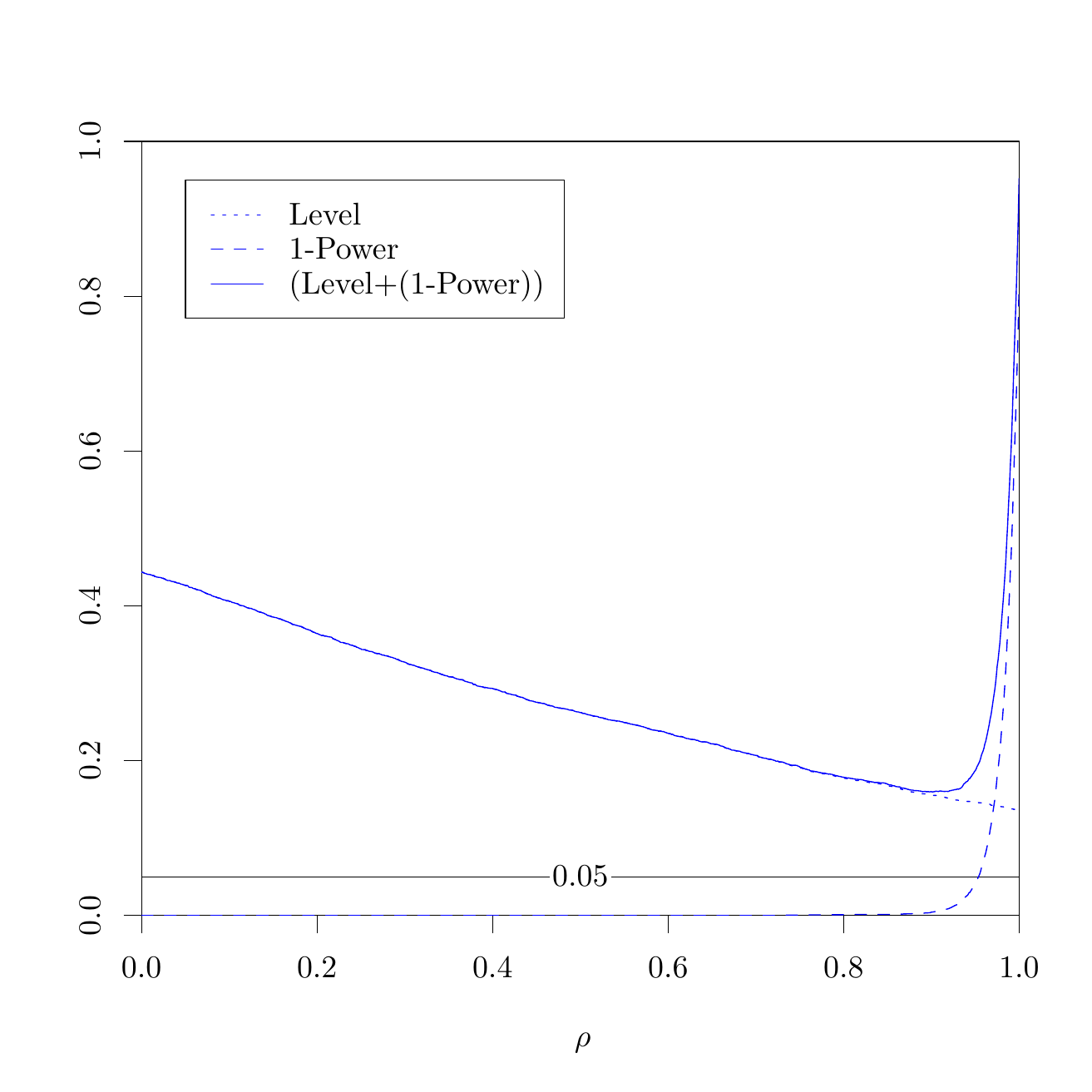}
\caption{This graphic shows for $k=2$, $\alpha=5\%$ and $n=\numprint{25600}$ the empirical rejection rate in the case \texttt{Cont} (dotted line) and $1$ minus the empirical rejection rate in the case \texttt{III-j} (dashed line) from the Monte Carlo simulation based on Corollary \ref{test_cor_J} as a function of $\rho \in [0,1]$.}\label{figure_jump_choice_rho}
\end{figure}

\begin{figure}[htb]
\centering
\begin{subfigure}[t]{0.45\textwidth}
\includegraphics[width=6.2cm]{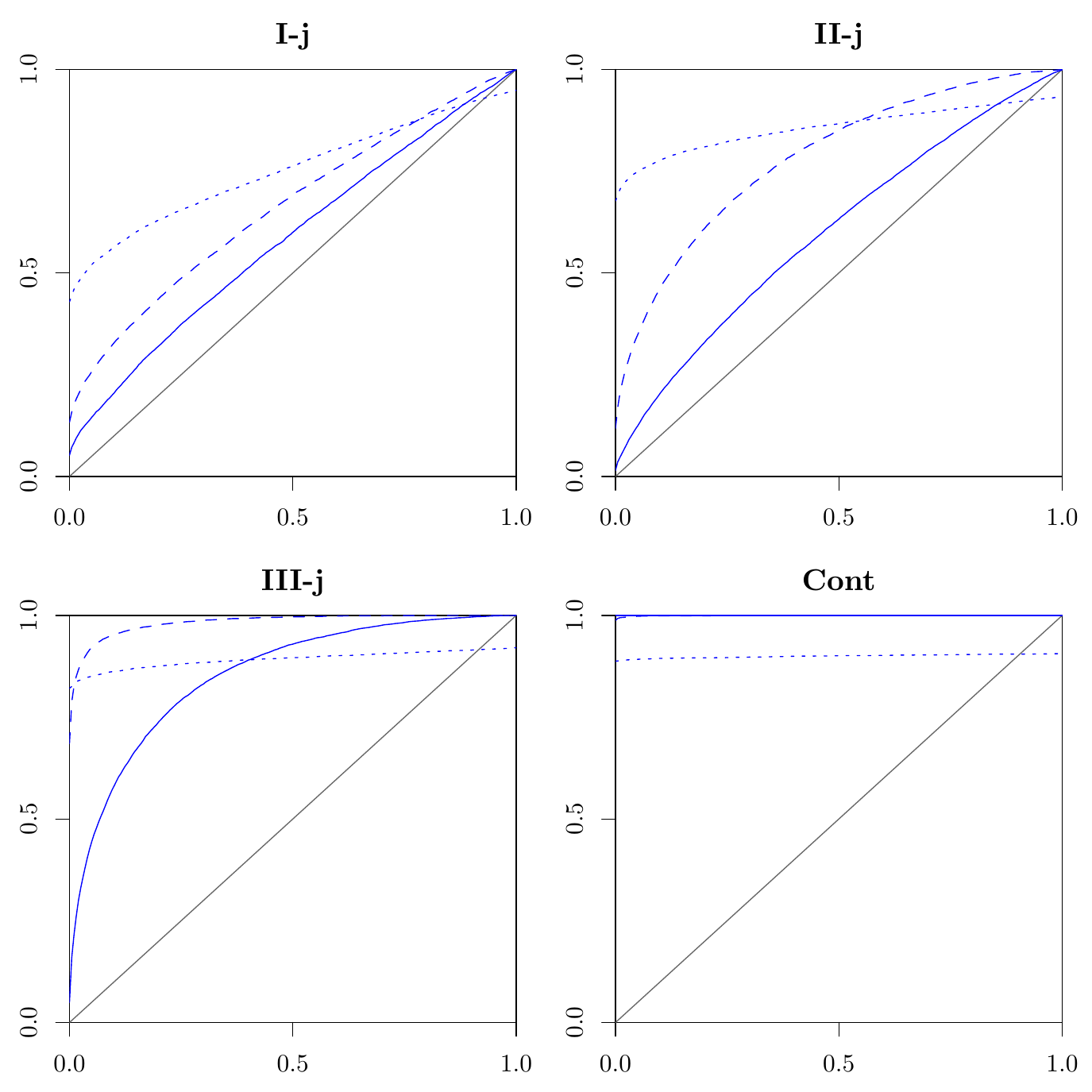}
\caption{Rejection curves from the Monte Carlo for $k=2$.}\label{fig_k2_1}
\end{subfigure}
\begin{subfigure}[t]{0.45\textwidth}
\includegraphics[width=6.2cm]{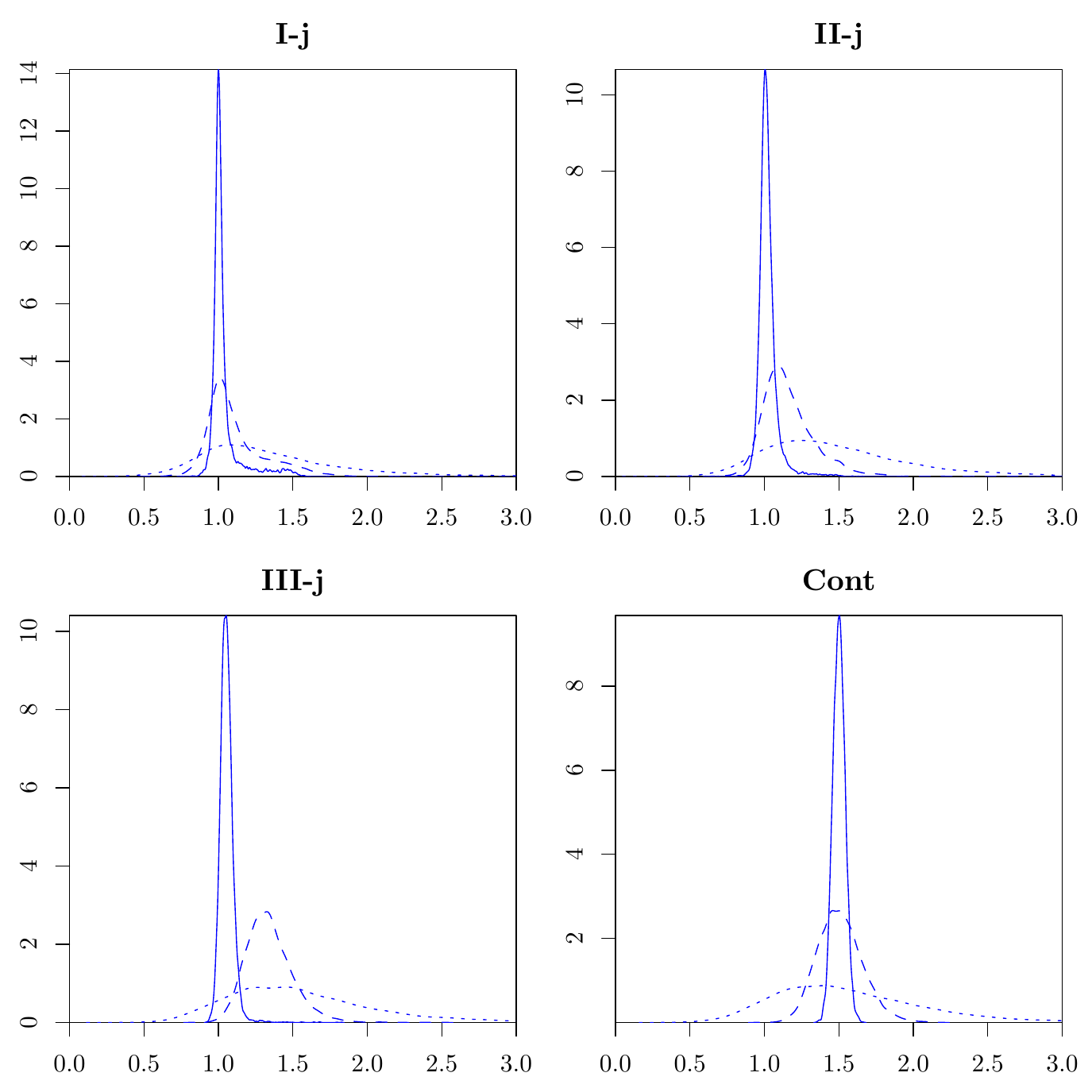}
\caption{Density estimation of $\Phi_{k,T,n}^{(J)}$ from the Monte Carlo for $k=2$.}\label{fig_k2_2}
\end{subfigure}

\begin{subfigure}[t]{0.45\textwidth}
\includegraphics[width=6.2cm]{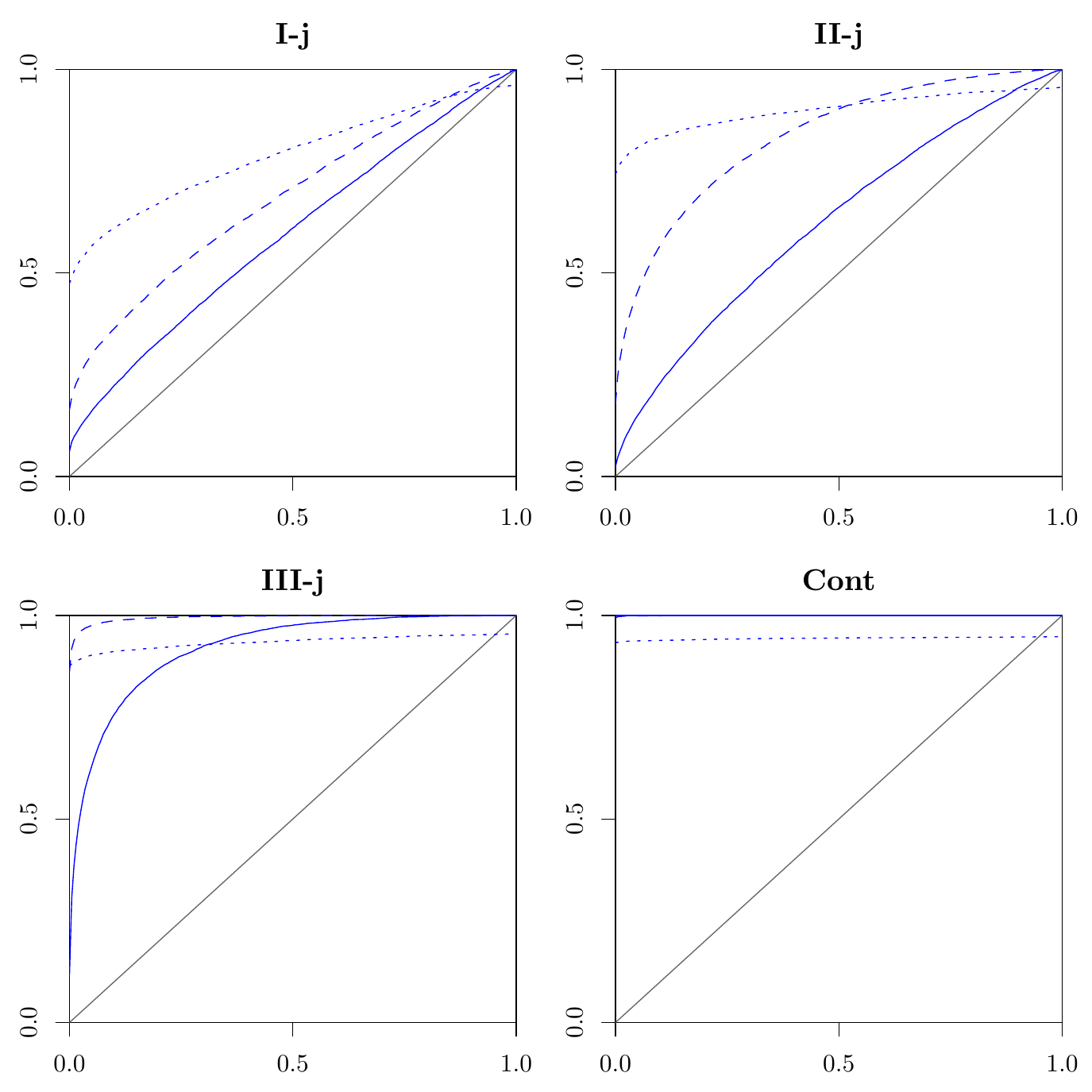}
\caption{Rejection curves from the Monte Carlo for $k=3$.}\label{fig_k3_1}
\end{subfigure}
\begin{subfigure}[t]{0.45\textwidth}
\includegraphics[width=6.2cm]{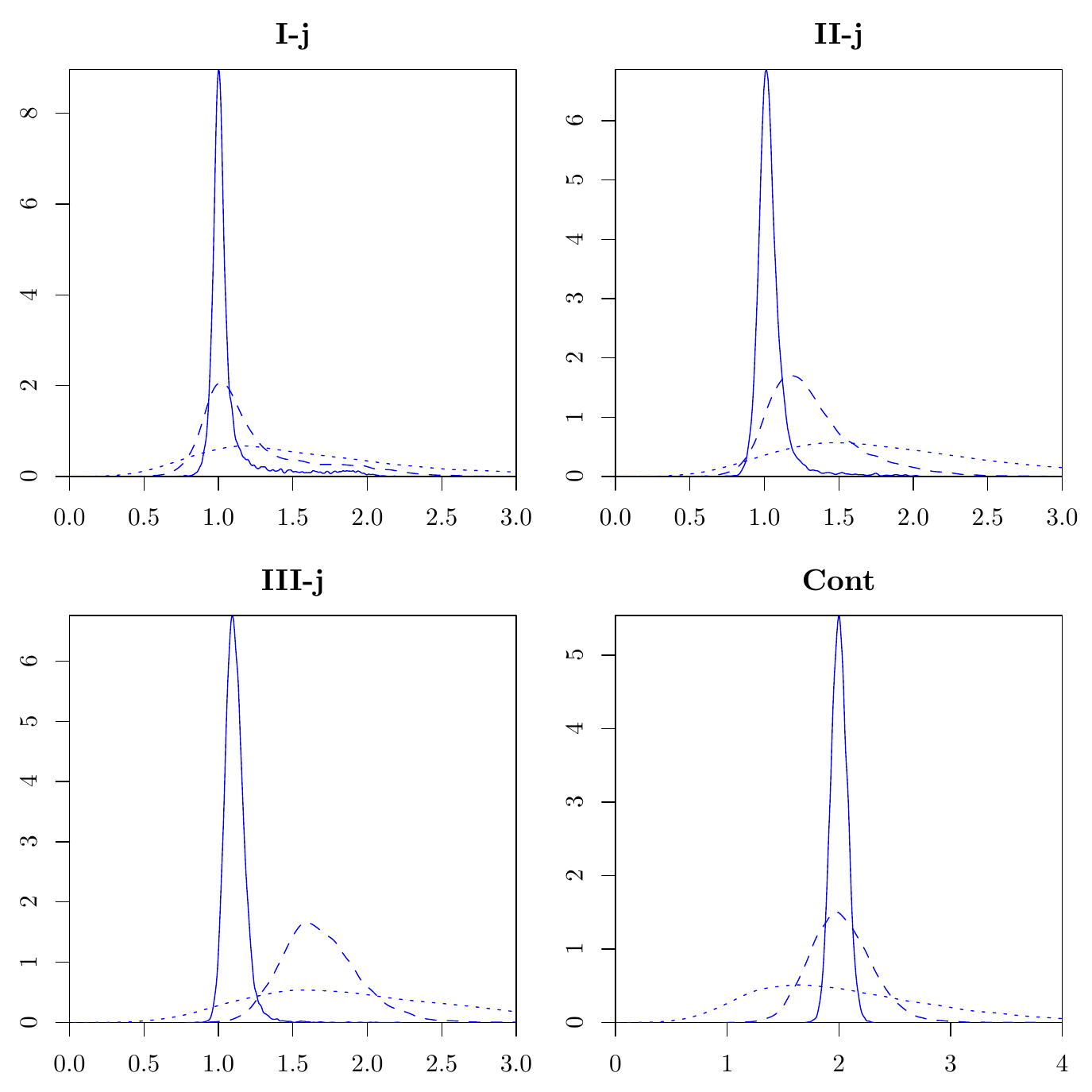} 
\caption{Density estimation of $\Phi_{k,T,n}^{(J)}$ from the Monte Carlo for $k=3$.}\label{fig_k3_2}
\end{subfigure}

\begin{subfigure}{0.45\textwidth}
\includegraphics[width=6.2cm]{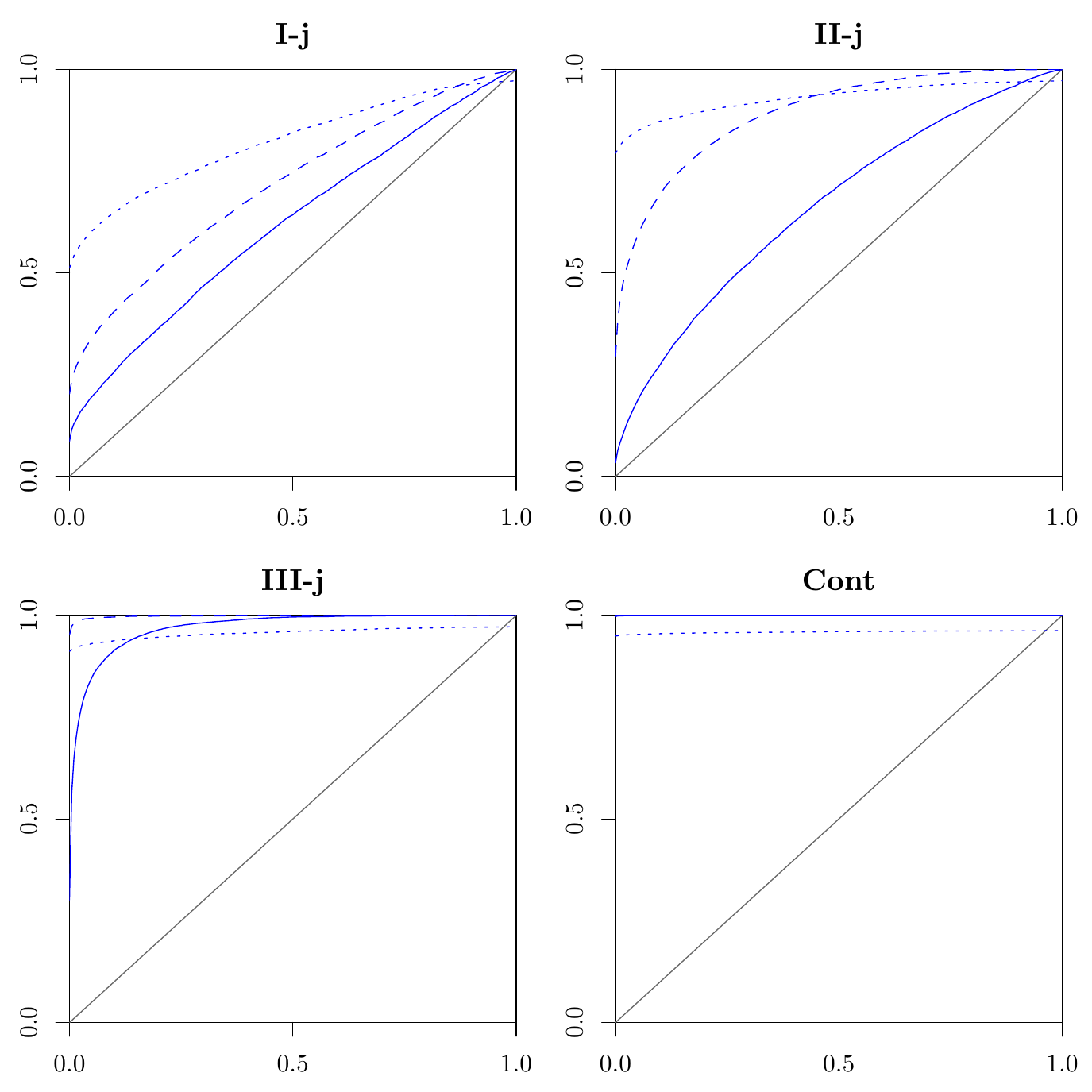}
\caption{Rejection curves from the Monte Carlo for $k=5$.}\label{fig_k5_1}
\end{subfigure}
\begin{subfigure}{0.45\textwidth}
\includegraphics[width=6.2cm]{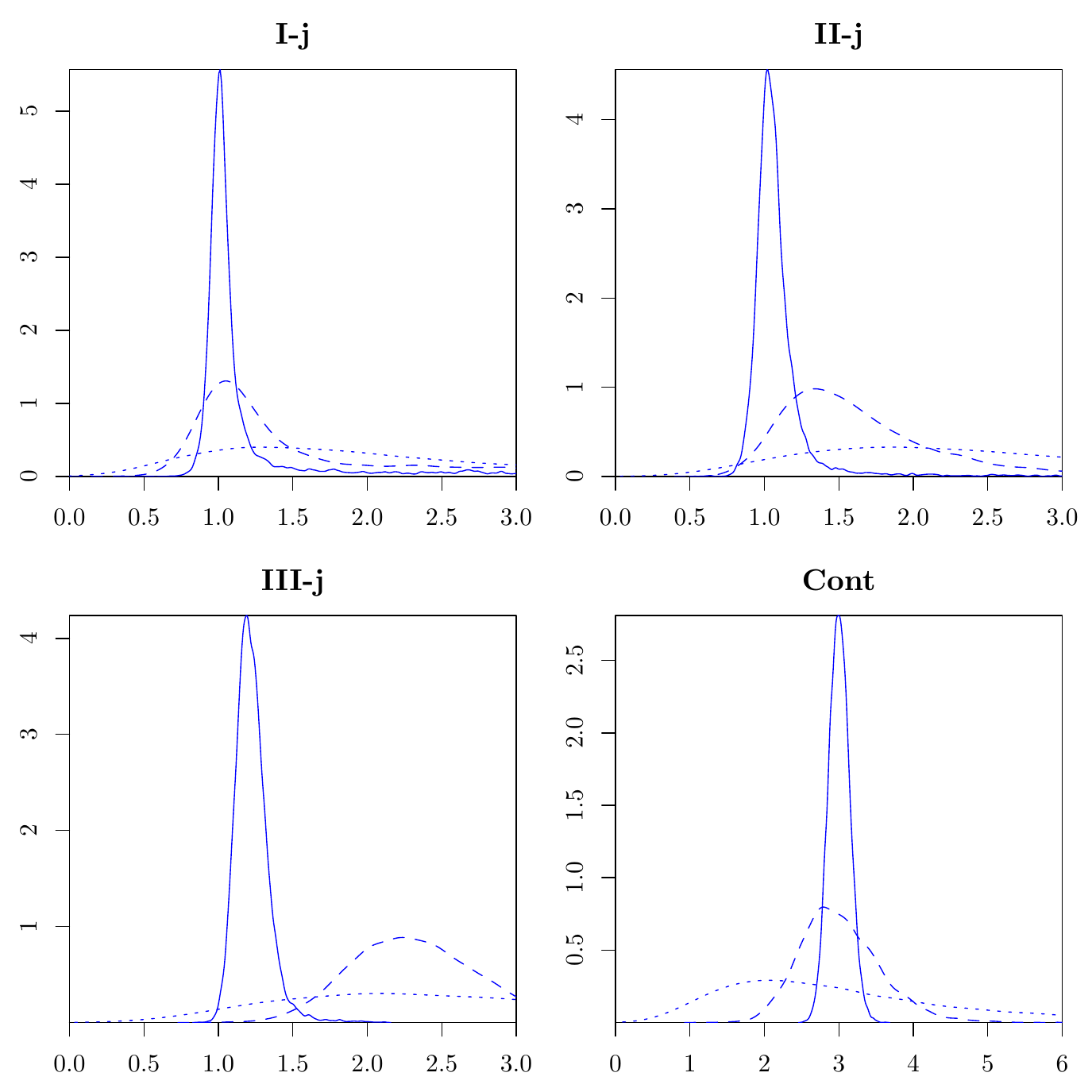}
\caption{Density estimation of $\Phi_{k,T,n}^{(J)}$ from the Monte Carlo for $k=5$.}\label{fig_k5_2}
\end{subfigure}
\caption{These graphics show the simulation results for the test from Theorem \ref{test_theo_J}. The dotted lines correspond to $n=100$, the dashed lines to $n=\numprint{1600}$ and the solid lines to $n=\numprint{25600}$. In all cases $N=\numprint{10000}$ paths were simulated.}
\label{figures_J}
\end{figure}

\begin{figure}[htb]
\centering
\begin{subfigure}[t]{0.45\textwidth}
\includegraphics[width=6.2cm]{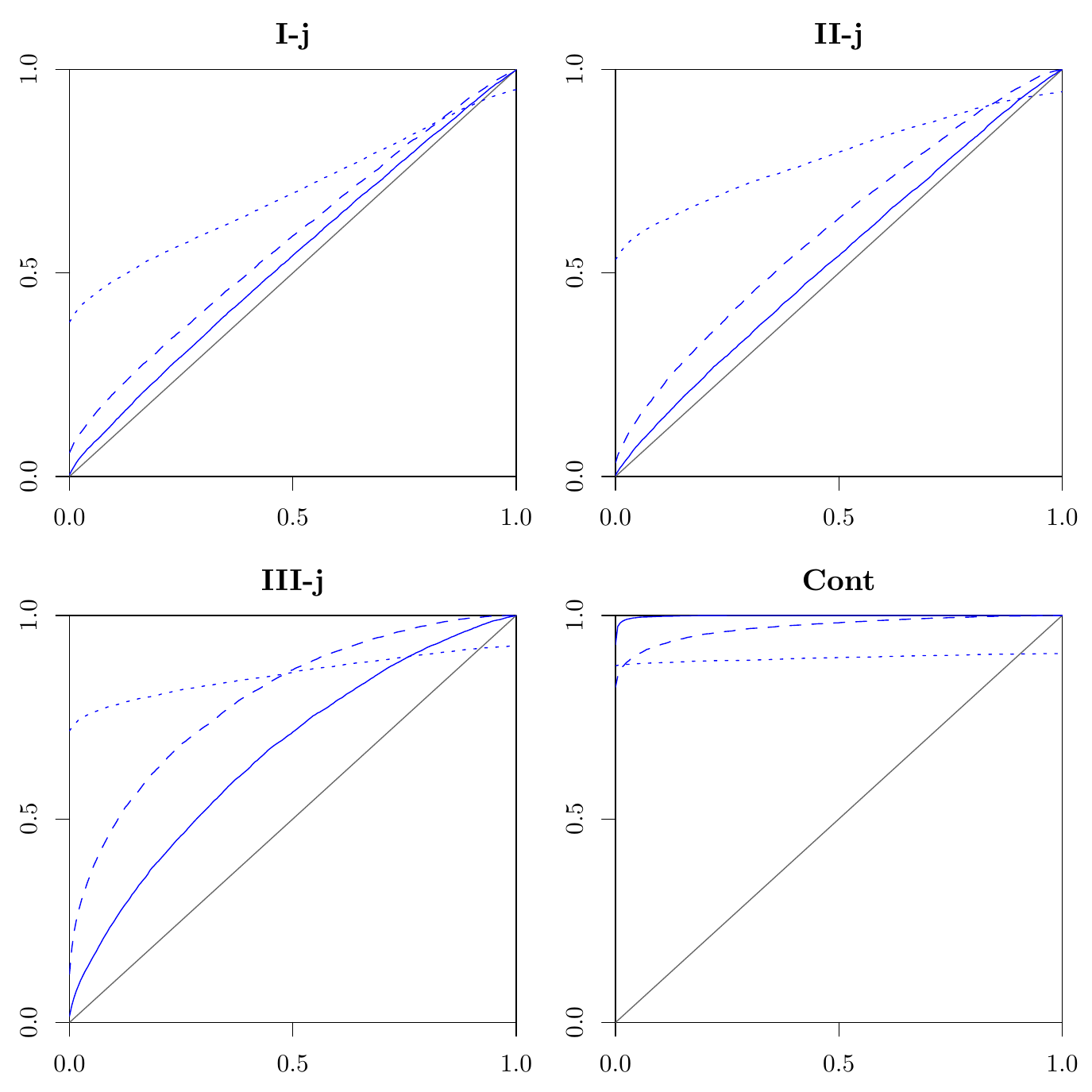} 
\caption{Rejection curves from the Monte Carlo for $k=2$.}\label{fig_k2r_1}
\end{subfigure}
\begin{subfigure}[t]{0.45\textwidth}
\includegraphics[width=6.2cm]{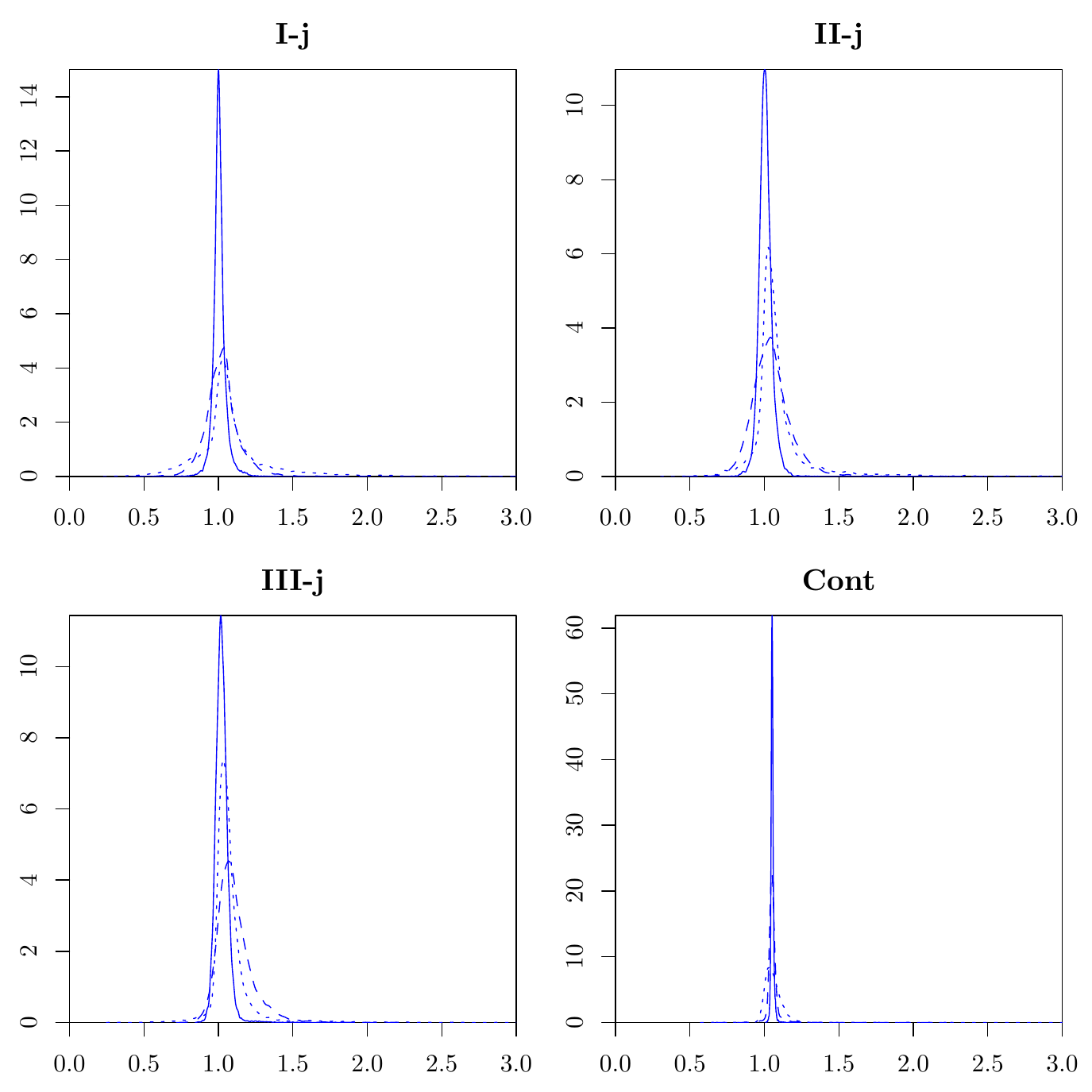}  
\caption{Density estimation of $\widetilde{\Phi}_{k,T,n}^{(J)}(0.9)$ from the Monte Carlo for $k=2$.}\label{fig_k2r_2}
\end{subfigure}

\begin{subfigure}[t]{0.45\textwidth}
\includegraphics[width=6.2cm]{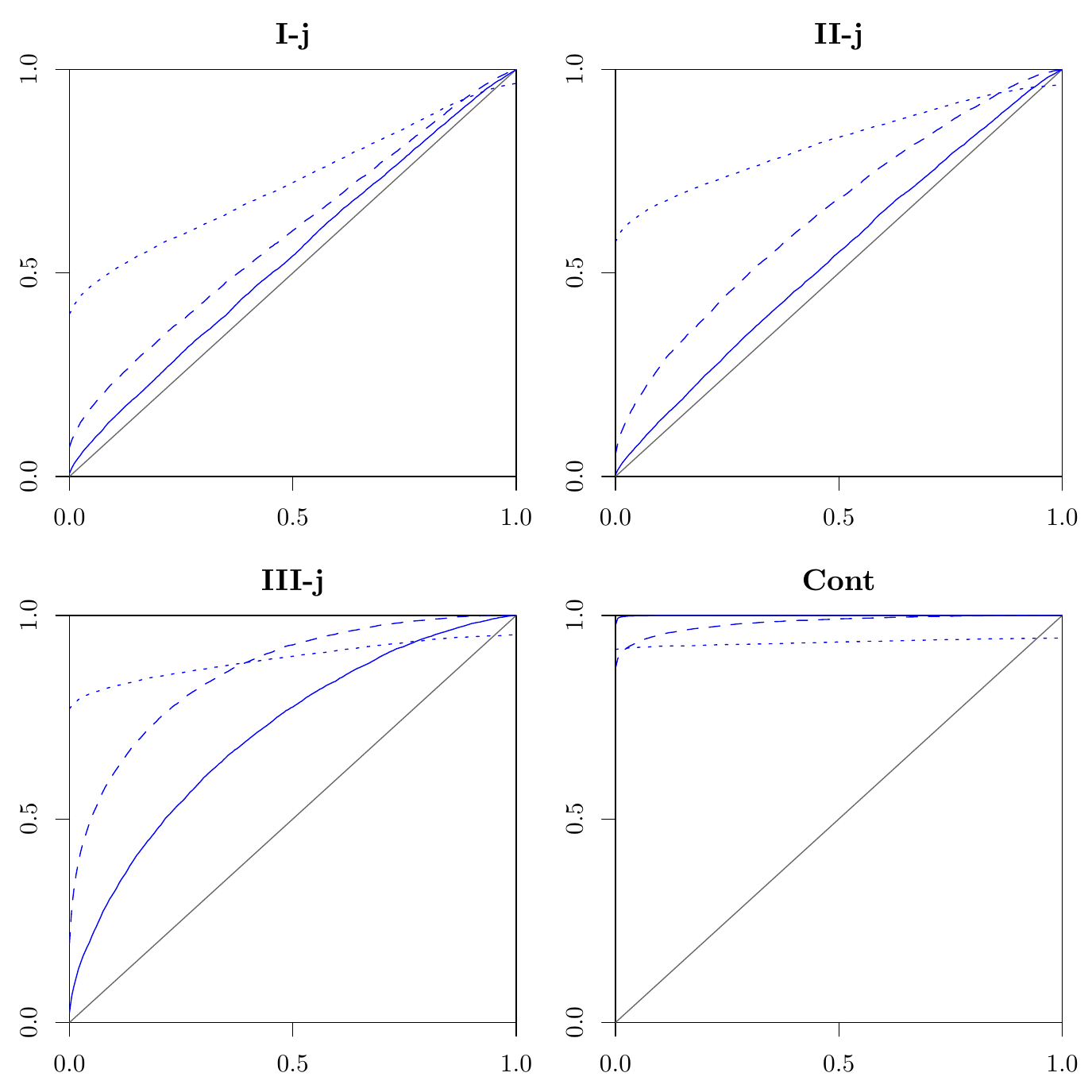}
\caption{Rejection curves from the Monte Carlo for $k=3$.}\label{fig_k3r_1}
\end{subfigure}
\begin{subfigure}[t]{0.45\textwidth}
\includegraphics[width=6.2cm]{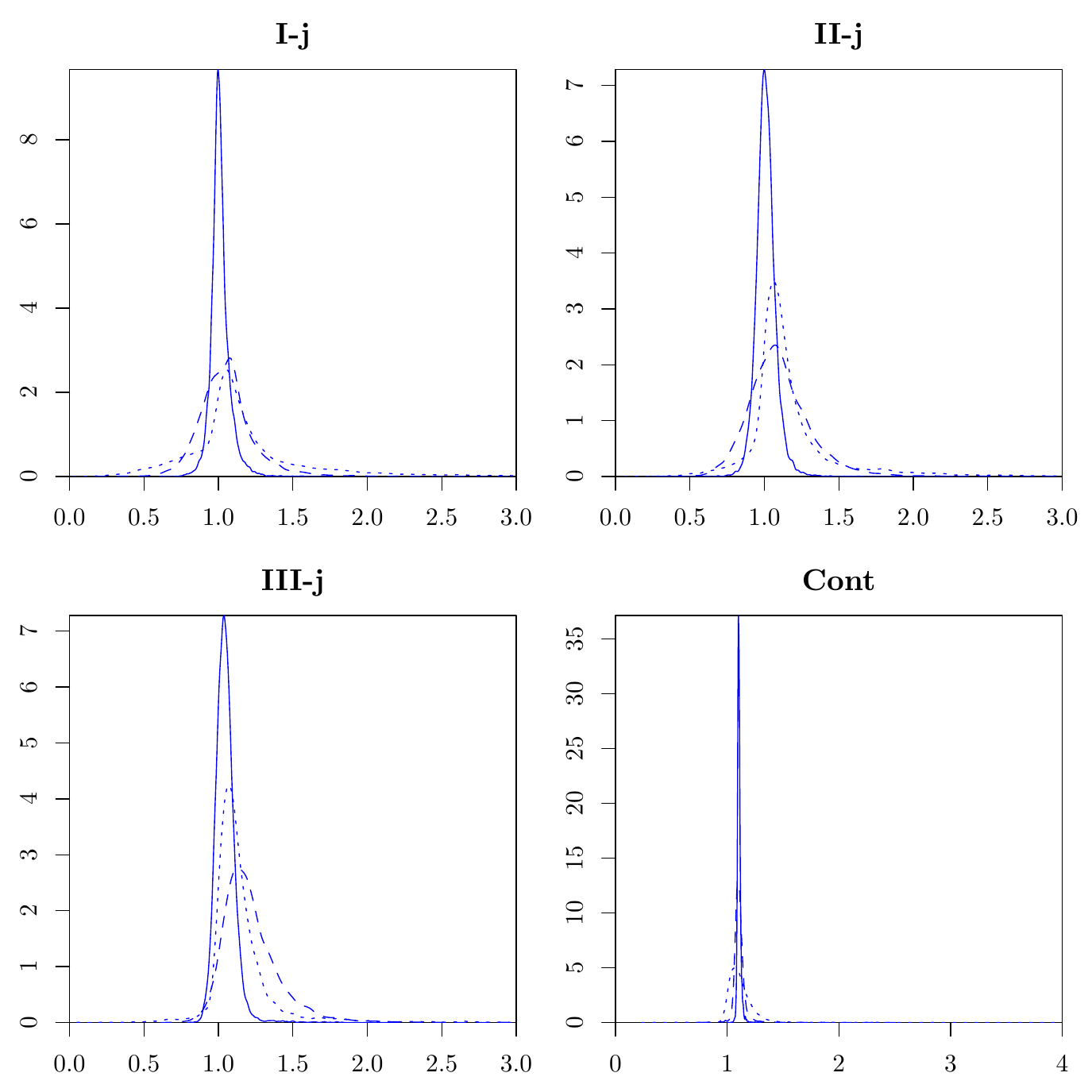} 
\caption{Density estimation of $\widetilde{\Phi}_{k,T,n}^{(J)}(0.9)$ from the Monte Carlo for $k=3$.}\label{fig_k3r_2}
\end{subfigure}

\begin{subfigure}[t]{0.45\textwidth}
\includegraphics[width=6.2cm]{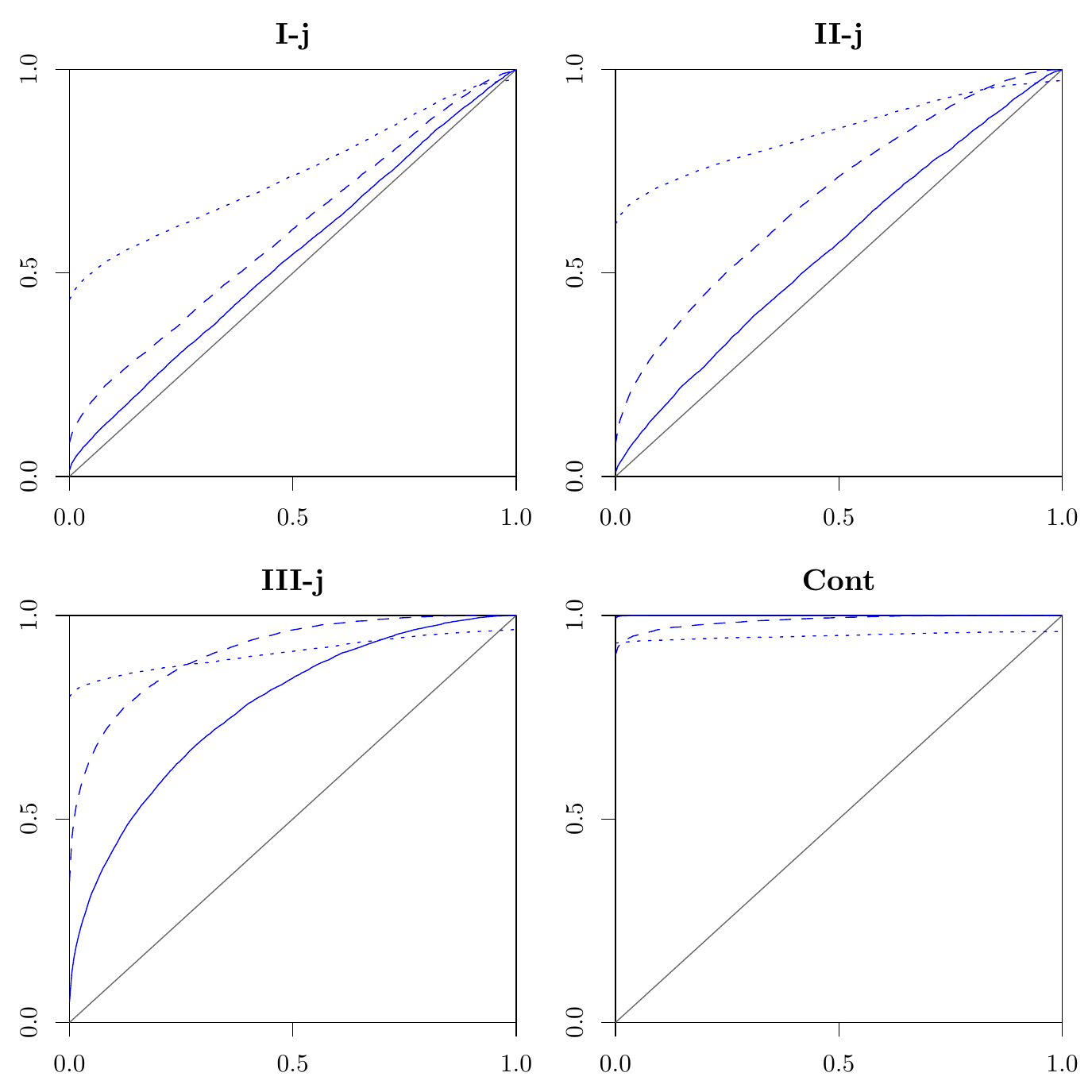}
\caption{Rejection curves from the Monte Carlo for $k=5$.}\label{fig_k5r_1}
\end{subfigure}
\begin{subfigure}[t]{0.45\textwidth}
\includegraphics[width=6.2cm]{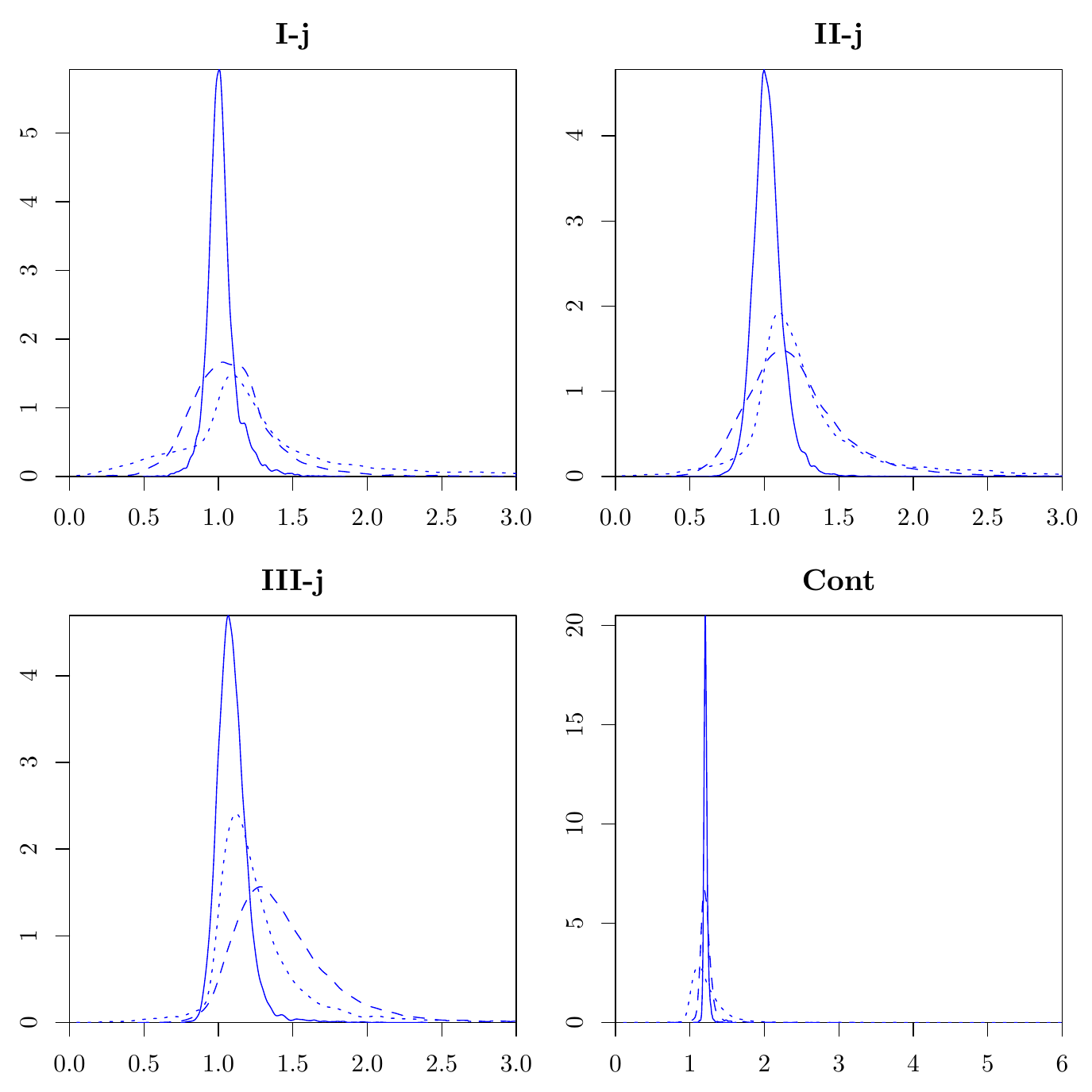} 
\caption{Density estimation of $\widetilde{\Phi}_{k,T,n}^{(J)}(0.9)$ from the Monte Carlo for $k=5$.}\label{fig_k5r_2}
\end{subfigure}
\caption{These graphics show the simulation results for the test from Corollary \ref{test_cor_J}. The dotted lines correspond to $n=100$, the dashed lines to $n=\numprint{1600}$ and the solid lines to $n=\numprint{25600}$. In all cases $N=\numprint{10000}$ paths were simulated.}
\label{figures_J_corr}
\end{figure}

\begin{figure}[htb]
\centering
\begin{subfigure}[t]{0.45\textwidth}
\includegraphics[width=6.2cm]{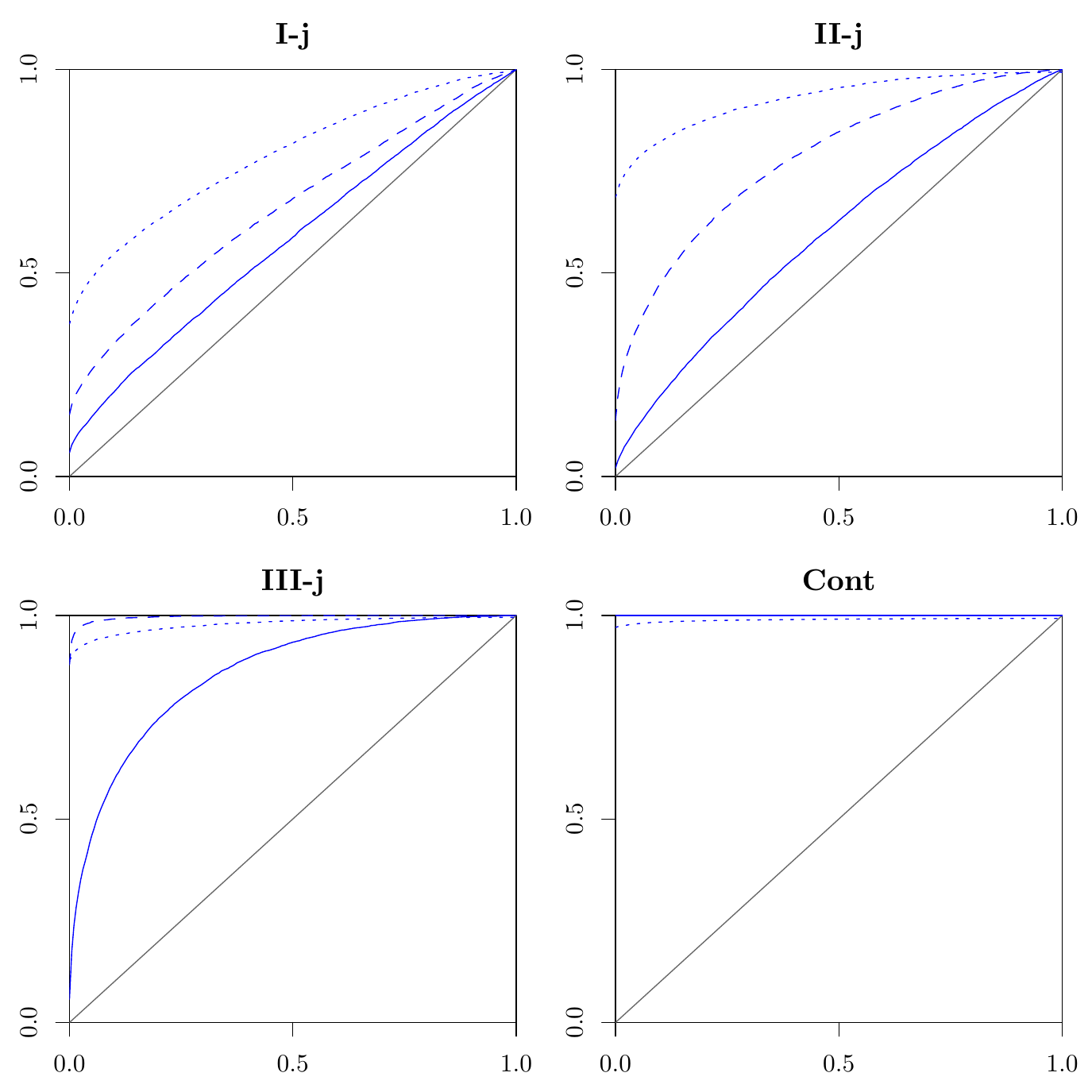}
\caption{Rejection curves for $k=2$.}\label{fig_equi_k2_1}
\end{subfigure}
\begin{subfigure}[t]{0.45\textwidth}
\includegraphics[width=6.2cm]{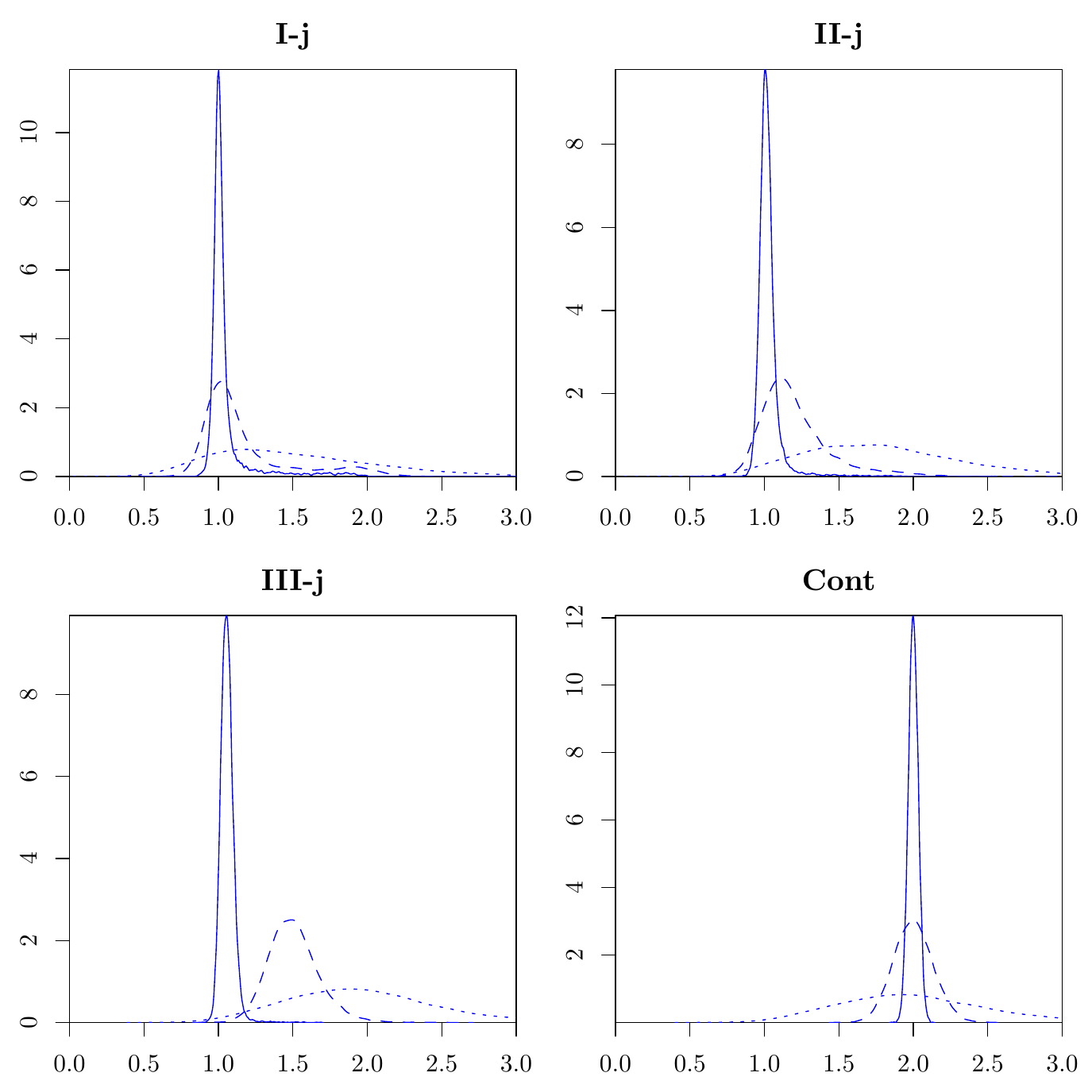} 
\caption{Density estimation of $\Phi_{2,T,n}^{(J)}$.}\label{fig_equi_k2_2}
\end{subfigure}

\begin{subfigure}[t]{0.45\textwidth}
\includegraphics[width=6.2cm]{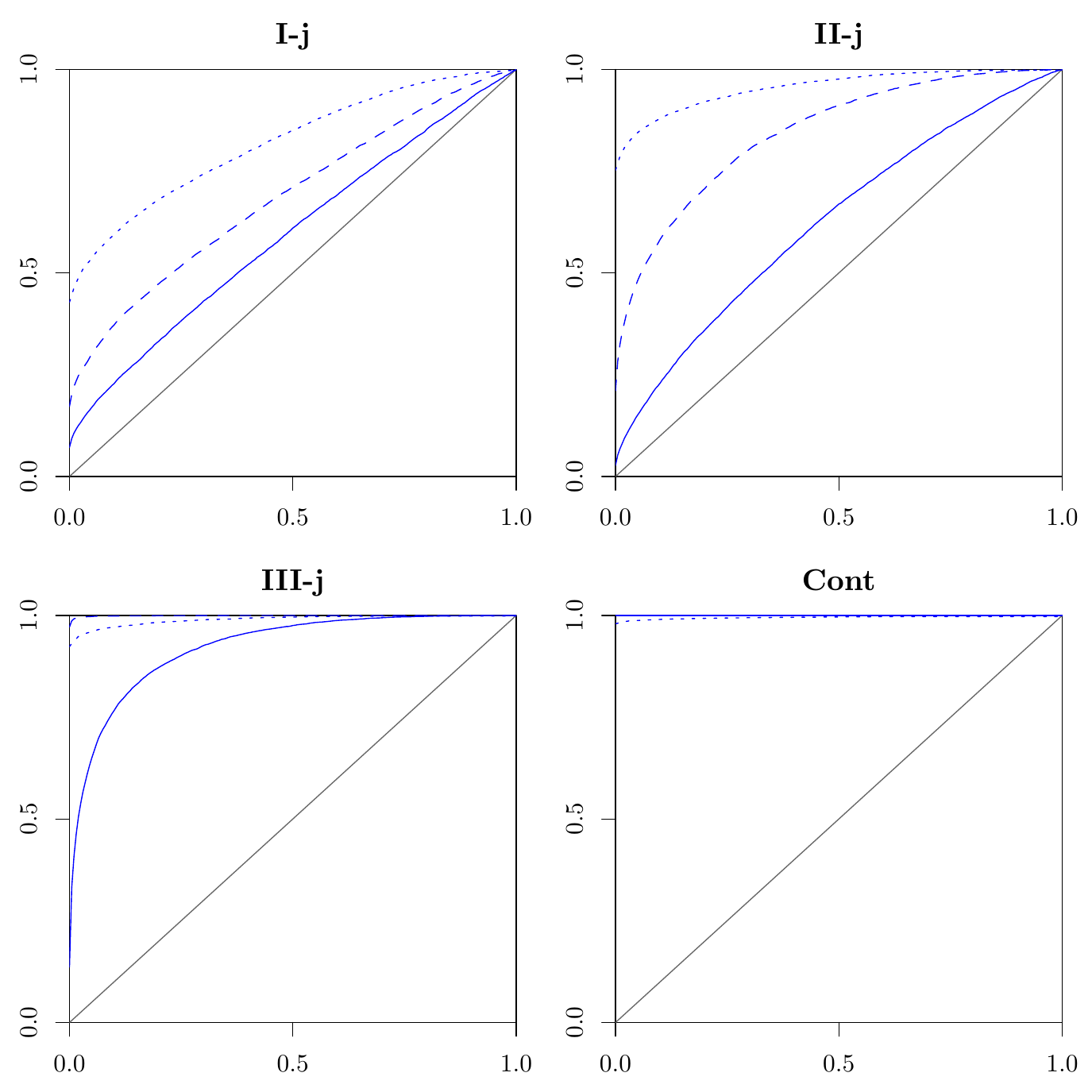} 
\caption{Rejection curves for $k=3$.}\label{fig_equi_k3_1}
\end{subfigure}
\begin{subfigure}[t]{0.45\textwidth}
\includegraphics[width=6.2cm]{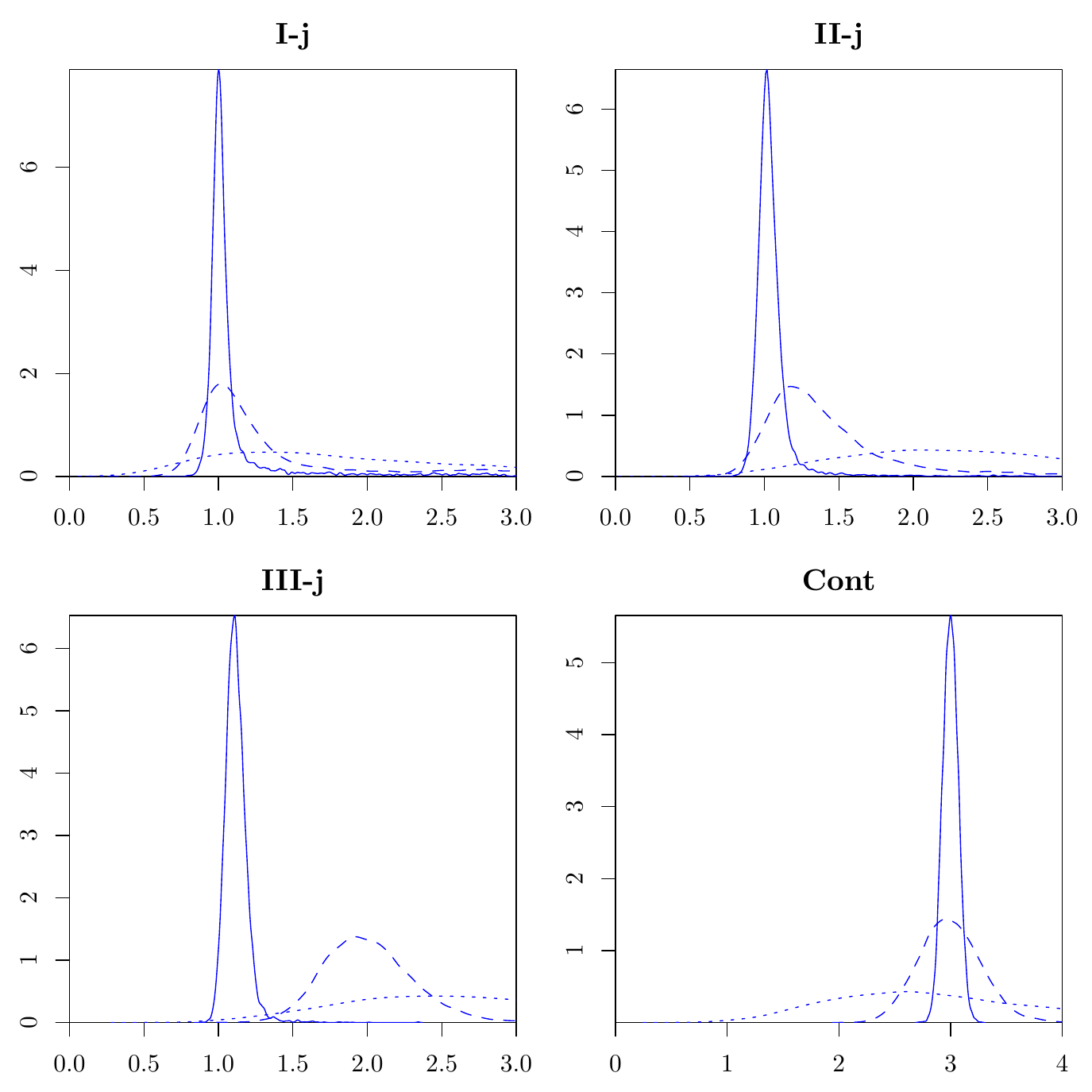}
\caption{Density estimation of $\Phi_{3,T,n}^{(J)}$.}\label{fig_equi_k3_2}
\end{subfigure}

\begin{subfigure}{0.45\textwidth}
\includegraphics[width=6.2cm]{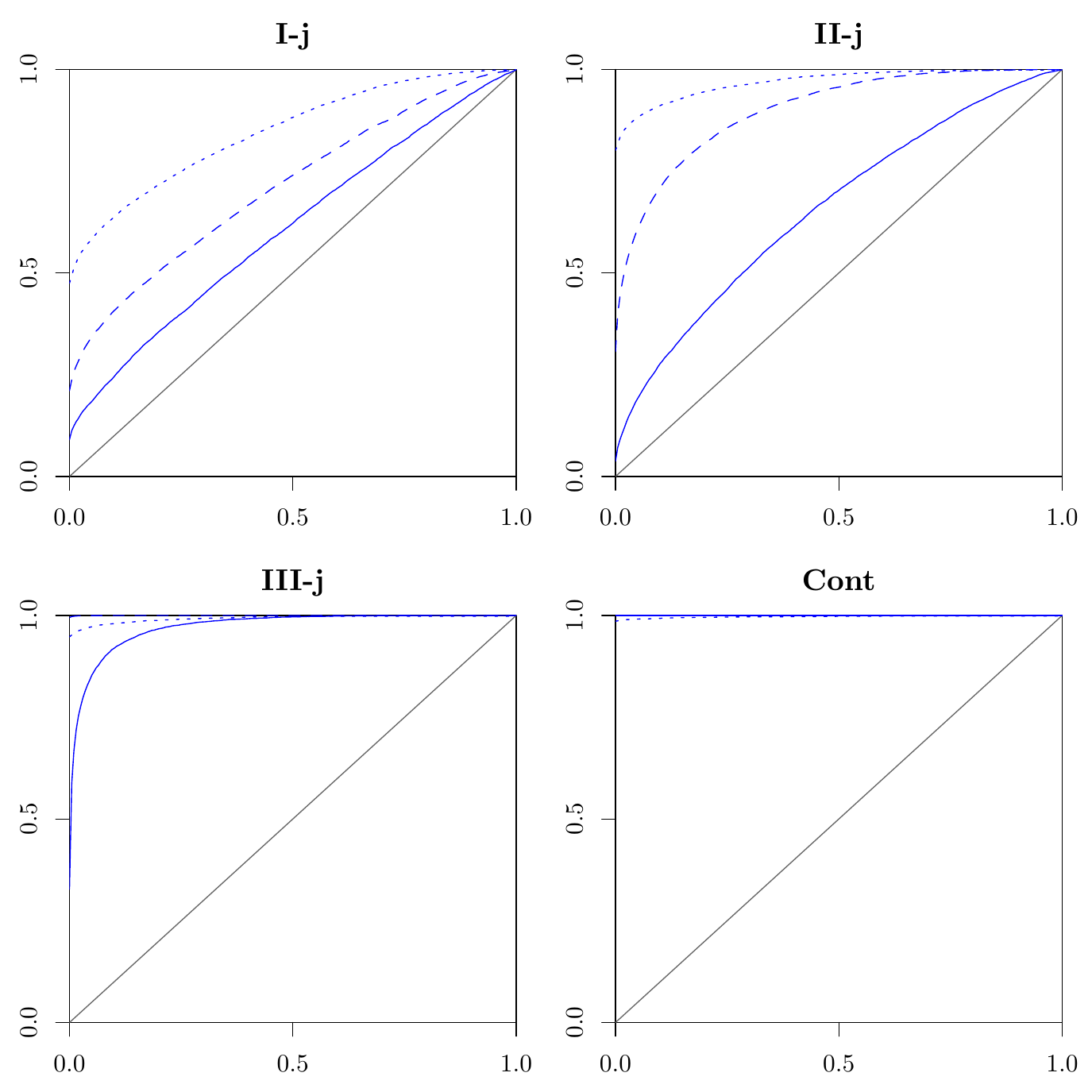}
\caption{Rejection curves for $k=5$.}\label{fig_equi_k5_1}
\end{subfigure}
\begin{subfigure}{0.45\textwidth}
\includegraphics[width=6.2cm]{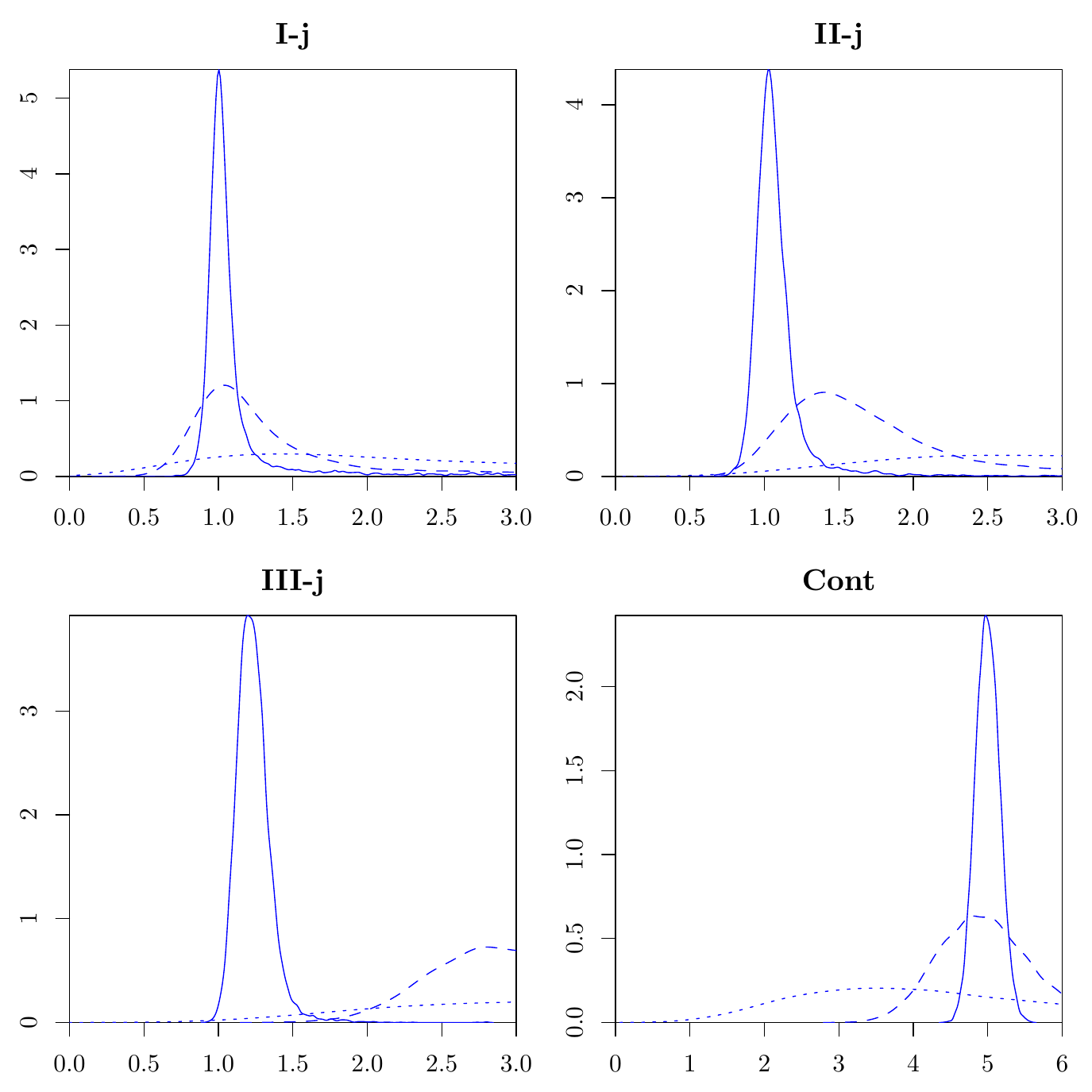}
\caption{Density estimation of $\Phi_{5,T,n}^{(J)}$.}\label{fig_equi_k5_2}
\end{subfigure}
\caption{Simulation results for the test from Theorem \ref{test_theo_J} based on \emph{equidistant observations} $t_{i,n}=i/n$. The dotted lines correspond to $n=100$, the dashed lines to $n=\numprint{1600}$ and the solid lines to $n=\numprint{25600}$. In all cases $N=\numprint{10000}$ paths were simulated.}
\label{figures_equi_J} 
\end{figure}

\begin{figure}[htb]
\centering
\begin{subfigure}[t]{0.45\textwidth}
\includegraphics[width=6.2cm]{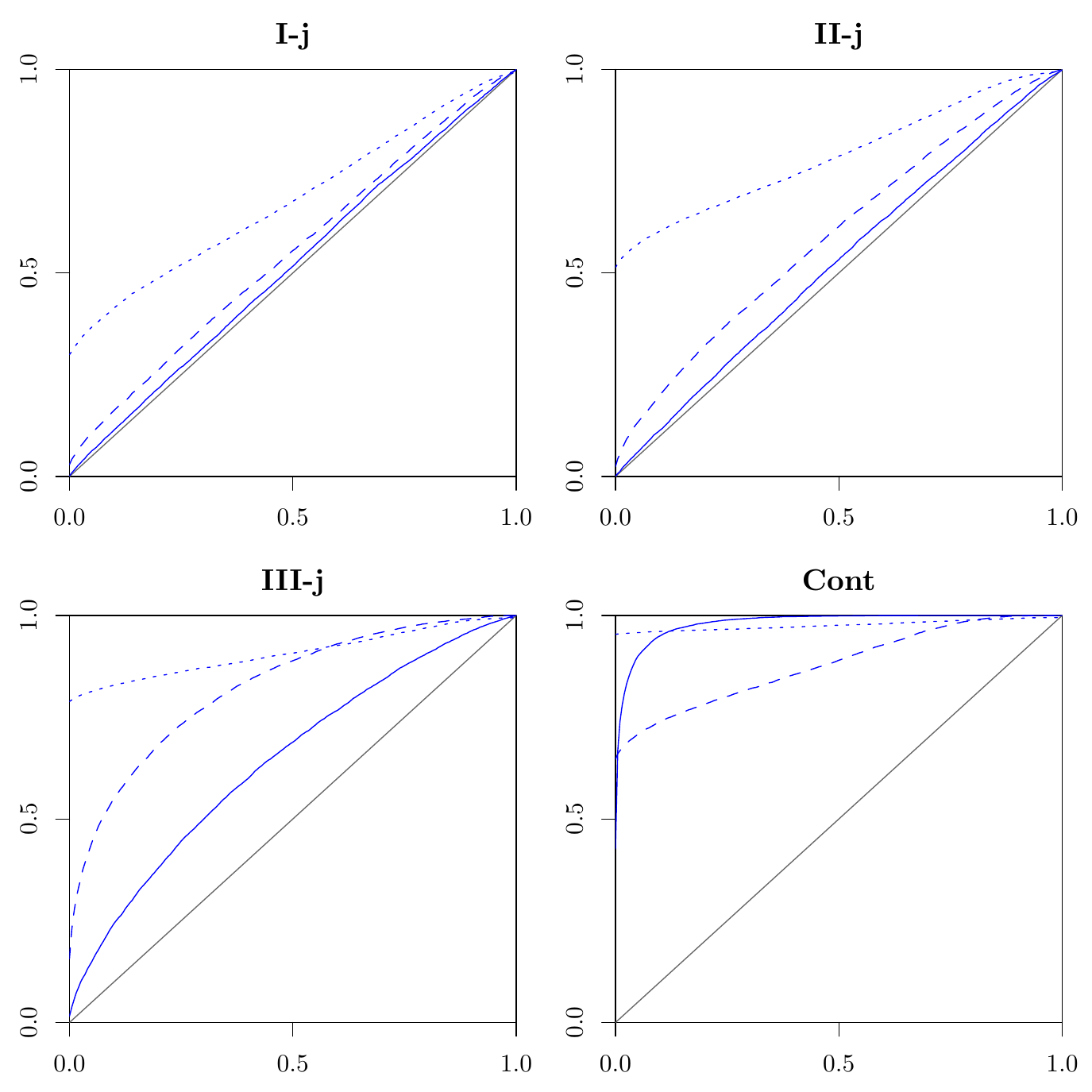}
\caption{Rejection curves from the Monte Carlo for $k=2$.}\label{fig_equi_k2r_1}
\end{subfigure}
\begin{subfigure}[t]{0.45\textwidth}
\includegraphics[width=6.2cm]{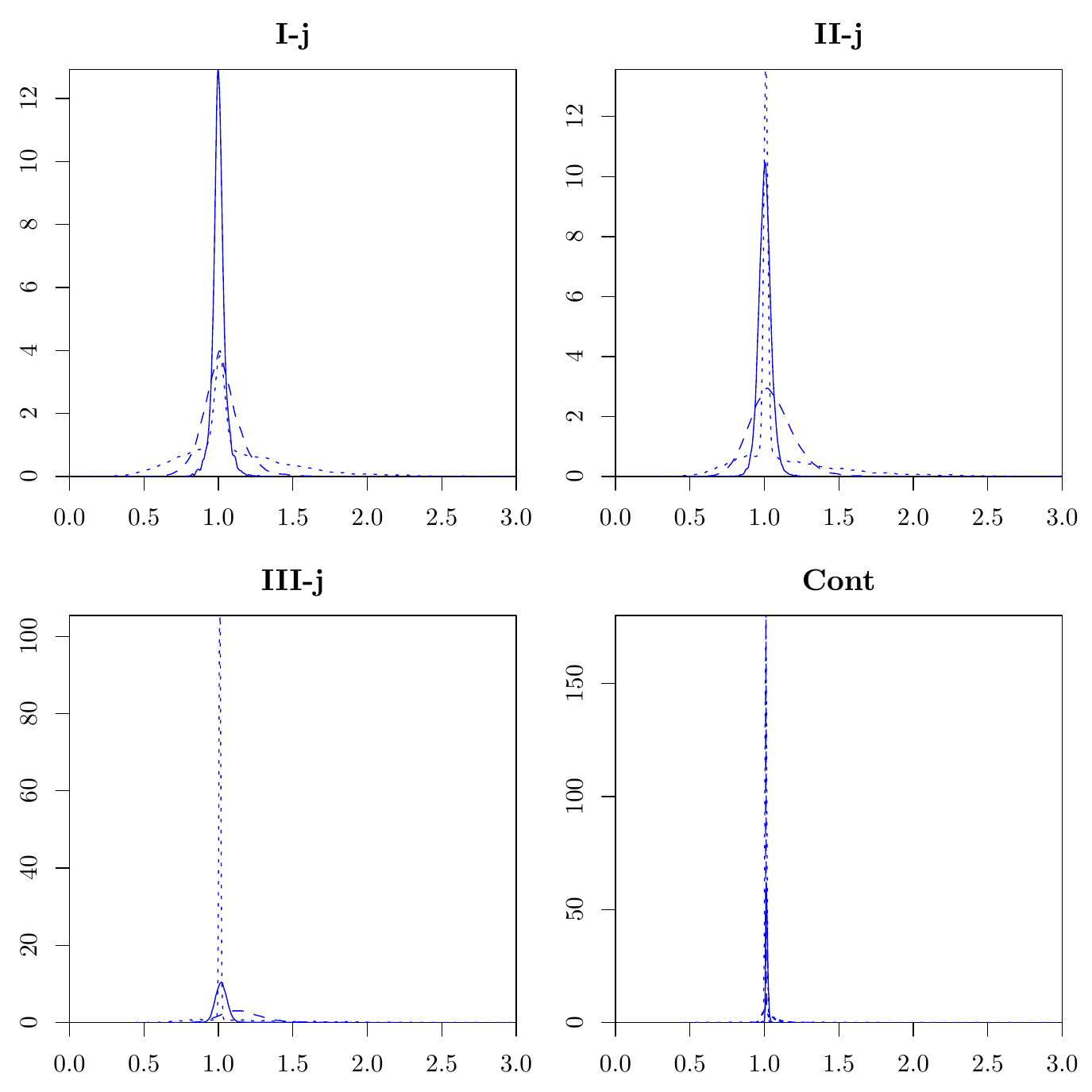}
\caption{Density estimation of $\widetilde{\Phi}_{2,T,n}^{(J)}(0.99)$.}\label{fig_equi_k2r_2}
\end{subfigure}

\begin{subfigure}[t]{0.45\textwidth}
\includegraphics[width=6.2cm]{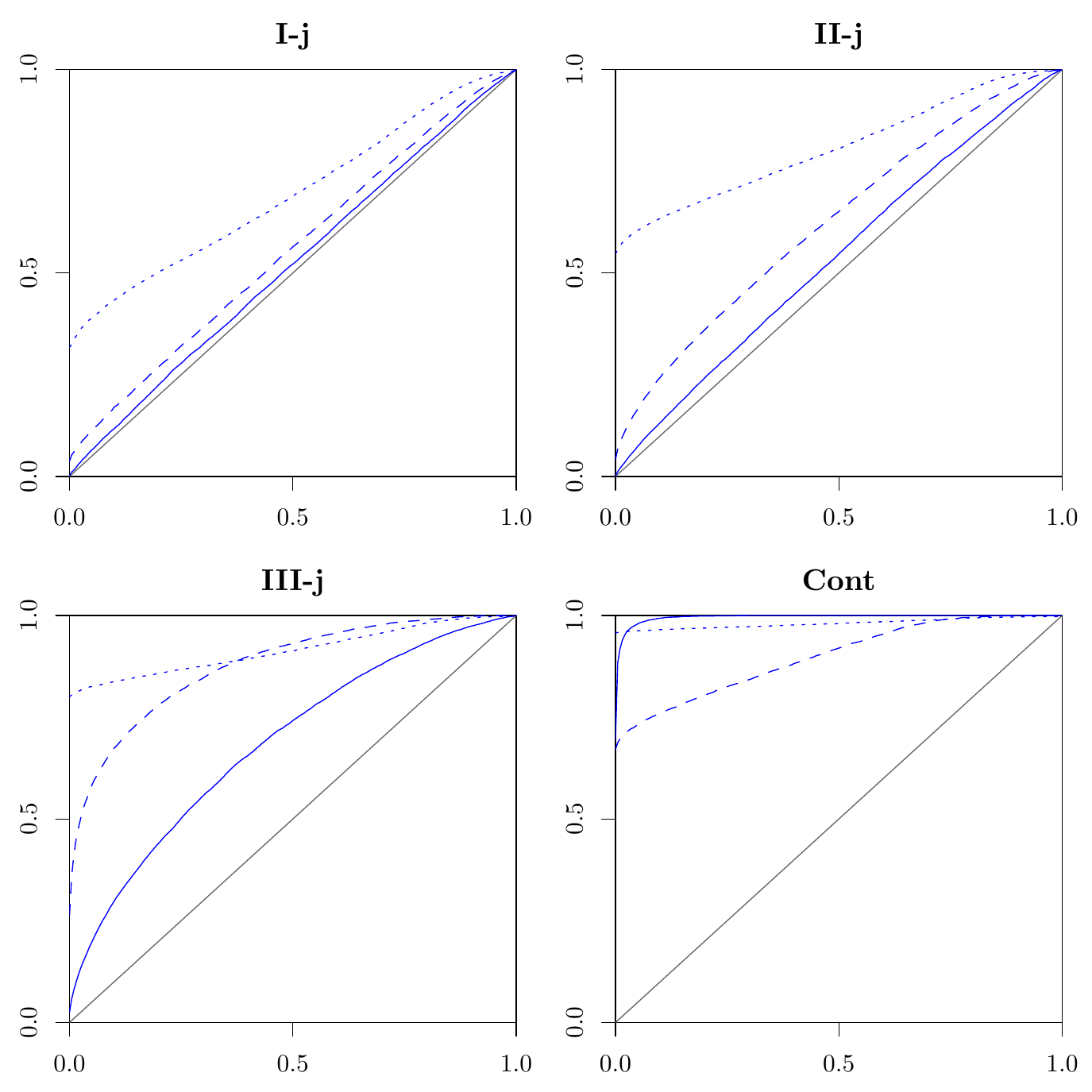}
\caption{Rejection curves from the Monte Carlo for $k=3$.}\label{fig_equi_k3r_1}
\end{subfigure}
\begin{subfigure}[t]{0.45\textwidth}
\includegraphics[width=6.2cm]{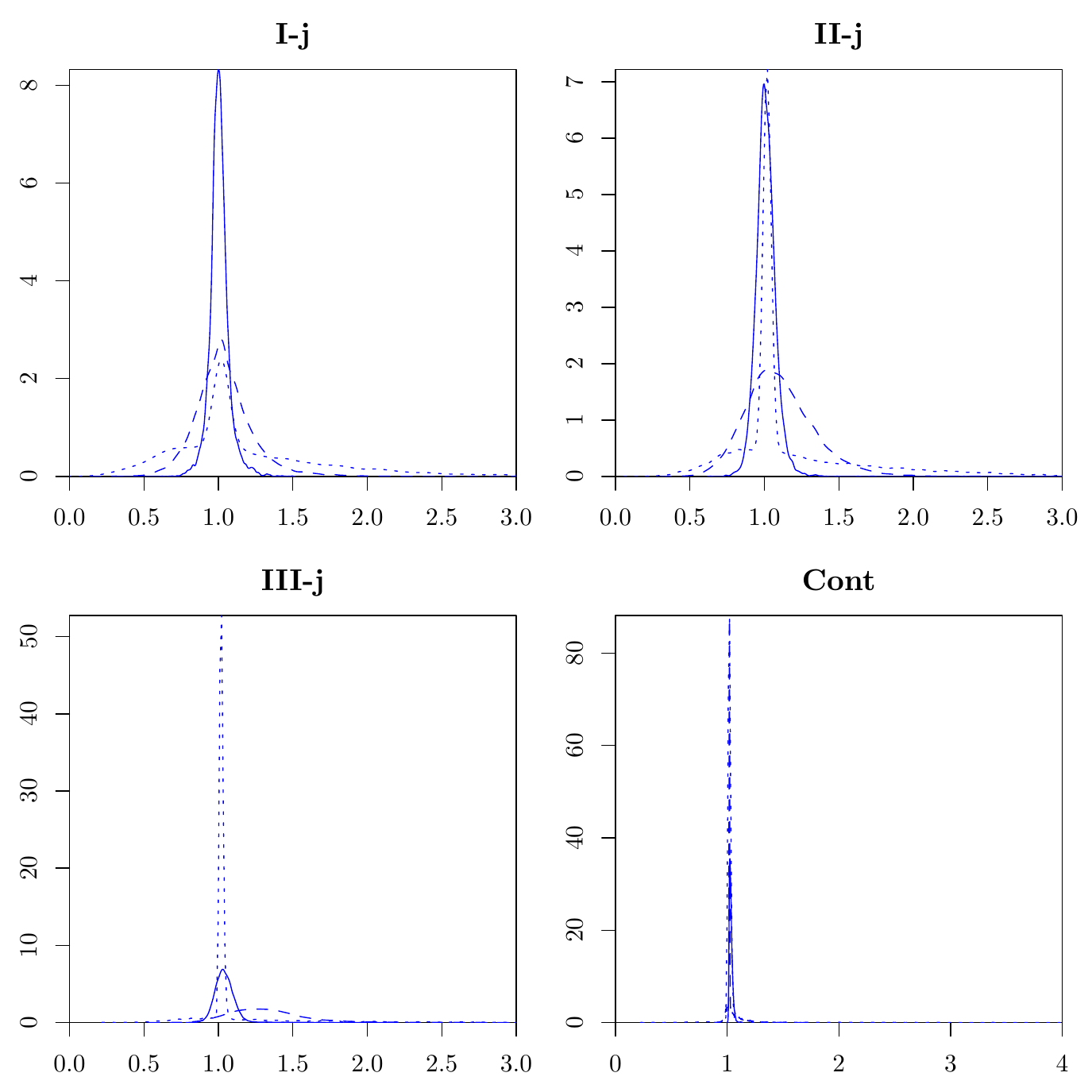}
\caption{Density estimation of $\widetilde{\Phi}_{3,T,n}^{(J)}(0.99)$.}\label{fig_equi_k3r_2}
\end{subfigure}

\begin{subfigure}[t]{0.45\textwidth}
\includegraphics[width=6.2cm]{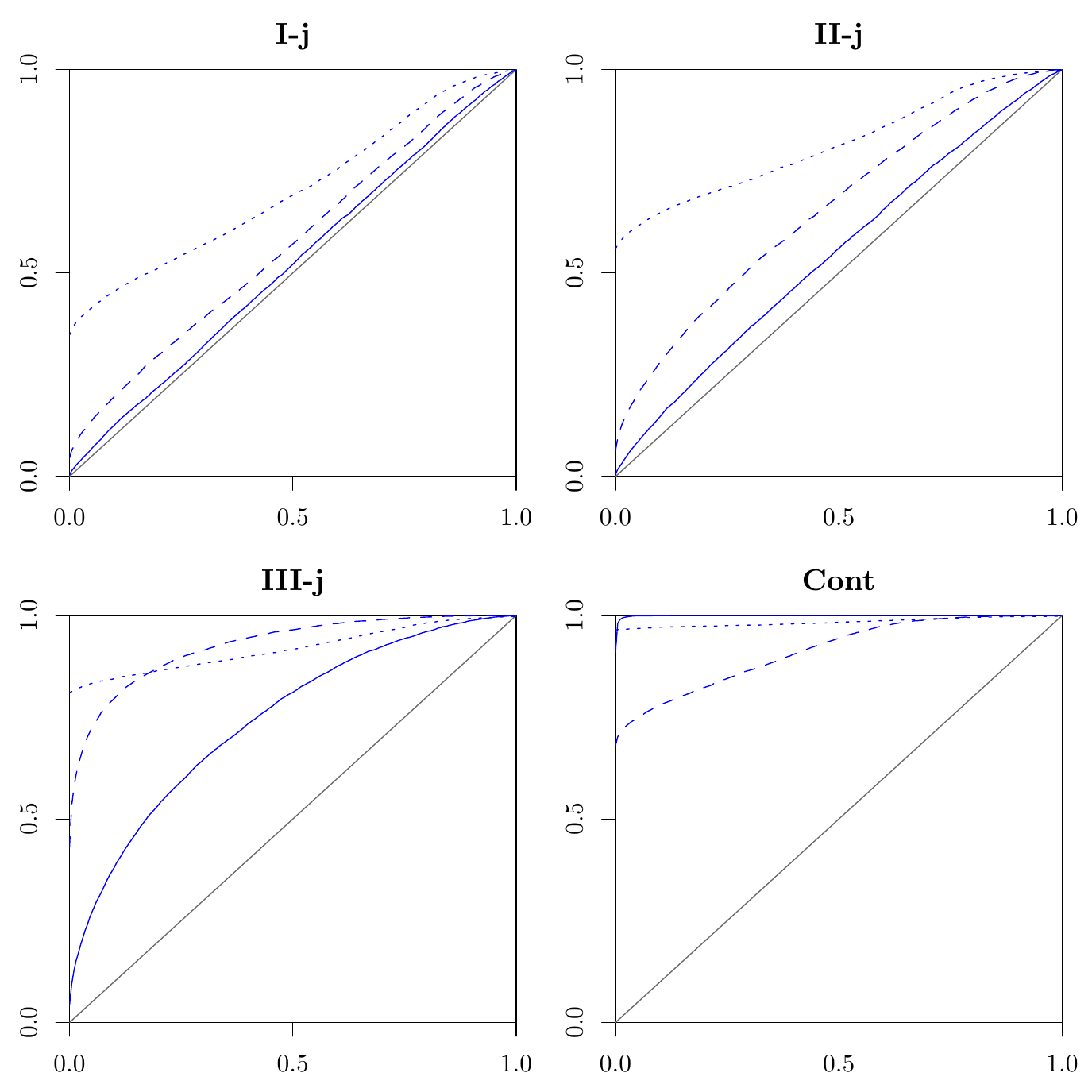}
\caption{Rejection curves from the Monte Carlo for $k=5$.}\label{fig_equi_k5r_1}
\end{subfigure}
\begin{subfigure}[t]{0.45\textwidth}
\includegraphics[width=6.2cm]{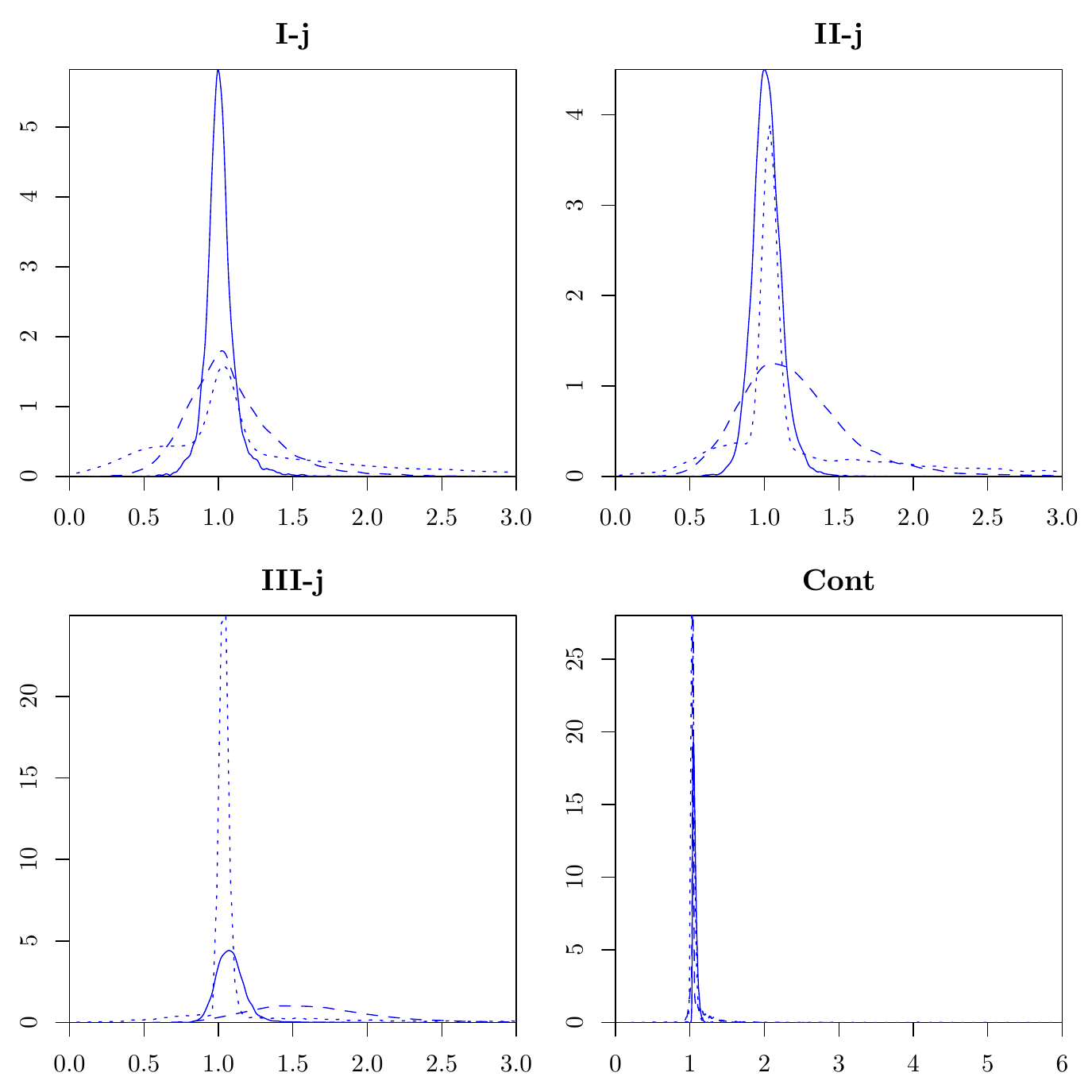}
\caption{Density estimation of $\widetilde{\Phi}_{5,T,n}^{(J)}(0.99)$.}\label{fig_equi_k5r_2}
\end{subfigure}
\caption{Simulation results for the test from Corollary \ref{test_cor_J} based on \emph{equidistant observations} $t_{i,n}=i/n$. The dotted lines correspond to $n=100$, the dashed lines to $n=\numprint{1600}$ and the solid lines to $n=\numprint{25600}$. In all cases $N=\numprint{10000}$ paths were simulated.}
\label{figures_equi_J_corr}
\end{figure}
\FloatBarrier

\begin{figure}[!htb]
\centering
\includegraphics[width=10cm]{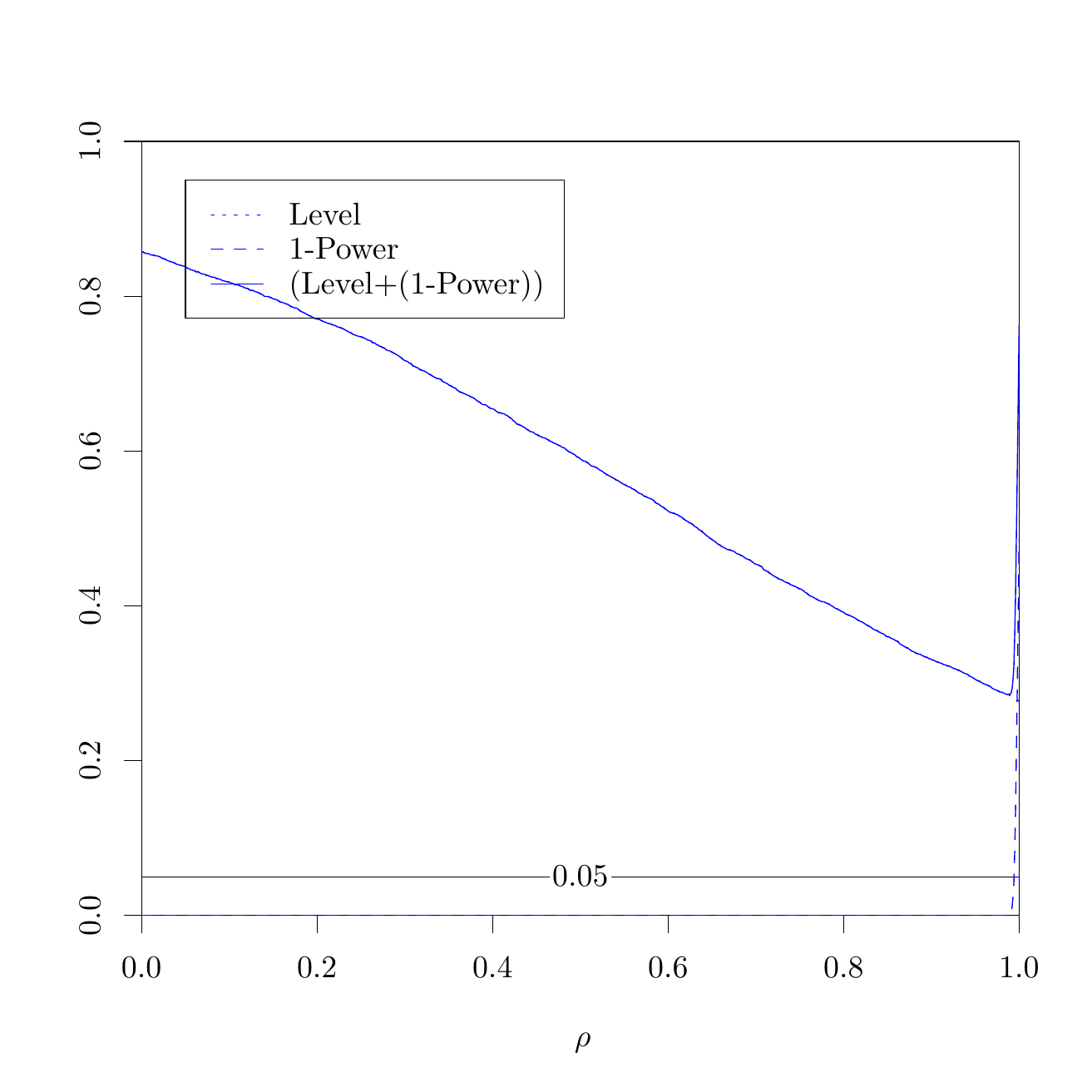}
\caption{This graphic shows for $k=2$, $\alpha=5\%$, $n=\numprint{25600}$ and \emph{equidistant observations} $t_{i,n}=i/n$ the empirical rejection rate in the case \texttt{Cont} (dotted line) and $1$ minus the empirical rejection rate in the case \texttt{III-j} (dashed line) from the Monte Carlo simulation based on Corollary \ref{test_cor_J} as a function of $\rho \in [0,1]$. We achieve a minimal overall error for approximately $\rho=0.99$.}\label{figure_jump_choice_rho_equi}
\end{figure}

\subsection{Testing for disjoint jumps}\label{sec:simul_CoJ}

We simulate according to the model used in Section 6 of \cite{JacTod09}, since by using the same configurations as in their paper we can compare the performance of our approach to the performance of their methods in the case of equidistant and synchronous observations. The model for the process $X$ is given by
\begin{equation*}
\begin{aligned}
&dX^{(1)}_t=X^{(1)}_t \sigma_1 d W^{(1)}_t+\alpha_1 \int_{\mathbb{R}} X^{(1)}_{t-}x_1 \mu_1(dt,dx_1)+\alpha_3 \int_{\mathbb{R}} X^{(1)}_{t-}x_3 \mu_3(dt,dx_3),
\\&dX^{(2)}_t=X^{(2)}_t \sigma_2 d W^{(2)}_t+\alpha_2 \int_{\mathbb{R}} X^{(2)}_{t-}x_2 \mu_2(dt,dx_2)+\alpha_3 \int_{\mathbb{R}} X^{(2)}_{t-}x_3 \mu_3(dt,dx_3),
\end{aligned}
\end{equation*}
where $[W^{(1)},W^{(2)}]_t=\rho t$ and the Poisson measures $\mu_i$ are independent of each other and have predictable compensators $\nu_i$ of the form
\[
\nu_i(dt,dx_i)=\kappa_i\frac{\mathds{1}_{[-h_i,-l_i]\cup [l_i,h_i]}(x_i)}{2(h_i-l_i)}dtdx_i
\]
where $0<l_i<h_i$ for $i=1,2,3$, and the initial values are $X_0=(1,1)^T$. We consider the same twelve parameter settings which were discussed in \cite{JacTod09} of which six allow for common jumps and six do not. In the case where common jumps are possible, we only use the simulated paths which contain common jumps. For the parameters we set $\sigma_1^2=\sigma_2^2=8 \times 10^{-5}$ in all scenarios and choose the parameters for the Poisson measures such that the contribution of the jumps to the total variation remains approximately constant and matches estimations from real financial data; see \cite{HuaTau06}. The parameter settings are summarized in Table \ref{par_settings_CoJ}; compare Table 1 in \cite{JacTod09}.

\begin{table}[b]
\centering
\resizebox{13.3cm}{!} {
\begin{tabular}{lccccccccccccc}
\hline 
 & \multicolumn{13}{c}{Parameters} \\
\cline{2-14}
Case & $\rho$ & $\alpha_1$ & $\kappa_1$ & $l_1$ & $h_1$ & $\alpha_2$ & $\kappa_2$ & $l_1$ & $h_1$ & $\alpha_3$ & $\kappa_3$ & $l_3$ & $h_3$ \\ 
\hline 
I-j & $0.0$ & $0.00$ & & & & $0.00$ & & &  & $0.01$ & $1$ & $0.05$ & $0.7484$ \\ 

II-j & $0.0$ & $0.00$ & & & & $0.00$ & & & & $0.01$ & $5$ & $0.05$ & $0.3187$ \\ 

III-j & $0.0$ & $0.00$ &  &  &  & $0.00$ &  &  &  & $0.01$ & $25$ & $0.05$ & $0.1238$ \\ 

I-m & $0.5$ & $0.01$ & $1$ & $0.05$ & $0.7484$ & $0.01$ & $1$ & $0.05$ & $0.7484$ & $0.01$ & $1$ & $0.05$ & $0.7484$ \\ 

II-m & $0.5$ & $0.01$ & $5$ & $0.05$ & $0.3187$ & $0.01$ & $5$ & $0.05$ & $0.3187$ & $0.01$ & $5$ & $0.05$ & $0.3187$ \\ 

III-m & $0.5$ & $0.01$ & $25$ & $0.05$ & $0.1238$ & $0.01$ & $25$ & $0.05$ & $0.1238$ & $0.01$ & $25$ & $0.05$ & $0.1238$ \\ 

I-d0 & $0.0$ & $0.01$ & $1$ & $0.05$ & $0.7484$ & $0.01$ & $1$ & $0.05$ & $0.7484$ &  &  &  &  \\ 

II-d0 & $0.0$ & $0.01$ & $5$ & $0.05$ & $0.3187$ & $0.01$ & $5$ & $0.05$ & $0.3187$ &  &  &  &  \\ 

III-d0 & $0.0$ & $0.01$ & $25$ & $0.05$ & $0.1238$ & $0.01$ & $25$ & $0.05$ & $0.1238$ &  &  &  &  \\ 

I-d1 & $1.0$ & $0.01$ & $1$ & $0.05$ & $0.7484$ & $0.01$ & $1$ & $0.05$ & $0.7484$ &  &  &  &  \\ 

II-d1 & $1.0$ & $0.01$ & $5$ & $0.05$ & $0.3187$ & $0.01$ & $5$ & $0.05$ & $0.3187$ &  &  &  &  \\ 

III-d1 & $1.0$ & $0.01$ & $25$ & $0.05$ & $0.1238$ & $0.01$ & $25$ & $0.05$ & $0.1238$ &  &  &  &  \\ 
\hline 
\end{tabular} }
\caption[]{Parameter settings for the simulation.}
\label{par_settings_CoJ}
\end{table}

\begin{figure}[tb]
\includegraphics[width=13.85cm]{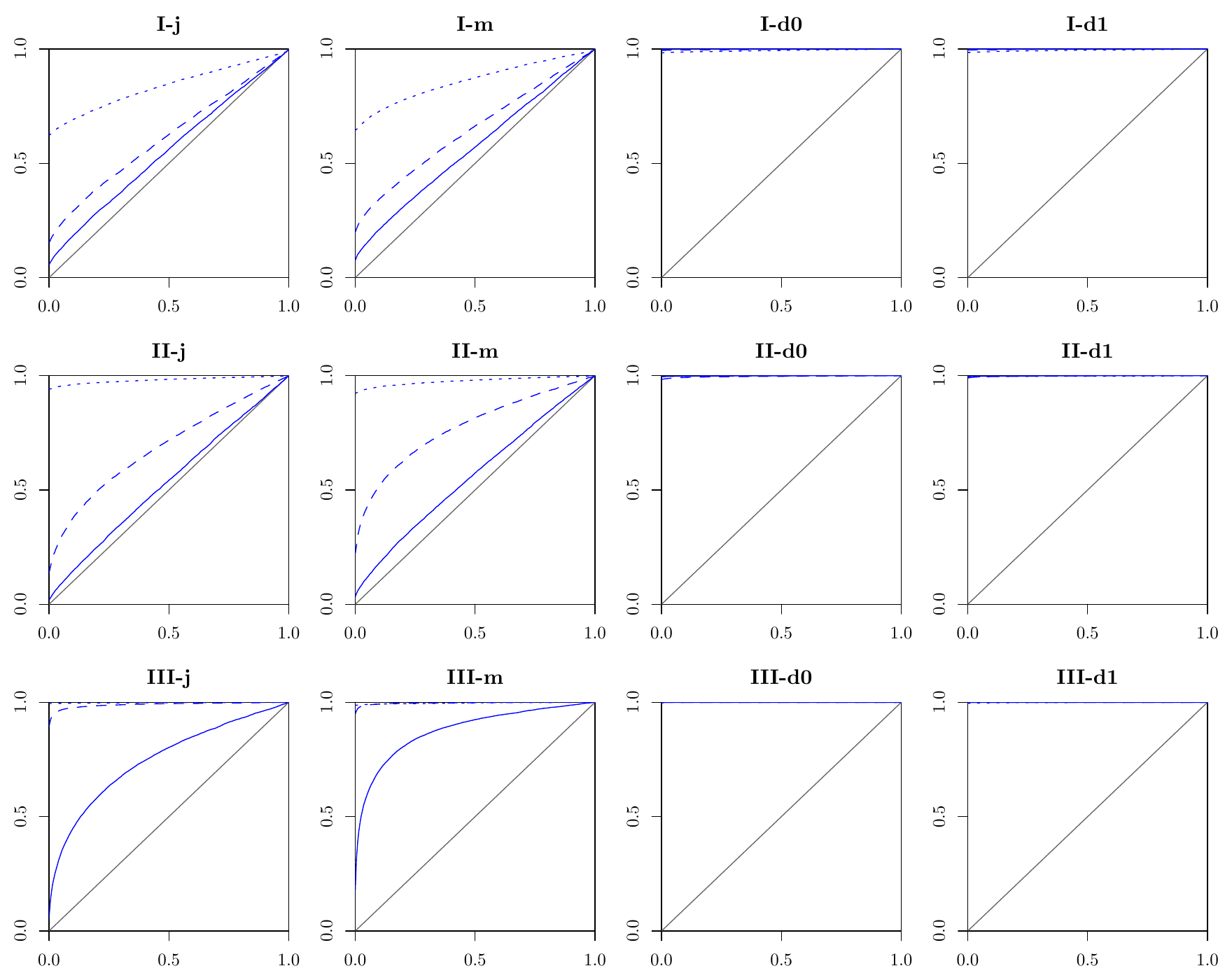}
\caption{Empirical rejection curves from the Monte Carlo simulation for the test derived from Theorem \ref{test_theo_CoJ}. The dotted lines represent the results for $n=100$, the dashed lines for $n=\numprint{1600}$ and the solid lines for $n=\numprint{25600}$. In each case $N=\numprint{10000}$ paths were simulated.}
\label{rejection_curves_CoJ}
\end{figure}

\begin{figure}[tb]
\includegraphics[width=13.85cm]{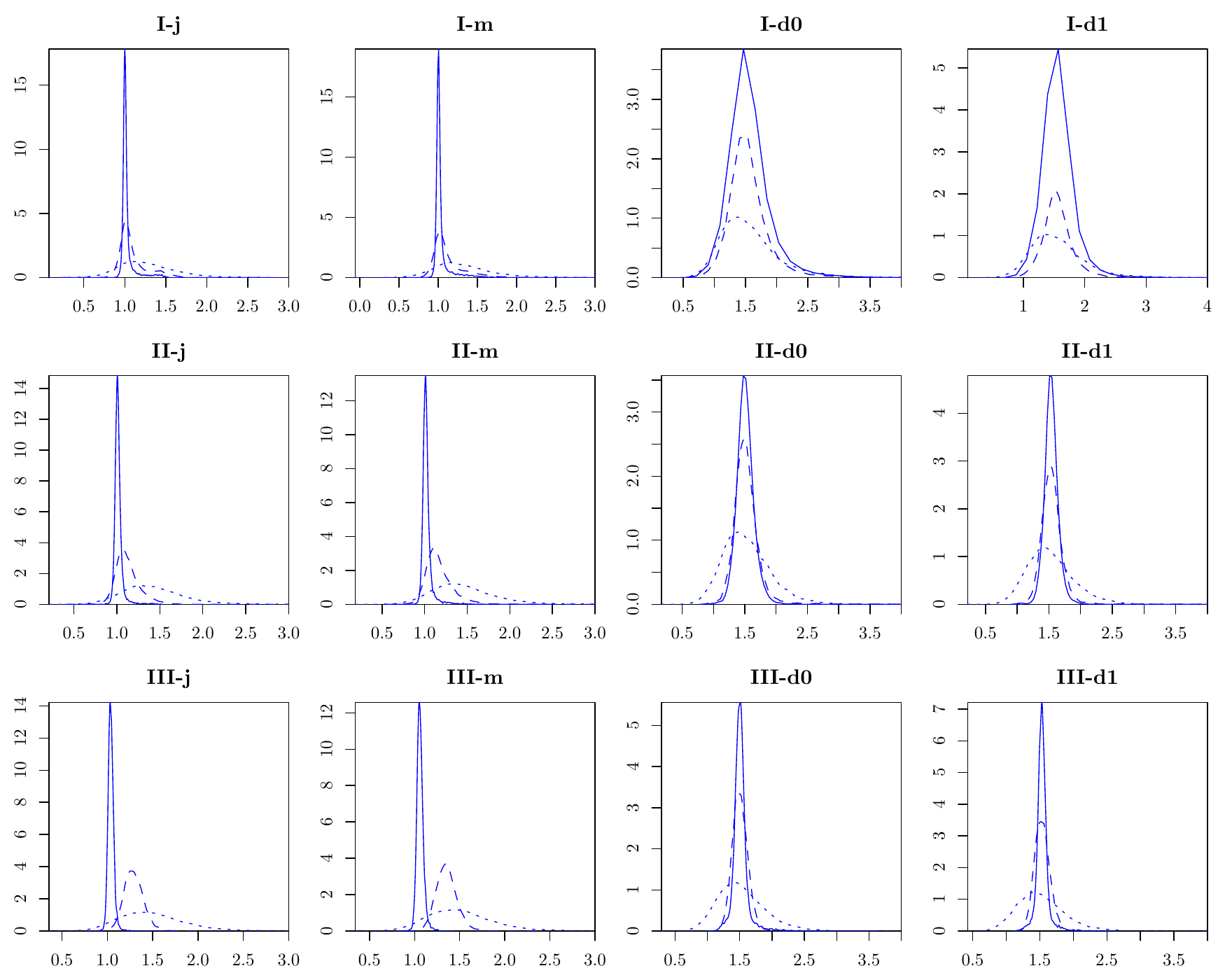}
\caption{Density estimations for $\Phi_{2,T,n}^{(CoJ)}$ from the Monte Carlo. The dotted lines correspond to $n=100$, the dashed lines to $n=\numprint{1600}$ and the solid lines to ${n=\numprint{25600}}$. In all cases $N=\numprint{10000}$ paths were simulated.}
\label{dens_est_CoJ}
\end{figure}

The first six cases in Table \ref{par_settings_CoJ} describe situations where common jumps might be present, the other six cases situations where there exist no common jumps. In the cases \texttt{I-j}, \texttt{II-j} and \texttt{III-j} there exist only common jumps and no disjoint jumps and the Brownian motions $W^{(1)}$ and $W^{(2)}$ are uncorrelated. In the cases \texttt{I-m}, \texttt{II-m} and \texttt{III-m} we have a mixed model which allows for common and disjoint jumps and also the Brownian motions are positively correlated. In the cases \texttt{I-d0}, \texttt{II-d0} and \texttt{III-d0} the Brownian motions $W^{(1)}$, $W^{(2)}$ are uncorrelated while in the cases \texttt{I-d1}, \texttt{II-d1} and \texttt{III-d1} the processes are driven by the same Brownian motion $W^{(1)}=W^{(2)}$. The prefixes \texttt{I}, \texttt{II} and \texttt{III} indicate an increasing number of jumps present in the observed paths. Since our choice of parameters is such that the overall contribution of the jumps to the quadratic variation is roughly the same in all parameter settings, this corresponds to a decreasing size of the jumps. Hence in the cases \texttt{I-*} we have few large jumps while in the cases \texttt{III-*} we have many small jumps.

As a model for the observation times we use the Poisson setting discussed in Examples \ref{example_poisson1} and \ref{example_poisson2} for $\lambda_1=\lambda_2=1$ and we set $T=1$. As in Section \ref{sec:simul_J} we choose $n=100$, $n=\numprint{1600}$ and $n=\numprint{25600}$. We set $\beta=0.03$ and $\varpi=0.49$ for all occuring truncations. We use $b_n= 1/\sqrt{n}$ for the local interval in the estimation of $\sigma_{s-}^{(l)},\sigma_s^{(l)},\rho_s$ and $L_n=\lfloor \ln (n)\rfloor$, $M_n=\lfloor 10 \sqrt{n} \rfloor$ in the simulation of the $\widehat{Z}_{n,m}(s)$. For an explanation for the choice of these parameters see again Section 5 of \cite{MarVet17}. We only use paths in the simulation where $\mu_i([0,1],\mathbb{R})\neq 0$ whenever $\alpha_i \neq 0$, $i=1,2,3$.

In Figures \ref{rejection_curves_CoJ}--\ref{dens_est_CoJ2} we display the results from the simulation for the testing procedures from Theorem \ref{test_theo_CoJ} and Corollary \ref{test_cor_CoJ}. First we plot in Figure \ref{rejection_curves_CoJ} for all twelve cases the empirical rejection rates from Theorem \ref{test_theo_CoJ} in dependence of $\alpha \in [0,1]$ as for the plots in the left column of Figure \ref{figures_J}. The six plots on the left show the results for the cases where the hypothesis of the existence of common jumps is true. Similarly as in Figure \ref{figures_J} we observe that the empirical rejection rates match the postulated asymptotic level of the test better if $n$ is larger or if common jumps are larger on average. In all six cases where common jumps are present and for all values of $n$ the test overrejects. This is due to the fact that the asymptotically negligible terms in $\Phi_{2,T,n}^{(CoJ)}$ (terms which are contained in $\Phi_{2,T,n}^{(CoJ)}$ but do not contribute in the limit) tend to be positive, hence $\Phi_{2,T,n}^{(CoJ)}$ is on average systematically larger than $1$ (see Figure \ref{dens_est_CoJ}) which yields the bias. However, at least for $n=\numprint{25600}$ the observed rejection rates match the asymptotic level quite well. Only in the cases \texttt{III-j} and \texttt{III-m} where the jumps are on average very small the empirical rejection rates are still far higher than the asymptotic level. The results are worse in the mixed model than in the model where there are only common jumps, because idiosyncratic jumps contribute to the asymptotically vanishing error. The test has very good power against the alternative of idiosyncratic jumps as can be seen in the six plots on the right hand side which correspond to the cases where there are no common jumps. 

Figure \ref{dens_est_CoJ} shows density estimations for $\Phi_{2,T,n}^{(CoJ)}$ in all twelve cases. If there are common jumps it is visible in the density plots that $\Phi_{2,T,n}^{(CoJ)}$ converges to $1$ as $n \rightarrow \infty$. However for $n=100$, $n=\numprint{1600}$ in the cases \texttt{II-j}, \texttt{II-m} and for $n=100$, $n=\numprint{1600}$, $n=\numprint{25600}$ in the cases \texttt{III-j}, \texttt{III-m} the density peaks at a value significantly larger than $1$ which corresponds to the overrejection in Figure \ref{rejection_curves_CoJ}. Under the alternative of disjoint jumps $\Phi_{2,T,n}^{(CoJ)}$ tends to cluster around $1.5$ which corresponds to the results obtained in Example \ref{example_poisson1} for the one-dimensional setting.

\begin{figure}[tb]
\includegraphics[width=13.85cm]{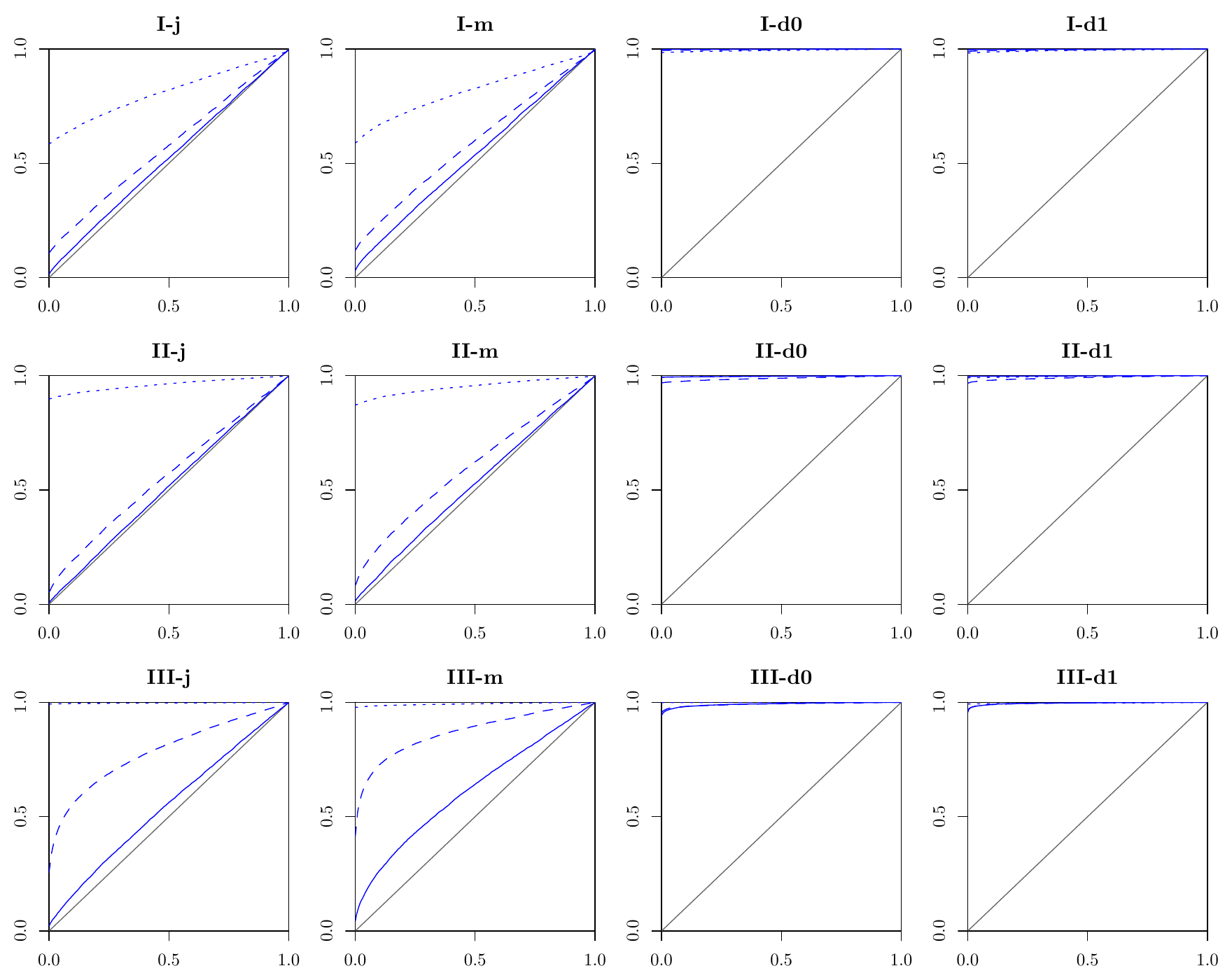}
\caption{Empirical rejection curves from the Monte Carlo simulation for the test derived from Corollary \ref{test_cor_CoJ}. The dotted lines represent the results for $n=100$, the dashed lines for $n=\numprint{1600}$ and the solid lines for $n=\numprint{25600}$. In each case $N=\numprint{10000}$ paths were simulated.}
\label{rejection_curves_CoJ2}
\end{figure}

Our simulation results from Theorem \ref{test_theo_CoJ} are worse than the results in the equidistant setting displayed in Figure 4 of \cite{JacTod09} while the power of our test is much better. This effect is partly due to the fact that, contrary to our approach, in \cite{JacTod09} idiosyncratic jumps, although their contribution is asymptotically negligible, are included in the estimation of the asymptotic variance in the central limit theorem. Hence they consistently overestimate the asymptotic variance which yields lower rejection rates. Further the asymptotically negligible terms in $\Phi_{2,T,n}^{(CoJ)}$ are larger relative to the asymptotically relevant terms in the asynchronous setting than in the setting of synchronous observation times which increases the rejection rates in the asynchronous setting.

\begin{figure}[tb]
\centering
\includegraphics[width=13.85cm]{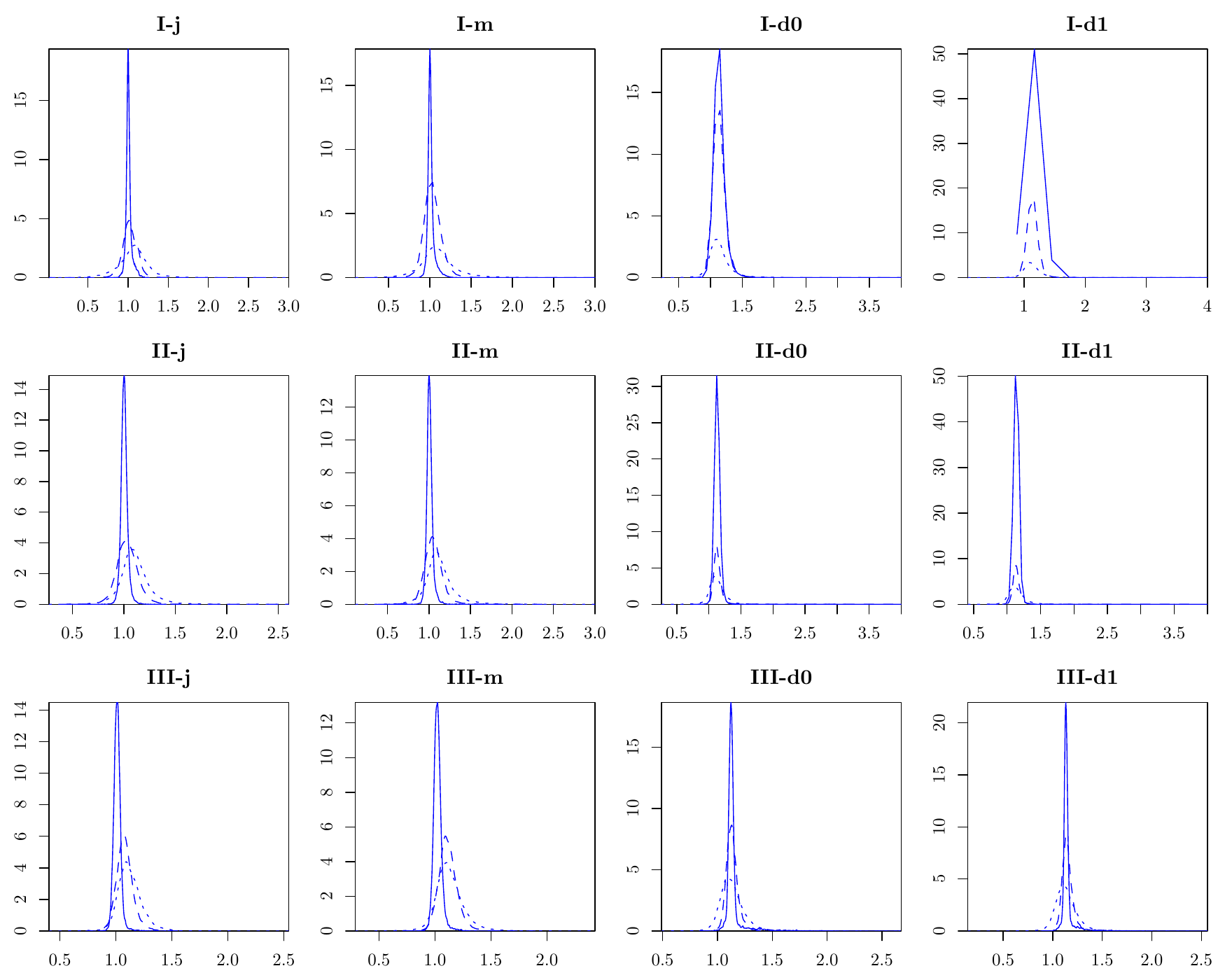}
\caption{Density estimations for $\widetilde{\Phi}_{2,T,n}^{(CoJ)}(\rho)$ from the Monte Carlo. The dotted lines correspond to $n=100$, the dashed lines to $n=\numprint{1600}$ and the solid lines to ${n=\numprint{25600}}$. In all cases $N=\numprint{10000}$ paths were simulated.}
\label{dens_est_CoJ2}
\end{figure}

The test from Corollary \ref{test_cor_CoJ} outperforms the test from Theorem \ref{test_theo_CoJ} in the simulation study. This can be seen in Figures \ref{rejection_curves_CoJ2} and \ref{dens_est_CoJ2} which show the results from the Monte Carlo simulation for the test from Corollary \ref{test_cor_CoJ} with $\rho=\numprint{0.75}$ (for the choice of $\rho=0.75$ see Figure \ref{figure_coj_choice_rho}) in the same fashion as for Theorem \ref{test_theo_CoJ} in Figures \ref{rejection_curves_CoJ} and \ref{dens_est_CoJ}. In the cases where common jumps are present we observe that the empirical rejection rates match the asymptotic level much better than in Figure \ref{rejection_curves_CoJ}. In Figure \ref{rejection_curves_CoJ2} we see that in the cases \texttt{I-j}, \texttt{I-m}, \texttt{II-j}, \texttt{II-m} we get good results already for $n=\numprint{1600}$ and in the cases \texttt{III-j}, \texttt{III-m} at least for $n=\numprint{25600}$. The power of the test from Corollary \ref{test_cor_CoJ} is practically as good as for the test from Theorem \ref{test_theo_CoJ}. Hence using the adjusted statistic $\widetilde{\Phi}_{2,T,n}^{(CoJ)}(\rho)$ instead of $\Phi_{2,T,n}^{(CoJ)}$ allows to get far better level results while the power of the test remains almost the same. 

Figure \ref{dens_est_CoJ2} shows that the adjusted estimator $\widetilde{\Phi}_{2,T,n}^{(CoJ)}(\rho)$ is much more centered around $1$ than $\Phi_{2,T,n}^{(CoJ)}$ if there exist common jumps which can be seen e.g.\ in the cases \texttt{III-j} and \texttt{III-m}. Further $\widetilde{\Phi}_{2,T,n}^{(CoJ)}(\rho)$ clusters around a value very close to $1$ also if there exist no common jumps. However, in the cases \texttt{*-d0} and \texttt{*-d1} the peak of the density still occurs at a value which is noticeably larger than $1$. 

Figure \ref{figure_coj_choice_rho} illustrates similarly as Figure \ref{figure_jump_choice_rho} for the test for jumps the performance of the test from Corollary \ref{test_cor_CoJ} in dependence of $\rho$. We choose the cases \texttt{III-j} and \texttt{III-d0} as representatives for the null hypothesis and the alternative. As expected the level as well as the power of the test decrease as $\rho$ increases. Here we get the lowest overall error for a value of $\rho$ close to $0.75$.

\begin{figure}[!htb]
\centering
\includegraphics[width=8cm]{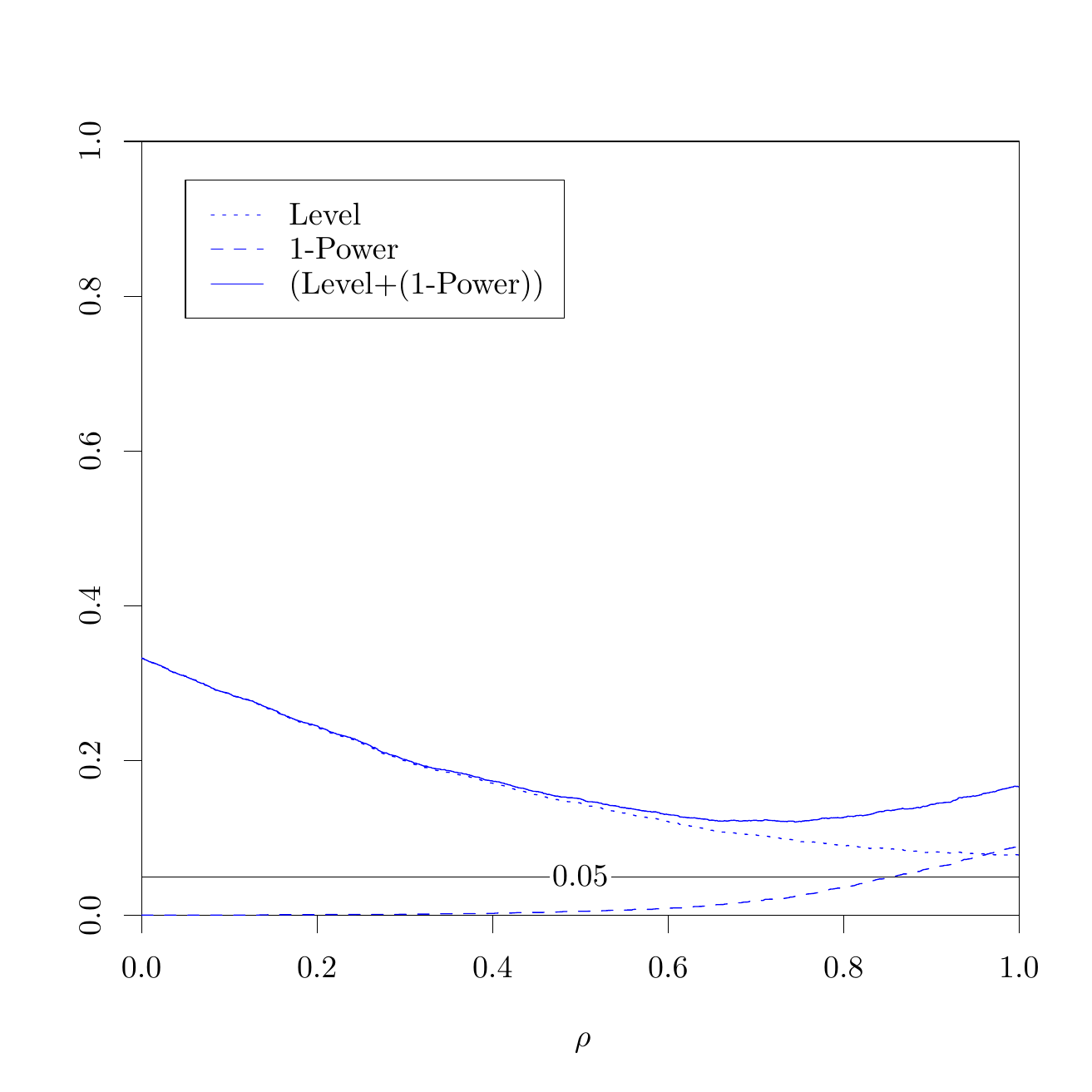}
\caption{This graphic shows for $\alpha=5\%$ and $n=\numprint{1600}$ the empirical rejection rate in the case \texttt{III-j} and $1$ minus the empirical rejection rate in the case \texttt{III-d0} from the Monte Carlo simulation based on Corollary \ref{test_cor_CoJ} as a function of $\rho \in [0,1]$.}\label{figure_coj_choice_rho}
\end{figure}

\FloatBarrier

\section{Structure of the proofs}\label{sec:proof_struc}
\def\theequation{5.\arabic{equation}}
\setcounter{equation}{0}

Throughout the proofs we will assume that the processes $b_s,\sigma_s$ and $\Gamma_s$ are bounded on $[0,T]$. By Conditions \ref{cond_cons_J} and \ref{cond_cons_CoJ} they are all locally bounded. A localization procedure then shows that the results for bounded processes can be carried over to the case of locally bounded processes (see e.g.\ Section 4.4.1 in \cite{JacPro12}).

Further we introduce the decomposition $X_t=X_0+B(q)_t+C_t+M(q)_t+N(q)_t$ of the It\^o semimartingale \eqref{ItoSemimart} or \eqref{ItoSemimart_2d} with
\begin{align*}
B(q)_t&=\int_0^t \big( b_s -\int(\delta(s,z)\mathds{1}_{\{\|\delta(s,z)\|\leq 1\}}-\delta(s,z)\mathds{1}_{\{\gamma(z)\leq 1/q\}})\lambda(dz)\big)ds,
\\C_t &= \int_0^t \sigma_s dW_s,
\\M(q)_t&=\int_0^t \int \delta(s,z) \mathds{1}_{\{\gamma(z)\leq 1/q\}}(\mu-\nu)(ds,dz),
\\N(q)_t&=\int_0^t \int \delta(s,z) \mathds{1}_{\{\gamma(z)>1/q\}}\mu(ds,dz).
\end{align*}
Here $q$ is a parameter which controls whether jumps are classified as small jumps or big jumps. \\

As we have already applied most of the techniques used in the upcoming proofs in \cite{MarVet17}, we keep the discussion of similar parts brief and add references to \cite{MarVet17} for more detailed arguments. Parts in the proofs which are new or specific to the tests introduced in this paper will be discussed in full detail.

\subsection{Proofs for Section \ref{Test_J}}\label{sec:proof_struc_J}
\begin{proof}[Proof of Theorem \ref{theo_cons_J}]
We obtain the following two representations for $\Phi_{k,T,n}^{(J)}$
\begin{align}
&\Phi_{k,T,n}^{(J)}= \frac{\sum_{l=0}^{k-1}\sum_{i\geq 1:t_{ki+l,n}\leq T} \left( \Delta_{ki+l,k,n} X \right)^4}{k V(g,\pi_n)_T}, \label{theo_cons_J_proof1}
\\&\Phi_{k,T,n}^{(J)}=\frac{ \frac{n}{k} \sum_{i \geq k:t_{i,n}\leq T} \left( \Delta_{i,k,n} X \right)^4}{ n V(g,\pi_n)_T}. \label{theo_cons_J_proof2}
\end{align}
Theorem 3.3.1 from \cite{JacPro12} yields
\begin{align*}
\sum_{i \geq 1:t_{ki+l}\leq T} \left( \Delta_{ki+l,k,n} X \right)^4 \overset{\mathbb{P}}{\longrightarrow} B_T^{(J)},~0\leq l \leq k-1,
\end{align*}
and $V(g,\pi_n)_T \overset{\mathbb{P}}{\longrightarrow} B_T^{(J)}$. Because of $B_T^{(J)} \neq 0$ on $\Omega_T^{(J)}$ plugging these into \eqref{theo_cons_J_proof1} yields the convergence to $1$ on $\Omega_T^{(J)}$.

Proposition A.2 in \cite{MarVet17} applied to the It\^o semimartingale $\widetilde{X}_t=(X_t,X_t)^*$ and the observation scheme defined by $\tilde{t}^{(l)}_{i,n}=t_{i,n}$, $l=1,2$, yields
\begin{align}\label{theo_cons_J_proof3}
n\sum_{i:t_{i,n}\leq T} \left( \Delta_{i,n} C \right)^4 \overset{\mathbb{P}}{\longrightarrow} C_{T}^{(J)}
\end{align} 
by Condition \ref{cond_cons_J} because of $\widetilde{\rho}_s=1$, $\widetilde{\sigma}^{(l)}_s=\sigma_s$, $l=1,2$, and $\widetilde{H}(t)=\widetilde{G}(t)=G(t)$. Similarly we obtain
\begin{align}\label{theo_cons_J_proof4}
\frac{n}{k}\sum_{i:t_{i,n}\leq T} \left( \Delta_{i,k,n} C \right)^4 \overset{\mathbb{P}}{\longrightarrow} kC_{k,T}^{(J)}.
\end{align} 
We then get the convergence on $\Omega_T^{(C)}$ from \eqref{theo_cons_J_proof2} using \eqref{theo_cons_J_proof3}, \eqref{theo_cons_J_proof4} and
\begin{align}\label{theo_cons_J_proof5}
\big|n V(g,[k'],\pi_n)_T-\frac{n}{k'} \sum_{i:t_{i}\leq T} \left( \Delta_{i,k',n} C \right)^4 \big|\mathds{1}_{\Omega_T^{(C)}} \overset{\mathbb{P}}{\longrightarrow} 0, \quad k'=1,k,
\end{align}
which will be proved in Section \ref{sec:proof_detail_J}. Note that because of $\int_0^T |\sigma_s| ds>0$ almost surely and because $G$ is strictly increasing we have $C_T^{(J)}>0$ almost surely.
\end{proof}

\begin{proof}[Proof of Theorem \ref{theo_clt_J}]
We prove
\begin{align}\label{proof_clt_J1}
\sqrt{n}\left(V(g,[k],\pi_n)_T-kV(g,\pi_n)_T \right) \overset{\mathcal{L}-s}{\longrightarrow} F_{k,T}^{(J)}
\end{align}
from which \eqref{theo_clt_J_1} easily follows. 

\textit{Step 1.} First we discretize $\sigma$ and restrict ourselves to the big jumps $\Delta N(q)_s$ of which there are only finitely many. We define $\sigma(r),C(r)$ for $r \in \mathbb{N}$ by $\sigma(r)_t=\sigma_{j 2^{-r}}$ if $t \in [(j-1)2^{-r},j2^{-r})$, $C(r)_t=\int_0^t \sigma(r)_s dW_s$. Further let $(S_{q,p})_p$ be an enumeration of the jump times of $N(q)$. By $\lceil x \rceil=\min \{n \in \mathbb{Z}: n \geq x\}$ we denozr for $x \in \mathbb{R}$ the smallest integer which is greater or equal to $x$. Using the notion of the jump times we modify the discretized processes $\sigma(r),C(r)$ further via
\begin{align*}
\tilde{\sigma}(r,q)_s=\begin{cases} \sigma_{S_{q,p}} & \mbox{if } s \in [S_{q,p},\lceil S_{q,p}/2^r\rceil/2^{r}) \\ 
\sigma(r)_s & \mbox{otherwise } \end{cases}, \quad \widetilde{C}(r,q)_t=\int_0^t \tilde{\sigma}(r,q)_s ds.
\end{align*}
Using this notation we then define
\begin{gather*}
R(n,q,r)
=4\sqrt{n}\sum_{i:t_{i,n} \leq T} (\Delta_{i,n} N(q))^3 \sum_{j=1}^{k-1} (k-j)(\Delta_{i+j,n}\widetilde{C}(r,q)+\Delta_{i-j,n}\widetilde{C}(r,q)). 
\end{gather*}
and show
\begin{align}\label{proof_clt_J1b}
 \lim_{q \rightarrow \infty} \limsup_{r \rightarrow \infty}\limsup_{n \rightarrow \infty} \mathbb{P}(|\sqrt{n}\left(V(g,[k],\pi_n)_T-kV(g,\pi_n)_T \right)-R(n,q,r)|> \varepsilon) \rightarrow 0
\end{align}
for all $\varepsilon>0$. The proof of \eqref{proof_clt_J1b} is given in Section \ref{sec:proof_detail_J}.

\textit{Step 2.} 
Next we prove
\begin{multline}\label{proof_clt_J10}
R(n,q,r)\overset{\mathcal{L}-s}{\longrightarrow} F_{k,T}^{(J)}(q,r)
 :=4\sum_{S_{q,p} \leq T} (\Delta N(q)_{S_{q,p}})^3 
 \\\times\big((\tilde{\sigma}(r,q)_{S_{q,p}-})^2 \xi_{k,-}(S_{{q,p}})+(\tilde{\sigma}(r,q)_{S_{q,p}})^2\xi_{k,+}(S_{{q,p}})\big)^{1/2}U_{S_{q,p}}
\end{multline}
for all $q>0$ and $r \in \mathbb{N}$.

To this end note that on the set $\Omega(n,q,r)$ where two different jumps of $N(q)$ are further apart than $|\pi_n|_T$ and any jump time of $N(q)$ is further away from $j 2^{-r}$ than $k|\pi_n|_T$ for any $j \in \{1,\ldots, \lfloor T 2^{r} \rfloor \}$ it holds
\begin{multline*}
R(n,q,r) \mathds{1}_{\Omega(n,q,r)} \nonumber
 =4\sum_{S_{q,p} \leq T } (\Delta N(q)_{S_{q,p}})^3 \sqrt{n} \sum_{j=1}^{k-1} (k-j)
 \\ \times(\tilde{\sigma}(r,q)_{S_{q,p}-}\Delta_{i_n(S_{q,p})-j,n}W+\tilde{\sigma}(r,q)_{S_{q,p}}\Delta_{i_n(S_{q,p})+j,n}W)\mathds{1}_{\Omega(n,q,r)}.
\end{multline*}
As in the proof of Proposition A.3 of \cite{MarVet17} it can be shown that Condition \ref{cond_clt_J}(ii) yields the $\mathcal{X}$-stable convergence
\begin{multline*}
\Big(\big(\sqrt{n}\sum_{j=1}^{k-1} (k-j)\Delta_{i_n(S_{q,p})-j,n}W,\sqrt{n}\sum_{j=1}^{k-1} (k-j)\Delta_{i_n(S_{q,p})+j,n}W\big)_{S_{q,p} \leq T} \Big)
\\ \overset{\mathcal{L}-s}{\longrightarrow}
\Big(\big(\sqrt{\xi_{k,-}(S_{q,p})}U_{S_{q,p},-},\sqrt{\xi_{k,+}(S_{q,p})}U_{S_{q,p},+}\big)_{S_{q,p} \leq T} \Big)
\end{multline*}
for standard normally distributed random variables $(U_{s,-},U_{s,+})$ which are independent of $\mathcal{F}$ and of the $\xi_{k,-}(s),\xi_{k,+}(s)$. Using this stable convergence, Proposition 2.2 in \cite{PodVet10} and the continuous mapping theorem we then obtain
\begin{multline*}
4\sum_{S_{q,p} \in P(q,T)} (\Delta N(q)_{S_{q,p}})^3\sqrt{n}\sum_{j=1}^{k-1}(k-j)
 \\ \times (\tilde{\sigma}(r,q)_{S_{q,p}-}\Delta_{i_n(S_{q,p})-j,n}W+\tilde{\sigma}(r,q){S_{q,p}}\Delta_{i_n(S_{q,p})+j,n}W)
 \overset{\mathcal{L}-s}{\longrightarrow}
 F_{k,T}^{(J)}(q,r).
\end{multline*}
Because of $\mathbb{P}(\Omega(n,q,r))\rightarrow 1$ as $n \rightarrow \infty$ for any $q,r$ this convergence yields \eqref{proof_clt_J10}.

\textit{Step 3.}
Finally we need to show
\begin{align}\label{proof_clt_J11}
 \lim_{q \rightarrow \infty} \limsup_{r \rightarrow \infty}
 \widetilde{\mathbb{P}}(|F_{k,T}^{(J)}-F_{k,T}^{(J)}(q,r)|> \varepsilon )=0,
\end{align}
for all $\varepsilon>0$. \eqref{proof_clt_J11} can be proven using that $\Gamma(\cdot,dy)$ has uniformly bounded first moments together with the boundedness of the jump sizes of $X$ respectively $N(q)$. 

Combining the results from Steps 1 to 3 we obtain \eqref{proof_clt_J1}.
\end{proof}

The structure of the proof of Theorem \ref{test_theo_J} and especially of \eqref{conv_Q_J} therein is identical to the structure of the proof of Theorem 4.2 and (A.27) in \cite{MarVet17}. 

\begin{proof}[Proof of Theorem \ref{test_theo_J}]
For proving \eqref{test_theo_level} we need to show
\begin{align*}
\widetilde{\mathbb{P}}\big(\sqrt{n} (V(g,[k],\pi_n)_T-k V(g,\pi_n)_T ) > \widehat{Q}_{k,T,n}(1-\alpha) \big| F^{(J)} \big) \rightarrow \alpha
\end{align*}
which follows from Theorem \ref{theo_clt_J} and 
\begin{align}\label{conv_Q_J}
\lim_{n \rightarrow \infty} \widetilde{\mathbb{P}}\big(\{|\widehat{Q}_{k,T,n}^{(J)}(\alpha) -Q_k^{(J)}(\alpha)|> \varepsilon \} \cap \Omega_T^{(J)}\big) \rightarrow 0
\end{align}
for all $\varepsilon>0$ and any $\alpha \in [0,1]$. Hence it remains to prove \eqref{conv_Q_J} for which we give a proof in Section \ref{sec:proof_detail_J}.

For proving \eqref{test_theo_power} we observe that $\Phi_{k,T,n}^{(J)}$ converges on $\Omega_T^{(C)}$ to a limit strictly greater than $1$ by Theorem \ref{theo_cons_J} and Condition \ref{cond_testproc_J}(ii). On the other hand we will show 
\begin{align}\label{test_power_klein_o_J}
\mathpzc{c}_{k,T,n}^{(J)} \mathds{1}_{\Omega_T^{(C)}}=o_{\widetilde{\mathbb{P}}}(1)
\end{align}
in Section \ref{sec:proof_detail_J}. Hence we obtain \eqref{test_theo_power}. 
\end{proof}

As a prerequisite for the proof of \eqref{conv_Q_J} we need the following Lemma which will be proven in Section \ref{sec:proof_detail_J}.

\begin{lemma}\label{lemma_conv_sigma}
Suppose Conditions \ref{cond_clt_J} and \ref{cond_testproc_J}(ii) are fulfilled and $S_p$ is a stopping time with $\Delta X_{S_p} \neq 0$ almost surely. Then it holds
\begin{align*}
\hat{\sigma}_n(S_p,-)\mathds{1}_{\{S_p < T\}} \overset{\mathbb{P}}{\longrightarrow} \sigma_{S_p-}\mathds{1}_{\{S_p < T\}},\quad \hat{\sigma}_n(S_p,+)\mathds{1}_{\{S_p < T\}} \overset{\mathbb{P}}{\longrightarrow} \sigma_{S_p}\mathds{1}_{\{S_p < T\}}.
\end{align*}
\end{lemma}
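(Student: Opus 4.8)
I would begin by reducing the claim to the convergence of the squared quantities, i.e.\ to
\[
(\hat\sigma_n(S_p,-))^2\mathds{1}_{\{S_p<T\}}\overset{\mathbb{P}}{\longrightarrow}(\sigma_{S_p-})^2\mathds{1}_{\{S_p<T\}},\qquad
(\hat\sigma_n(S_p,+))^2\mathds{1}_{\{S_p<T\}}\overset{\mathbb{P}}{\longrightarrow}(\sigma_{S_p})^2\mathds{1}_{\{S_p<T\}};
\]
since $\hat\sigma_n(S_p,\pm)\geq0$ the assertion then follows by the continuous mapping theorem (taking $\sigma\geq0$ without loss of generality, as only the squares enter \eqref{def_FkT_J} and \eqref{est_F_J}). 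It suffices to treat the ``$+$'' case, the ``$-$'' case being analogous with left limits. Two elementary observations drive the proof. First, the observation interval $\mathcal{I}_{i,n}$ containing $S_p$ satisfies $t_{i-1,n}<S_p$, hence it is \emph{not} among the intervals summed in $(\hat\sigma_n(S_p,+))^2$, so the jump $\Delta X_{S_p}$ never enters the estimator. Second, writing $A_n^+=\bigcup_{i:\mathcal{I}_{i,n}\subset[S_p,S_p+b_n]}\mathcal{I}_{i,n}$, this is an interval contained in $(S_p,S_p+b_n]$ whose complement in $[S_p,S_p+b_n]$ has Lebesgue measure at most $2|\pi_n|_T=o(b_n)$, by the standing assumptions $b_n\to0$, $|\pi_n|_T/b_n\to0$. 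Decomposing $X=X_0+B(q)+C+M(q)+N(q)$ as in Section \ref{sec:proof_struc}, and using that $N(q)$ has only finitely many, isolated jumps on $[0,T]$, on $\{S_p<T\}$ for $n$ large enough (depending on $\omega$) there is no jump of $N(q)$ in $A_n^+$, so $\Delta_{i,n}N(q)=0$ for every $i$ with $\mathcal{I}_{i,n}\subset A_n^+$ and
\[
(\hat\sigma_n(S_p,+))^2=\frac1{b_n}\sum_{i:\mathcal{I}_{i,n}\subset A_n^+}\big(\Delta_{i,n}C+\Delta_{i,n}B(q)+\Delta_{i,n}M(q)\big)^2 .
\]

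Expanding the square, I would then control the four resulting contributions. The drift term is negligible since $|\Delta_{i,n}B(q)|\leq K_q|\mathcal{I}_{i,n}|$ and $\sum_{\mathcal{I}_{i,n}\subset A_n^+}|\mathcal{I}_{i,n}|\leq b_n$, so $\frac1{b_n}\sum(\Delta_{i,n}B(q))^2\leq K_q^2|\pi_n|_T\to0$. For the $M(q)$ term, since the index set $\{i:\mathcal{I}_{i,n}\subset A_n^+\}$ is not adapted, I replace it by the larger set with indicator $\chi_i=\mathds{1}_{\{S_p\le t_{i-1,n}\}}\mathds{1}_{\{t_{i-1,n}\le S_p+b_n\}}$, which is $\mathcal{F}_{t_{i-1,n}}$-measurable (both factors being events that compare stopping times); using the exogeneity \eqref{prob_space} and $|\delta|\le\Gamma\gamma$ with $\Gamma$ bounded,
\[
\mathbb{E}\Big[\sum_i\chi_i(\Delta_{i,n}M(q))^2\Big]=\mathbb{E}\Big[\sum_i\chi_i\,\mathbb{E}\big[(\Delta_{i,n}M(q))^2\,\big|\,\mathcal{F}_{t_{i-1,n}}\vee\mathcal{S}\big]\Big]\le C\,\psi(q)\,\mathbb{E}\Big[\sum_i\chi_i|\mathcal{I}_{i,n}|\Big]\le 2C\,\psi(q)\,b_n ,
\]
with $\psi(q)=\int(\gamma(z)^2\mathds{1}_{\{\gamma(z)\le1/q\}})\lambda(dz)\to0$ as $q\to\infty$ by dominated convergence; hence this term is $O_{\mathbb{P}}(\psi(q))$ uniformly in $n$. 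For the continuous term, the same replacement gives $\sum_{\mathcal{I}_{i,n}\subset A_n^+}(\Delta_{i,n}C)^2=\sum_i\chi_i(\Delta_{i,n}C)^2$ up to a single boundary increment which is $o_{\mathbb{P}}(b_n)$; a second-moment computation conditional on $\mathcal{F}_{t_{i-1,n}}$ (the cross terms vanishing because $C$ carries no drift) shows $\frac1{b_n}\sum_i\chi_i\big[(\Delta_{i,n}C)^2-\int_{\mathcal{I}_{i,n}}\sigma_u^2\,du\big]\overset{\mathbb{P}}{\longrightarrow}0$ with error of order $|\pi_n|_T/b_n$, while $\frac1{b_n}\int_{\bigcup_{\chi_i=1}\mathcal{I}_{i,n}}\sigma_u^2\,du\to(\sigma_{S_p})^2$ because $\bigcup_{\chi_i=1}\mathcal{I}_{i,n}$ shrinks down to the right of $S_p$ and $\sigma$ is right-continuous at $S_p$ (here it is essential that the interval containing $S_p$ was excised, so a possible jump of $\sigma$ at $S_p$ does not spoil the limit). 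The cross terms in the expansion are then bounded by the Cauchy--Schwarz inequality.

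Collecting the estimates gives $(\hat\sigma_n(S_p,+))^2=(\sigma_{S_p})^2+o_{\mathbb{P}}(1)+O_{\mathbb{P}}(\psi(q)^{1/2})$ on $\{S_p<T\}$, with the $o_{\mathbb{P}}(1)$ depending on $q$; letting $n\to\infty$ and then $q\to\infty$ yields the desired convergence, and the ``$-$'' case is identical with $\sigma_{S_p-}$ in place of $\sigma_{S_p}$. The main obstacle is the bookkeeping with the $S_p$-dependent, non-adapted index set of the sum: because $S_p$ is merely a stopping time rather than a fixed time, textbook spot-volatility results do not apply directly, and one must pass to the adapted approximation $\chi_i$ and invoke the exogeneity of the scheme to legitimise the conditional computations. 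All remaining estimates are of the standard spot-volatility type and parallel the arguments in \cite{MarVet17} and Section~9 of \cite{JacPro12}, from which the technical details could alternatively be quoted.
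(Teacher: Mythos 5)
Your argument for $\hat\sigma_n(S_p,+)$ is sound: the indicator $\chi_i=\mathds{1}_{\{S_p\leq t_{i-1,n}\}}\mathds{1}_{\{t_{i-1,n}\leq S_p+b_n\}}$ is indeed $\mathcal{F}_{t_{i-1,n}}$-measurable because both $\{S_p\leq t_{i-1,n}\}$ and $\{S_p+b_n\geq t_{i-1,n}\}$ are comparisons of stopping times, and this legitimises the conditional expectations. The paper arrives at the same conclusion by a slightly different route, noting that the constant-time proof (which applies Lemma~2.2.12 of \cite{JacPro12} with $\mathcal{G}_i^n=\sigma(\mathcal{F}_{t_{i_n(s)+i,n}},\mathcal{S})$) transfers verbatim to a stopping time on the right, since the times $t_{i_n(S_p)+i,n}$ for $i\geq0$ are themselves stopping times; your adapted-indicator device is a transparent way of making the same point, and the decomposition and $q$-limit are also as in the paper.

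The genuine gap is your claim that the ``$-$'' case is ``analogous with left limits.'' The paper singles out precisely this point and resolves it by a different mechanism. For $\hat\sigma_n(S_p,-)$ the sum runs over intervals $\mathcal{I}_{i,n}\subset[S_p-b_n,S_p)$, and the times $t_{i_n(S_p-b_n)+i-1,n}$ enumerating them are in general \emph{not} stopping times. The natural adapted majorant $\mathds{1}_{\{S_p\leq t_{i-1,n}+b_n\}}$ belongs to $\mathcal{F}_{t_{i-1,n}+b_n}$ rather than to $\mathcal{F}_{t_{i-1,n}}$: at time $t_{i-1,n}$ one would have to know whether $S_p$ will occur within the next $b_n$ units of time, which is information about the future. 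As a consequence the conditional bounds you carry out for the $M(q)$ and $C$ contributions cannot be repeated on the left of $S_p$, and the $o_\mathbb{P}(1)$ and $O_\mathbb{P}(\psi(q))$ estimates you obtain for ``$+$'' have no counterpart for ``$-$'' under your scheme. The paper circumvents this by introducing enlarged filtrations $(\mathcal{F}_t^\varepsilon)_{t\geq0}$ with respect to which any jump time $S_p$ satisfying $\mu(S_p,(-\infty,-\varepsilon]\cup[\varepsilon,\infty))=1$ is $\mathcal{F}_0^\varepsilon$-measurable (following Step~6 of the proof of Theorem~9.3.2 in \cite{JacPro12}); it then checks that $X$ remains an It\^o semimartingale with unchanged continuous martingale part, applies the constant-time argument under the new filtration (under which $S_p$ is effectively deterministic), and finally lets $\varepsilon\to0$. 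Some version of this filtration-enlargement idea is indispensable; without it your proof does not establish the convergence of $\hat\sigma_n(S_p,-)$.
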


\begin{proposition}\label{lemma_test_J}
Suppose Condition \ref{cond_testproc_J} is fulfilled. Then it holds 
\begin{align}\label{test_theo_J_proof_A1}
\widetilde{\mathbb{P}} \big( \big\{\big| \frac{1}{M_n} \sum_{m=1}^{M_n} \mathds{1}_{\{\widehat{F}_{k,T,n,m}^{(J)} \leq \Upsilon\}} -\widetilde{\mathbb{P}}(F_{k,T}^{(J)}\leq \Upsilon|\mathcal{X})\big|>\varepsilon \big\} \cap \Omega_T^{(J)}\big) \rightarrow 0
\end{align}
for any $\mathcal{X}$-measurable random variable $\Upsilon$ and all $\varepsilon>0$.
\end{proposition}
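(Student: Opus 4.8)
The plan is to show that the empirical distribution function of the bootstrapped sample $\{\widehat F_{k,T,n,m}^{(J)}:m=1,\dots,M_n\}$ converges, conditionally on $\mathcal X$, to the $\mathcal X$-conditional distribution function of $F_{k,T}^{(J)}$ at every $\mathcal X$-measurable point $\Upsilon$, on the set $\Omega_T^{(J)}$. The argument proceeds in three stages: (1) reduce $\widehat F_{k,T,n,m}^{(J)}$ to a finite sum over the (finitely many) large jumps of $X$; (2) identify the conditional law of this finite sum, given $\mathcal F$ and $\mathcal S$, with the conditional law of a discretised version of $F_{k,T}^{(J)}$, using Condition \ref{cond_testproc_J}(i) for the bootstrapped $(\hat\xi_{k,n,m,-},\hat\xi_{k,n,m,+})$ and Lemma \ref{lemma_conv_sigma} for the volatility estimators; (3) apply a conditional law of large numbers over $m=1,\dots,M_n$ together with a continuity argument to pass from the empirical measure to the limiting conditional distribution function.

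First I would truncate. Fix $\beta>0$ and $\varpi\in(0,1/2)$ as in \eqref{est_F_J}. Since $|\pi_n|_T\to0$ and the threshold $\beta|\mathcal I_{i,n}|^\varpi$ separates the finitely many large jumps $\Delta N(q)_{S_{q,p}}$ from the continuous and small-jump part exactly as in the proof of Theorem \ref{theo_clt_J}, on a set of probability tending to one only the increments $\Delta_{i,n}X$ containing a jump $S_p$ of $X$ survive the indicator $\mathds 1_{\{|\Delta_{i,n}X|>\beta|\mathcal I_{i,n}|^\varpi\}}$, and for those increments $\Delta_{i,n}X\to\Delta X_{S_p}$. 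Hence $\widehat F_{k,T,n,m}^{(J)}$ is, up to an $o_{\widetilde{\mathbb P}}(1)$ term uniform in $m$, equal to
\[
4\sum_{p:S_p\le T}(\Delta X_{S_p})^3\sqrt{(\hat\sigma_n(t_{i_n(S_p),n},-))^2\hat\xi_{k,n,m,-}(t_{i_n(S_p),n})+(\hat\sigma_n(t_{i_n(S_p),n},+))^2\hat\xi_{k,n,m,+}(t_{i_n(S_p),n})}\,U_{n,i,m}.
\]
By Lemma \ref{lemma_conv_sigma}, $\hat\sigma_n(t_{i_n(S_p),n},\mp)\to\sigma_{S_p-},\sigma_{S_p}$ in probability; and since $L_n/n\to0$ the index $i_n(S_p)$ and its $L_n$-neighbourhood stay within a shrinking time window around $S_p$, so the bootstrap weights \eqref{weights_testing} sample from interval lengths all associated to times converging to $S_p$.

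Next I would compute the conditional law. Conditionally on $\mathcal X$ (equivalently, on $\mathcal F$ together with the jump positions), the $U_{n,i,m}$ are i.i.d.\ standard normal and independent across $m$, and the $(\hat\xi_{k,n,m,-}(t_{i_n(S_p),n}),\hat\xi_{k,n,m,+}(t_{i_n(S_p),n}))_p$ are, for each fixed $m$, a vector whose $\mathcal S$-conditional law is exactly the object controlled by Condition \ref{cond_testproc_J}(i): it converges in probability (in the sense of \eqref{weights_testing} and the display in \ref{cond_testproc_J}(i)) to the product law $\prod_p\widetilde{\mathbb P}^{(\xi_{k,-}(S_p),\xi_{k,+}(S_p))}$. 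Combining this with the just-established convergence of the $\hat\sigma_n$'s and the fact that $x\mapsto\sqrt{a^2x_-+b^2x_+}$ is continuous, the $\mathcal X$-conditional law of the single summand-vector converges weakly (in probability) to the law of $(4(\Delta X_{S_p})^3\sqrt{\sigma_{S_p-}^2\xi_{k,-}(S_p)+\sigma_{S_p}^2\xi_{k,+}(S_p)}\,U_{S_p})_p$, hence the $\mathcal X$-conditional law of $\widehat F_{k,T,n,1}^{(J)}$ converges weakly in probability to the $\mathcal X$-conditional law of $F_{k,T}^{(J)}$. Finally, for fixed $n$ the summands over $m$ are, conditionally on $\mathcal X$ and $\mathcal S$, i.i.d.; a conditional weak law of large numbers (e.g.\ a conditional version of the Glivenko--Cantelli argument, or simply Chebyshev on the conditional variance of $\frac1{M_n}\sum_m\mathds 1_{\{\widehat F_{k,T,n,m}^{(J)}\le\Upsilon\}}$, which is $O(1/M_n)\to0$) gives that $\frac1{M_n}\sum_{m=1}^{M_n}\mathds 1_{\{\widehat F_{k,T,n,m}^{(J)}\le\Upsilon\}}$ is close to $\widetilde{\mathbb P}(\widehat F_{k,T,n,1}^{(J)}\le\Upsilon\mid\mathcal X,\mathcal S)$, which in turn converges to $\widetilde{\mathbb P}(F_{k,T}^{(J)}\le\Upsilon\mid\mathcal X)$ at continuity points of the latter.

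The main obstacle is the continuity issue at the threshold $\Upsilon$: the convergence of distribution functions from weak convergence only holds at continuity points, and here $\Upsilon$ is an arbitrary $\mathcal X$-measurable random variable. This is handled by the assertion, noted in the text after Condition \ref{cond_testproc_J}, that under Condition \ref{cond_testproc_J} the $\mathcal X$-conditional distribution of $F_{k,T}^{(J)}$ is almost surely continuous on $\Omega_T^{(J)}$ --- this follows because $\Gamma(s,\{(0,0)\})$ integrates to zero and the $U_{S_p}$ are independent standard normals, so $F_{k,T}^{(J)}$ is, conditionally on $\mathcal X$, a nondegenerate Gaussian mixture on $\Omega_T^{(J)}$ and hence has a continuous conditional CDF; thus every $\Upsilon$ is $\widetilde{\mathbb P}$-almost surely a continuity point. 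A secondary technical point is making the various "in probability" statements interact correctly --- convergence of the $\mathcal S$-conditional laws in Condition \ref{cond_testproc_J}(i) is itself only in $\widetilde{\mathbb P}$-probability --- but this is routine once one passes to subsequences along which all the finitely many convergences hold almost surely, exactly as in the proof of Theorem 4.2 and (A.27) of \cite{MarVet17}, whose structure this proof follows; the details are deferred to Section \ref{sec:proof_detail_J}.
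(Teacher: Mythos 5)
Your proposal follows essentially the same three-step structure as the paper's proof: truncate $\widehat F_{k,T,n,m}^{(J)}$ to a finite number of large jumps (the paper's $R_k(P,n,m)$), use Condition \ref{cond_testproc_J}(i) together with Lemma \ref{lemma_conv_sigma} to identify the limit of its $\mathcal{X}$-conditional law, apply a conditional law of large numbers across $m$, and pass to $P\to\infty$ using continuity of the $\mathcal{X}$-conditional distribution of $F_{k,T}^{(J)}$. The only presentational difference is that the paper makes the iterated limit $\lim_{P\to\infty}\limsup_{n\to\infty}$ explicit via the auxiliary quantities $R_k(P,n,m)$ and $R_k(P)$, and proves the truncation error bound by averaging $\widetilde{\mathbb{P}}(|R_k(P,n,m)-\widehat F_{k,T,n,m}^{(J)}|>\varepsilon)$ over $m$ rather than claiming uniformity directly; both are equivalent here by exchangeability in $m$.
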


The proof of Proposition \ref{lemma_test_J} requires Lemma \ref{lemma_conv_sigma} and will be given in Section \ref{sec:proof_detail_J}.

\begin{proof}[Proof of \eqref{conv_Q_J}]
We have
\begin{align*}
&\widetilde{\mathbb{P}}(\{\widehat{Q}_{k,T,n}^{(J)}(\alpha)>Q_k^{(J)}(\alpha)+\varepsilon\}\cap \Omega_T^{(J)})
\\&~~~=\widetilde{\mathbb{P}}\big(\big\{\frac{1}{M_n} \sum_{m=1}^{M_n} \mathds{1}_{\{\widehat{F}_{k,T,n,m}^{(J)}>Q_k^{(J)}(\alpha)+\varepsilon\}}>\frac{M_n-(\lfloor \alpha M_n \rfloor -1)}{M_n} \big\}\cap \Omega_T^{(J)}\big)
\\&~~~\leq\widetilde{\mathbb{P}}\big(\big\{\frac{1}{M_n} \sum_{m=1}^{M_n} \mathds{1}_{\{\widehat{F}_{k,T,n,m}^{(J)}>Q_k^{(J)}(\alpha)+\varepsilon\}}-\Upsilon(\alpha,\varepsilon)>(1-\alpha)-\Upsilon(\alpha,\varepsilon) \big\}\cap \Omega_T^{(J)}\big)
\end{align*}
with $\Upsilon(\alpha,\varepsilon)=\widetilde{\mathbb{P}}(F_{k,T}^{(J)}>Q_k^{(J)}(\alpha)+\varepsilon|\mathcal{X})$. Because the $\mathcal{X}$-conditional distribution of $F_{k,T}^{(J)}$ is continuous on $\Omega_T^{(J)}$, it holds $(1-\alpha)-\Upsilon(\alpha,\varepsilon)>0$ almost surely. \eqref{test_theo_J_proof_A1} then yields
\begin{align}\label{test_theo_J_proof4}
\widetilde{\mathbb{P}}(\{\widehat{Q}_{k,T,n}^{(J)}>Q_k^{(J)}(\alpha)+\varepsilon\}\cap \Omega_T^{(J)}) \rightarrow 0
\end{align}
because 
\begin{align*}
&\frac{1}{M_n} \sum_{m=1}^{M_n} \mathds{1}_{\{\widehat{F}_{k,T,n,m}^{(J)}>Q_k^{(J)}(\alpha)+\varepsilon\}}-\Upsilon(\alpha,\varepsilon)
\end{align*}
converges on $\Omega_T^{(J)}$ in probability to zero by \eqref{test_theo_J_proof_A1}. Analogously we derive
\begin{align*}
\widetilde{\mathbb{P}}(\{\widehat{Q}_{k,T,n}^{(J)}<Q_k^{(J)}(\alpha)-\varepsilon\}\cap \Omega_T^{(J)}) \rightarrow 0
\end{align*}
which together with \eqref{test_theo_J_proof4} yields \eqref{conv_Q_J}.
\end{proof}

\begin{proof}[Proof of Corollary \ref{test_cor_J}] 
From $A_{k,T,n}^{(J)} \overset{\mathbb{P}}{\longrightarrow} k^2C_{k,T}^{(J)}-kC_T^{(J)}$, which can be shown using techniques from the proof of (A.26) in \cite{MarVet17}, we deduce
\begin{align}\label{theo_cor_J_proof1}
\sqrt{n}\rho \frac{n^{-1}A_{k,T,n}^{(J)}}{kV(g,\pi_n)_T}\mathds{1}_{\Omega_T^{(J)}} =\rho n^{-1/2} \frac{A_{k,T,n}^{(J)}}{kV(g,\pi_n)_T}\mathds{1}_{\Omega_T^{(J)}} \overset{\mathbb{P}}{\longrightarrow} 0
\end{align}
on $\Omega_T^{(J)}$ and
\begin{align}\label{theo_cor_J_proof2}
\rho \frac{A_{k,T,n}^{(J)}}{knV(g,\pi_n)_T}\mathds{1}_{\Omega_T^{(C)}} \overset{\mathbb{P}}{\longrightarrow} \rho \frac{k^2 C_{k,T}^{(J)}-kC_T^{(J)}}{k C_T^{(J)}}\mathds{1}_{\Omega_T^{(C)}} 
\end{align}
on $\Omega_T^{(C)}$. 

\eqref{theo_cor_J_proof1} together with Theorem \ref{theo_clt_J} yields the $\mathcal{X}$-stable convergence
\begin{align*}
\sqrt{n}(\widetilde{\Phi}_{k,T,n}^{(J)}(\rho)-1)=\sqrt{n}(\Phi_{k,T,n}^{(J)}-\rho(kn)^{-1}A_{k,T,n}^{(J)}/V(g,\pi_n)_T-1) \overset{\mathcal{L}-s}{\longrightarrow}F_{k,T}^{(J)}
\end{align*}
on $\Omega_T^{(J)}$ and hence \eqref{test_cor_level_J} follows as in the proof of \eqref{test_theo_level}.

From \eqref{theo_cor_J_proof2} we derive
\begin{align*}
\Phi_{k,T,n}^{(J)}-\rho \frac{n^{-1}A_{k,T,n}^{(J)}}{kV(g,\pi_n)_T}\mathds{1}_{\Omega_T^{(J)}}-1 \overset{\mathbb{P}}{\longrightarrow} (1-\rho) \frac{k^2 C_{k,T}^{(J)}-kC_T^{(J)}}{k C_T^{(J)}}\mathds{1}_{\Omega_T^{(C)}}
\end{align*}
which is almost surely larger than $0$ by Condition \ref{cond_testproc_J}(ii) and hence \eqref{test_cor_power_J} follows as in the proof of \eqref{test_theo_power}.

\end{proof}

\subsection{Proofs for Section \ref{Test_CoJ}}\label{sec:proof:CoJ}

\begin{proof}[Proof of Theorem \ref{theo_cons_CoJ1}] From (2.2) in \cite{MarVet17} we obtain
\begin{align*}
&V(f,\pi_n)_T \overset{\mathbb{P}}{\longrightarrow} B_T^{(CoJ)},
\\&V(f,[k],\pi_n)_T=\sum_{l_1,l_2=0}^{k-1} \sum_{i,j \geq k:t_{ki+l_1,n}^{(1)}\wedge t_{kj+l_2,n}^{(2)}\leq T} f(\Delta_{ki+l_1,k,n}^{(1)}X^{(1)},\Delta_{kj+l_2,k,n}^{(2)}X^{(2)})
\\&~~~~~~~~~~~~~~~~~~~~~~~~~~~~~~~~~~~~~~~~\times \mathds{1}_{\{\mathcal{I}_{ki+l_1,k,n}\cap \mathcal{I}_{kj+l_2,k,n} \neq \emptyset\}}
\\&~~~~~~~~~~~~~~~~~\overset{\mathbb{P}}{\longrightarrow} \sum_{l_1,l_2=0}^{k-1} B_T^{(CoJ)} = k^2 B_T^{(CoJ)},
\end{align*}
which yields \eqref{theo_cons_CoJ1_conv}. 
\end{proof}

\begin{proof}[Proof of Theorem \ref{theo_cons_CoJ2}] It holds
\begin{align*}
\Phi_{k,T,n}^{(CoJ)}=\frac{n/k^2 V(f,[k],\pi_n)_T}{n V(f,\pi_n)_T}
\end{align*} 
and the proof of
\begin{align*}
(n/k^2 V(f,[k],\pi_n)_T, n V(f,\pi_n)_T) \overset{\mathcal{L}-s}{\longrightarrow}
(D_{k,T}^{(CoJ)}+k C_{k,T}^{(CoJ)}, D_{1,T}^{(CoJ)}+C_{1,T}^{(CoJ)} )
\end{align*}
is similar to the proof of Theorem 3.2 in \cite{MarVet17}. This yields \eqref{theo_cons_CoJ2_conv} because by Condition \ref{cond_clt_coJ} we have $\int_0^T|\sigma_s^{(1)}\sigma_s^{(2)}|ds>0$ which together with the fact that $H_1$ is strictly increasing guarantees $C_{1,T}^{(CoJ)}+D_{1,T}^{(CoJ)}\geq C_{1,T}^{(CoJ)}>0.$ \end{proof}

The proof of Theorem \ref{theo_clt_CoJ} has the same structure as the proof of Theorem \ref{theo_clt_J}.

\begin{proof}[Proof of Theorem \ref{theo_clt_CoJ}]
We prove
\begin{align}\label{proof_theo_clt_CoJ_step0}
\sqrt{n}\left(V(f,[2],\pi_n)_T-4 V(f,\pi_n)_T\right) \overset{\mathcal{L}-s}{\longrightarrow} F_{2,T}^{(CoJ)}
\end{align}
from which \eqref{theo_clt_CoJ1} easily follows. 

\textit{Step 1.} We introduce discretized versions $\tilde{\sigma}(q,r), \widetilde{C}(r,q)$ of $\sigma, C$ as in Step 1 in the proof of Theorem \ref{theo_clt_J} with the only difference that by $(S_{q,p})_p$ we denote an enumeration of the jump times of $N^{(1)}(q) N^{(2)}(q)$ and not of $N(q)$. Thereby we turn the focus to the common jumps. We then show
\begin{align}\label{proof_theo_clt_CoJ_step1}
\lim_{q \rightarrow \infty} \limsup_{r \rightarrow \infty} \limsup_{n \rightarrow \infty} \mathbb{P}(
|\sqrt{n}\left(V(f,[2],\pi_n)_T-4 V(f,\pi_n)_T\right)-\widetilde{R}(n,q,r)|>\varepsilon)=0
\end{align}
for all $\varepsilon>0$ where
\begin{align*}
&\widetilde{R}(n,q,r)=\widetilde{R}^{(1)}(n,q,r)+\widetilde{R}^{(2)}(n,q,r),
\\&\widetilde{R}^{(l)}(n,q,r)=\sqrt{n}\sum_{i,j\geq 2:t_{i,n}^{(l)}\wedge t_{j,n}^{(3-l)}\leq T} 
2 \big( \Delta_{i-1,1,n}^{(l)}N^{(l)}(q) \Delta_{i,1,n}^{(l)}\widetilde{C}^{(l)}(r,q)
\\&~~~~~~+\Delta_{i-1,1,n}^{(l)}\widetilde{C}^{(l)}(r,q) \Delta_{i,1,n}^{(l)}N^{(l)} (q)\big) 
 \big(\Delta_{j,2,n}^{(3-l)}N^{(3-l)}(q) \big)^2\mathds{1}_{\{\mathcal{I}_{i,2,n}^{(l)}\cap \mathcal{I}_{j,2,n}^{(3-l)}\neq \emptyset\}},~l=1,2.
\end{align*}
The proof for \eqref{proof_theo_clt_CoJ_step1} will be given in Section \ref{sec:proof_detail_CoJ}.

\textit{Step 2.} Next we will show
\begin{align}\label{proof_theo_clt_CoJ_step2}
\widetilde{R}(n,q,r) \overset{\mathcal{L}-s}{\longrightarrow} F_T^{(CoJ)}(q,r)
\end{align}
for any $q>0$ and $r \in \mathbb{N}$ where
\begin{small}\begin{align*}
&F_{T}^{(CoJ)}(q,r):=4\sum_{S_{q,p} \in P(q,T)} \Delta X^{(1)}_{S_{q,p}} \Delta X^{(2)}_{S_{q,p}}
\\&~\times\Big[\Delta X^{(2)}_{S_{q,p}}\Big( \tilde{\sigma}^{(1)}(r,q)_{S_{q,p}-}\sqrt{ 
\mathcal{L}({S_{q,p}})}U_{S_{q,p}}^{(1),-}+\tilde{\sigma}^{(1)}(r,q)_{S_{q,p}}
\sqrt{\mathcal{R}({S_{q,p}})} U_{S_{q,p}}^{(1),+}
\\&~~+\sqrt{(\tilde{\sigma}^{(1)}(r,q)_{S_{q,p}-})^2(\mathcal{L}^{(1)}-\mathcal{L})({S_{q,p}})
+(\tilde{\sigma}^{(1)}(r,q)_{S_{q,p}})^2(\mathcal{R}^{(1)}-\mathcal{R})({S_{q,p}}) } U_{S_{q,p}}^{(2)}\Big) 
\\&~+\Delta X^{(1)}_{S_{q,p}} \Big(\tilde{\sigma}^{(2)}(r,q)_{S_{q,p}-}\tilde{\rho}(r,q)_{S_{q,p}-}\sqrt{\mathcal{L}({S_{q,p}}) }U_{S_p}^{(1),-}+\tilde{\sigma}^{(2)}(r,q)_{S_{q,p}}\tilde{\rho}(r,q)_{S_{q,p}}\sqrt{\mathcal{R}({S_p}) }U_{S_{q,p}}^{(1),+}
\\&~~+\big((\tilde{\sigma}^{(2)}(r,q)_{S_{q,p}-})^2(1-(\tilde{\rho}(r,q)_{S_{q,p}-})^2)\mathcal{L}({S_{q,p}})
\\&~~~~~~~~~~~~~~~~~~~~~+(\tilde{\sigma}^{(2)}(r,q)_{S_{q,p}})^2(1-(\tilde{\rho}(r,q)_{S_{q,p}})^2)\mathcal{R}({S_{q,p}}) \big)^{1/2}U_{S_{q,p}}^{(3)}
\\&~~+\sqrt{(\tilde{\sigma}^{(2)}(r,q)_{S_{q,p}-})^2(\mathcal{L}^{(2)}-
\mathcal{L})({S_{q,p}})+(\tilde{\sigma}^{(2)}(r,q)_{S_{q,p}})^2(\mathcal{R}^{(2)}-
\mathcal{R})({S_{q,p}})}U_{S_{q,p}}^{(4)}\Big)\Big].
\end{align*}
\end{small}Here $\tilde{\sigma}^{(1)}(r,q),\tilde{\sigma}^{(2)}(r,q),\tilde{\rho}(r,q)$ are discretized versions of $\sigma^{(1)},\sigma^{(2)},\rho$ which are obtained in the same way as $\tilde{\sigma}(r,q),\widetilde{C}(r,q)$ above. The proof of \eqref{proof_theo_clt_CoJ_step2} will also be sketched in Section \ref{sec:proof_detail_CoJ}.

\textit{Step 3.} Finally we consider
\begin{align}\label{proof_theo_clt_CoJ_step3}
 \lim_{q \rightarrow \infty} \limsup_{r \rightarrow \infty}
 \widetilde{\mathbb{P}}(|F_{T}^{(CoJ)}-F_{T}^{(CoJ)}(q,r)|> \varepsilon )=0
\end{align}
for all $\varepsilon>0$ which can be proven using that the first moments of $\Gamma(\cdot,dy)$ are uniformly bounded together with the boundedness of the jump sizes of $X$ respectively $N(q)$. 

Combining \eqref{proof_theo_clt_CoJ_step1}, \eqref{proof_theo_clt_CoJ_step2} and \eqref{proof_theo_clt_CoJ_step3} yields \eqref{proof_theo_clt_CoJ_step0}.
\end{proof}

\begin{proof}[Proof of Theorem \ref{test_theo_CoJ}]
For proving \eqref{test_theo_level_CoJ} we will prove
\begin{gather}
\widetilde{\mathbb{P}}\big(\sqrt{n} \left|V(f,[2],\pi_n)_T-4 V(f,\pi_n)_T \right| > \widehat{Q}^{(CoJ)}_{T,n}(1-\alpha) \big| F^{(J)} \big) \rightarrow \alpha \nonumber
\end{gather}
which follows from Theorem \ref{theo_clt_CoJ} and 
\begin{align}\label{conv_Q_CoJ}
\lim_{n \rightarrow \infty} \widetilde{\mathbb{P}}\big(\big\{|\widehat{Q}_{T,n}^{(CoJ)}(\alpha) -Q^{(CoJ)}(\alpha)|> \varepsilon \big\}\cap \Omega_T^{(CoJ)}\big) =0
\end{align}
for all $\varepsilon>0$ and any $\alpha \in [0,1]$. Hence it remains to prove \eqref{conv_Q_CoJ} for which we give a proof in Section \ref{sec:proof_detail_CoJ}.

For proving \eqref{test_theo_power_CoJ} we observe that $\Phi^{(CoJ)}_{2,T,n}$ converges on the given $F$ to a random variable which is under Condition \ref{cond_testproc_CoJ}(iv) almost surely different from $1$ by Theorem \ref{theo_cons_CoJ2}. Note to this end that $C_{k,T}^{(CoJ)},C_{1,T}^{(CoJ)}$ are $\mathcal{F}$-conditional constant while the $\mathcal{F}$-conditional distribution of $D_{k,T}^{(CoJ)},D_{1,T}^{(CoJ)}$ is continuous. Hence $kC_{k,T}^{(CoJ)}\neq C_{1,T}^{(CoJ)}$ or $D_{k,T}^{(CoJ)}\neq D_{1,T}^{(CoJ)}$ almost surely imply 
$$kC_{k,T}^{(CoJ)}+D_{k,T}^{(CoJ)}\neq C_{1,T}^{(CoJ)}+D_{1,T}^{(CoJ)}~~ a.s. $$ Hence \eqref{test_theo_power_CoJ} follows from
\begin{align}\label{test_power_klein_o_CoJ}
\mathpzc{c}_{2,T,n}^{(CoJ)} \mathds{1}_{\Omega_T^{(nCoJ)}} =o_\mathbb{P}(1)
\end{align}
which will be proved in Section \ref{sec:proof_detail_CoJ}.
\end{proof}

As a prerequisite for the proof of \eqref{conv_Q_CoJ} we need the following Lemma which will be proven in Section \ref{sec:proof_detail_CoJ}.

\begin{lemma}\label{lemma_conv_sigma_CoJ}
Suppose Conditions \ref{cond_clt_coJ} and \ref{cond_testproc_CoJ}(ii) are fulfilled and $S_p$ is a stopping time with $\Delta X_{S_p} \neq 0$ almost surely. Then for $l=1,2$ it holds
\begin{align}
&\hat{\sigma}_n^{(l)}(S_p,-)\mathds{1}_{\{S_p < T\}} \overset{\mathbb{P}}{\longrightarrow} \sigma^{(l)}_{S_p-}\mathds{1}_{\{S_p < T\}},\quad \hat{\sigma}^{(l)}_n(S_p,+)\mathds{1}_{\{S_p < T\}} \overset{\mathbb{P}}{\longrightarrow} \sigma^{(l)}_{S_p}\mathds{1}_{\{S_p < T\}},\label{lemma_conv_sigma_sigma}
\\&\hat{\rho}_n(S_p,-)\mathds{1}_{\{S_p < T\}} \overset{\mathbb{P}}{\longrightarrow} \rho_{S_p-}\mathds{1}_{\{S_p < T\}},\quad \hat{\rho}_n(S_p,+)\mathds{1}_{\{S_p < T\}} \overset{\mathbb{P}}{\longrightarrow} \rho_{S_p}\mathds{1}_{\{S_p < T\}}.\label{lemma_conv_sigma_rho}
\end{align}
\end{lemma}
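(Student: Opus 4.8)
The plan is to prove Lemma~\ref{lemma_conv_sigma_CoJ} by adapting the argument from the one-dimensional analogue, Lemma~\ref{lemma_conv_sigma}, whose proof is deferred to Section~\ref{sec:proof_detail_J}. The key structural observation is that at a fixed stopping time $S_p$ with $\Delta X_{S_p}\neq 0$, the local volatility estimators $\hat\sigma_n^{(l)}(S_p,\pm)$ and correlation estimators $\hat\rho_n(S_p,\pm)$ are built from increments over intervals $\mathcal I^{(l)}_{i,n}$ that lie strictly to the left (for $-$) or strictly to the right (for $+$) of $S_p$, inside a shrinking window of width $b_n$. Since $b_n\to 0$ and $|\pi_n|_T/b_n\overset{\mathbb P}{\to}0$, this window contains asymptotically infinitely many full observation intervals, none of which contains $S_p$ itself; hence, conditionally on the observation scheme, the relevant increments carry no contribution from the jump at $S_p$.

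\textbf{Step 1: reduce to the continuous-plus-small-jump part.} First I would use the decomposition $X=X_0+B(q)+C+M(q)+N(q)$ from Section~\ref{sec:proof_struc}. On $\{S_p<T\}$, for $n$ large enough the window $[S_p-b_n,S_p)$ (resp.\ $[S_p,S_p+b_n]$) contains, with probability tending to one, none of the finitely many big-jump times of $N(q)$ other than possibly $S_p$, and $S_p\notin\mathcal I^{(l)}_{i,n}$ for the intervals entering $\hat\sigma_n^{(l)}(S_p,\pm)$ by the choice $\mathcal I_{i,n}^{(l)}\subset[s-b_n,s)$ resp.\ $\subset[s,s+b_n]$; thus the big jumps do not enter. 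The drift term $B(q)$ and the small-jump martingale $M(q)$ contribute $o_{\mathbb P}(b_n)$ after summation and division by $b_n$, using the locally bounded bounds on $b$, $\sigma$, $\Gamma$ assumed throughout and $\int(\gamma^2\wedge1)\lambda(dz)<\infty$; letting $q\to\infty$ afterwards removes the dependence on $q$. This reduces the claim to showing $\frac1{b_n}\sum_{i:\mathcal I^{(l)}_{i,n}\subset[S_p-b_n,S_p)}(\Delta^{(l)}_{i,n}C^{(l)})^2\overset{\mathbb P}{\to}(\sigma^{(l)}_{S_p-})^2$ and the analogous statements for the $+$-side and for the cross term $\hat\kappa_n(S_p,\pm)$.

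\textbf{Step 2: identify the limits via c\`adl\`ag continuity and exogeneity.} Conditioning on $\mathcal S$ (the $\sigma$-algebra of the observation scheme, using the product structure \eqref{prob_space}), the increments $\Delta^{(l)}_{i,n}C^{(l)}$ over a window shrinking to $S_p$ from the left are, up to $o_{\mathbb P}$-terms coming from the variation of $\sigma$ on a shrinking interval, centred Gaussian with conditional variances $(\sigma^{(l)}_{S_p-})^2|\mathcal I^{(l)}_{i,n}|$ (using the c\`adl\`agness of $\sigma^{(1)},\sigma^{(2)},\rho$ from Condition~\ref{cond_cons_CoJ} — and here one needs that $\sigma$ is itself an It\^o semimartingale, Condition~\ref{cond_testproc_CoJ}(iii), to control the fluctuation of $\sigma$ near $S_p$ at the relevant rate). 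Then $\frac1{b_n}\sum|\mathcal I^{(l)}_{i,n}|\to 1$ because the omitted mass near the endpoints is at most $|\pi_n|_T/b_n\to0$, and a conditional $L^2$ (weak law of large numbers) estimate gives $\frac1{b_n}\sum(\Delta^{(l)}_{i,n}C^{(l)})^2\to(\sigma^{(l)}_{S_p-})^2$ in probability; the variance of the sum is $O(|\pi_n|_T/b_n)\to0$. The same argument applied to $\Delta^{(1)}_{i,n}C^{(1)}\Delta^{(2)}_{j,n}C^{(2)}\mathds 1_{\{\mathcal I^{(1)}_{i,n}\cap\mathcal I^{(2)}_{j,n}\neq\emptyset\}}$ yields $\hat\kappa_n(S_p,-)\overset{\mathbb P}{\to}\rho_{S_p-}\sigma^{(1)}_{S_p-}\sigma^{(2)}_{S_p-}$, which is exactly the asynchronous quadratic-covariation functional of Hayashi--Yoshida localised to $[S_p-b_n,S_p)$. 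This step is essentially the content of Lemma~\ref{lemma_conv_sigma} transferred to two dimensions, and I would simply refer to its proof in Section~\ref{sec:proof_detail_J} for the bookkeeping, noting the bivariate adaptation.

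\textbf{Step 3: combine for $\hat\rho_n$.} Finally, \eqref{lemma_conv_sigma_rho} follows from \eqref{lemma_conv_sigma_sigma}, the convergence $\hat\kappa_n(S_p,\pm)\overset{\mathbb P}{\to}\rho_{S_p\mp?}\sigma^{(1)}\sigma^{(2)}$ just established, the continuous mapping theorem, and the fact that $\|\sigma_{S_p}\|\neq0$ resp.\ $\|\sigma_{S_p-}\|\neq0$ on $\{S_p<T\}$ — more precisely one uses that $\{s:\sigma_s^{(1)}=0\}$ and $\{s:\sigma_s^{(2)}=0\}$ are Lebesgue-null under Condition~\ref{cond_testproc_CoJ}, so that the denominators $\hat\sigma_n^{(1)}(S_p,\pm)\hat\sigma_n^{(2)}(S_p,\pm)$ are bounded away from zero in probability. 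The main obstacle is Step~2: controlling the fluctuation of the stochastic volatility $\sigma$ over the shrinking window $[S_p-b_n,S_p)$ precisely enough that the quadratic variation of $C^{(l)}$ there is $(\sigma^{(l)}_{S_p-})^2 b_n(1+o_{\mathbb P}(1))$ rather than merely $O_{\mathbb P}(b_n)$ — this is where the semimartingale assumption on $\sigma$ enters and where the bulk of the technical estimates (Burkholder--Davis--Gundy on $\sigma$, conditioning on $\mathcal S$, and handling the boundary intervals of the window) live; these are routine given the tools already invoked in \cite{MarVet17} and will be carried out in Section~\ref{sec:proof_detail_CoJ}.
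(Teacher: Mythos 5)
Your plan matches the paper's proof in its essential structure: \eqref{lemma_conv_sigma_sigma} is obtained by applying the one-dimensional Lemma~\ref{lemma_conv_sigma} componentwise, \eqref{lemma_conv_sigma_rho} then follows from \eqref{lemma_conv_sigma_sigma}, the continuous mapping theorem, the fact that $\sigma^{(1)}_s\sigma^{(2)}_s\neq 0$ for almost all $s$, and the auxiliary convergence of $\hat\kappa_n(S_p,\pm)$ to $\rho_{S_p\mp}\sigma^{(1)}_{S_p\mp}\sigma^{(2)}_{S_p\mp}$ (note: the left estimator converges to the left limit, the right estimator to the right value; your ``$S_p\mp$?'' has the sign confused). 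The $\hat\kappa$ convergence is proved exactly as you sketch: strip off $B(q),M(q),N(q)$, then apply a conditional WLLN (Lemma 2.2.12 of Jacod--Protter) to the Hayashi--Yoshida cross term of $C^{(1)},C^{(2)}$ after an $L^2$ bound; in the paper that $L^2$ control is packaged in a dedicated lemma (Lemma~\ref{lemma_hayyos_L2_mart}), but that is the same tool you invoke.

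One substantive misattribution is worth correcting before you fill in the details. You claim the It\^o-semimartingale hypothesis on $\sigma$ (which is really Condition~\ref{cond_testproc_CoJ}(iii); the lemma's printed reference to (ii) looks like a typo) is needed to control the \emph{rate} of the fluctuation of $\sigma$ over the shrinking window $[S_p-b_n,S_p)$, so that the conditional quadratic variation of $C^{(l)}$ is $(\sigma^{(l)}_{S_p-})^2 b_n(1+o_{\mathbb{P}}(1))$ rather than merely $O_{\mathbb{P}}(b_n)$, and you anticipate BDG estimates on $\sigma$. That is not where the hypothesis enters. In the paper's proof of Lemma~\ref{lemma_conv_sigma}, the fluctuation of $\sigma$ is handled purely by c\`adl\`agness and boundedness (right-continuity gives $\mathbb{P}(\sup_{u\in[s,s+b_n]}|\sigma_u^2-\sigma_s^2|>\varepsilon)\to0$; no rate is needed for consistency). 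The semimartingale assumption on $\sigma$ enters only in the extension from deterministic times $s$ to random stopping times $S_p$, specifically for the left-side estimators $\hat\sigma_n(S_p,-)$ and $\hat\kappa_n(S_p,-)$, where $t_{i_n(S_p-b_n)+i-1,n}$ are generally not stopping times: one passes to an enlarged filtration $(\mathcal{F}^\varepsilon_t)$ under which $S_p$ is $\mathcal{F}^\varepsilon_0$-measurable, and one needs $X$ to remain an It\^o semimartingale with the same continuous part $C$ under that filtration (cf.\ Step~6 of Theorem~9.3.2 in Jacod--Protter). Your plan would still go through, but you should not expect to \emph{use} BDG on $\sigma$ in Step~2; the technical burden is in the filtration-enlargement step and in the $L^2$ bound for the asynchronous Hayashi--Yoshida sum.
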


\begin{proposition}\label{lemma_test_CoJ}
Suppose Condition \ref{cond_testproc_CoJ} is fulfilled. Then it holds 
\begin{align}\label{lemma_test_CoJ1}
\widetilde{\mathbb{P}} \big( \big\{\big| \frac{1}{M_n} \sum_{m=1}^{M_n} \mathds{1}_{\{\widehat{F}_{T,n,m}^{(CoJ)} \leq \Upsilon\}} -\widetilde{\mathbb{P}}(F_{2,T}^{(CoJ)}\leq \Upsilon|\mathcal{X})\big|>\varepsilon \big\} \cap \Omega_T^{(J)}\big) \rightarrow 0
\end{align}
for any $\mathcal{X}$-measurable random variable $\Upsilon$ and all $\varepsilon>0$.
\end{proposition}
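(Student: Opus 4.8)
The plan is to follow the argument used for Proposition \ref{lemma_test_J} (which itself mirrors the proof of Theorem 4.2 in \cite{MarVet17}), adapted to the asynchronous bivariate situation; throughout we work on $\Omega_T^{(CoJ)}$, the set that is meant in \eqref{lemma_test_CoJ1}. Recall that the bootstrap variables $V_{n,m}(\cdot)$ and the auxiliary Gaussians $U^{(1,2),\pm}_{n,(i,j),m}$, $U^{(1)}_{n,i,m}$, $U^{(2)}_{n,j,m}$, $U^{(3)}_{n,j,m}$ are independent over $m$ and independent of $\mathcal{F}$; hence the $\widehat{F}_{T,n,m}^{(CoJ)}$, $m=1,\ldots,M_n$, are $\mathcal{F}$-conditionally i.i.d.

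First I would establish the conditional law of large numbers over the $M_n$ replicates. Since $\mathds{1}_{\{\widehat{F}_{T,n,m}^{(CoJ)}\leq \Upsilon\}}$, $m=1,\ldots,M_n$, are $\mathcal{F}$-conditionally i.i.d.\ Bernoulli, their empirical average has $\mathcal{F}$-conditional variance at most $1/(4M_n)\to 0$, so by conditional Chebyshev
\[
\frac{1}{M_n}\sum_{m=1}^{M_n}\mathds{1}_{\{\widehat{F}_{T,n,m}^{(CoJ)}\leq \Upsilon\}} - \widetilde{\mathbb{P}}\big(\widehat{F}_{T,n,1}^{(CoJ)}\leq \Upsilon\bigm|\mathcal{F}\big) = o_{\widetilde{\mathbb{P}}}(1).
\]
It therefore remains to prove $\widetilde{\mathbb{P}}(\widehat{F}_{T,n,1}^{(CoJ)}\leq \Upsilon\mid\mathcal{F})\mathds{1}_{\Omega_T^{(CoJ)}} \overset{\mathbb{P}}{\longrightarrow} \widetilde{\mathbb{P}}(F_{2,T}^{(CoJ)}\leq \Upsilon\mid\mathcal{X})\mathds{1}_{\Omega_T^{(CoJ)}}$.

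Next I would identify this conditional limit. A standard truncation argument for It\^o semimartingales (as in the proof of Theorem \ref{theo_cons_CoJ2}) shows that the thresholds $\beta|\mathcal{I}^{(l)}_{i,n}|^\varpi$ asymptotically retain in the double sum defining $\widehat{F}_{T,n,1}^{(CoJ)}$ only the finitely many pairs $(i,j)$ for which $\mathcal{I}^{(1)}_{i,n}$ contains a jump of $X^{(1)}$, $\mathcal{I}^{(2)}_{j,n}$ a jump of $X^{(2)}$, and $\mathcal{I}^{(1)}_{i,n}\cap\mathcal{I}^{(2)}_{j,n}\neq\emptyset$; because $|\pi_n|_T\to 0$ and there are only finitely many jumps on $[0,T]$, on $\Omega_T^{(CoJ)}$ these are precisely the pairs of intervals straddling a common jump time $S_p$, and every other term is $o_{\mathbb{P}}(1)$. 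For such a pair we have $\Delta^{(l)}_{i,n}X^{(l)}\to\Delta X^{(l)}_{S_p}$, Lemma \ref{lemma_conv_sigma_CoJ} yields consistency of $\hat\sigma^{(l)}_n(\cdot,\pm)$ and $\hat\rho_n(\cdot,\pm)$ at $S_p$ (using Condition \ref{cond_testproc_CoJ}(iii)), and Condition \ref{cond_testproc_CoJ}(i), transferred from the fixed point $s_p$ to the converging sequence $\tau_{i,j,n}\to S_p$ via local homogeneity, gives $\mathcal{S}$-conditional convergence in law of $\widehat{Z}_{n,1}(\tau_{i,j,n})$ to $Z(S_p)\sim\Gamma(S_p,\cdot)$, jointly and independently across distinct common jump times. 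Substituting these convergences into the explicit formula for $\widehat{F}_{T,n,1}^{(CoJ)}$ and comparing with the definition of $F_{2,T}^{(CoJ)}$ --- the Gaussians in both being i.i.d.\ standard normal and independent of everything else --- gives the claimed $\mathcal{F}$-conditional convergence in law, in probability; as in \cite{MarVet17} this is made precise by routing through the discretized and truncated variables $F_T^{(CoJ)}(q,r)$ from the proof of Theorem \ref{theo_clt_CoJ}.

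Finally, under Condition \ref{cond_testproc_CoJ} the set $\{\|\sigma_s\|=0\}$ is a Lebesgue null set, so at every common jump time at least one Gaussian coefficient in $F_{2,T}^{(CoJ)}$ is a.s.\ nonzero and the $\mathcal{X}$-conditional law of $F_{2,T}^{(CoJ)}$ is a.s.\ absolutely continuous; conditional weak convergence in probability together with a.s.\ continuity of the limit c.d.f.\ at the $\mathcal{X}$-measurable point $\Upsilon$ then upgrades to convergence of the conditional c.d.f.'s at $\Upsilon$, which with the first step gives \eqref{lemma_test_CoJ1}. I expect the main obstacle to be the third step: controlling, $\mathcal{F}$-conditionally and jointly over all common jump times, the six interval-length components of $\widehat{Z}_{n,1}$, the full family of volatility and correlation estimators and the jump increments at once, and organising the truncation so that disjoint jumps whose surrounding intervals happen to overlap for small $n$ are shown not to contribute --- this is where Conditions \ref{cond_testproc_CoJ}(i)--(iii) and Lemma \ref{lemma_conv_sigma_CoJ} do the real work.
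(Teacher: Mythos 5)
Your architecture is essentially the paper's: a conditional law of large numbers over the $M_n$ bootstrap replicates, truncation to finitely many common jumps, consistency of $\hat{\sigma}^{(l)}_n(\cdot,\pm)$ and $\hat{\rho}_n(\cdot,\pm)$ via Lemma~\ref{lemma_conv_sigma_CoJ} together with Condition~\ref{cond_testproc_CoJ}(i) for the truncated object, and an absolute-continuity argument to pass from conditional weak convergence to convergence of the conditional distribution functions at $\Upsilon$. The only organizational difference (you run the LLN for the full $\widehat{F}^{(CoJ)}_{T,n,m}$ first and then truncate, whereas the paper truncates to $R(P,n,m)$ first, applies the bootstrap LLN to the truncated object in \eqref{CoJ_Test_Proof00}, and controls the difference separately in \eqref{CoJ_Test_Proof0}) is cosmetic. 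One small inaccuracy: the paper does \emph{not} route through the $(q,r)$-discretized $F_T^{(CoJ)}(q,r)$ from the proof of Theorem~\ref{theo_clt_CoJ}; it uses the simpler $P$-truncations $R(P,n,m)$ and $R(P)$, with no time-discretization of $\sigma$.

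The genuine gap is in the step you flag as the ``main obstacle'' but leave entirely unspecified: showing that the contribution to $\widehat{F}^{(CoJ)}_{T,n,m}$ of all pairs $(i,j)$ not straddling one of the $P$ largest common jumps vanishes, uniformly over $m$. Calling this a ``standard truncation argument (as in the proof of Theorem~\ref{theo_cons_CoJ2})'' understates it and points at the wrong proof. The difficulty, which is absent from the univariate Proposition~\ref{lemma_test_J}, is that the spot-volatility estimators $\hat{\sigma}^{(l)}(n,i)$ inside $\widehat{F}^{(CoJ)}_{T,n,m}$ are built from increments of $X$ over the whole window $[t^{(l)}_{i,n}-b_n, t^{(l)}_{i,n}+b_n]$ and are therefore correlated with the increments $\Delta^{(l)}_{i,n}X^{(l)}$ and $\Delta^{(3-l)}_{j,n}X^{(3-l)}$ they multiply, so one cannot simply iterate conditional expectations. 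The paper resolves this with the Minkowski decomposition \eqref{CoJ_Test_Proof2}, $\hat{\sigma}^{(l)}(n,i) \leq Y^{(l)}_{(i,j),n} + \widetilde{Y}^{(l)}_{(i,j),n}$, which splits the increments used in the estimator according to whether the corresponding small intervals overlap $\mathcal{I}^{(3-l)}_{j,n}$ or not, so each piece becomes amenable to iterated conditional expectations; the resulting bounds (\eqref{CoJ_Test_Proof3}--\eqref{CoJ_Test_Proof5}) then rely crucially on the technical Condition~\ref{cond_testproc_CoJ}(ii) --- which you mention only in passing --- and on the telescoping bound for the $L_n$-weighted sums over the bootstrap index as in \eqref{index_shift_trick}. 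Without identifying this decomposition and the specific role of Condition~\ref{cond_testproc_CoJ}(ii), the instruction ``follow the argument used for Proposition~\ref{lemma_test_J}'' does not produce a proof, because the univariate version has no analogue of this entanglement between $\hat{\sigma}$ and the increments of the \emph{other} component.
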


The proof of Proposition \ref{lemma_test_CoJ} requires Lemma \ref{lemma_conv_sigma_CoJ} and will be given in Section \ref{sec:proof_detail_CoJ}. The proof for \eqref{conv_Q_CoJ} based on Proposition \ref{lemma_test_CoJ} is then identical to the proof of \eqref{conv_Q_J} based on Proposition \ref{lemma_test_J} and is therefore omitted here.

\begin{proof}[Proof of Corollary \ref{test_cor_CoJ}]
Using arguments from the proof of Theorem \ref{theo_cons_CoJ2} and the proof of (A.26) in \cite{MarVet17} we derive
\begin{align}\label{theo_cor_proof1}
A_{T,n}^{(CoJ)} \overset{\mathcal{L}-s}{\longrightarrow} 4(2C_{2,T}^{(CoJ)}-C_{1,T}^{(CoJ)})+4(D_{2,T}^{(CoJ)}-D_{1,T}^{(CoJ)}).
\end{align}
Hence on $\Omega_T^{(CoJ)}$ it holds 
\begin{align*}
\sqrt{n}\frac{n^{-1} A_{T,n}^{(CoJ)}}{4V(f,\pi_n)_T} \mathds{1}_{\Omega_T^{(CoJ)}} \overset{\mathbb{P}}{\longrightarrow} 0
\end{align*}
and combining this with \eqref{theo_clt_CoJ1} yields the $\mathcal{X}$-stable convergence
\begin{align}\label{theo_cor_proof2}
\sqrt{n}\Big( \Phi_{2,T,n}^{(CoJ)}-\rho\frac{n^{-1} A_{T,n}^{(CoJ)}}{4V(f,\pi_n)_T} -1 \Big) \overset{\mathcal{L}-s}{\longrightarrow} \frac{F_{2,T}^{(CoJ)}}{4B_T^{(CoJ)}}
\end{align}
on $\Omega_T^{(CoJ)}$. Replacing \eqref{theo_clt_CoJ1} with \eqref{theo_cor_proof2} in the proof of \eqref{test_theo_level_CoJ} yields \eqref{test_cor_level_CoJ}.

Under the alternative we obtain using Theorem \ref{theo_cons_CoJ2} and \eqref{theo_cor_proof1}
\begin{multline*}
\big(\Phi_{2,T,n}^{(CoJ)}-\rho\frac{n^{-1}A_{T,n}^{(CoJ)}}{4V(f,\pi_n)_T}-1\big) \mathds{1}_{\Omega_T^{(nCoJ)}}
\\ \overset{\mathcal{L}-s}{\longrightarrow}
(1-\rho)\frac{(2C_{2,T}^{(CoJ)}-C_{1,T}^{(CoJ)})+(D_{2,T}^{(CoJ)}-D_{1,T}^{(CoJ)})}{C_{1,T}^{(CoJ)}+D_{1,T}^{(CoJ)}}\mathds{1}_{\Omega_T^{(nCoJ)}}
\end{multline*}
where the limit is almost surely different from zero by Condition \ref{cond_testproc_CoJ}(iv). We then obtain \eqref{test_cor_power_CoJ} as in the proof of Theorem \ref{test_theo_CoJ} because of $\mathpzc{c}_{2,T,n}^{(CoJ)}\mathds{1}_{\Omega_T^{(nCoJ)}}=o_\mathbb{P}(1)$; compare \eqref{test_power_klein_o_CoJ}.
\end{proof}

\bibliographystyle{chicago}
\bibliography{bibliography}

\newpage

\setcounter{page}{1}

\section{Online-Appendix: Proofs for the arguments omitted in Section \ref{sec:proof_struc}}\label{sec:proof_detail}
\def\theequation{6.\arabic{equation}}
\setcounter{equation}{0}

We will make repeatedly use of the following estimates. Throughout the upcoming proofs $K$ and $K_q$ will denote generic constants, the latter dependent on $q$, to simplify notation. 
\begin{lemma}\label{elem_ineq}
If Condition \ref{cond_cons_J} or Condition \ref{cond_cons_CoJ} is fulfilled and the processes $b_t,\sigma_t, \Gamma_t$ are bounded there exist constants $K_p, K_{p'},K_{p,q},\widetilde{K}_{p,q},e_q \geq 0$ such that
\begin{align}
&\|B(q)_{s+t}-B(q)_s\|^p \leq K_{p,q} t^p, \label{elem_ineq_B}
\\ &\mathbb{E} \big[ \|C_{s+t}-C_{s}\|^p|\mathcal{F}_s\big] \leq K_p t^{p/2}, \label{elem_ineq_C}
\\ &\mathbb{E} \big[ \|M(q)_{s+t}-M(q)_{s}\|^{p'}|\mathcal{F}_s\big] \leq K_{p'} t e_q, \label{elem_ineq_M}
\\ &\mathbb{E} \big[ \|N(q)_{s+t}-N(q)_{s}\|^p|\mathcal{F}_s\big] \leq \widetilde{K}_{p,q} t+K_{p,q}t^p, \label{elem_ineq_N}
\\ &\mathbb{E} \big[ \|X_{s+t}-X_{s}\|^{p'}|\mathcal{F}_s\big] \leq K_{p'} t, \label{elem_ineq_X}
\end{align}
for all $s , t \geq 0$ with $s+t \leq T$ and all $q >0$, $p \geq 1$, $p' \geq 2$. Here, $e_q$ can be chosen such that $e_q \rightarrow 0$ for $q \rightarrow \infty$. For $p \geq 2$ the constant $\widetilde{K}_{p,q}$ may be chosen independently of $q$.
\end{lemma}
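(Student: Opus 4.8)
The five estimates are all of the same flavour: they are moment bounds for the increments of the individual building blocks of the It\^o semimartingale decomposition $X=X_0+B(q)+C+M(q)+N(q)$ introduced just before the lemma, under the hypothesis that $b$, $\sigma$ and $\Gamma$ are bounded. The plan is to treat each line separately, using only classical tools (Jensen/H\"older for the drift, Burkholder--Davis--Gundy for the continuous and purely discontinuous martingale parts, and direct estimates on the jump measure for $N(q)$), and then to combine them for the bound on $X$ itself.

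First I would prove \eqref{elem_ineq_B}: since $b_s$ is bounded and $\int(\gamma(z)^2\wedge 1)\lambda(dz)<\infty$ with $\gamma$ bounded, the integrand defining $B(q)$ is bounded by a constant $K_q$ (depending on $q$ through the set $\{\gamma(z)\le 1/q\}$), so $\|B(q)_{s+t}-B(q)_s\|\le K_q t$ pathwise and the $p$-th power bound follows. Next, for \eqref{elem_ineq_C}, I would apply the Burkholder--Davis--Gundy inequality to the continuous martingale $C_{s+\cdot}-C_s$: its quadratic variation is $\int_s^{s+t}\|\sigma_u\|^2 du\le K t$, and BDG gives $\Exp[\|C_{s+t}-C_s\|^p\mid\mathcal{F}_s]\le K_p\,\Exp[(\int_s^{s+t}\|\sigma_u\|^2du)^{p/2}\mid\mathcal{F}_s]\le K_p t^{p/2}$. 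For \eqref{elem_ineq_M}, the compensated small-jump martingale $M(q)$ has predictable bracket controlled by $\int_s^{s+t}\int\|\delta(u,z)\|^2\mathds{1}_{\{\gamma(z)\le 1/q\}}\lambda(dz)\,du\le \Gamma_\ast^2 t\int_{\{\gamma(z)\le 1/q\}}\gamma(z)^2\lambda(dz)=:K\,t\,e_q$, where $e_q\to 0$ by dominated convergence since $\int(\gamma^2\wedge 1)\lambda<\infty$; an application of BDG for $p'\ge 2$ (together with the elementary bound $(\sum)^{p'/2}$ versus the sum of the squared jumps, controlled again by the same integral) yields $\Exp[\|M(q)_{s+t}-M(q)_s\|^{p'}\mid\mathcal{F}_s]\le K_{p'}t e_q$, at least after possibly enlarging $e_q$ by a $q$-independent factor.

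For \eqref{elem_ineq_N}, $N(q)$ is a finite-activity jump process: the number of its jumps in $(s,s+t]$ is Poisson-like with intensity $\lambda(\{\gamma>1/q\})=:\ell_q<\infty$, and each jump has size bounded by $\Gamma_\ast\sup\gamma=:K$. Conditioning on $\mathcal{F}_s$ and using that the jump sizes are uniformly bounded, one gets $\Exp[\|N(q)_{s+t}-N(q)_s\|^p\mid\mathcal{F}_s]\le K^p\,\Exp[(\text{number of jumps})^p]$; splitting into the event of exactly one jump (probability $\le \ell_q t$, contributing $\widetilde K_{p,q}t$) and the event of at least two jumps (probability $\le K t^2$, contributing $K_{p,q}t^p$ after absorbing lower powers of $t\le T$) gives the stated form. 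For $p\ge 2$ one can instead write $N(q)=M(q')^c+\text{(drift)}$-type bounds, or more simply note that the compensator of $\|\Delta N(q)\|^p$ over $(s,s+t]$ is $\le t\int\|\delta(u,z)\|^p\mathds{1}_{\{\gamma>1/q\}}\lambda(dz)$ which, by $\|\delta\|\le\Gamma_\ast\gamma\le\Gamma_\ast(\gamma^2\wedge 1)^{1/2}$ when $\gamma$ is bounded, is $\le\widetilde K_p\,t$ with $\widetilde K_p$ independent of $q$; this is the source of the claim that $\widetilde K_{p,q}$ may be taken $q$-free for $p\ge 2$. Finally, \eqref{elem_ineq_X} follows from the triangle inequality $\|X_{s+t}-X_s\|\le\|B(q)_{s+t}-B(q)_s\|+\|C_{s+t}-C_s\|+\|M(q)_{s+t}-M(q)_s\|+\|N(q)_{s+t}-N(q)_s\|$, raising to the power $p'\ge 2$, applying the $c_r$-inequality, taking conditional expectations, inserting \eqref{elem_ineq_B}--\eqref{elem_ineq_N}, and using $t\le T$ to dominate $t^{p'}$ and $t^{p'/2}$ by $Kt$; one fixes any convenient value of $q$ (say $q=1$) since the left side does not depend on $q$.

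The only mildly delicate point is the last claim in \eqref{elem_ineq_N}--\eqref{elem_ineq_X}, namely the uniformity in $q$ of the constant for $p\ge 2$: for $p<2$ the bound $\widetilde K_{p,q}t$ genuinely degenerates as $q\to\infty$ because $\ell_q$ may blow up, whereas for $p\ge 2$ one must exploit that the $p$-th moment of the jumps is controlled by the $\lambda$-integrable quantity $\gamma^2$ (using boundedness of $\gamma$ to pass from $\gamma^p$ to $\gamma^2$), not by the mere jump count. I would make this explicit and otherwise keep the computation brief, since each individual estimate is standard; the bound \eqref{elem_ineq_C} with its $t^{p/2}$ scaling and \eqref{elem_ineq_M} with its vanishing factor $e_q$ are the two that will actually be used repeatedly later, so I would state those parts of the argument most carefully.
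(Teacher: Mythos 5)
Your proposal is correct and takes essentially the same approach as the paper: you have simply worked out the standard moment bounds that the paper delegates to its references. The paper's proof consists of a single citation to inequalities (2.1.33), (2.1.34), (2.1.37), (2.1.41) of Jacod and Protter (2012), and these are precisely the estimates you re-derive — boundedness of the drift integrand for \eqref{elem_ineq_B}, Burkholder--Davis--Gundy for \eqref{elem_ineq_C} and \eqref{elem_ineq_M} (with the $\gamma$-boundedness argument needed to convert the $\int\gamma^{p'}$ term into $Ke_q$ and to absorb $(te_q)^{p'/2}$ into $te_q$ for $t\le T$), the finite-activity jump-count argument for \eqref{elem_ineq_N}, and the $c_r$-inequality plus a fixed choice of $q$ for \eqref{elem_ineq_X}.
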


\begin{proof}[Proof of Lemma \ref{elem_ineq}]
The inequalities \eqref{elem_ineq_B}--\eqref{elem_ineq_X} follow from Condition \ref{cond_cons_J} respectively \ref{cond_cons_CoJ} and inequalities (2.1.33), (2.1.34), (2.1.37), (2.1.41) in \cite{JacPro12}.
\end{proof}

The following elementary statement will be used frequently in the upcoming proofs.

\begin{lemma}\label{conv_cond_expec}
Let $(Y_n)_{n \in \mathbb{N}}$ be a sequence of real-valued integrable random variables on $(\Omega,\mathcal{F},\mathbb{P})$ and $\mathcal{G} \subset \mathcal{F}$ a sub-$\sigma$-algebra. Then it holds
\begin{align*}
\mathbb{E}\big[|Y_n|\big|\mathcal{G}\big] \overset{\mathbb{P}}{\longrightarrow} 0 \Rightarrow Y_n \overset{\mathbb{P}}{\longrightarrow} 0
\end{align*}
\end{lemma}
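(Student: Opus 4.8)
The plan is to fix $\varepsilon>0$ and estimate $\mathbb{P}(|Y_n|>\varepsilon)$ by splitting the probability space according to the size of the $\mathcal{G}$-measurable random variable $Z_n:=\mathbb{E}\big[|Y_n|\,\big|\,\mathcal{G}\big]$. The naive route --- combining the conditional Markov inequality with the tower property --- only produces $\mathbb{P}(|Y_n|>\varepsilon)\le\varepsilon^{-1}\mathbb{E}[Z_n]=\varepsilon^{-1}\mathbb{E}[|Y_n|]$, which is useless here since we are only given $Z_n\overset{\mathbb{P}}{\to}0$ and not $L^1$-convergence of $Y_n$. The key observation that makes the argument work is that $\{Z_n\le\delta\}\in\mathcal{G}$, so the indicator $\mathds{1}_{\{Z_n\le\delta\}}$ may be pulled inside a conditional expectation given $\mathcal{G}$.

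Concretely, for arbitrary $\delta>0$ I would write
\[
\mathbb{P}(|Y_n|>\varepsilon)=\mathbb{P}\big(\{|Y_n|>\varepsilon\}\cap\{Z_n\le\delta\}\big)+\mathbb{P}\big(\{|Y_n|>\varepsilon\}\cap\{Z_n>\delta\}\big).
\]
The second term is bounded by $\mathbb{P}(Z_n>\delta)$, which tends to $0$ as $n\to\infty$ by the hypothesis $Z_n\overset{\mathbb{P}}{\to}0$. For the first term, using that $\mathds{1}_{\{Z_n\le\delta\}}$ is $\mathcal{G}$-measurable together with the tower property and the conditional version of Markov's inequality,
\[
\mathbb{P}\big(\{|Y_n|>\varepsilon\}\cap\{Z_n\le\delta\}\big)=\mathbb{E}\big[\mathds{1}_{\{Z_n\le\delta\}}\,\mathbb{P}(|Y_n|>\varepsilon\,|\,\mathcal{G})\big]\le\mathbb{E}\big[\mathds{1}_{\{Z_n\le\delta\}}\,\varepsilon^{-1}Z_n\big]\le\frac{\delta}{\varepsilon}.
\]
Hence $\mathbb{P}(|Y_n|>\varepsilon)\le\delta/\varepsilon+\mathbb{P}(Z_n>\delta)$, so that $\limsup_{n\to\infty}\mathbb{P}(|Y_n|>\varepsilon)\le\delta/\varepsilon$. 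Letting $\delta\downarrow0$ yields $\mathbb{P}(|Y_n|>\varepsilon)\to0$, and since $\varepsilon>0$ was arbitrary this is exactly $Y_n\overset{\mathbb{P}}{\to}0$.

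There is essentially no genuine obstacle beyond recognizing that the decomposition along $\{Z_n\le\delta\}$ is needed --- i.e.\ that one cannot simply upgrade $Z_n\overset{\mathbb{P}}{\to}0$ to an $L^1$ statement. Everything else (conditional Markov inequality, measurability of $Z_n$ with respect to $\mathcal{G}$, tower property) is standard and I would only spell it out briefly.
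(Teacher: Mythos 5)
Your proof is correct, and it takes a genuinely different route from the paper. The paper truncates $|Y_n|$ at the level $1$: it writes $\mathbb{P}(|Y_n|>\varepsilon)\le\varepsilon^{-1}\mathbb{E}[|Y_n|\wedge 1]$, pushes the truncation inside the conditional expectation via Jensen's inequality for the concave function $x\mapsto x\wedge 1$ to get $\mathbb{E}[|Y_n|\wedge 1]\le\mathbb{E}[Z_n\wedge 1]$ with $Z_n=\mathbb{E}[|Y_n|\,|\,\mathcal{G}]$, and then invokes the standard characterization that $Z_n\overset{\mathbb{P}}{\to}0$ if and only if $\mathbb{E}[|Z_n|\wedge c]\to 0$ for every $c>0$. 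You instead avoid both Jensen and that characterization: you partition $\Omega$ on the $\mathcal{G}$-measurable event $\{Z_n\le\delta\}$, so that on the "good" set the conditional Markov inequality gives the clean bound $\delta/\varepsilon$, and on the complement you fall back to $\mathbb{P}(Z_n>\delta)\to0$. The paper's version is slightly slicker (one displayed inequality chain plus a citation to a known equivalence), while yours is more self-contained and makes the mechanism --- that the $\mathcal{G}$-measurability of $Z_n$ is what allows the indicator to pass inside the conditional expectation --- fully explicit. Both are short, elementary, and equally valid.
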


\begin{proof} For $\varepsilon <1$ it holds by Markov's inequality and Jensen's inequality
\begin{gather*}
\mathbb{P}(|Y_n|>\varepsilon)=\mathbb{P}(|Y_n|\wedge 1>\varepsilon) \leq \frac{\mathbb{E}[|Y_n|\wedge 1] }{\varepsilon} 
=\frac{\mathbb{E}\big[\mathbb{E}\big[|Y_n|\wedge 1\big|\mathcal{G}\big]\big] }{\varepsilon}
 \leq \frac{\mathbb{E}\big[\mathbb{E}\big[|Y_n|\big|\mathcal{G}\big]\wedge 1\big] }{\varepsilon}.
\end{gather*}
This estimate yields the claim as a sequence of random variables $(\widetilde{Y}_n)_{n \in \mathbb{N}}$ converges in probability to zero if and only if
$\mathbb{E}[|\widetilde{Y}_n| \wedge c] \rightarrow 0$ for any $c>0$.
\end{proof}

\subsection{Proofs for Section \ref{Test_J}}\label{sec:proof_detail_J}
\begin{proof}[Proof of \eqref{theo_cons_J_proof5}] We will give the proof for $k'=1$ only as the proof for general $k'=k$ is identical.
It holds $X_t \mathds{1}_{\Omega_T^{(C)}}=(B_t+C_t)\mathds{1}_{\Omega_T^{(C)}}$ with
\begin{align}\label{definition_B}
B_t=\int_0^t\big(b_s-\int_{\mathbb{R}^2}\delta(s,z)\mathds{1}_{\{\|\delta(s,z)\|\leq 1\}}\lambda(dz)\big)ds
\end{align}
since $\int_0^T \int \delta(s,z)\mu(ds,dz) \equiv 0$ on $\Omega_T^{(C)}$.

On $\Omega_T^{(C)}$ we obtain using \eqref{elem_ineq_B} and \eqref{elem_ineq_C}
\begin{align*}
&\mathbb{E} \big[\big|n V(g, \pi_n)_T-n\sum_{i:t_{i,n}\leq T} \left( \Delta_{i,n} C \right)^4 \big|\mathds{1}_{\Omega_T^{(C)}} \big| \mathcal{S} \big] 
\\&~~~~~~\leq nK \sum_{i:t_{i,n}\leq T} \Big( \mathbb{E}\big[\left( \Delta_{i,n} B \right)^4 \big|\mathcal{S}\big]+\mathbb{E}\big[\left| \Delta_{i,n} B \right|^3 |\Delta_{i,n} C| \big|\mathcal{S}\big] 
\\&~~~~~~~~~~~~~~~~~~~~~+\mathbb{E}\big[\left( \Delta_{i,n} B \right)^2 \left( \Delta_{i,n} C \right)^2\big|\mathcal{S}\big] +\mathbb{E}\big[|\Delta_{i,n} B| \left| \Delta_{i,n} C \right|^3\big|\mathcal{S}\big] \Big)
\\&~~~~~~ \leq nK \sum_{i:t_{i,n}\leq T} \big(K\big|\mathcal{I}_{i,n}\big|^4+K \big|\mathcal{I}_{i,n}\big|^{7/2}+K \big|\mathcal{I}_{i,n}\big|^{3}+K \big|\mathcal{I}_{i,n}\big|^{5/2} \big)
\\&~~~~~~ \leq n K \max\{T^{3/2},1\}(|\pi_n|_T)^{1/2}\sum_{i:t_{i,n}\leq T} \big|\mathcal{I}_{i,n}\big|^2
\end{align*}
which vanishes as $n \rightarrow \infty$ by Condition \ref{cond_cons_J}(i) and (ii). \eqref{theo_cons_J_proof5} then follows from Lemma \ref{conv_cond_expec}.
\end{proof}

\begin{proof}[Proof of \eqref{proof_clt_J1b}] To simplify notation we set $\Delta_{i,k',n}X=0$, $\mathcal{I}_{i,k',n}=\emptyset$, $k'=1,k$, whenever $t_{i,n} > T$ in this proof.\\
\textit{Step 1.}
We first prove
\begin{align}\label{proof_clt_J1b_step1}
 \limsup_{n \rightarrow \infty} \mathbb{P}(|\sqrt{n}\left(V(g,[k],\pi_n)_T-kV(g,\pi_n)_T \right)-R(n)|> \varepsilon) \rightarrow 0
\end{align}
where
\begin{align*}
R(n)=\sqrt{n} \sum_{i:t_{i,n}\leq T}4 \left(\Delta_{i,n} X \right)^3\sum_{j=1}^{k-1}(k-j) \big(
 \Delta_{i+j,n} X + \Delta_{i-j,n} X \big).
\end{align*}

Using the identity
\begin{multline}\label{proof_clt_J2}
(x_1+\ldots+x_k)^4=\sum_{i=1}^k (x_i)^4+4 \sum_{i=1}^k\sum_{j \neq i} (x_i)^3 x_j+6 \sum_{i=1}^k\sum_{j > i} (x_i)^2 (x_j)^2
\\+ 12\sum_{i=1}^k\sum_{j \neq i}\sum_{j' >j} (x_i)^2 x_j x_{j'}+24\sum_{i=1}^k \sum_{i'> i}\sum_{j >i'}\sum_{j' >j} x_i x_{i'} x_j  x_{j'}
\end{multline}
which is a specific case of the multinomial theorem, we derive
\begin{align}\label{proof_clt_J3}
&\sqrt{n}\left(V(g,[k],\pi_n)_T-k V(g,\pi_n)_T \right) \nonumber
\\&~~=\sqrt{n} \sum_{i:t_{i,n}\leq T}\sum_{j=1}^{k-1}(k-j) \big(
4 \left(\Delta_{i,n} X \right)^3 \left( \Delta_{i+j,n} X \right)
+4 \left(\Delta_{i,n} X \right)^3 \left( \Delta_{i-j,n} X \right)
\big) \nonumber
\\&~~~~+O_{\mathbb{P}}\Big(\sqrt{n} \sum_{i:t_{i,n}\leq T}\sum_{j=1}^{k-1}K_{k,j} \left(\Delta_{i,n} X \right)^2 \left( \Delta_{i+j,n} X \right)^2 \Big)
+ O_{\mathbb{P}}\left(\sqrt{n}|\pi_n|_T\right)
\end{align}
where we used the inequalities $|x_i x_i'|\leq (x_i)^2+(x_{i'})^2$, $|x_j x_{j'}|\leq (x_j)^2+(x_{j'})^2$ to include terms with powers $(2,1,1,0)$ and $(1,1,1,1)$ from \eqref{proof_clt_J2} into the first summand of the last line of \eqref{proof_clt_J3}. The last term in \eqref{proof_clt_J3} is due to boundary effects at $T$.

By iterated expectations and inequality \eqref{elem_ineq_X} we get
\begin{align*}
\sqrt{n}\mathbb{E} \Big[\sum_{i:t_{i,n}\leq T}\sum_{j=1}^{k-1}K_{k,j} \left(\Delta_{i,n} X \right)^2 \left( \Delta_{i+j,n} X \right)^2 \Big| \mathcal{S}\Big]
 \leq \sqrt{n} K T |\pi_n|_T.
\end{align*}
Hence those terms and the $O_{\mathbb{P}}(\sqrt{n}|\pi_n|_T)$-term vanish for $n \rightarrow \infty$ by Condition \ref{cond_clt_J}(i). This yields \eqref{proof_clt_J1b_step1}.

\textit{Step 2.} Next we prove
\begin{align}\label{proof_clt_J4}
\lim_{q \rightarrow \infty} \limsup_{n \rightarrow \infty} \mathbb{P}(|R(n)-R(n,q)|> \varepsilon) \rightarrow 0
\end{align}
where
\begin{align*}
&R(n,q)=\sqrt{n} \sum_{i:t_{i,n}\leq T}4 \left(\Delta_{i,n} N(q) \right)^3\sum_{j=1}^{k-1}(k-j) \big(
 \Delta_{i+j,n} C + \Delta_{i-j,n} C \big).
\end{align*}

Therefore we first consider
\begin{align}\label{proof_clt_J5}
\lim_{q \rightarrow \infty} \limsup_{n \rightarrow \infty} \mathbb{P}(|R(n)-R'(n,q))|> \varepsilon) \rightarrow 0
\end{align}
for all $\varepsilon>0$ with
\begin{align*}
&R'(n,q)=\sqrt{n} \sum_{i:t_{i,n}\leq T}4 \left(\Delta_{i,n} N(q) \right)^3\sum_{j=1}^{k-1}(k-j) \big(
 \Delta_{i+j,n} X + \Delta_{i-j,n} X \big)
\end{align*}
Using 
\begin{align*}
|a^3-b^3|=|a-b||a^2+ab+b^2| \leq \frac{3}{2}|a-b|(a^2+b^2)
\end{align*}
we derive
\begin{align}
&|R(n)-R'(n,q)| \nonumber 
\\ &~~~~ \leq \sqrt{n} K \sum_{i:t_{i,n}\leq T} \big|\Delta_{i,n} X-\Delta_{i,n} N(q) \big| \big( \left(\Delta_{i,n} X \right)^2+\left(\Delta_{i,n} N(q) \right)^2\big) \nonumber
 \\&~~~~~~~~~~~~~~~~~~~~~~~~~~~~~~~~~~~~~~\times\sum_{j=1}^{k-1}|
 \Delta_{i+j,n} X + \Delta_{i-j,n} X |. \label{proof_clt_J6}
\end{align}
The $\mathcal{S}$-conditional expectation of \eqref{proof_clt_J6} is after using iterated expectations, the H\"older inequality with $p_1=3$, $p_2=3/2$ on $ \Delta_{i,n}(B(q)+C)((\Delta_{i,n}X)^2+(\Delta_{i,n}N(q))^2)$, the Cauchy-Schwarz inequality on $ \Delta_{i,n}M(q)((\Delta_{i,n}X)^2+(\Delta_{i,n}N(q))^2)$ and Lemma \ref{elem_ineq} bounded by
\begin{align}\label{proof_clt_J7}
&\sqrt{n} K \sum_{i:t_{i,n}\leq T} \big[ (K_q |\mathcal{I}_{i,n}|^3+|\mathcal{I}_{i,n}|^{3/2})^{1/3}(|\mathcal{I}_{i,n}|+K|\mathcal{I}_{i,n}|+K_q |\mathcal{I}_{i,n}|^{3})^{2/3} \nonumber
\\&~~~~~~~~~~~~~~~ + (e_q|\mathcal{I}_{i,n}|)^{1/2}(|\mathcal{I}_{i,n}|+K|\mathcal{I}_{i,n}|+K_q |\mathcal{I}_{i,n}|^4)^{1/2}
\big] \sum_{j=-(k-1),j \neq 0}^{k-1} |\mathcal{I}_{i+j,n}|^{1/2} \nonumber
\\&~~ \leq K(K_q(|\pi_n|_T)^{1/6} +(e_q+K_q(|\pi_n|_T)^3 )^{1/2}) \sqrt{n}\sum_{i:t_{i,n}\leq T} |\mathcal{I}_{i,n}| \sum_{j=-(k-1),j \neq 0}^{k-1} |\mathcal{I}_{i+j,n}|^{1/2} \nonumber
\\&~~ \leq K(K_q(|\pi_n|_T)^{1/6} +(e_q)^{1/2}) \sqrt{n}\sum_{i:t_{i,n}\leq T} |\mathcal{I}_{i,k,n}||\mathcal{I}_{i,k,n}|^{1/2} \nonumber
\\&~~ \leq K(K_q(|\pi_n|_T)^{1/6} +(e_q)^{1/2}) \Big(nT\sum_{i:t_{i,n}\leq T} |\mathcal{I}_{i,k,n}|^{2} \Big)^{1/2} 
\end{align}
where we used $ |\mathcal{I}_{i,n}||\mathcal{I}_{i+j,n}|^{1/2}\leq |\mathcal{I}_{\max\{i,i+j\},k,n}|^{3/2}$ for $|j|\leq k-1$ and the Cauchy-Schwarz inequality for sums to obtain the last two inequalities. The last bound vanishes as $n \rightarrow \infty$, $q \rightarrow \infty$ by Condition \ref{cond_cons_J}. Hence by Lemma \ref{conv_cond_expec} we have proved \eqref{proof_clt_J5}.

Further we prove
\begin{align}\label{proof_clt_J8}
\lim_{q \rightarrow \infty} \limsup_{n \rightarrow \infty} \mathbb{P}(|R(n,q)-R'(n,q))|> \varepsilon) \rightarrow 0
\end{align}
for all $\varepsilon>0$. Denote by $\Omega(n,q)$ the set where two jumps of $N(q)$ are further apart than $k|\pi_n|_T$. By iterated expectations and Lemma \ref{elem_ineq} it holds
\begin{align*}
&\mathbb{E}[|R(n,q)-R'(n,q))|\mathds{1}_{\Omega(n,q)}|\mathcal{S}]
\\ &~~~~ \leq K \sqrt{n} \sum_{i:t_{i,n}\leq T} \mathbb{E} \big[
|\Delta_{i,n}N(q)|^3 \sum_{j=-(k-1),j \neq 0}^{k-1} |\Delta_{i+j,n}(B(q)+M(q))| \big| \mathcal{S}
\big] 
\\&~~~~ \leq K \sqrt{n} \sum_{i:t_{i,n}\leq T} (|\mathcal{I}_{i,n}|+K_q |\mathcal{I}_{i,n}|^3) \sum_{j=-(k-1),j \neq 0}^{k-1} (K_q|\mathcal{I}_{i,n}|^2+e_q |\mathcal{I}_{i,n}|)^{1/2}
\\&~~~~ \leq K (1+K_q(|\pi_n|_T)^2)(K_q|\pi_n|_T+e_q)^{1/2}\Big(nT\sum_{i:t_{i,n}\leq T} |\mathcal{I}_{i,k,n}|^{2} \Big)^{1/2}
\end{align*}
where the last inequality follows as in \eqref{proof_clt_J7}. The last bound vanishes as $n \rightarrow \infty$, $q \rightarrow \infty$ by Condition \ref{cond_cons_J}. Hence by Lemma \ref{conv_cond_expec} we have proved \eqref{proof_clt_J8} because of $\mathbb{P}(\Omega(n,q))\rightarrow 1$ as $n \rightarrow \infty$ for any $q>0$ which together with \eqref{proof_clt_J5} yields \eqref{proof_clt_J4}.

\textit{Step 3.} Finally we consider
\begin{align}\label{proof_clt_J1b_step3}
 \lim_{q \rightarrow \infty} \limsup_{r \rightarrow \infty}\limsup_{n \rightarrow \infty} \mathbb{P}(|R(n,q)-R(n,q,r)|> \varepsilon) \rightarrow 0.
\end{align}
Using iterated expectations, inequality \eqref{elem_ineq_N} and (2.1.34) from \cite{JacPro12} we obtain
\begin{small}
\begin{align*}
&\mathbb{E}[|R(n,q)-R(n,q,r)||\mathcal{S}]
\\&~ \leq (K+K_q(|\pi_n|_T)^2)\mathbb{E}\big[\sqrt{n} \sum_{i:t_{i,n}\leq T}|\mathcal{I}_{i,n}|\sum_{j=-(k-1),j\neq 0}^{k-1} (\int_{t_{i+j-1,n}}^{t_{i+j,n}} \|\sigma_s-\tilde{\sigma}(r,q)_s\|^2ds)^{1/2}\big|\mathcal{S}\big]
\\&~ \leq (K+K_q(|\pi_n|_T)^2)
\\&~~~~~~~ \times \mathbb{E}\big[\sqrt{n} \sum_{i:t_{i,n}\leq T}|\mathcal{I}_{i,n}|(2k-2)^{1/2}\big(\sum_{j=-(k-1),j\neq 0}^{k-1} \int_{t_{i+j-1,n}}^{t_{i+j,n}} \|\sigma_s-\tilde{\sigma}(r,q)_s\|^2ds\big)^{1/2}\big|\mathcal{S}\big].
\end{align*}
\end{small}This quantity is using the Cauchy-Schwarz inequality for sums further bounded by
\begin{align*}
&(K+K_q(|\pi_n|_T)^2) \sqrt{G_n(T)} \mathbb{E}\big[\big( (2k-2)^{2}\sum_{i:t_{i,n}\leq T}\int_{t_{i-1,n}}^{t_{i,n}} \|\sigma_s-\tilde{\sigma}(r,q)_s\|^2ds \big)^{1/2}\big| \mathcal{S} \big]
\\&~ = (K+K_q(|\pi_n|_T)^2) \sqrt{G_n(T)} \mathbb{E}\big[\big( \int_{0}^{T} \|\sigma_s-\tilde{\sigma}(r,q)_s\|^2 ds \big)^{1/2}\big| \mathcal{S} \big]
\\&~~~~~~~~~~~~~~~~~~~ \overset{\mathbb{P}}{\longrightarrow} K\sqrt{G(T)} \mathbb{E}\big[\big( \int_{0}^{T} \|\sigma_s-\tilde{\sigma}(r,q)_s\|^2 ds \big)^{1/2}\big| \mathcal{S} \big]
\end{align*} 
where the convergence as $n \rightarrow \infty$ follows from Condition \ref{cond_cons_J}. Here the limit vanishes as $r \rightarrow \infty$ for any $q>0$ since the expectation vanishes as $r \rightarrow \infty$ by dominated convergence because $\sigma,\tilde{\sigma}(r,q)$ are bounded by assumption and because $(\tilde{\sigma}(r,q))_{r \in \mathbb{N}}$ is a sequence of right continuous elementary processes approximating $\sigma$. Hence we get \eqref{proof_clt_J1b_step3} by Lemma \ref{conv_cond_expec}. 
 
Combining \eqref{proof_clt_J1b_step1}, \eqref{proof_clt_J4} and \eqref{proof_clt_J1b_step3} we obtain \eqref{proof_clt_J1b}.
\end{proof}

\begin{proof}[Proof of \eqref{test_power_klein_o_J}]
Note that it holds
\begin{align}\label{test_theo_cons_proof1}
\mathpzc{c}_{k,T,n}^{(J)}=\frac{\sqrt{n}\widehat{Q}_{k,T,n}^{(J)}(1-\alpha)}{n kV(g,\pi_n)_T}
\end{align}
where the denominator in \eqref{test_theo_cons_proof1} converges to $k C_T^{(J)}>0$ on $\Omega_T^{(C)}$ as shown in the proof of Theorem \ref{theo_cons_J}. Hence it remains to show that the numerator $\sqrt{n}\widehat{Q}_{n,T}^{(J)}(1-\alpha)$ is $o_{\widetilde{\mathbb{P}}}(1)$ on $\Omega_T^{(C)}$. We have
\begin{align}\label{test_theo_cons_proof2}
\sqrt{n}\widehat{Q}_{n,T}^{(J)}(1-\alpha) \leq \frac{\sqrt{n}}{\lfloor \alpha M_n \rfloor} \sum_{m=1}^{M_n} \big|\widehat{F}_{k,T,n,m}^{(J)}\big|.
\end{align}
Further we get using 
\begin{align*}
\mathbb{P}(|\Delta_{i,n}(B+C)|^p> \beta^p |\mathcal{I}_{i,n}|^{p\varpi}|\mathcal{S})
\leq \frac{K |\mathcal{I}_{i,n}|^p+K_p|\mathcal{I}_{i,n}|^{p/2}}{\beta^p |\mathcal{I}_{i,n}|^{p\varpi}}
\leq K_{p} |\mathcal{I}_{i,n}|^{p(1/2-\varpi)}
\end{align*}
for $p \geq 1$ where the process $(B_t)_{t \geq 0}$ is defined as in \eqref{definition_B}, the inequality $\sqrt{a+b} \leq \sqrt{a}+\sqrt{b}$ for $a,b \geq 0$, iterated expectations and twice the Cauchy-Schwarz inequality
\begin{align*}
&\mathbb{E}\big[\sqrt{n}|\widehat{F}_{k,T,n,m}^{(J)}|\mathds{1}_{\Omega^{(C)}_T}\big| \mathcal{S} \big] 
\\&\leq \sqrt{n}\sum_{i:t_{i,n} \leq T} \mathbb{E}[|\Delta_{i,n}(B+C)|^3
\mathds{1}_{\{|\Delta_{i,n}(B+C)| > \beta |\mathcal{I}_{i,n}|^\varpi\}}
\\&~~~~~~~~~~~ \times \big(\hat{\sigma}_n(t_{i,n},-)\sqrt{\mathbb{E}[\hat{\xi}_{k,n,m,-}(t_{i,n})|\mathcal{F}]}+\hat{\sigma}_n(t_{i,n},+)\sqrt{\mathbb{E}[\hat{\xi}_{k,n,m,+}(t_{i,n})|\mathcal{F}]} \big)|\mathcal{S}] 
\\&\leq \sqrt{n}\sum_{i:t_{i,n} \leq T} \big(\mathbb{E}[(\Delta_{i,n}(B+C))^{12} |\mathcal{S}]
\mathbb{P}(|\Delta_{i,n}(B+C)|^p > \beta^p |\mathcal{I}_{i,n}|^{p\varpi}|\mathcal{S}) \big)^{1/4} 
\\&~~~~~~~~~~~ \times \big(\mathbb{E}[2(\hat{\sigma}_n(t_{i,n},-))^2+2(\hat{\sigma}_n(t_{i,n},+))^2 |\mathcal{S}] \big)^{1/2} K \sqrt{n|\pi_n|_T}
\\& \leq K_{p} n \sqrt{|\pi_n|_T}\sum_{i:t_{i,n} \leq T} (K_q|\mathcal{I}_{i,n}|^{12}+|\mathcal{I}_{i,n}|^6)^{1/4}|\mathcal{I}_{i,n}|^{p(1-2\varpi)/4} (4b_n/ b_n )^{1/2}
\\& \leq K_{p} \sqrt{n} (|\pi_n|_T)^{1/2+p(1-2\varpi)/4} \sqrt{n} \sum_{i:t_{i,n} \leq T} |\mathcal{I}_{i,n}|^{3/2}.
\end{align*}
If we pick $p$ such that $p(1-2 \varpi)/4\geq 1/2$ the final bound vanishes as $n \rightarrow \infty$ by Condition \ref{cond_clt_J}(i) and \ref{cond_cons_J}(ii); compare \eqref{proof_clt_J7}. Hence by Lemma \ref{conv_cond_expec} we obtain $\sqrt{n}\widehat{Q}_{n,T}^{(J)}(1-\alpha)=o_{\widetilde{\mathbb{P}}}(1)$.
\end{proof}

\begin{proof}[Proof of Lemma \ref{lemma_conv_sigma}]
First we prove the claim for a constant stopping time $s \in(0,T)$. We will only give the proof for $\hat{\sigma}_n(s,+)$ as the proof for $\hat{\sigma}_n(s,-)$ is identical. Without loss of generality we assume $s+b_n \leq T$.

Using the elementary inequality 
\[
|x^2-y^2| \leq \frac{1+\varepsilon}{\varepsilon} |x-y|^2+\varepsilon y^2
\]
which holds for all $x,y \in \mathbb{R}$ and $\varepsilon > 0$ (compare Step 2 in the proof of Theorem 9.3.2 of \cite{JacPro12}) we obtain
\begin{align*}
&\big|(\hat{\sigma}^{(l)}_n(s,+))^2-\frac{1}{b_n}\sum_{i:\mathcal{I}_{i,n} \subset [s,s+b_n]} (\Delta_{i,n} C)^2\big|
\\&~\leq \frac{1}{b_n}\sum_{i:\mathcal{I}_{i,n} \subset [s,s+b_n]} \big( K_\varepsilon (\Delta_{i,n}(B(q)+M(q)+N(q)))^2 + \varepsilon(\Delta_{i,n} C )^2 \big)
\\&~\leq\frac{1}{b_n}\sum_{i:\mathcal{I}_{i,n} \subset [s,s+b_n]} \big(3 K_\varepsilon((\Delta_{i,n}B(q) )^2+(\Delta_{i,n}M(q) )^2+(\Delta_{i,n}N(q))^2)+ \varepsilon(\Delta_{i,n} C )^2\big).
\end{align*}
Here we have $\sum_{i:\mathcal{I}_{i,n} \subset [s,s+b_n]} (\Delta_{i,n} N(q) )^2=0$ with probability approaching $1$ as $n \rightarrow \infty$ because the probability for the event that there exists a jump of $N(q)$ in $[s,s+b_n]$ vanishes as $b_n \rightarrow 0$. Further the $\mathcal{S}$-conditional expectation of the remaining terms is using \eqref{elem_ineq_B}--\eqref{elem_ineq_M}
bounded by
\begin{align*}
K\big(K_\varepsilon e_q +K_{\varepsilon,q}|\pi_n|_T+\varepsilon  \big)
\end{align*}
which vanishes as first $n \rightarrow \infty$, then $q \rightarrow \infty$ and finally $\varepsilon \rightarrow 0$. 

Hence it remains to show
\begin{align}\label{cons_spot_vola_endo_proof1}
\frac{1}{b_n}\sum_{i:\mathcal{I}_{i,n} \subset [s,s+b_n]} (\Delta_{i,n} C)^2 \overset{\mathbb{P}}{\longrightarrow} \sigma_s^2.
\end{align}
Therefore we apply Lemma 2.2.12 from \cite{JacPro12} with
\begin{align}\label{cons_spot_vola_endo_proof2b}
\zeta_i^n=\frac{1}{b_n}(\Delta_{i_n(s)+i,n} C)^2-\frac{1}{b_n} (\sigma_s)^2|\mathcal{I}_{i_n(s)+i,n}|, \quad \mathcal{G}_{i}^n=\sigma(\mathcal{F}_{t_{i_n(s)+i,n}},\mathcal{S}).
\end{align}
We obtain using the conditional It\^o isometry and the fact that $\sigma$ is bounded
\begin{align}\label{cons_spot_vola_endo_proof3}
&\mathbb{E} \big[\sum_{i:\mathcal{I}_{i_{n}(s)+i,n} \subset [s,s+b_n]} |\mathbb{E}[\zeta_i^n|\mathcal{G}_{i-1}^n]| \big| \mathcal{S}\big]
\\&~~=\frac{1}{b_n}\sum_{i:\mathcal{I}_{i,n} \subset [s,s+b_n]} \mathbb{E} \big[\mathbb{E}[\int_{t_{i-1,n}}^{t_{i,n}} |(\sigma_u)^2-(\sigma_s)^2|du|\mathcal{F}_{t_{i-1,n}},\mathcal{S}]\big| \mathcal{S}\big] +O_\mathbb{P}(|\pi_n|_T/b_n) \nonumber
\\&~~\leq \frac{1}{b_n}\sum_{i:\mathcal{I}_{i,n} \subset [s,s+b_n]} \mathbb{E} \big[\mathbb{E}[\varepsilon |\mathcal{I}_{i,n}|+K|\mathcal{I}_{i,n}| \mathds{1}_{\{\sup_{u \in \mathcal{I}_{i,n}}|(\sigma_u)^2-(\sigma_s)^2|>\varepsilon\}}|\mathcal{F}_{t_{i-1,n}},\mathcal{S}] \big| \mathcal{S}\big] \nonumber
\\&~~~~~~~+O_\mathbb{P}(|\pi_n|_T/b_n) \nonumber
\\ &~~ \leq (\varepsilon+K\mathbb{E}[\mathds{1}_{\{\sup_{u \in [s,s+b_n]}|(\sigma_u)^2-(\sigma_s)^2|>\varepsilon\}}| \mathcal{S}])+O_\mathbb{P}(|\pi_n|_T/b_n) \nonumber
\\ &~~ \leq \varepsilon +K\mathbb{P}(\sup_{u \in [s,s+b_n]}|(\sigma_u)^2-(\sigma_s)^2|>\varepsilon|\mathcal{S})+O_\mathbb{P}(|\pi_n|_T/b_n)
\end{align}
where the first term vanishes as $\varepsilon \rightarrow 0$, the second term vanishes as $n \rightarrow \infty$ for all $\varepsilon>0$ because $\sigma$ is right-continuous and the third term vanishes as $n \rightarrow \infty$ by Condition \ref{cond_testproc_J}. Further we get 
\begin{align}\label{cons_spot_vola_endo_proof4}
\sum_{i:\mathcal{I}_{i_{n}(s)+i,n} \subset [s,s+b_n]} \mathbb{E}[(\zeta_i^n)^2|\mathcal{G}_{i-1}^n] 
 \leq \frac{4|\pi_n|_T b_n}{b_n^2}K \overset{\mathbb{P}}{\longrightarrow} 0
\end{align}
from $(a-b)^2\leq 2a^2+2b^2$, inequality \eqref{elem_ineq_C}, the boundedness of $\sigma$ and $|\pi_n|_T/b_n \overset{\mathbb{P}}{\longrightarrow} 0$. Hence we obtain $\sum_i \mathbb{E}[\zeta_i^n|\mathcal{G}_{i-1}^n] \overset{\mathbb{P}}{\longrightarrow} 0$ and $\sum_i \mathbb{E}[(\zeta_i^n)^2|\mathcal{G}_{i-1}^n] \overset{\mathbb{P}}{\longrightarrow} 0$ from \eqref{cons_spot_vola_endo_proof3} and \eqref{cons_spot_vola_endo_proof4} which yields
\begin{align*}
\sum_{i:\mathcal{I}_{i_{n}(s)+i,n} \subset [s,s+b_n]} \zeta_i^n
\overset{\mathbb{P}}{\longrightarrow} 0
\end{align*}
by Lemma 2.2.12 from \cite{JacPro12} which implies \eqref{cons_spot_vola_endo_proof1}.

The proof for the convergence of $\hat{\sigma}_n(S_p,+)$ for a stopping time $S_p$ is identical to the proof for $\hat{\sigma}_n(s,+)$ above. For $\hat{\sigma}_n(S_p,-)$ however the situation is different. There we use the sum over all $i \in \mathbb{N}$ with $t_{i,n} \in [S_p-b_n,S_p)$ which can be written as $$t_{i_n(S_p-b_n)+i-1,n}, \quad i=1,\ldots , i_n(S_p)-i_n(S_p-b_n)+1.$$ Here, in general $t_{i_n(S_p-b_n)+i-1,n}$ is no stopping time and we cannot use conditional expectations as above in that case. This problem can be circumvented by observing that jump times $S_p$ are asymptotically independent of $(C_u)_{u \in [S_p-\delta,S_p+\delta]}$ for $\delta \rightarrow 0$. To formalize this idea we define new filtrations $(\mathcal{F}^\varepsilon_t)_{t \geq 0}$, $\varepsilon>0$, such that jump times $S_p$ with $\mu(S_p,(-\infty,\varepsilon]\cup [\varepsilon,\infty))=1$ are $\mathcal{F}_0^\varepsilon$-measurable. Then $X$ is also an It\^o semimartingale with respect to the filtration $(\mathcal{F}^\varepsilon_t)_{t \geq 0}$ and we have $C^\varepsilon=C$ for the continuous martingale part $C^\varepsilon$ of $X$ under the filtration $(\mathcal{F}^\varepsilon_t)_{t \geq 0}$. The convergence of $\hat{\sigma}_n(S_p,-)$ for any jump time $S_p$ with $\mu(S_p,(-\infty,\varepsilon]\cup [\varepsilon,\infty))=1$ then follows because $S_p$ is $\mathcal{F}_0^\varepsilon$-measurable and therefore practically deterministic with respect to the new filtration. The statement for general jump times $S_p$ is obtained by letting $\varepsilon \rightarrow 0$. Consult Step 6 in the proof of Theorem 9.3.2 from \cite{JacPro12} for more details.
\end{proof}

\begin{proof}[Proof of Proposition \ref{lemma_test_J}]
Denote by $S_p$, $p=1,\ldots,P$, the jump times of the $P$ largest jumps of $X$ in $[0,T]$ and introduce the notation
\begin{align*}
&R_k(P,n,m)=\sum_{p=1}^P (\Delta_{i_n(S_p),n}X)^3 \mathds{1}_{\{|\Delta_{i_n(S_p),n}X|> \beta |\mathcal{I}_{i_n(S_p),n}|^\varpi\}} 
\\&~~~\times\big((\hat{\sigma}_n(t_{i_n(S_p),n},-))^2\hat{\xi}_{k,n,m,-}(t_{i_n(S_p),n})
~+(\hat{\sigma}_n(t_{i_n(S_p),n},+))^2\hat{\xi}_{k,n,m,+}(t_{i_n(S_p),n})\big)^{1/2}
\\&~~~\times U_{n,i_n(S_p),m},
\\ &R_k(P)=\sum_{p=1}^P (\Delta X_{S_p})^3\big((\sigma_{S_p-})^2\xi_{k,-}(S_p)+(\sigma_{S_p})^2 \xi_{k,+}(S_p)\big)^{1/2} U_{S_p}.
\end{align*}

\textit{Step 1.} From a result similar to Lemma A.6 in \cite{MarVet17}, which follows from Condition \ref{cond_testproc_J}(i), and Lemma \ref{lemma_conv_sigma} we derive
\begin{align}\label{test_theo_J_proof_A2}
\widetilde{\mathbb{P}} \big( \big\{\big| \frac{1}{M_n} \sum_{m=1}^{M_n} \mathds{1}_{\{R_k(P,n,m) \leq \Upsilon\}} -\widetilde{\mathbb{P}}(R_k(P)\leq \Upsilon|\mathcal{X})\big|>\varepsilon \big\} \cap \Omega_T^{(J)}\big) \rightarrow 0
\end{align}
as $n \rightarrow \infty$ for any fixed $P$. 

\textit{Step 2.} Next we show
\begin{align}\label{test_theo_J_proof_A5}
\lim_{P \rightarrow \infty} \limsup_{n \rightarrow \infty}  \widetilde{\mathbb{P}}\big(\big\{\big|\frac{1}{M_n} \sum_{m=1}^{M_n}(\mathds{1}_{\{R_k(P,n,m)\leq \Upsilon\}}-\mathds{1}_{\{\widehat{F}_{k,T,n,m}^{(J)}\leq \Upsilon\}} )\big| >\varepsilon\big\}\cap \Omega_T^{(J)} \big)=0
\end{align}
for all $\varepsilon >0$ which follows from
\begin{align}\label{test_theo_J_proof_A4}
\lim_{P \rightarrow \infty} \limsup_{n \rightarrow \infty} \frac{1}{M_n} \sum_{m=1}^{M_n} \widetilde{\mathbb{P}}(|R_k(P,n,m)-\widehat{F}_{k,T,n,m}^{(J)}|>\varepsilon)=0
\end{align}
for all $\varepsilon>0$; compare Step 4 in the proof of Proposition A.7 from \cite{MarVet17}. 

On the set $\Omega(q,P,n)$ on which the jumps of $N(q)$ are among the $P$ largest jumps and two different jumps of $N(q)$ are further apart than $|\pi_n|_T$ it holds
\begin{small}\begin{align}\label{test_theo_J_proof1}
&|R_k(P,n,m)-\widehat{F}_{k,T,n,m}^{(J)}|\mathds{1}_{\Omega(q,P,n)} \nonumber
\\&~~\leq \sum_{i:t_{i,n} \leq T} |\Delta_{i,n} (X-N(q)) |^3 \mathds{1}_{\{|\Delta_{i,n} X |> \beta |\mathcal{I}_{i,n}|^{\varpi} \}} 
\Big(\frac{2}{b_n} \sum_{j \neq i:\mathcal{I}_{j,n} \subset [t_{i,n}-b_n,t_{i,n}+b_n]} \left(\Delta_{j,n} X \right)^2  \Big)^{1/2} \nonumber
\\&~~~~~~~~~~~~~~~~~~\times\big( \hat{\xi}_{k,n,m,-}(t_{i,n})+\hat{\xi}_{k,n,m,+}(t_{i,n})\big)^{1/2} |U_{n,i,m} | \nonumber
\\&~~\leq K\sum_{i:t_{i,n} \leq T} \big(\left|\Delta_{i,n} (B(q)+ C) \right|^3+\left|\Delta_{i,n} M(q) \right|^3\big)\Big(\frac{1}{b_n} \sum_{j \neq i:\mathcal{I}_{j,n} \subset [t_{i,n}-b_n,t_{i,n}+b_n]} \left(\Delta_{j,n} X \right)^2 \Big)^{1/2} \nonumber
\\&~~~~~~~~~~~~~~~~~~\times
\big( \hat{\xi}_{k,n,m,-}(t_{i,n})+\hat{\xi}_{k,n,m,+}(t_{i,n})\big)^{1/2} | U_{n,i,m}|.
\end{align}
\end{small}For the continuous parts in \eqref{test_theo_J_proof1} we get using the Cauchy-Schwarz inequality, Lemma \ref{elem_ineq} and \mbox{$\hat{\xi}_{k,n,m,-}(t_{i,n}),\hat{\xi}_{k,n,m,+}(t_{i,n}) \leq n K |\pi_n|_T$} 
\begin{align}\label{test_theo_J_proof3}
&\mathbb{E} \Big[\sum_{i:t_{i,n} \leq T} |\Delta_{i,n} (B(q)+ C) |^3
\Big(\frac{1}{b_n} \sum_{j \neq i:\mathcal{I}_{j,n} \subset [t_{i,n}-b_n,t_{i,n}+b_n]} (\Delta_{j,n} X )^2 \Big)^{1/2} \nonumber
\\&~~~~~~~~~~~~~~~~~~~~~~~~ \times \big( \hat{\xi}_{k,n,m,-}(t_{i,n})+\hat{\xi}_{k,n,m,+}(t_{i,n})\big)^{1/2} |U_{n,i,m}| \Big| \mathcal{S} \Big] \nonumber
\\ &~~ \leq K \sum_{i:t_{i,n} \leq T} \left(\left( K_q |\mathcal{I}_{i,n}|^6 + |\mathcal{I}_{i,n}|^3 \right) \frac{2b_n}{ b_n} n|\pi_n|_T \right)^{1/2} \nonumber
\\&~~= K \sqrt{n} |\pi_n|_T \left(K_q(|\pi_n|_T)^3+1\right)^{1/2} T 
\end{align}
which vanishes as $n \rightarrow \infty$ by Condition \ref{cond_clt_J}(i).

Denote $$\hat{\xi}_{k,n,m}(i)=\hat{\xi}_{k,n,m,-}(t_{i,n})+\hat{\xi}_{k,n,m,+}(t_{i,n}).$$ We get for the remaining terms containing $|\Delta_{i,n} M(q)|^3$ in \eqref{test_theo_J_proof1} using the Jensen inequality and inequalities \eqref{elem_ineq_M}, \eqref{elem_ineq_X}
\begin{small}\begin{align}\label{test_theo_J_proof2}
&\mathbb{E} \Big[\sum_{i:t_{i,n} \leq T} \left|\Delta_{i,n} M(q) \right|^3
\Big(\frac{1}{b_n} \sum_{j \neq i:\mathcal{I}_{j,n} \subset [t_{i,n}-b_n,t_{i,n}+b_n]} (\Delta_{j,n} X )^2 \Big)^{1/2} \nonumber
\\&~~~~~~~~~~~~~~~~~~~~~~~~ \times \big( \hat{\xi}_{k,n,m,-}(t_{i,n})+\hat{\xi}_{k,n,m,+}(t_{i,n})\big)^{1/2} |U_{n,i,m}| \Big| \mathcal{S} \Big] \nonumber
\\ &~\leq K\sum_{i:t_{i,n} \leq T} 
 \Big(
\mathbb{E}\Big[\mathbb{E}[|\Delta_{i,n} M(q) |^3|\mathcal{S},\mathcal{F}_{t_{i-1,n}} ]\Big(\frac{1}{b_n} \sum_{j :\mathcal{I}_{j,n} \subset [t_{i,n}-b_n,t_{i,n})} (\Delta_{j,n} X )^2\Big)^{1/2}\Big|\mathcal{S} \Big] \nonumber
\\&~~+ \mathbb{E}\Big[|\Delta_{i,n} M(q) |^3 \mathbb{E}\Big[\Big(\frac{1}{b_n} \sum_{j :\mathcal{I}_{j,n} \subset [t_{i,n},t_{i,n}+b_n]} \left(\Delta_{j,n} X \right)^2\Big)^{1/2}\Big|\mathcal{S},\mathcal{F}_{t_{i,n}} \Big]\Big|\mathcal{S}\Big] \Big)
 \mathbb{E} \big[2\sqrt{\hat{\xi}_{k,n,m}(i)} \big| \mathcal{S} \big] \nonumber
\\ &~ \leq K e_q \sum_{i:t_{i,n} \leq T} 
\Big(|\mathcal{I}_{i,n}|\Big(\mathbb{E}\Big[\frac{1}{b_n} \sum_{j :\mathcal{I}_{j,n} \subset [t_{i,n}-b_n,t_{i,n})} (\Delta_{j,n} X )^2\Big|\mathcal{S} \Big]\Big)^{1/2} \nonumber
\\&~~+ \mathbb{E}\Big[|\Delta_{i,n} M(q) |^3 \Big(\mathbb{E}\Big[\frac{1}{b_n} \sum_{j :\mathcal{I}_{j,n} \subset [t_{i,n},t_{i,n}+b_n]} (\Delta_{j,n} X )^2\Big|\mathcal{S},\mathcal{F}_{t_{i,n}} \Big]\Big)^{1/2} \Big| \mathcal{S} \Big]\Big) \big(\mathbb{E} \big[\hat{\xi}_{k,n,m}(i) \big| \mathcal{S} \big]\big)^{1/2} \nonumber
 \\&~ \leq K e_q \sum_{i:t_{i,n} \leq T} |\mathcal{I}_{i,n}|
 \Big(K\frac{b_n}{b_n} \Big)^{1/2} \nonumber
\\&~~~~~\times \Big(
\sum_{l=-L_n}^{L_n}|\mathcal{I}_{i+l,n}|\Big( \sum_{\zeta = -L_n}^{L_n} |\mathcal{I}_{i+\zeta,n}| \Big)^{-1} \Big(n \sum_{j=1}^{k-1}(k-j)^2(|\mathcal{I}_{i+l+j,n}|+|\mathcal{I}_{i+l-j,n}|) \Big)^{1/2}
\Big) \nonumber
\\&~ \leq K e_q \sqrt{n} \sum_{i:t_{i,n} \leq T} |\mathcal{I}_{i,n}|
\sum_{l=-L_n}^{L_n}|\mathcal{I}_{i+l,n}| \Big( \sum_{\zeta = -L_n}^{L_n} |\mathcal{I}_{i+\zeta,n}| \Big)^{-1} \sum_{j=-(k-1)}^{k-1}|\mathcal{I}_{i+l+j,n}|^{1/2} \nonumber
\\&~ = K e_q \sqrt{n} \sum_{i:t_{i,n} \leq T} |\mathcal{I}_{i,n}| \sum_{j=-(k-1)}^{k-1}|\mathcal{I}_{i+j,n}|^{1/2} 
\sum_{l=-L_n}^{L_n} |\mathcal{I}_{i+l,n}| 
\Big( \sum_{\zeta = -L_n}^{L_n} |\mathcal{I}_{i+l+\zeta,n}| \Big)^{-1} \nonumber
\\&~~~~~+O_\mathbb{P}(\sqrt{n}|\pi_n|_T) 
\end{align}
\end{small}where we changed the index $i \rightarrow i+l$ to derive the last equality.
Then \eqref{test_theo_J_proof2} vanishes as in \eqref{proof_clt_J7} because of
\begin{multline}\label{index_shift_trick}
\sum_{l=-L_n}^{L_n} |\mathcal{I}_{i+l,n}| 
\big( \sum_{\zeta = -L_n}^{L_n} |\mathcal{I}_{i+l+\zeta,n}| \big)^{-1} 
 \\ ~\leq 
\sum_{l=-L_n}^{0}|\mathcal{I}_{i+l,n}| 
\big( \sum_{\zeta = -L_n}^{L_n} |\mathcal{I}_{i+l+\zeta,n}| \big)^{-1} +\sum_{l=0}^{L_n} |\mathcal{I}_{i+l,n}| 
\big( \sum_{\zeta = -L_n}^{L_n} |\mathcal{I}_{i+l+\zeta,n}| \big)^{-1} \leq 2
.
\end{multline}
Hence by Lemma \ref{conv_cond_expec} we get \eqref{test_theo_J_proof_A4} because of $\mathbb{P}(\Omega(q,P,n))\rightarrow \infty$ as $P,n \rightarrow \infty$ for any $q>0$.

\textit{Step 3.} Finally we show
\begin{align}\label{test_theo_J_proof_A3}
\widetilde{\mathbb{P}}(R_k(P) \leq \Upsilon|\mathcal{X}) \mathds{1}_{\Omega_T^{(J)}} \overset{\mathbb{P}}{\longrightarrow} \widetilde{\mathbb{P}}(F_{k,T}^{(J)} \leq \Upsilon|\mathcal{X}) \mathds{1}_{\Omega_T^{(J)}}
\end{align}
as $P \rightarrow \infty$. We have $R_k(P) \overset{\mathbb{P}}{\longrightarrow} F_{k,T}^{(J)}$ as $P \rightarrow \infty$ and it can be shown that this convergence yields \eqref{test_theo_J_proof_A3}; compare Step 3 in the proof of Proposition A.7 in \cite{MarVet17}.

Combining \eqref{test_theo_J_proof_A2}, \eqref{test_theo_J_proof_A5} and \eqref{test_theo_J_proof_A3} yields \eqref{test_theo_J_proof_A1}.
\end{proof}

\subsection{Proofs for Section \ref{Test_CoJ}}\label{sec:proof_detail_CoJ}

\begin{proof}[Proof of \eqref{proof_theo_clt_CoJ_step1}]
\textit{Step 1.} We first prove 
\begin{align}\label{theo_clt_CoJ_proof_step1_step1}
\lim_{n \rightarrow \infty} \mathbb{P}(|\sqrt{n}\left(V(f,[2],\pi_n)_T-4 V(f,\pi_n)_T\right)-R(n)|> \varepsilon)=0
\end{align}
for all $\varepsilon>0$ where $R(n)=R^{(1)}(n)+R^{(2)}(n)$ and
\begin{align*}
R^{(l)}(n)=&\sqrt{n}\sum_{i,j\geq 2:t_{i,n}^{(l)}\wedge t_{j,n}^{(3-l)}\leq T}
2 \Delta_{i-1,1,n}^{(l)}X^{(l)}\Delta_{i,1,n}^{(l)}X^{(l)} 
\\&~~~~~~~~~~~~~~~~\times\big(\Delta_{j,2,n}^{(3-l)}X^{(3-l)} \big)^2\mathds{1}_{\{\mathcal{I}_{i,2,n}^{(l)}\cap \mathcal{I}_{j,2,n}^{(3-l)}\neq \emptyset\}}, \quad l=1,2.
\end{align*}
Therefore note that it holds
\begin{align}\label{theo_clt_CoJ_proof1}
&\sqrt{n}\left(V(f,[2],\pi_n)_T-4 V(f,\pi_n)_T\right) \nonumber 
\\&~=\sqrt{n}\sum_{i,j\geq 2:t_{i,n}^{(1)}\wedge t_{j,n}^{(2)}\leq T} 
\Big[\big(\Delta_{i-1,1,n}^{(1)}X^{(1)}+\Delta_{i,1,n}^{(1)}X^{(1)}  \big)^2
\big(\Delta_{j-1,1,n}^{(2)}X^{(2)}+\Delta_{j,1,n}^{(2)}X^{(2)}  \big)^2 \nonumber 
\\&~~~~-\sum_{l_1,l_2=0,1}\big( \Delta_{i-l_1,1,n}^{(1)}X^{(1)}  \big)^2 \big( \Delta_{j-l_2,1,n}^{(2)}X^{(2)} \big)^2
 \mathds{1}_{\{\mathcal{I}_{i-l_1,1,n}^{(1)}\cap \mathcal{I}_{j-l_2,1,n}^{(2)}\neq \emptyset\}} \Big] \mathds{1}_{\{\mathcal{I}_{i,2,n}^{(1)}\cap \mathcal{I}_{j,2,n}^{(2)}\neq \emptyset\}} \nonumber 
 \\&~~~~+O_\mathbb{P}(\sqrt{n}|\pi_n|_T) \nonumber
 \\&~=\sqrt{n}\sum_{i,j\geq 2:t_{i,n}^{(1)}\wedge t_{j,n}^{(2)}\leq T}
\Big[ 2 \Delta_{i-1,1,n}^{(1)}X^{(1)}\Delta_{i,1,n}^{(1)}X^{(1)} 
\big(\Delta_{j-1,1,n}^{(2)}X^{(2)}+\Delta_{j,1,n}^{(2)}X^{(2)}  \big)^2 \nonumber 
\\&~~~~+2 \big(\Delta_{i-1,1,n}^{(1)}X^{(1)}+\Delta_{i,1,n}^{(1)}X^{(1)} \big)^2 
\Delta_{j-1,1,n}^{(2)}X^{(2)}\Delta_{j,1,n}^{(2)}X^{(2)} \nonumber 
\\&~~~~+\sum_{l_1,l_2=0,1}\big( \Delta_{i-l_1,1,n}^{(1)}X^{(1)}  \big)^2 \big( \Delta_{j-l_2,1,n}^{(2)}X^{(2)} \big)^2
 \mathds{1}_{\{\mathcal{I}_{i-l_1,1,n}^{(1)}\cap \mathcal{I}_{j-l_2,1,n}^{(2)}= \emptyset\}} \Big]\mathds{1}_{\{\mathcal{I}_{i,2,n}^{(1)}\cap \mathcal{I}_{j,2,n}^{(2)}\neq \emptyset\}} \nonumber
 \\&~~~~+O_\mathbb{P}(\sqrt{n}|\pi_n|_T),
\end{align}
where the $O_\mathbb{P}(\sqrt{n}|\pi_n|_T)$-terms are due to boundary effects. Because of the indicator $ \mathds{1}_{\{\mathcal{I}_{i-l_1,1,n}^{(1)}\cap \mathcal{I}_{j-l_2,1,n}^{(2)}= \emptyset\}}$ we may use iterated expectations and inequality \eqref{elem_ineq_X} to bound the $\mathcal{S}$-conditional expectation of the sum over the $( \Delta_{i-l_1,1,n}^{(1)}X^{(1)}  )^2 ( \Delta_{j-l_2,1,n}^{(2)}X^{(2)} )^2$ in \eqref{theo_clt_CoJ_proof1} by
\begin{align*}
4K \sqrt{n} \sum_{i,j\geq 2:t_{i,n}^{(1)}\wedge t_{j,n}^{(2)}\leq T} |\mathcal{I}_{i,2,n}^{(1)}||\mathcal{I}_{j,2,n}^{(2)}| \mathds{1}_{\{\mathcal{I}_{i,2,n}^{(1)}\cap \mathcal{I}_{j,2,n}^{(2)}\neq \emptyset\}}
=\frac{32KH_{2,n}(T)}{\sqrt{n}}
\end{align*}
which converges to zero in probability as $n \rightarrow \infty$ because of Condition \ref{cond_cons_CoJ2}(ii). Hence we obtain \eqref{theo_clt_CoJ_proof_step1_step1} by Lemma \ref{conv_cond_expec}.

\textit{Step 2.} Next we will show
\begin{align}\label{theo_clt_CoJ_proof2}
\lim_{q \rightarrow \infty} \limsup_{n \rightarrow \infty} \mathbb{P}(|R^{(l)}(n)-R^{(l)}(n,q)|> \varepsilon) \rightarrow 0
\end{align}
for all $\varepsilon >0$ and $l=1,2$ with
\begin{gather*}
R^{(l)}(n,q)=\sqrt{n}\sum_{i,j\geq 2:t_{i,n}^{(l)}\wedge t_{j,n}^{(3-l)}\leq T}
2 \Delta_{i-1,1,n}^{(l)}X^{(l)}\Delta_{i,1,n}^{(l)}X^{(l)} 
\big(\Delta_{j,2,n}^{(3-l)}N^{(3-l)}(q) \big)^2\mathds{1}_{\{\mathcal{I}_{i,2,n}^{(l)}\cap \mathcal{I}_{j,2,n}^{(3-l)}\neq \emptyset\}},
\end{gather*}
$l=1,2$. Denote by $\Delta_{(j,i),(k_2,k_1),n}^{(3-l,l)} X$ the increment of $X$ over the interval $\mathcal{I}_{i,k_1,n}^{(l)} \cap \mathcal{I}_{j,k_2,n}^{(3-l)}$ and by $\Delta_{(j,i),(k_1,k_2),n}^{(3-l\setminus l)} X$ the increment of $X$ over $\mathcal{I}_{j,k_1,n}^{(3-l)} \setminus \mathcal{I}_{i,k_2,n}^{(l)}$ (which might be the sum of the increments over two separate intervals). 

Using the elementary inequality
\begin{align*}
|a^2-b^2| = (\rho|a+b|)(\rho^{-1}|a-b|) \leq \rho^2(a+b)^2+\rho^{-2}(a-b)^2
\end{align*}
which holds for any $a,b \in \mathbb{R}$, $\rho>0$ for $a=\Delta_{j,2,n}^{(2)}X^{(2)}$ and $b=\Delta_{j,2,n}^{(2)}N^{(2)}(q)$ yields
\begin{small}
\begin{align}\label{theo_clt_CoJ_proof3}
&\mathbb{E}[|R^{(l)}(n)-R^{(l)}(n,q)|| \mathcal{S} ] \nonumber
\\ &~\leq 2\sqrt{n}\sum_{i,j\geq 2:t_{i,n}^{(l)}\wedge t_{j,n}^{(3-l)}\leq T}
\Big(\rho^2 \mathbb{E}\big[|\Delta_{i-1,1,n}^{(l)}X^{(l)}\Delta_{i,1,n}^{(l)}X^{(l)}| 
\big(\Delta_{j,2,n}^{(3-l)}(X^{(3-l)}+N^{(3-l)}(q)) \big)^2 \big| \mathcal{S} \big] \nonumber
\\&~~~+\rho^{-2}\mathbb{E}\big[|\Delta_{i-1,1,n}^{(l)}X^{(l)}\Delta_{i,1,n}^{(l)}X^{(l)}| 
\big(\Delta_{j,2,n}^{(3-l)}(X^{(3-l)}-N^{(3-l)}(q)) \big)^2 \big| \mathcal{S} \big] \Big)
\mathds{1}_{\{\mathcal{I}_{i,2,n}^{(l)}\cap \mathcal{I}_{j,2,n}^{(3-l)}\neq \emptyset\}}.
\end{align}
\end{small}
Using
\begin{align*}
& |\Delta_{\iota,1,n}^{(l)} X^{(l)}|\leq|\Delta_{(j,\iota),(2,1),n}^{(3-l,l)} X^{(l)}|+|\Delta_{(\iota,j),(1,2),n}^{(l\setminus (3-l))} X^{(l)}|, ~\iota=i-1,i,
\\&\big(\Delta_{j,2,n}^{(3-l)} Y \big)^2 \leq 3 \big(\Delta_{(j,i),(2,1),n}^{(3-l,l)} Y \big)^2 +3 \big(\Delta_{(j,i-1),(2,1),n}^{(3-l,l)} Y  \big)^2 +3 \big(\Delta_{(j,i),(2,2),n}^{(3-l\setminus l)} Y  \big)^2,
\end{align*}
for $Y=X^{(3-l)}+N^{(3-l)}(q)$ to treat increments over overlapping and non-overlapping intervals differently we obtain that the first summand in \eqref{theo_clt_CoJ_proof3} is bounded by
\begin{small}
\begin{align}\label{theo_clt_CoJ_proof4}
&6\rho^2\sqrt{n}\sum_{i,j\geq 2:t_{i,n}^{(l)}\wedge t_{j,n}^{(3-l)}\leq T} \bigg[
\mathbb{E}\big[|\Delta_{i-1,1,n}^{(l)}X^{(l)}\Delta_{i,1,n}^{(l)}X^{(l)}|
\big(\Delta_{(j,i),(2,2),n}^{(3-l\setminus l)} (X^{(3-l)}+N^{(3-l)}(q)) \big)^2 \big| \mathcal{S} \big]\nonumber
\\& ~~~~+\sum_{\iota=i-1,i} \Big(\mathbb{E}\big[|\Delta_{2i-1-\iota,1,n}^{(l)}X^{(l)}\Delta_{(j,\iota),(2,1),n}^{(3-l,l)} X^{(l)}|
\big(\Delta_{(j,\iota),(2,1),n}^{(3-l,l)} (X^{(3-l)}+N^{(3-l)}(q))  \big)^2 \big| \mathcal{S} \big]\nonumber
\\& ~~~~~~~~~~~~ +\mathbb{E}\big[|\Delta_{2i-1-\iota,1,n}^{(l)}X^{(l)}\Delta_{(\iota,j),(1,2),n}^{(l\setminus 3-l)} X^{(l)}|
\big(\Delta_{(j,\iota),(2,1),n}^{(3-l,l)} (X^{(3-l)}+N^{(3-l)}(q))  \big)^2 \big| \mathcal{S} \big]
 \Big) \bigg]\nonumber
 \\ &~~\times\mathds{1}_{\{\mathcal{I}_{i,2,n}^{(l)}\cap \mathcal{I}_{j,2,n}^{(3-l)}\neq \emptyset\}}\nonumber
\\& \leq K \rho^2 \sqrt{n} \sum_{i,j\geq 2:t_{i,n}^{(l)}\wedge t_{j,n}^{(3-l)}\leq T} \bigg[(|\mathcal{I}_{i-1,1,n}^{(l)}||\mathcal{I}_{i,1,n}^{(l)}|)^{1/2} (|\mathcal{I}_{j,2,n}^{(3-l)}|+K_q |\mathcal{I}_{j,2,n}^{(3-l)}|^2)\nonumber
\\&~~~~+\sum_{\iota=i-1,i} \Big( |\mathcal{I}_{2i-1-\iota,1,n}^{(l)}|^{1/2}  |\mathcal{I}_{\iota,1,n}^{(l)}\cap \mathcal{I}_{j,2,n}^{(3-l)}|^{1/2}(|\mathcal{I}_{\iota,1,n}^{(l)}\cap \mathcal{I}_{j,2,n}^{(3-l)}|+K_q |\mathcal{I}_{\iota,1,n}^{(l)}\cap \mathcal{I}_{j,2,n}^{(3-l)}|^4)^{1/2}\nonumber
\\&~~~~~~~~+ (|\mathcal{I}_{i-1,1,n}^{(l)}||\mathcal{I}_{i,1,n}^{(l)}|)^{1/2}
(|\mathcal{I}_{\iota,1,n}^{(l)}\cap \mathcal{I}_{j,2,n}^{(3-l)}|+K_q|\mathcal{I}_{\iota,1,n}^{(l)}\cap \mathcal{I}_{j,2,n}^{(3-l)}|^2)\nonumber
\big) \bigg]\mathds{1}_{\{\mathcal{I}_{i,2,n}^{(l)}\cap \mathcal{I}_{j,2,n}^{(3-l)}\neq \emptyset\}}\nonumber
\\& \leq K \rho^2 
(1+K_q|\pi_n|_T)H_{2,n}(T)/\sqrt{n} \nonumber
\\&~~+ K \rho^2 (1+K_q(|\pi_n|_T)^3)^{1/2} \sqrt{n} \sum_{i,j\geq 2:t_{i,n}^{(l)}\wedge t_{j,n}^{(3-l)}\leq T} |\mathcal{I}_{i,2,n}^{(l)}|^{1/2} |\mathcal{I}_{i,2,n}^{(l)} \cap \mathcal{I}_{j,2,n}^{(3-l)}|\mathds{1}_{\{\mathcal{I}_{i,2,n}^{(l)}\cap \mathcal{I}_{j,2,n}^{(3-l)}\neq \emptyset\}}
\end{align}
\end{small}where we used iterated expectations, the Cauchy-Schwarz inequality for the second summand and Lemma \ref{elem_ineq} repeatedly. The first term in the last bound vanishes as $n \rightarrow \infty$ because of Condition \ref{cond_cons_CoJ} and \ref{cond_cons_CoJ2}(ii) while the second term vanishes after letting $n \rightarrow \infty$, $q \rightarrow \infty$ and then $\rho \rightarrow 0$ by Condition \ref{cond_cons_CoJ2}(i) and (ii) because of
\begin{align}\label{bound_prob_square}
&\sqrt{n}\sum_{i,j\geq 2:t_{i,n}^{(l)}\wedge t_{j,n}^{(3-l)}\leq T} |\mathcal{I}_{i,2,n}^{(l)}|^{1/2} |\mathcal{I}_{i,2,n}^{(l)} \cap \mathcal{I}_{j,2,n}^{(3-l)}|\mathds{1}_{\{\mathcal{I}_{i,2,n}^{(l)}\cap \mathcal{I}_{j,2,n}^{(3-l)}\neq \emptyset\}} \nonumber \\
&~~~~~~\leq \sqrt{n} \sum_{i\geq 2:t_{i,n}^{(l)}\leq T} |\mathcal{I}_{i,2,n}^{(l)}|^{1/2}|\mathcal{I}_{i,2,n}^{(l)}| \nonumber
 \leq \big(n T \sum_{i\geq 2:t_{i,n}^{(l)}\leq T} |\mathcal{I}_{i,2,n}^{(l)}|^2 \big)^{1/2} \nonumber
\\&~~~~~~\leq (2^3T H_{2,n}(T))^{1/2}+O_\mathbb{P} (\sqrt{n}|\pi_n|_T)
\end{align}
where the inequality in the second to last line follows from the Cauchy-Schwarz inequality for sums.

Analogously we can bound the second summand in \eqref{theo_clt_CoJ_proof3} by
\begin{align*}
&K \rho^{-2}
(K_q|\pi_n|_T+1+e_q)H_{2,n}(T)/\sqrt{n} 
\\&~~+ K \rho^{-2} (K_q(|\pi_n|_T)^3+|\pi_n|_T+e_q)^{1/2} \big(n T \sum_{i\geq 2:t_{i,n}^{(l)}\leq T} |\mathcal{I}_{i,2,n}^{(l)}|^2 \big)^{1/2}+O_\mathbb{P} (\sqrt{n}|\pi_n|_T)
\end{align*}
where the first term vanishes as $n \rightarrow \infty$ and the second term vanishes as $n \rightarrow \infty$ and then $q \rightarrow \infty$ for any $\rho>0$. Hence using Lemma \ref{conv_cond_expec} we have proved \eqref{theo_clt_CoJ_proof2}.

\textit{Step 3.} Further we will show
\begin{align}\label{theo_clt_CoJ_proof5}
\lim_{q \rightarrow \infty} \limsup_{n \rightarrow \infty} \mathbb{P}(|R^{(l)}(n,q)-\widetilde{R}^{(l)}(n,q)|> \varepsilon) \rightarrow 0
\end{align}
for all $\varepsilon>0$ and $l=1,2$ with
\begin{multline*}
\widetilde{R}^{(l)}(n,q)=\sqrt{n}\sum_{i,j\geq 2:t_{i,n}^{(l)}\wedge t_{j,n}^{(3-l)}\leq T} 
2 \big( \Delta_{i-1,1,n}^{(l)}N^{(l)}(q) \Delta_{i,1,n}^{(l)}C^{(l)}
\\+\Delta_{i-1,1,n}^{(l)}C^{(l)} \Delta_{i,1,n}^{(l)}N^{(l)} (q)\big) 
 \big(\Delta_{j,2,n}^{(3-l)}N^{(3-l)}(q)  \big)^2\mathds{1}_{\{\mathcal{I}_{i,2,n}^{(l)}\cap \mathcal{I}_{j,2,n}^{(3-l)}\neq \emptyset\}},~l=1,2.
\end{multline*}
Using 
\begin{multline*}
\mathbb{E}[\Delta_{(j,\iota),(2,1),n}^{(3-l,l)}(X^{(l)}-N^{(l)}(q))
\big(\Delta_{(j,\iota),(2,1),n}^{(3-l,l)}N^{(3-l)}(q)  \big)^2 |\mathcal{S}]
\\ \leq (K_q|\mathcal{I}_{\iota,1,n}^{(l)}\cap \mathcal{I}_{j,2,n}^{(3-l)}|^{1/6} +(e_q)^{1/2})|\mathcal{I}_{\iota,1,n}^{(l)}\cap \mathcal{I}_{j,2,n}^{(3-l)}|
\end{multline*}
which can be derived as in \eqref{proof_clt_J7} we get analogously to \eqref{theo_clt_CoJ_proof4}
\begin{align}\label{theo_clt_CoJ_proof6}
&\mathbb{E}\big[\sqrt{n}\sum_{i,j\geq 2:t_{i,n}^{(l)}\wedge t_{j,n}^{(3-l)}\leq T} 
2 \Delta_{i-1,1,n}^{(l)}(X^{(l)}-N^{(l)}(q)) \Delta_{i,1,n}^{(l)}(X^{(l)}-N^{(l)}(q)) \nonumber
\\&~~~~~~~~~~~~~~~~~~~~~\times \big(\Delta_{j,2,n}^{(3-l)}N^{(3-l)}(q)  \big)^2
  \mathds{1}_{\{\mathcal{I}_{i,2,n}^{(l)}\cap \mathcal{I}_{j,2,n}^{(3-l)}\neq \emptyset\}}
\big| \mathcal{S} \big] \nonumber
\\ &~\leq  K\sqrt{n}\sum_{i,j\geq 2:t_{i,n}^{(l)}\wedge t_{j,n}^{(3-l)}\leq T} 
\Big[((K_q|\mathcal{I}_{i-1,1,n}^{(l)}|+1+e_q)|\mathcal{I}_{i-1,1,n}^{(l)}|)^{1/2} \nonumber
\\&~~~~~~~~~~~~~~~~~~~~~\times((K_q|\mathcal{I}_{i,1,n}^{(l)}|+1+e_q)|\mathcal{I}_{i,1,n}^{(l)}|)^{1/2}
(1+K_q |\mathcal{I}_{j,2,n}^{(3-l)}|)|\mathcal{I}_{j,2,n}^{(3-l)}|
  \nonumber
 \\&~~~~+ \sum_{\iota=i-1,i}  \Big( ((K_q|\mathcal{I}_{2i-1-\iota,1,n}^{(l)}|+1+e_q)|\mathcal{I}_{2i-1-\iota,1,n}^{(l)}|)^{1/2} \nonumber
 \\&~~~~~~~~~~~~~~~~~~~~~\times (K_q|\mathcal{I}_{\iota,1,n}^{(l)}\cap \mathcal{I}_{j,2,n}^{(3-l)}|^{1/6} +(e_q)^{1/2})|\mathcal{I}_{\iota,1,n}^{(l)}\cap \mathcal{I}_{j,2,n}^{(3-l)}| \nonumber
\\& ~~~~+ ((K_q|\mathcal{I}_{2i-1-\iota,1,n}^{(l)}|+1+e_q)|\mathcal{I}_{2i-1-\iota,1,n}^{(l)}|)^{1/2}((K_q|\mathcal{I}_{\iota,1,n}^{(l)}|+1+e_q)|\mathcal{I}_{\iota,1,n}^{(l)}|)^{1/2} \nonumber
\\&~~~~~~~~~~~~~~~~~~~~~\times(1+K_q |\mathcal{I}_{\iota,1,n}^{(l)}\cap \mathcal{I}_{j,2,n}^{(3-l)}| )|\mathcal{I}_{\iota,1,n}^{(l)}\cap \mathcal{I}_{j,2,n}^{(3-l)}| \Big)
 \Big]\mathds{1}_{\{\mathcal{I}_{i,2,n}^{(l)}\cap \mathcal{I}_{j,2,n}^{(3-l)}\neq \emptyset\}}\nonumber
 \\ &~\leq K (K_q |\pi_n|_T+1+e_q)(1+K_q|\pi_n|_T)H_{2,n}(T)/\sqrt{n}   \nonumber
 \\&~~~~+ (K_q |\pi_n|_T+1+e_q)^{1/2}(K_q(|\pi_n|_T)^{1/6} +(e_q)^{1/2}) \big( nT\sum_{i \geq 2 :t_{i,n}^{(l)} \leq T} | \mathcal{I}_{i,2,n}^{(l)}|^2 \big)^{1/2}
\end{align}
which vanishes as $n \rightarrow \infty$ and then $q \rightarrow \infty$. Furthermore we get also analogously to \eqref{theo_clt_CoJ_proof4} 
\begin{align}\label{theo_clt_CoJ_proof7}
&\mathbb{E}\big[\sqrt{n}\sum_{i,j\geq 2:t_{i,n}^{(l)}\wedge t_{j,n}^{(3-l)}\leq T} 
2 \Delta_{i-1,1,n}^{(l)}(B^{(l)}(q)+M^{(l)}(q)) \Delta_{i,1,n}^{(l)}(N^{(l)}(q)) \nonumber
\\&~~~~~~~~~~~~~~~~\times \big(\Delta_{j,2,n}^{(3-l)}N^{(3-l)}(q)  \big)^2
  \mathds{1}_{\{\mathcal{I}_{i,2,n}^{(l)}\cap \mathcal{I}_{j,2,n}^{(3-l)}\neq \emptyset\}}
\big| \mathcal{S} \big] \nonumber
\\ &\leq  K\sqrt{n}\sum_{i,j\geq 2:t_{i,n}^{(l)}\wedge t_{j,n}^{(3-l)}\leq T} 
\Big[((K_q|\mathcal{I}_{i-1,1,n}^{(l)}|+e_q)|\mathcal{I}_{i-1,1,n}^{(l)}|)^{1/2} ((1+K_q|\mathcal{I}_{i,1,n}^{(l)}|)|\mathcal{I}_{i,1,n}^{(l)}|)^{1/2}\nonumber
\\&~~~~~~~~~~~~~~~~~~~~~\times 
(1+K_q |\mathcal{I}_{j,2,n}^{(3-l)}|)|\mathcal{I}_{j,2,n}^{(3-l)}|
  \nonumber
\\&~~~~+((K_q|\mathcal{I}_{i-1,1,n}^{(l)}|+e_q)|\mathcal{I}_{i-1,1,n}^{(l)}|)^{1/2} ((1+K_q|\mathcal{I}_{i,1,n}^{(l)}\cap \mathcal{I}_{j,2,n}^{(3-l)}|)|\mathcal{I}_{i,1,n}^{(l)}\cap \mathcal{I}_{j,2,n}^{(3-l)}|)^{1/2}\nonumber
 \\&~~~~~~~~~~~~~~~~~~~~~\times ((1+K_q|\mathcal{I}_{i,1,n}^{(l)}\cap \mathcal{I}_{j,2,n}^{(3-l)}|^3)|\mathcal{I}_{i,1,n}^{(l)}\cap \mathcal{I}_{j,2,n}^{(3-l)}|)^{1/2} \nonumber
 \\&~~~~+((K_q|\mathcal{I}_{i-1,1,n}^{(l)}\cap \mathcal{I}_{j,2,n}^{(3-l)}|+e_q)|\mathcal{I}_{i-1,1,n}^{(l)}\cap \mathcal{I}_{j,2,n}^{(3-l)}|)^{1/2} ((1+K_q|\mathcal{I}_{i,1,n}^{(l)}|)|\mathcal{I}_{i,1,n}^{(l)}|)^{1/2}\nonumber
 \\&~~~~~~~~~~~~~~~~~~~~~\times ((1+K_q|\mathcal{I}_{i-1,1,n}^{(l)}\cap \mathcal{I}_{j,2,n}^{(3-l)}|^3)|\mathcal{I}_{i-1,1,n}^{(l)}\cap \mathcal{I}_{j,2,n}^{(3-l)}|)^{1/2} \nonumber
\\& ~~~~+ ((K_q|\mathcal{I}_{i-1,1,n}^{(l)}|+e_q)|\mathcal{I}_{i-1,1,n}^{(l)}|)^{1/2}((1+K_q|\mathcal{I}_{i,1,n}^{(l)}|)|\mathcal{I}_{i,1,n}^{(l)}|)^{1/2} \nonumber
\\&~~~~~~~~~~~~~~~~~~~~~\times(1+K_q |\mathcal{I}_{i,1,n}^{(l)}\cap \mathcal{I}_{j,2,n}^{(3-l)}| )|\mathcal{I}_{i,1,n}^{(l)}\cap \mathcal{I}_{j,2,n}^{(3-l)}| \nonumber
\\& ~~~~+ ((K_q|\mathcal{I}_{i-1,1,n}^{(l)}|+e_q)|\mathcal{I}_{i-1,1,n}^{(l)}|)^{1/2}((1+K_q|\mathcal{I}_{i,1,n}^{(l)}|)|\mathcal{I}_{i,1,n}^{(l)}|)^{1/2} \nonumber
\\&~~~~~~~~~~~~~~~~~~~~~\times(1+K_q |\mathcal{I}_{i-1,1,n}^{(l)}\cap \mathcal{I}_{j,2,n}^{(3-l)}| )|\mathcal{I}_{i-1,1,n}^{(l)}\cap \mathcal{I}_{j,2,n}^{(3-l)}| \nonumber
 \Big]\mathds{1}_{\{\mathcal{I}_{i,2,n}^{(l)}\cap \mathcal{I}_{j,2,n}^{(3-l)}\neq \emptyset\}}\nonumber
 \\ &~ \leq K (K_q |\pi_n|_T+e_q)^{1/2} (1+K_q|\pi_n|_T)^{3/2}H_{2,n}(T)/\sqrt{n}  \nonumber 
 \\&~~~~+(K_q |\pi_n|_T+e_q)^{1/2}(1+K_q|\pi_n|_T)\big( nT\sum_{i \geq 2 :t_{i,n}^{(l)} \leq T} | \mathcal{I}_{i,2,n}^{(l)}|^2 \big)^{1/2}
\end{align}
which vanishes as $n \rightarrow \infty$ and then $q \rightarrow \infty$. The same obviously also holds if we switch the roles of $i-1$ and $i$. Hence using Lemma \ref{conv_cond_expec} the estimates \eqref{theo_clt_CoJ_proof6} and \eqref{theo_clt_CoJ_proof7} yield \eqref{theo_clt_CoJ_proof5} because $\Delta_{i-1,1,n}^{(l)}N(q)\Delta_{i,1,n}^{(l)}N(q)=0$ eventually for all $i$ as there are only finitely many big jumps.

\textit{Step 4.} Finally it remains to show
\begin{align}\label{theo_clt_CoJ_proof_step1_step4}
\lim_{q \rightarrow \infty} \limsup_{r \rightarrow \infty} \limsup_{n \rightarrow \infty} \mathbb{P}(|\widetilde{R}(n,q)-\widetilde{R}(n,q,r)|>\varepsilon)=0
\end{align}
for all $\varepsilon>0$ with $\widetilde{R}(n,q)=\widetilde{R}^{(1)}(n,q)+\widetilde{R}^{(2)}(n,q)$. The proof for \eqref{theo_clt_CoJ_proof_step1_step4} is identical to the proof of \eqref{proof_clt_J1b_step3} because we have 
\begin{align*}
&|\Delta_{i-\iota,1,n}^{(l)}N^{(l)}(q)|(\Delta_{j,2,n}^{(3-l)}N^{(3-l)}(q) )^2 \mathds{1}_{\{\mathcal{I}_{i,2,n}^{(l)} \cap \mathcal{I}_{j,2,n}^{(3-l)}\neq \emptyset\}}
\\&~~= |\Delta_{i-\iota,1,n}^{(l)}N^{(l)}(q)|(\Delta_{i-\iota,1,n}^{(3-l)}N^{(3-l)}(q) )^2
\leq K \|\Delta_{i-\iota,1,n}^{(l)}N(q) \|^3
\end{align*}
for $\iota=0,1$ and $l=1,2$ on the set $\Omega(n,q)$ where two different jump times of $N(q)$ are further apart than $4|\pi_n|_T$ and because of $\mathbb{P}(\Omega(n,q))\rightarrow 1$ for any $q>0$.

Combining \eqref{theo_clt_CoJ_proof_step1_step1}, \eqref{theo_clt_CoJ_proof2}, \eqref{theo_clt_CoJ_proof5} and \eqref{theo_clt_CoJ_proof_step1_step4} then yields \eqref{proof_theo_clt_CoJ_step1}.
\end{proof}

\begin{proof}[Proof of \eqref{proof_theo_clt_CoJ_step2}]
On the set $\Omega(n,q,r)$ where any jump times $S_{q,p}$ in $[0,T]$ are further apart than $4|\pi_n|_T$ from each other and from all $j 2^{-r}$ with $1 \leq j \leq \lfloor T 2^r \rfloor $ we have
\begin{small}\begin{align*}
&\widetilde{R}(n,q,r)\mathds{1}_{\Omega(n,q,r)}
= \mathds{1}_{\Omega(n,q,r)}4\sqrt{n}\sum_{S_{q,p} \leq T} 
\Delta N^{(2)}(q)_{S_{q,p}} \Delta N^{(1)}(q)_{S_{q,p}} \nonumber
\\&~~\times \Big(\Delta N^{(2)}(q)_{S_{p}}\big(\tilde{\sigma}^{(1)}_{S_{q,p}-}(r,q)\Delta_{i_n^{(1)}(S_{q,p})-1,n}^{(1)}W^{(1)}+\tilde{\sigma}^{(1)}_{S_{q,p}}(r)\Delta_{i_n^{(1)}(S_{q,p})+1,n}^{(1)}W^{(1)}\big)
\\&~~~~+\Delta N^{(1)}(q)_{S_{q,p}}\Big[\tilde{\sigma}^{(2)}_{S_{q,p}-}(r,q)
\\&~~~~~~~~~~~\times \big(\tilde{\rho}_{S_{q,p}-}(r,q)\Delta_{i_n^{(2)}(S_{q,p})-1,n}^{(2)}W^{(1)}+\sqrt{1-\tilde{\rho}_{S_{q,p}-}(r,q)^2}\Delta_{i_n^{(2)}(S_{q,p})-1,n}^{(2)}W^{(2)}\big) 
\\&~~~~~~+\tilde{\sigma}^{(2)}_{S_{q,p}}(r,q)\big(\tilde{\rho}_{S_{q,p}}(r,q)\Delta_{i_n^{(2)}(S_{q,p})+1,n}^{(2)}W^{(1)}+\sqrt{1-(\tilde{\rho}_{S_{q,p}}(r,q))^2}\Delta_{i_n^{(2)}(S_{q,p})+1,n}^{(2)}W^{(2)}\big)
\Big]\Big) 
\end{align*}
\end{small}where the factor $4$ stems from the fact that any jump of $N^{(l)}(q)$ is observed in two consecutive intervals $\mathcal{I}_{i-1,2,n}^{(l)}$, $\mathcal{I}_{i,2,n}^{(l)}$, $l=1,2$. 

Similarly as in the proof of Proposition A.3 in \cite{MarVet17} it can be shown that Condition \ref{cond_clt_coJ} yields the $\mathcal{X}$-stable convergence
\begin{align}\label{theo_clt_CoJ_proof8b}
&\big(\big(\Delta_{i_n^{(1)}(S_{q,p})-1,n}^{(1)}W^{(1)},\Delta_{i_n^{(1)}(S_{q,p})+1,n}^{(1)}W^{(1)},\Delta_{i_n^{(2)}(S_{q,p})-1,n}^{(2)}W^{(1)},\Delta_{i_n^{(2)}(S_{q,p})+1,n}^{(2)}W^{(1)}, \nonumber
\\&~~~\Delta_{i_n^{(2)}(S_{q,p})-1,n}^{(2)}W^{(2)},\Delta_{i_n^{(2)}(S_{q,p})+1,n}^{(2)}W^{(2)} \big)_{S_{q,p} \leq T} \big)  \nonumber
\\ &~~~~~\overset{\mathcal{L}-s}{\longrightarrow} \Big(\Big(\sqrt{\mathcal{L}(S_{q,p})} U_{S_{q,p},-}^{(1)}+\sqrt{(\mathcal{L}^{(1)}-\mathcal{L})(S_{q,p})}U_{S_{q,p},-}^{(2)}, \nonumber
\\&~~~~~~~~~~~~~~~\sqrt{\mathcal{R}(S_{q,p})} U_{S_{q,p},+}^{(1)}+\sqrt{(\mathcal{R}^{(1)}-\mathcal{R})(S_{q,p})}U_{S_{q,p},+}^{(2)}, \nonumber
\\&~~~~~~~~~~~~~~~\sqrt{\mathcal{L}(S_{q,p})} U_{S_{q,p},-}^{(1)}+\sqrt{(\mathcal{L}^{(2)}-\mathcal{L})(S_{q,p})}U_{S_{q,p},-}^{(3)}, \nonumber
\\&~~~~~~~~~~~~~~~\sqrt{\mathcal{R}(S_{q,p})} U_{S_{q,p},+}^{(1)}+\sqrt{(\mathcal{R}^{(2)}-\mathcal{R})(S_{q,p})}U_{S_{q,p},+}^{(3)}, \nonumber
\\&~~~~~~~~~~~~~~~\sqrt{\mathcal{L}^{(2)}(S_{q,p})}U_{S_{q,p},-}^{(4)},\sqrt{\mathcal{R}^{(2)}(S_{q,p})}U_{S_{q,p},+}^{(4)}\Big)_{S_{q,p} \leq T} \Big)
\end{align}
where $U_{s,-}^{(i)},U_{s,+}^{(i)}$ for $i=1,\ldots,4$ are standard normally distributed random variables which are independent of $\mathcal{F}$ and of the random vectors $(\mathcal{L}^1,\mathcal{R}^1,\mathcal{L}^2,\mathcal{R}^2,\mathcal{L},\mathcal{R})(s)$. \eqref{theo_clt_CoJ_proof8b} together with Proposition 2.2 in \cite{PodVet10} and the continuous mapping theorem then yields
\begin{align*}
&4\sqrt{n}\sum_{S_{q,p} \in P(q,T)} 
\Delta N^{(2)}(q)_{S_{q,p}} \Delta N^{(1)}(q)_{S_{q,p}} \nonumber
\\&~\times \Big(\Delta N^{(2)}(q)_{S_{p}}\Big[\tilde{\sigma}^{(1)}_{S_{q,p}-}(r,q)\Delta_{i_n^{(1)}(S_{q,p})-1,n}^{(1)}W^{(1)}+\tilde{\sigma}^{(1)}_{S_{q,p}}(r,q)\Delta_{i_n^{(1)}(S_{q,p})+1,n}^{(1)}W^{(1)}\Big]
\\&~~~+\Delta N^{(1)}(q)_{S_{q,p}}
 \Big[\tilde{\sigma}^{(2)}_{S_{q,p}-}(r,q)\big(\tilde{\rho}_{S_{q,p}-}(r,q)\Delta_{i_n^{(2)}(S_{q,p})-1,n}^{(2)}W^{(1)}
\\&~~~~~~~~~~~~~~~~~~~~~~~~~~~~~~~~~~+\sqrt{1-(\tilde{\rho}_{S_{q,p}-}(r,q))^2}\Delta_{i_n^{(2)}(S_{q,p})-1,n}^{(2)}W^{(2)}\big) 
\\&~~~~~~+\tilde{\sigma}^{(2)}_{S_{q,p}}(r,q)\big(\tilde{\rho}_{S_{q,p}}\Delta_{i_n^{(2)}(S_{q,p})+1,n}^{(2)}W^{(1)}+\sqrt{1-\tilde{\rho}_{S_{q,p}}(r,q))^2}\Delta_{i_n^{(2)}(S_{q,p})+1,n}^{(2)}W^{(2)}\big)
\Big]\Big) 
\\& ~~\overset{\mathcal{L}-s}{\longrightarrow}F_{T}^{(CoJ)}(q,r).
\end{align*}
Because of $\mathbb{P}(\Omega(n,q,r))\rightarrow 1$ as $n \rightarrow \infty$ for any $q,r$ this yields \eqref{proof_theo_clt_CoJ_step2}.
\end{proof}

\begin{proof}[Proof of \eqref{test_power_klein_o_CoJ}]
From the proof of Theorem \ref{theo_clt_CoJ} we know that $n V(f,\pi_n)_T$ converges on $\Omega_T^{(nCoJ)}$ stably in law to a non-negative random variable. Comparing \eqref{test_theo_cons_proof2} in the proof of Theorem \ref{test_theo_J} it then suffices to show
\begin{align}\label{test_theo_cons_CoJ_proof2}
\sqrt{n} \widehat{F}_{T,n,m}^{(CoJ)}\mathds{1}_{\Omega_T^{(nCoJ)}}=o_\mathbb{P}(1)
\end{align}
uniformly in $m$ for proving \eqref{test_power_klein_o_CoJ}. In order to achieve this goal, observe that it holds
\begin{align}
&\mathbb{E}[\sqrt{n} |\widehat{F}_{T,n,m}^{(CoJ)}|\mathds{1}_{\Omega_T^{(nCoJ)}} |\mathcal{F} ] \nonumber
\\&~~\leq K n \sum_{l=1,2}\sum_{i,j:t_{i,n}^{(l)} \wedge t_{j,n}^{(3-l)} \leq T} |\Delta_{i,n}^{(l)} X^{(l)}|(\Delta_{j,n}^{(3-l)} X^{(3-l)})^2 |\hat{\sigma}^{(l)}(t_{i,n}^{(l)},-)+\hat{\sigma}^{(l)}(t_{i,n}^{(l)},+)| \nonumber
\\&~~~~~~~~~~~\times\sqrt{|\mathcal{I}_{i-1,n}^{(l)}|+|\mathcal{I}_{i+1,n}^{(l)}|} \mathds{1}_{\{|\Delta_{i,n}^{(l)} X^{(l)} |> \beta |\mathcal{I}_{i,n}^{(l)}|^{\varpi} \}}\mathds{1}_{\{|\Delta_{j,n}^{(3-l)} X^{(3-l)} |> \beta |\mathcal{I}_{j,n}^{(3-l)}|^{\varpi} \}} \nonumber
\\&~~~~~~~~~~~\times\mathds{1}_{\{\mathcal{I}_{i,n}^{(l)} \cap\mathcal{I}_{j,n}^{(3-l)}\neq \emptyset\}} \mathds{1}_{\Omega_T^{(nCoJ)}}\nonumber
\\& ~~ \leq Kn \frac{1}{\varepsilon}\sum_{l=1,2}\sum_{i,j:t_{i,n}^{(l)} \wedge t_{j,n}^{(3-l)} \leq T} (\Delta_{i,n}^{(l)} X^{(l)})^2(\Delta_{j,n}^{(3-l)} X^{(3-l)})^2\mathds{1}_{\{|\Delta_{i,n}^{(l)} X^{(l)} |> \beta |\mathcal{I}_{i,n}^{(l)}|^{\varpi} \}} \nonumber
\\&~~~~~~~~~~~\times
\mathds{1}_{\{|\Delta_{j,n}^{(3-l)} X^{(3-l)} |> \beta |\mathcal{I}_{j,n}^{(3-l)}|^{\varpi} \}} \mathds{1}_{\{\mathcal{I}_{i,n}^{(l)} \cap\mathcal{I}_{j,n}^{(3-l)}\neq \emptyset\}}\mathds{1}_{\Omega_T^{(nCoJ)}} \label{test_theo_cons_CoJ_proof3}
\\&~~~~+ Kn \varepsilon \sum_{l=1,2}\sum_{i,j:t_{i,n}^{(l)} \wedge t_{j,n}^{(3-l)} \leq T} (\Delta_{j,n}^{(3-l)} X^{(3-l)})^2 (\hat{\sigma}^{(l)}(t_{i,n}^{(l)},-)+\hat{\sigma}^{(l)}(t_{i,n}^{(l)},+))^2\nonumber
\\&~~~~~~~~~~~\times(|\mathcal{I}_{i-1,n}^{(l)}|+|\mathcal{I}_{i+1,n}^{(l)}|) \mathds{1}_{\{\mathcal{I}_{i,n}^{(l)} \cap\mathcal{I}_{j,n}^{(3-l)}\neq \emptyset\}}\mathds{1}_{\Omega_T^{(nCoJ)}}
\label{test_theo_cons_CoJ_proof4}
\end{align}
where we used the elementary inequality $|ab| \leq a^2/\varepsilon+ \varepsilon b^2$ which holds for any $a,b \in \mathbb{R}$, $\varepsilon>0$.

First we consider \eqref{test_theo_cons_CoJ_proof3}. Note that by Proposition A.5 from \cite{MarVet17} it suffices on the subset where no common jumps are present to consider \eqref{test_theo_cons_CoJ_proof3} where $(\Delta_{i,n}^{(l)} X^{(l)})^2(\Delta_{j,n}^{(3-l)} X^{(3-l)})^2$ is replaced with $$((\Delta_{i,n}^{(l)} C^{(l)})^2+(\Delta_{i,n}^{(l)} N(q)^{(l)})^2)((\Delta_{j,n}^{(3-l)} C^{(3-l)})^2+(\Delta_{j,n}^{(3-l)} N(q)^{(3-l)})^2).$$
Because we are on $\Omega_T^{(nCoJ)}$ the sum with $(\Delta_{i,n}^{(l)} N(q)^{(l)})^2(\Delta_{j,n}^{(3-l)} N(q)^{(3-l)})^2$ vanishes as $n \rightarrow \infty$. Further we obtain using the Cauchy-Schwarz inequality, inequalities (A.31) from \cite{MarVet17} and \eqref{elem_ineq_C}
\begin{align*}
&\mathbb{E}[n \sum_{i,j:t_{i,n}^{(1)} \wedge t_{j,n}^{(2)}\leq T}(\Delta_{i,n}^{(1)} C^{(1)})^2(\Delta_{j,n}^{(2)} C^{(2)})^2\mathds{1}_{\{|\Delta_{i,n}^{(1)} X^{(1)} |> \beta |\mathcal{I}_{i,n}^{(1)}|^{\varpi} \}} \nonumber
\\&~~~~~~~~~~~\times
\mathds{1}_{\{|\Delta_{j,n}^{(2)} X^{(2)} |> \beta |\mathcal{I}_{j,n}^{(2)}|^{\varpi} \}} \mathds{1}_{\{\mathcal{I}_{i,n}^{(1)} \cap\mathcal{I}_{j,n}^{(2)}\neq \emptyset\}} |\mathcal{S}]
\\&~~\leq n \sum_{i,j:t_{i,n}^{(1)} \wedge t_{j,n}^{(2)}\leq T}\big(\mathbb{E}[(\Delta_{i,n}^{(1)} C^{(1)})^8|\mathcal{S}]\mathbb{E}[(\Delta_{j,n}^{(2)} C^{(2)})^8|\mathcal{S}] \mathbb{P}(|\Delta_{i,n}^{(1)} X^{(1)} |> \beta |\mathcal{I}_{i,n}^{(1)}|^{\varpi}|\mathcal{S}) 
\\&~~~~~~~~~~~\times\mathbb{P}(|\Delta_{j,n}^{(2)} X^{(2)} |> \beta |\mathcal{I}_{j,n}^{(2)}|^{\varpi}|\mathcal{S})\big)^{1/4} \mathds{1}_{\{\mathcal{I}_{i,n}^{(1)} \cap\mathcal{I}_{j,n}^{(2)}\neq \emptyset\}}
\\&~~ \leq K(|\pi_n|_T)^{(1/2-\varpi)/2} H_{1,n}(T)
\end{align*}
which vanishes as $n \rightarrow \infty$ by Condition \ref{cond_cons_CoJ2}. Hence for showing that \eqref{test_theo_cons_CoJ_proof3} vanishes as $n\rightarrow \infty$ it remains to discuss
\begin{align}\label{test_theo_cons_CoJ_proof5}
&Y^{(l)}(n,q)=n \sum_{l=1,2}\sum_{i,j:t_{i,n}^{(l)} \wedge t_{j,n}^{(3-l)} \leq T} (\Delta_{i,n}^{(l)} C^{(l)})^2(\Delta_{j,n}^{(3-l)} N(q)^{(3-l)})^2\mathds{1}_{\{|\Delta_{i,n}^{(l)} X^{(l)} |> \beta |\mathcal{I}_{i,n}^{(l)}|^{\varpi} \}} \nonumber
\\&~~~~~~~~~~~~~~~~~~~~\times
\mathds{1}_{\{|\Delta_{j,n}^{(3-l)} X^{(3-l)} |> \beta |\mathcal{I}_{j,n}^{(3-l)}|^{\varpi} \}} \mathds{1}_{\{\mathcal{I}_{i,n}^{(l)} \cap\mathcal{I}_{j,n}^{(3-l)}\neq \emptyset\}}\mathds{1}_{\Omega_T^{(nCoJ)}},\quad l=1,2.
\end{align}
Let $S_{q,p}^{(l)}$ denote an enumeration of the jump times of $N(q)^{(l)}$. Then $Y^{(l)}(n,q)$ is asymptotically equal to
\begin{align*}
&\sum_{l=1,2}\sum_{S_{q,p}^{(3-l)}\leq T} (\Delta N(q)^{(3-l)}_{S_{q,p}^{(3-l)}})^2
\\&~~~\times n\sum_{i:t_{i,n}^{(l)}\leq T}(\Delta_{i,n}^{(l)} C^{(l)})^2\mathds{1}_{\{|\Delta_{i,n}^{(l)} X^{(l)} |> \beta |\mathcal{I}_{i,n}^{(l)}|^{\varpi} \}} \mathds{1}_{\{\mathcal{I}_{i,n}^{(l)} \cap\mathcal{I}_{i_n^{(3-l)}(S_{q,p}^{(3-l)}),n}^{(3-l)}\neq \emptyset\}}\mathds{1}_{\Omega_T^{(nCoJ)}}.
\end{align*}
In the proof of Proposition A.3 in \cite{MarVet17} it is shown that 
\begin{align*}
n\sum_{i:t_{i,n}^{(l)}\leq T}(\Delta_{i,n}^{(l)} C^{(l)})^2\mathds{1}_{\{\mathcal{I}_{i,n}^{(l)} \cap\mathcal{I}_{i_n^{(3-l)}(S_{q,p}^{(3-l)}),n}^{(3-l)}\neq \emptyset\}} \overset{\mathcal{L}-s}{\longrightarrow} R_1^{(l)}(S_{q,p}^{(3-l)})
\end{align*}
for any jump time $S_{q,p}^{(3-l)}$ of $N(q)^{(3-l)}$. Further we obtain from (A.31) in \cite{MarVet17} and inequality \eqref{elem_ineq_C}
\begin{align*}
&\mathbb{E}[n\sum_{i:t_{i,n}^{(l)}\leq T}(\Delta_{i,n}^{(l)} C^{(l)})^2\mathds{1}_{\{|\Delta_{i,n}^{(l)} X^{(l)} |> \beta |\mathcal{I}_{i,n}^{(l)}|^{\varpi} \}} \mathds{1}_{\{\mathcal{I}_{i,n}^{(l)} \cap\mathcal{I}_{i_n^{(3-l)}(S_{q,p}^{(3-l)}),n}^{(3-l)}\neq \emptyset\}} |\mathcal{S} ]
\\&~~\leq n\sum_{i:t_{i,n}^{(l)}\leq T} (\mathbb{E}[(\Delta_{i,n}^{(l)} C^{(l)})^4|\mathcal{S}] \mathbb{P}(|\Delta_{i,n}^{(l)} X^{(l)} |> \beta |\mathcal{I}_{i,n}^{(l)}|^{\varpi} |\mathcal{S}))^{1/2}\mathds{1}_{\{\mathcal{I}_{i,n}^{(l)} \cap\mathcal{I}_{i_n^{(3-l)}(S_{q,p}^{(3-l)}),n}^{(3-l)}\neq \emptyset\}}
\\&~~ \leq K(|\pi_n|_T)^{1/2-\varpi}n\sum_{i:t_{i,n}^{(l)}\leq T} |\mathcal{I}_{i,n}^{(l)}|\mathds{1}_{\{\mathcal{I}_{i,n}^{(l)} \cap\mathcal{I}_{i_n^{(3-l)}(S_{q,p}^{(3-l)}),n}^{(3-l)}\neq \emptyset\}}.
\end{align*}
Here the expression in the last line vanishes as $n \rightarrow \infty$ because of
\begin{align*}
&\mathbb{E}[n\sum_{i:t_{i,n}^{(l)}\leq T} |\mathcal{I}_{i,n}^{(l)}|\mathds{1}_{\{\mathcal{I}_{i,n}^{(l)} \cap\mathcal{I}_{i_n^{(3-l)}(s),n}^{(3-l)}\neq \emptyset\}}]
\\&~~=\mathbb{E}[\eta_{1,n,-}^{(l)}(s)+(\Delta_{i_n^{(l)}(s),n}^{(l)}\overline{W}^{(l)})^2 +\eta_{1,n,+}^{(l)}(s)]
\\&~~=\mathbb{E}[\eta_{1,n,-}^{(l)}(s)+(s-t_{i_n^{(l)}(s)-1,n}^{(l)})+(t_{i_n^{(l)}(s),n}^{(l)}-s) +\eta_{1,n,+}^{(l)}(s)]
\end{align*}
where the right hand side converges as $n \rightarrow \infty$ by Condition \ref{cond_cons_CoJ2}. This estimate yields
\begin{align*}
&\lim_{n \rightarrow \infty} \mathbb{P}\big((|\pi_n|_T)^{-(1/2-\varpi)/2} n\sum_{i:t_{i,n}^{(l)}\leq T} |\mathcal{I}_{i,n}^{(l)}|\mathds{1}_{\{|\Delta_{i,n}^{(l)} X^{(l)} |> \beta |\mathcal{I}_{i,n}^{(l)}|^{\varpi} \}}\mathds{1}_{\{\mathcal{I}_{i,n}^{(l)} \cap\mathcal{I}_{i_n^{(3-l)}(S_{q,p}^{(3-l)}),n}^{(3-l)}\neq \emptyset\}} \\&~~~~~~~~~~~~~~~~~~~~~~~~~~~~~~~~~~~~~~~~~~~~~~~~~~~~~~~~~~~~~~~~~~~~~~~~~~~~~ \leq R_1^{(l)}(S_{q,p}^{(3-l)})\big)=1,
\end{align*} 
for any jump time $S_{q,p}^{(3-l)}$ from which we conclude
\begin{align*}
\lim_{n \rightarrow \infty} \mathbb{P}((|\pi_n|_T)^{-(1/2-\varpi)/2}Y^{(l)}(n,q) \leq D_{1,T}^{(CoJ)})=1, \quad l=1,2.
\end{align*}
Hence we finally get that \eqref{test_theo_cons_CoJ_proof5} and also \eqref{test_theo_cons_CoJ_proof3} vanish as $n \rightarrow \infty$.

Next we consider \eqref{test_theo_cons_CoJ_proof4}. Using Condition \ref{cond_cons_CoJ} and the assumption that $\Gamma_t$ is bounded we obtain that there exists a $q'>0$ with $X=B(q')+C+M(q')$. Plugging this decomposition into \eqref{test_theo_cons_CoJ_proof4} and using Lemma A.4 from \cite{MarVet17} we obtain that the $\mathcal{S}$-conditional expectation of \eqref{test_theo_cons_CoJ_proof4} is bounded by
\begin{align*}
&Kn \varepsilon \sum_{l=1,2} \sum_{i,j:t_{i,n}^{(l)} \wedge t_{j,n}^{(3-l)} \leq T} |\mathcal{I}_{j,n}^{(3-l)}| \Big(\frac{1}{ b_n}\sum_{\iota:\mathcal{I}_{\iota,n}^{(l)} \subset [t_{i,n}^{(l)}-b_n,t_{i,n}^{(l)}+b_n]} |\mathcal{I}_{\iota,n}^{(l)}|\Big) (|\mathcal{I}_{i-1,n}^{(l)}|+|\mathcal{I}_{i+1,n}^{(l)}|)
\\&~~~~~~~~~~~\times\mathds{1}_{\{\mathcal{I}_{i,n}^{(l)} \cap\mathcal{I}_{j,n}^{(3-l)}\neq \emptyset\}}
\\&~~\leq K \varepsilon H_{2,n}(T)+O_\mathbb{P}(n(|\pi_n|_T)^2).
\end{align*}
This expression converges to $K \varepsilon H_2(T)$ as $n \rightarrow \infty$. Hence alltogether we have shown
\begin{align*}
\lim_{\varepsilon \rightarrow 0} \limsup_{n \rightarrow \infty} \mathbb{P}(\mathbb{E}[\sqrt{n} |\widehat{F}_{T,n,m}^{(CoJ)}|\mathds{1}_{\Omega_T^{(nCoJ)}} |\mathcal{F} ] >\delta)=0
\end{align*}
for any $\delta >0$ which by Lemma \ref{conv_cond_expec} yields \eqref{test_theo_cons_CoJ_proof2}.
\end{proof}

The following Lemma is needed in the proof of Lemma \eqref{lemma_conv_sigma_CoJ}. The techniques used in the proof are taken from the proof of Theorem 3.1 in \cite{hayyos05}.

\begin{lemma}\label{lemma_hayyos_L2_mart}
Let $Y^{(1)},Y^{(2)}$ be martingales which are adapted to the filtration $(\mathcal{F}_t)_{t \geq 0}$. Further let $(s,t]$ be a deterministic interval and the observation scheme be exogenous. If the inequalities
\begin{align}
&\mathbb{E}[(Y^{(1)}_{t'}-Y^{(1)}_{s'})^2|\mathcal{F}_{s'}]\leq K_{Y^{(1)}} (t-s), \quad \mathbb{E}[(Y^{(2)}_{t'}-Y^{(2)}_{s'})^2|\mathcal{F}_{s'}]\leq K_{Y^{(2)}} (t-s),~~~~~ \label{lemma_hayyos_L2_mart_cond1}
\\& \mathbb{E}\big[(Y^{(1)}_{t'}-Y^{(1)}_{s'})^2(Y^{(2)}_{t'}-Y^{(2)}_{s'})^2\big|\mathcal{F}_{s'}\big] \leq K_{Y^{(1)},Y^{(2)}}(t'-s')^2, \label{lemma_hayyos_L2_mart_cond2}
\end{align}
hold, then for all $s\leq s'\leq t' \leq t$ we have 
\begin{align*}
\mathbb{E} [ (\sum_{i,j:\mathcal{I}_{i,n}^{(1)} \cup \mathcal{I}_{j,n}^{(2)} \subset (s,t]} \Delta_{i,n}^{(1)}Y^{(1)}\Delta_{j,n}^{(2)}Y^{(2)}\mathds{1}_{\{\mathcal{I}_{i,n}^{(1)} \cap \mathcal{I}_{j,n}^{(2)} \neq \emptyset\}}  )^2|\sigma(\mathcal{F}_s,\mathcal{S})] \leq \widetilde{K}_{Y^{(1)},Y^{(2)}}(t-s)^2
\end{align*}
where $\widetilde{K}_{Y^{(1)},Y^{(2)}}=14K_{Y^{(1)}}K_{Y^{(2)}}+3K_{Y^{(1)},Y^{(2)}}$.
\end{lemma}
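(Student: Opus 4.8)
The plan is to expand the square of the double sum into a sum over pairs of index pairs $((i,j),(i',j'))$ and to classify these pairs according to how the underlying observation intervals overlap. Write $Z_{i,j} = \Delta_{i,n}^{(1)}Y^{(1)}\Delta_{j,n}^{(2)}Y^{(2)}\mathds{1}_{\{\mathcal{I}_{i,n}^{(1)} \cap \mathcal{I}_{j,n}^{(2)} \neq \emptyset\}}$ for the intervals contained in $(s,t]$, so that the quantity of interest is $\mathbb{E}[(\sum_{i,j} Z_{i,j})^2 \mid \sigma(\mathcal{F}_s,\mathcal{S})]$. Conditioning on $\mathcal{S}$ the observation intervals become deterministic, so the combinatorial classification of overlap patterns is fixed and one only has to bound conditional expectations of the martingale increments. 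The three terms to handle are: (a) the diagonal terms $i=i'$, $j=j'$, where $\mathbb{E}[Z_{i,j}^2\mid\mathcal{F}_s,\mathcal{S}]$ is controlled by \eqref{lemma_hayyos_L2_mart_cond2}; (b) the terms where exactly one of the two index coordinates agrees (say $i=i'$ but $j\neq j'$, or $i\neq i'$ but $j=j'$); and (c) the terms where $i\neq i'$ and $j\neq j'$.

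For the off-diagonal terms the key tool is the martingale/orthogonality structure: if two increments $\Delta_{i,n}^{(l)}Y^{(l)}$ and $\Delta_{i',n}^{(l)}Y^{(l)}$ occur over disjoint intervals, iterated conditioning on the $\sigma$-algebra generated by $\mathcal{F}_s$, $\mathcal{S}$ and the left endpoint of the later interval makes one of the two factors a conditionally centered martingale increment, so the cross term vanishes. Hence in case (c) only pairs for which $\mathcal{I}_{i,n}^{(1)}$ overlaps $\mathcal{I}_{i',n}^{(1)}$ or $\mathcal{I}_{j,n}^{(2)}$ overlaps $\mathcal{I}_{j',n}^{(2)}$ can survive; since the intervals of a single process are ordered and non-overlapping, such pairs are essentially diagonal up to a bounded combinatorial factor, and each surviving product is bounded by Cauchy--Schwarz together with \eqref{lemma_hayyos_L2_mart_cond1} applied to each process separately. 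In case (b), with $i=i'$ fixed, the inner sum $\sum_{j,j'} \mathbb{E}[(\Delta_{i,n}^{(1)}Y^{(1)})^2 \Delta_{j,n}^{(2)}Y^{(2)}\Delta_{j',n}^{(2)}Y^{(2)}\cdots]$ again collapses by the same orthogonality argument applied to the $Y^{(2)}$-increments to the diagonal $j=j'$, after which one uses \eqref{lemma_hayyos_L2_mart_cond2}; summing over $i$ and over the common interval of $Y^{(2)}$ produces at most $K_{Y^{(1)}}K_{Y^{(2)}}$ times a telescoping sum of interval lengths, hence $O((t-s)^2)$. Collecting the three contributions and tracking the explicit numerical constants yields the stated bound with $\widetilde{K}_{Y^{(1)},Y^{(2)}} = 14 K_{Y^{(1)}}K_{Y^{(2)}} + 3 K_{Y^{(1)},Y^{(2)}}$.

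I expect the main obstacle to be the bookkeeping in case (b)/(c): one must carefully argue that, once the orthogonality kills all genuinely ``staggered'' cross terms, the remaining terms can each be associated to a bounded number of overlapping-interval configurations, and that the sum of $|\mathcal{I}_{i,n}^{(1)}|\,|\mathcal{I}_{j,n}^{(2)}|$ over overlapping pairs inside $(s,t]$ is bounded by $(t-s)^2$ (this is the asynchronous Cauchy--Schwarz-type estimate, exactly the phenomenon behind the functionals $H_{k,n}$). The martingale orthogonality itself is routine via iterated expectations with respect to $\sigma(\mathcal{F}_u,\mathcal{S})$ at the appropriate endpoint $u$, using exogeneity of $\mathcal{S}$ so that conditioning on the observation scheme does not destroy the martingale property of $Y^{(1)},Y^{(2)}$; the moment inputs \eqref{lemma_hayyos_L2_mart_cond1}--\eqref{lemma_hayyos_L2_mart_cond2} then feed directly into the final summation. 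This mirrors the structure of the proof of Theorem 3.1 in \cite{hayyos05}, which is why that reference is cited.
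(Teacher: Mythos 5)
Your high‑level skeleton matches the paper's: expand the square into diagonal, partially‑diagonal, and fully off‑diagonal pairs, exploit martingale orthogonality, and feed in the moment conditions. However, the way you invoke the orthogonality is wrong in each of the three cases, and the error is not cosmetic — it suppresses the \emph{dominant} contribution.

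In case (a) you claim $\mathbb{E}[Z_{i,j}^2\mid\mathcal{F}_s,\mathcal{S}]$ is ``controlled by'' \eqref{lemma_hayyos_L2_mart_cond2}. But \eqref{lemma_hayyos_L2_mart_cond2} bounds $\mathbb{E}[(Y^{(1)}_{t'}-Y^{(1)}_{s'})^2(Y^{(2)}_{t'}-Y^{(2)}_{s'})^2\mid\mathcal{F}_{s'}]$ for increments over a \emph{common} interval $[s',t']$, whereas here $\mathcal{I}_{i,n}^{(1)}$ and $\mathcal{I}_{j,n}^{(2)}$ are merely overlapping. You first have to split each coarse increment into the piece over $\mathcal{I}_{i,n}^{(1)}\cap\mathcal{I}_{j,n}^{(2)}$ and the complementary pieces; \eqref{lemma_hayyos_L2_mart_cond2} handles only the former, and the cross and complementary pieces need \eqref{lemma_hayyos_L2_mart_cond1} via iterated conditioning. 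This is exactly what the paper does, and it is why the diagonal term picks up a $K_{Y^{(1)}}K_{Y^{(2)}}$ contribution in addition to $K_{Y^{(1)},Y^{(2)}}$.

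In cases (b) and (c) you claim orthogonality collapses the off‑diagonal terms ``essentially to the diagonal''. That is false. Take the synchronous grid $\mathcal{I}_{i,n}^{(1)}=\mathcal{I}_{i,n}^{(2)}$ and $i\neq i'$: the term $\mathbb{E}[\Delta_i Y^{(1)}\Delta_i Y^{(2)}\Delta_{i'}Y^{(1)}\Delta_{i'}Y^{(2)}]$ is the product of two conditional cross‑covariances and does \emph{not} vanish; summing over $i\neq i'$ already yields a contribution of order $(t-s)^2$, not $O(|\pi_n|_t(t-s))$. Indeed, the fully off‑diagonal case is the unique part of the sum whose size is $\Theta((t-s)^2)$ and hence cannot be dismissed. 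Similarly in case (b), once $(\Delta_{i,n}^{(1)}Y^{(1)})^2$ is expanded, its sub‑pieces overlap both $\mathcal{I}_{j,n}^{(2)}$ and $\mathcal{I}_{j',n}^{(2)}$, so the $Y^{(2)}$‑increments are no longer orthogonal against the rest of the integrand. What actually happens is: refine every coarse increment into sub‑increments over the intersections $\mathcal{I}_{i,n}^{(1)}\cap\mathcal{I}_{j,n}^{(2)}$; expand the four‑fold product; iterated conditioning kills every term in which some sub‑interval contributes an odd number of factors; the survivors in case (c) are exactly the two pairings $\{(i,j),(i',j')\}$ and $\{(i,j'),(i',j)\}$, each bounded via Cauchy--Schwarz and \eqref{lemma_hayyos_L2_mart_cond1} by $K_{Y^{(1)}}K_{Y^{(2)}}|\mathcal{I}_{i,n}^{(1)}\cap\mathcal{I}_{j,n}^{(2)}||\mathcal{I}_{i',n}^{(1)}\cap\mathcal{I}_{j',n}^{(2)}|$ (resp.\ the analogous product), and summing the intersection lengths gives $(t-s)^2$; in case (b) the survivor is the analogous pair $\{(i,j),(i,j')\}$ which additionally gains a $|\pi_n|_t$ factor because both intersections sit inside the single interval $\mathcal{I}_{i,n}^{(1)}$. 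So the conclusion of the lemma happens to hold, but your sketch does not establish it: the bookkeeping you flag as ``the main obstacle'' is not about counting overlap configurations but about recognizing that orthogonality applies only to these refined sub‑increments and then identifying which even pairings survive.
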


\begin{proof} To shorten notation set $\mathds{1}_{i,j}=\mathds{1}_{\{\mathcal{I}_{i,n}^{(1)} \cap \mathcal{I}_{j,n}^{(2)} \neq \emptyset\}}$. Note that it holds
\begin{align}
&\big(\sum_{i,j:\mathcal{I}_{i,n}^{(1)} \cup \mathcal{I}_{j,n}^{(2)} \subset (s,t]} \Delta_{i,n}^{(1)}Y^{(1)}\Delta_{j,n}^{(2)}Y^{(2)}\mathds{1}_{i,j}  \big)^2 \nonumber
\\&~~~~=\sum_{i,j,i',j':\mathcal{I}_{i,n}^{(1)} \cup \mathcal{I}_{j,n}^{(2)} \cup \mathcal{I}_{i',n}^{(1)} \cup \mathcal{I}_{j',n}^{(2)} \subset (s,t]} \Delta_{i,n}^{(1)}Y^{(1)}\Delta_{j,n}^{(2)}Y^{(2)}\Delta_{i',n}^{(1)}Y^{(1)}\Delta_{j',n}^{(2)}Y^{(2)}\mathds{1}_{i,j}\mathds{1}_{i',j'}\nonumber
\\&~~~~=\sum_{i=i',j=j'} (\Delta_{i,n}^{(1)}Y^{(1)})^2(\Delta_{j,n}^{(2)}Y^{(2)})^2\mathds{1}_{i,j} \label{lemma_hayyos_L2_mart_term1}
\\&~~~~~~~~~~+\sum_{i=i',j\neq j'} (\Delta_{i,n}^{(1)}Y^{(1)})^2\Delta_{j,n}^{(2)}Y^{(2)}\Delta_{j',n}^{(2)}Y^{(2)}\mathds{1}_{i,j}\mathds{1}_{i,j'} \label{lemma_hayyos_L2_mart_term2}
\\&~~~~~~~~~~+\sum_{i\neq i',j=j'} \Delta_{i,n}^{(1)}Y^{(1)}\Delta_{i',n}^{(1)}Y^{(1)}(\Delta_{j,n}^{(2)}Y^{(2)})^2\mathds{1}_{i,j}\mathds{1}_{i',j} \label{lemma_hayyos_L2_mart_term3}
\\&~~~~~~~~~~+\sum_{i\neq i',j\neq j'} \Delta_{i,n}^{(1)}Y^{(1)}\Delta_{i',n}^{(1)}Y^{(1)}\Delta_{j,n}^{(2)}Y^{(2)}\Delta_{j',n}^{(2)}Y^{(2)}\mathds{1}_{i,j}\mathds{1}_{i',j'} \label{lemma_hayyos_L2_mart_term4}
\end{align}
and we discuss the terms \eqref{lemma_hayyos_L2_mart_term1}--\eqref{lemma_hayyos_L2_mart_term4} separately.

For discussing \eqref{lemma_hayyos_L2_mart_term1} we denote by $\Delta_{(i,j),n}^{(1,2)}Y^{(l)}$ the increment of $Y^{(l)}$, $l=1,2$, over the interval $\mathcal{I}_{i,n}^{(1)} \cap \mathcal{I}_{j,n}^{(2)}$ and by $\Delta_{(i,\{j\}),n}^{(l \setminus 3-l)}Y^{(l)}$ the increment of $Y^{(l)}$ over the interval $\mathcal{I}_{i,n}^{(l)} \setminus \mathcal{I}_{j,n}^{(3-l)} $ (which might also be the sum of increments over two distinct intervals). Using this notation, iterated expectations and the inequalities \eqref{lemma_hayyos_L2_mart_cond1}, \eqref{lemma_hayyos_L2_mart_cond2} we obtain that the $\sigma(\mathcal{F}_s,\mathcal{S})$-conditional expectation of \eqref{lemma_hayyos_L2_mart_term1} is equal to
\begin{align*}
&\sum_{i,j} \mathbb{E}[ (\Delta_{(i,j),n}^{(1,2)}Y^{(1)}+\Delta_{(i,\{j\}),n}^{(1 \setminus 2)}Y^{(1)})^2 (\Delta_{(i,j),n}^{(1,2)}Y^{(2)}+\Delta_{(j,\{i\}),n}^{(2 \setminus 1)}Y^{(2)})^2 |\sigma(\mathcal{F}_s,\mathcal{S})] \mathds{1}_{i,j}
\\&~= \sum_{i,j} \mathbb{E}[ ((\Delta_{(i,j),n}^{(1,2)}Y^{(1)})^2+(\Delta_{(i,\{j\}),n}^{(1 \setminus 2)}Y^{(1)})^2)(\Delta_{(j,\{i\}),n}^{(2 \setminus 1)}Y^{(2)})^2
\\&~~~~~~~~~~~~~~~~~~~~~~~~~~~~~~~~~~~+(\Delta_{(i,\{j\}),n}^{(1 \setminus 2)}Y^{(1)})^2(\Delta_{(i,j),n}^{(1,2)}Y^{(2)})^2 |\sigma(\mathcal{F}_s,\mathcal{S})] \mathds{1}_{i,j}
\\&~~~~+\sum_{i,j}\mathbb{E}[ (\Delta_{(i,j),n}^{(1,2)}Y^{(1)})^2(\Delta_{(i,j),n}^{(1,2)}Y^{(2)})^2 |\sigma(\mathcal{F}_s,\mathcal{S})]
\\&~\leq \sum_{i,j} K_{Y^{(1)}}K_{Y^{(2)}}(|\mathcal{I}_{i,n}^{(1)}| |\mathcal{I}_{j,n}^{(2)}\setminus \mathcal{I}_{i,n}^{(1)}|+|\mathcal{I}_{i,n}^{(1)}\setminus \mathcal{I}_{j,n}^{(2)}||\mathcal{I}_{i,n}^{(1)}\cap \mathcal{I}_{j,n}^{(2)}|)\mathds{1}_{i,j}
\\&~~~~+\sum_{i,j}K_{Y^{(1)},Y^{(2)}}|\mathcal{I}_{i,n}^{(1)}\cap \mathcal{I}_{j,n}^{(2)}|^2
\\&~ =(2K_{Y^{(1)}}K_{Y^{(2)}} + K_{Y^{(1)},Y^{(2)}})\sum_{i,j:\mathcal{I}_{i,n}^{(1)} \cup \mathcal{I}_{j,n}^{(2)} \subset (s,t]} |\mathcal{I}_{i,n}^{(1)}||\mathcal{I}_{j,n}^{(2)}|\mathds{1}_{i,j} 
\\&~\leq 3(2K_{Y^{(1)}}K_{Y^{(2)}} + K_{Y^{(1)},Y^{(2)}})  (|\pi_n|_t \wedge(t-s))(t-s).
\end{align*}

For treating \eqref{lemma_hayyos_L2_mart_term2} we additionally denote by $\Delta_{(i,\{j,j'\}),n}^{(1 \setminus 2)}Y^{(1)}$ the increment of $Y^{(1)}$ over the interval $\mathcal{I}_{i,n}^{(1)} \setminus( \mathcal{I}_{j,n}^{(2)} \cup \mathcal{I}_{j',n}^{(2)})$ (which might also be the sum of increments over up to three distinct intervals). We then obtain by using iterated expectations, the fact that $Y^{(1)}, Y^{(2)}$ are martingales, the Cauchy-Schwarz inequality and \eqref{lemma_hayyos_L2_mart_cond1} that the $\sigma(\mathcal{F}_s,\mathcal{S})$-conditional expectation of \eqref{lemma_hayyos_L2_mart_term2} is bounded by
\begin{align*}
&\sum_{i,j \neq j'} \big|\mathbb{E}[(\Delta_{(i,j),n}^{(1,2)}Y^{(1)}+\Delta_{(i,j'),n}^{(1,2)}Y^{(1)}+\Delta_{(i,\{j,j'\}),n}^{(1 \setminus 2)}Y^{(1)})^2
\\&~~~~~~~~~~~~~~~~~~~~~~~~~~~~~~~~~~~~~~~~~~\times\Delta_{(i,j),n}^{(1,2)}Y^{(2)}\Delta_{(i,j'),n}^{(1,2)}Y^{(2)} |\sigma(\mathcal{F}_s,\mathcal{S})]\big|\mathds{1}_{i,j}\mathds{1}_{i,j'}
\\&~~=2\sum_{i,j \neq j'} \big|\mathbb{E}[\Delta_{(i,j),n}^{(1,2)}Y^{(1)}\Delta_{(i,j),n}^{(1,2)}Y^{(2)}\Delta_{(i,j'),n}^{(1,2)}Y^{(1)}\Delta_{(i,j'),n}^{(1,2)}Y^{(2)} |\sigma(\mathcal{F}_s,\mathcal{S})]\big|
\\&~~\leq 2\sum_{i,j \neq j'} K_{Y^{(1)}}K_{Y^{(2)}}|\mathcal{I}_{i,n}^{(1)} \cap \mathcal{I}_{j,n}^{(2)}||\mathcal{I}_{i,n}^{(1)} \cap \mathcal{I}_{j',n}^{(2)}|
\\&~~\leq K_{Y^{(1)}}K_{Y^{(2)}} \sum_{i:\mathcal{I}_{i,n}^{(1)} \subset (s,t]} |\mathcal{I}_{i,n}^{(1)}|^2 \leq 6 K_{Y^{(1)}}K_{Y^{(2)}} (|\pi_n|_t \wedge (t-s)) (t-s).
\end{align*}

By symmetry \eqref{lemma_hayyos_L2_mart_term3} can be treated similarly as \eqref{lemma_hayyos_L2_mart_term2}. Hence it remains to discuss \eqref{lemma_hayyos_L2_mart_term4}. Using the notation introduced above we obtain that the $\sigma(\mathcal{F}_s,\mathcal{S})$-conditional expectation of \eqref{lemma_hayyos_L2_mart_term3} is equal to
\begin{align*}
&\sum_{i \neq i',j \neq j'}\mathbb{E}[
(\Delta_{(i,j),n}^{(1,2)}Y^{(1)}+\Delta_{(i,j'),n}^{(1,2)}Y^{(1)}+\Delta_{(i,\{j,j'\}),n}^{(1 \setminus 2)}Y^{(1)})
\\&~~~~~~~~\times(\Delta_{(i',j),n}^{(1,2)}Y^{(1)}+\Delta_{(i',j'),n}^{(1,2)}Y^{(1)}+\Delta_{(i',\{j,j'\}),n}^{(1 \setminus 2)}Y^{(1)})
\\&~~~~~~~~\times(\Delta_{(i,j),n}^{(1,2)}Y^{(2)}+\Delta_{(i',j),n}^{(1,2)}Y^{(2)}+\Delta_{(j,\{i,i'\}),n}^{(2 \setminus 1)}Y^{(2)})
\\&~~~~~~~~\times(\Delta_{(i,j'),n}^{(1,2)}Y^{(2)}+\Delta_{(i',j'),n}^{(1,2)}Y^{(2)}+\Delta_{(j',\{i,i'\}),n}^{(2 \setminus 1)}Y^{(2)})|\sigma(\mathcal{F}_s,\mathcal{S})] 
\mathds{1}_{i,j}\mathds{1}_{i',j'}
\\&~~=\sum_{i \neq i',j \neq j'}\mathbb{E}[
(\Delta_{(i,j),n}^{(1,2)}Y^{(1)}+\Delta_{(i,j'),n}^{(1,2)}Y^{(1)})(\Delta_{(i',j),n}^{(1,2)}Y^{(1)}+\Delta_{(i',j'),n}^{(1,2)}Y^{(1)})
\\&~~~~~~~~\times(\Delta_{(i,j),n}^{(1,2)}Y^{(2)}+\Delta_{(i',j),n}^{(1,2)}Y^{(2)})(\Delta_{(i,j'),n}^{(1,2)}Y^{(2)}+\Delta_{(i',j'),n}^{(1,2)}Y^{(2)})|\sigma(\mathcal{F}_s,\mathcal{S})] 
\mathds{1}_{i,j}\mathds{1}_{i',j'}.
\end{align*}
Here terms including factors like $\Delta_{(i,\{j,j'\}),n}^{(1 \setminus 2)}Y^{(1)}$ vanish using iterated expectations because $Y^{(1)}$ is a martingale and $\mathcal{I}_{i,n}^{(1)} \setminus (\mathcal{I}_{j,n}^{(2)} \cup \mathcal{I}_{j',n}^{(2)})$ overlaps with none of the other intervals for $i \neq i'$. Expanding the product above and observing that the expectation vanishes for all products where one of $i,i',j,j'$ occurs exactly once yields that the $\sigma(\mathcal{F}_s,\mathcal{S})$-conditional expectation of \eqref{lemma_hayyos_L2_mart_term4} is equal to
\begin{align*}
&\sum_{i \neq i',j \neq j'}\mathbb{E}[
\Delta_{(i,j),n}^{(1,2)}Y^{(1)}\Delta_{(i',j'),n}^{(1,2)}Y^{(1)}\Delta_{(i,j),n}^{(1,2)}Y^{(2)}\Delta_{(i',j'),n}^{(1,2)}Y^{(2)}
\\&~~~~~~~~+\Delta_{(i,j),n}^{(1,2)}Y^{(1)}\Delta_{(i',j'),n}^{(1,2)}Y^{(1)}\Delta_{(i',j),n}^{(1,2)}Y^{(2)}\Delta_{(i,j'),n}^{(1,2)}Y^{(2)}
\\&~~~~~~~~+\Delta_{(i,j'),n}^{(1,2)}Y^{(1)}\Delta_{(i',j),n}^{(1,2)}Y^{(1)}\Delta_{(i,j),n}^{(1,2)}Y^{(2)}\Delta_{(i',j'),n}^{(1,2)}Y^{(2)}
\\&~~~~~~~~+\Delta_{(i,j'),n}^{(1,2)}Y^{(1)}\Delta_{(i',j),n}^{(1,2)}Y^{(1)}\Delta_{(i',j),n}^{(1,2)}Y^{(2)}\Delta_{(i,j'),n}^{(1,2)}Y^{(2)}
|\sigma(\mathcal{F}_s,\mathcal{S})] 
\mathds{1}_{i,j}\mathds{1}_{i',j'}
\\&~~=\sum_{i \neq i',j \neq j'}\mathbb{E}[
\Delta_{(i,j),n}^{(1,2)}Y^{(1)}\Delta_{(i',j'),n}^{(1,2)}Y^{(1)}\Delta_{(i,j),n}^{(1,2)}Y^{(2)}\Delta_{(i',j'),n}^{(1,2)}Y^{(2)}
\\&~~~~~~~~+\Delta_{(i,j'),n}^{(1,2)}Y^{(1)}\Delta_{(i',j),n}^{(1,2)}Y^{(1)}\Delta_{(i',j),n}^{(1,2)}Y^{(2)}\Delta_{(i,j'),n}^{(1,2)}Y^{(2)}
|\sigma(\mathcal{F}_s,\mathcal{S})] 
\mathds{1}_{i,j}\mathds{1}_{i',j'}
\end{align*}
where the second identity holds as one of the intervals $\mathcal{I}_{i,n}^{(1)}\cap \mathcal{I}_{j,n}^{(2)}$, $\mathcal{I}_{i,n}^{(1)}\cap \mathcal{I}_{j',n}^{(2)}$, ${\mathcal{I}_{i',n}^{(1)}\cap \mathcal{I}_{j,n}^{(2)}}$, $\mathcal{I}_{i',n}^{(1)}\cap \mathcal{I}_{j',n}^{(2)}$ always has to be empty for $i=i'$, $j \neq j'$. We then further obtain using iterated expectations, the conditional Cauchy-Schwarz inequality and \eqref{lemma_hayyos_L2_mart_cond2} that the $\sigma(\mathcal{F}_s,\mathcal{S})$-conditional expectation of \eqref{lemma_hayyos_L2_mart_term4} is bounded by
\begin{align*}
& K_{Y^{(1)}}K_{Y^{(2)}} \sum_{i \neq i',j \neq j'} (|\mathcal{I}_{i,n}^{(1)} \cap \mathcal{I}_{j,n}^{(2)}| |\mathcal{I}_{i',n}^{(1)} \cap \mathcal{I}_{j',n}^{(2)}|
+|\mathcal{I}_{i,n}^{(1)} \cap \mathcal{I}_{j',n}^{(2)}| |\mathcal{I}_{i',n}^{(1)} \cap \mathcal{I}_{j,n}^{(2)}|)\mathds{1}_{i,j}\mathds{1}_{i',j'}
\\&~~\leq 2K_{Y^{(1)}}K_{Y^{(2)}}\sum_{i:\mathcal{I}_{i,n}^{(1)} \subset (s,t]} |\mathcal{I}_{i,n}^{(1)}|\sum_{i':\mathcal{I}_{i',n}^{(1)} \subset (s,t]} |\mathcal{I}_{i',n}^{(1)}| \leq 2K_{Y^{(1)}}K_{Y^{(2)}} (t-s)^2.
\end{align*}
We conclude the proof by combining the bounds for \eqref{lemma_hayyos_L2_mart_term1}--\eqref{lemma_hayyos_L2_mart_term4} and using Lemma \ref{conv_cond_expec}. 
\end{proof}

\begin{proof}[Proof of Lemma \ref{lemma_conv_sigma_CoJ}]
First note that \eqref{lemma_conv_sigma_sigma} follows from Lemma \ref{lemma_conv_sigma}. Further \eqref{lemma_conv_sigma_rho} follows from \eqref{lemma_conv_sigma_sigma} and
\begin{equation}\label{lemma_conv_sigma_kappa}
\begin{aligned}
&\hat{\kappa}_n(S_p,-)\mathds{1}_{\{S_p < T\}} \overset{\mathbb{P}}{\longrightarrow} \rho_{S_p-}\sigma^{(1)}_{S_p-}\sigma^{(2)}_{S_p-}\mathds{1}_{\{S_p < T\}}, 
\\& \hat{\kappa}_n(S_p,+)\mathds{1}_{\{S_p < T\}} \overset{\mathbb{P}}{\longrightarrow} \rho_{S_p}\sigma^{(1)}_{S_p}\sigma^{(2)}_{S_p}\mathds{1}_{\{S_p < T\}}
\end{aligned}
\end{equation}
because of $\sigma^{(1)}_s \sigma^{(2)}_s \neq 0$ almost surely for almost all $s \in [0,T]$ by Condition \ref{cond_clt_coJ}. Hence it remains to prove \eqref{lemma_conv_sigma_kappa}.

We will only give the proof for $\hat{\kappa}_n(s,+)$ as the proof for $\hat{\kappa}_n(s,-)$ is identical. Further we will give a formal proof only for constant stopping times $s \in (0,T)$ and assume without loss of generality $s+b_n \leq T$ as we did for Lemma \ref{lemma_conv_sigma}. The extension to jump times $S_p$ then works analogous as for the estimators for $\sigma_{S_p-}$, $\sigma_{S_p}$; compare the proof of Theorem 9.3.2 in \cite{JacPro12}. 

We first prove
\begin{align}\label{cons_spot_kappa_exo_proof1}
\hat{\kappa}^{(l)}(s,+)-\frac{1}{b_n}\sum_{i,j:\mathcal{I}_{i,n}^{(1)} \cup \mathcal{I}_{j,n}^{(2)} \subset [s,s+b_n]} \Delta_{i,n}^{(1)} C^{(1)}\Delta_{j,n}^{(2)} C^{(2)}\mathds{1}_{\{\mathcal{I}_{i,n}^{(1)} \cap \mathcal{I}_{j,n}^{(2)} \neq \emptyset\}} \overset{\mathbb{P}}{\longrightarrow} 0.
\end{align}
Therefore consider
\begin{align}
&\hat{\kappa}^{(l)}(s,+)-\frac{1}{b_n}\sum_{i,j:\mathcal{I}_{i,n}^{(1)} \cup \mathcal{I}_{j,n}^{(2)} \subset [s,s+b_n]} \Delta_{i,n}^{(l)} C^{(1)}\Delta_{j,n}^{(l)} C^{(2)}\mathds{1}_{\{\mathcal{I}_{i,n}^{(1)} \cap \mathcal{I}_{j,n}^{(2)} \neq \emptyset\}} \nonumber
\\&~~=\frac{1}{b_n}\sum_{i,j:\mathcal{I}_{i,n}^{(1)} \cup \mathcal{I}_{j,n}^{(2)} \subset [s,s+b_n]} \big(\Delta_{i,n}^{(1)} B^{(1)}(q)\Delta_{j,n}^{(2)} X^{(2)}+\Delta_{i,n}^{(1)}X^{(1)}\Delta_{j,n}^{(2)} B^{(2)}(q) \big)\nonumber
\\&~~~~~~~~~~~~~~~~~~~~~~~~~~~~~~~- \Delta_{i,n}^{(1)}B^{(1)}(q)\Delta_{j,n}^{(2)} B^{(2)}(q) \big)\mathds{1}_{\{\mathcal{I}_{i,n}^{(1)} \cap \mathcal{I}_{j,n}^{(2)} \neq \emptyset\}} ~~\label{cons_spot_kappa_exo_proof2}
\\&~~~~+\frac{1}{b_n}\sum_{i,j:\mathcal{I}_{i,n}^{(1)} \cup \mathcal{I}_{j,n}^{(2)} \subset [s,s+b_n]} \big(\Delta_{i,n}^{(1)} N^{(1)}(q)\Delta_{j,n}^{(2)} (X^{(2)}-B^{(2)}(q))\nonumber
\\&~~~~~~~~~~~~~~~~~~~~~~~~~~~~~~~+ \Delta_{i,n}^{(1)}(C^{(1)}+M^{(1)}(q))\Delta_{j,n}^{(2)} N^{(2)}(q) \big)\mathds{1}_{\{\mathcal{I}_{i,n}^{(1)} \cap \mathcal{I}_{j,n}^{(2)} \neq \emptyset\}}~~\label{cons_spot_kappa_exo_proof3}
\\&~~~~+\sum_{l=1,2}\frac{1}{b_n}\sum_{i,j:\mathcal{I}_{i,n}^{(l)} \cup \mathcal{I}_{j,n}^{(3-l)} \subset [s,s+b_n]} \Delta_{i,n}^{(l)} M^{(l)}(q)\Delta_{j,n}^{(3-l)} C^{(3-l)}\mathds{1}_{\{\mathcal{I}_{i,n}^{(l)} \cap \mathcal{I}_{j,n}^{(3-l)} \neq \emptyset\}} \label{cons_spot_kappa_exo_proof4}
\\&~~~~+\frac{1}{b_n}\sum_{i,j:\mathcal{I}_{i,n}^{(1)} \cup \mathcal{I}_{j,n}^{(2)} \subset [s,s+b_n]} \Delta_{i,n}^{(1)} M^{(1)}(q)\Delta_{j,n}^{(2)} M^{(2)}(q)\mathds{1}_{\{\mathcal{I}_{i,n}^{(1)} \cap \mathcal{I}_{j,n}^{(2)} \neq \emptyset\}} \label{cons_spot_kappa_exo_proof5}
\end{align}
where all terms \eqref{cons_spot_kappa_exo_proof2}--\eqref{cons_spot_kappa_exo_proof5} vanish for first $n \rightarrow \infty$ and then $q \rightarrow \infty$ as we will show in the following.

The $\mathcal{S}$-conditional expectation of the absolute value of \eqref{cons_spot_kappa_exo_proof2} is using \eqref{elem_ineq_B} and \eqref{elem_ineq_X} bounded by
\begin{align*}
&\sum_{l=1,2} \frac{1}{b_n}\sum_{i:\mathcal{I}_{i,n}^{(l)} \subset [s,s+b_n]} K_q|\mathcal{I}_{i,n}^{(l)}|
\\ &~~~~~~~~~~\times 
\mathbb{E}\big[(X^{(3-l)}_{t^{(3-l)}_{i_n^{(3-l)}(t_{i,n}^{(l)}) \wedge (i_n^{(3-l)}(s+b_n)-1),n }}-X^{(3-l)}_{t^{(3-l)}_{i_n^{(3-l)}(t_{i-1,n}^{(l)})-1) \vee i_n^{(3-l)}(s),n }})^2\big|\mathcal{S}\big]^{1/2}
\\&~~~~~+ \frac{1}{b_n}\sum_{i,j:\mathcal{I}_{i,n}^{(1)} \cup \mathcal{I}_{j,n}^{(2)} \subset [s,s+b_n]}K_q |\mathcal{I}^{(1)}_{i,n}| |\mathcal{I}^{(2)}_{j,n}| \mathds{1}_{\{\mathcal{I}_{i,n}^{(1)} \cap \mathcal{I}_{j,n}^{(2)} \neq \emptyset\}}
\\&~~ \leq (3|\pi_n|_T)^{1/2}\sum_{l=1,2} \frac{1}{b_n}\sum_{i:\mathcal{I}_{i,n}^{(l)} \subset [s,s+b_n]} K_q|\mathcal{I}_{i,n}^{(l)}| + K_q |\pi_n|_T
\\&~~\leq K_q (|\pi_n|_T)^{1/2}
\end{align*}
which vanishes as $n \rightarrow \infty$. \eqref{cons_spot_kappa_exo_proof3} vanishes as $n \rightarrow \infty$ because of $$\lim_{n \rightarrow \infty} \mathbb{P}(\exists t \in (s,s+b_n]:\Delta N(q)_t \neq 0)= 0.$$

Using \eqref{elem_ineq_C}, \eqref{elem_ineq_M} and 
\begin{align*}
\mathbb{E}[(M(q)^{(l)}_{s+t}-M(q)^{(l)}_{s})^2 (C^{(3-l)}_{s+t}-C^{(3-l)}_{s})^2 |\mathcal{F}_s]\leq K e_q t, \quad s,t \geq 0 ,~l=1,2,
\end{align*}
which is obtained from Lemma A.4 in \cite{MarVet17} we may apply Lemma \ref{lemma_hayyos_L2_mart} for \eqref{cons_spot_kappa_exo_proof4} which yields
\begin{align*}
\mathbb{E}[\eqref{cons_spot_kappa_exo_proof4}^2|\mathcal{S}]
\leq K e_q. 
\end{align*}
The right hand side vanishes as $q \rightarrow \infty$. Next consider the following decomposition of \eqref{cons_spot_kappa_exo_proof5} using the notation from the proof of Lemma \ref{lemma_hayyos_L2_mart}
\begin{align*}
&\eqref{cons_spot_kappa_exo_proof5}=\frac{1}{b_n}\sum_{i,j:\mathcal{I}_{i,n}^{(1)} \cup \mathcal{I}_{j,n}^{(2)} \subset [s,s+b_n]}\Delta_{(i,j),n}^{(1,2)} M^{(1)}(q)\Delta_{(i,j),n}^{(1,2)} M^{(2)}(q) 
\\&~~~~~~+\sum_{l=1,2}\frac{1}{b_n}\sum_{i,j:\mathcal{I}_{i,n}^{(l)} \cup \mathcal{I}_{j,n}^{(3-l)} \subset [s,s+b_n]} \Delta_{(i,\{j\}),n}^{(l\setminus 3-l)} M^{(l)}(q)\Delta_{(i,j),n}^{(l,3-l)} M^{(3-l)}(q)\mathds{1}_{\{\mathcal{I}_{i,n}^{(l)} \cap \mathcal{I}_{j,n}^{(3-l)} \neq \emptyset\}}
\\&~~~~~~+ \frac{1}{b_n}\sum_{i,j:\mathcal{I}_{i,n}^{(1)} \cup \mathcal{I}_{j,n}^{(2)}\subset [s,s+b_n]} \Delta_{(i,\{j\}),n}^{(1\setminus 2)} M^{(1)}(q)\Delta_{(j,\{i\}),n}^{(2\setminus 1)} M^{(2)}(q)\mathds{1}_{\{\mathcal{I}_{i,n}^{(1)} \cap \mathcal{I}_{j,n}^{(2)} \neq \emptyset\}}
\end{align*}
where the $\mathcal{S}$-conditional expectation of the first sum can be bounded by $e_q$ using \eqref{elem_ineq_M} because of $|\Delta_{(i,j),n}^{(1,2)} M^{(1)}(q)\Delta_{(i,j),n}^{(1,2)} M^{(2)}(q)|\leq K\| \Delta_{(i,j),n}^{(1,2)} M(q)\|^2$. Further the expectation of the square of the second and third sum can be bounded as in the proof of Lemma \ref{lemma_hayyos_L2_mart} by $(e_q)^2$ because the condition \eqref{lemma_hayyos_L2_mart_cond2} is only needed for the treatment of the sum involving terms of the form $(\Delta_{(i,j),n}^{(1,2)} M^{(1)}(q))^2(\Delta_{(i,j),n}^{(1,2)} M^{(2)}(q))^2$ which we here treat separately. Hence we have proven \eqref{cons_spot_kappa_exo_proof1}.

Then \eqref{cons_spot_kappa_exo_proof1} yields that for proving $\hat{\kappa}_n(s,+) \overset{\mathbb{P}}{\longrightarrow} \rho_{s}\sigma^{(1)}_{s}\sigma^{(2)}_{s}$ it suffices to verify
\begin{align}\label{cons_spot_kappa_exo_proofb1}
\frac{1}{b_n}\sum_{i,j:\mathcal{I}_{i,n}^{(1)} \cup \mathcal{I}_{j,n}^{(2)}\subset [s,s+b_n]} \Delta_{i,n}^{(1)} C^{(1)}\Delta_{j,n}^{(2)} C^{(2)}\mathds{1}_{\{\mathcal{I}_{i,n}^{(1)} \cap \mathcal{I}_{j,n}^{(2)} \neq \emptyset\}} \overset{\mathbb{P}}{\longrightarrow} \rho_s \sigma_s^{(1)} \sigma_s^{(2)}.
\end{align}
For proving \eqref{cons_spot_kappa_exo_proofb1} define
\begin{gather*}
\zeta_i^n=\frac{1}{b_n}\sum_{j,k:\mathcal{I}_{j,n}^{(1)} \cup\mathcal{I}_{k,n}^{(2)}\subset (s+(i-1)b_n/r_n,s+ib_n/r_n]} \Delta_{j,n}^{(1)} C^{(1)}\Delta_{k,n}^{(2)} C^{(2)}\mathds{1}_{\{\mathcal{I}_{j,n}^{(1)} \cap \mathcal{I}_{k,n}^{(2)} \neq \emptyset\}}- \frac{1}{r_n}\rho_s \sigma_s^{(1)} \sigma_s^{(2)}
\end{gather*}
and $\mathcal{G}_{i}^n=\sigma(\mathcal{F}_{s+ib_n/r_n},\mathcal{S})$ for some deterministic sequence $(r_n)_{n \in \mathbb{N}} \subset \mathbb{N}$ with $r_n \rightarrow \infty$ and $r_n|\pi_n|_T/b_n \overset{\mathbb{P}}{\longrightarrow}0$. Such a sequence $(r_n)_{n \in \mathbb{N}}$ always exists because of $|\pi_n|_T/b_n \overset{\mathbb{P}}{\longrightarrow}0$. Denote $$L(n,i)=\{(j,k):\mathcal{I}_{j,n}^{(1)} \cup\mathcal{I}_{k,n}^{(2)}\subset (s+(i-1)b_n/r_n,s+ib_n/r_n]\}.$$ We then obtain using iterated expectations, the fact that $C^{(1)},C^{(2)}$ are martingales and a form of the conditional It\^o isometry for two different integrals
\begin{align*}
&\sum_{i=1}^{r_n} \mathbb{E}[\zeta_i^n|\mathcal{G}_{i-1}^n] 
\\&~~=\frac{1}{b_n}\sum_{i=1}^{r_n}
\sum_{(j,k) \in L(n,i)} \mathbb{E}[ \Delta_{(j,k),n}^{(1,2)} C^{(1)}\Delta_{(j,k),n}^{(1,2)} C^{(2)}|\mathcal{G}_{i-1}^n]- \rho_s \sigma_s^{(1)} \sigma_s^{(2)}
\\&~~=\frac{1}{b_n}\sum_{i=1}^{r_n}
 \mathbb{E}[ \int_{s+(i-1)b_n/r_n)}^{s+ib_n/r_n}(\rho_u\sigma^{(1)}_u\sigma^{(2)}_u- \rho_s \sigma_s^{(1)} \sigma_s^{(2)})du||\mathcal{G}_{i-1}^n]+O_\mathbb{P}(r_n|\pi_n|_T/b_n).
\end{align*}
As argued for \eqref{cons_spot_vola_endo_proof3} this term vanishes in probability as $n \rightarrow \infty$ because $\rho,\sigma^{(1)},\sigma^{(2)}$ are right-continuous and bounded and because of the condition on $r_n$. Note that we obtain from the Cauchy-Schwarz inequality and inequality \eqref{elem_ineq_C} that
\begin{align*}
&\mathbb{E}[(C^{(1)}_t-C^{(1)}_s)^2(C^{(2)}_t-C^{(2)}_s)^2|\sigma(\mathcal{F}_s,\mathcal{S})]
\\~~~~&\leq (\mathbb{E}[(C^{(1)}_t-C^{(1)}_s)^4|\sigma(\mathcal{F}_s,\mathcal{S})]\mathbb{E}[(C^{(2)}_t-C^{(2)}_s)^4|\sigma(\mathcal{F}_s,\mathcal{S})])^{1/2} \leq K (t-s)^2
\end{align*}
holds for any $t\geq s\geq 0$. Hence we obtain using $(a-b)^2 \leq 2a^2+2b^2$ and Lemma \ref{lemma_hayyos_L2_mart}
\begin{align*}
&\sum_{i=1}^{r_n} \mathbb{E}[(\zeta_i^n)^2|\mathcal{G}_{i-1}^n] 
\\&~~\leq \frac{2}{b_n^2}\sum_{i=1}^{r_n} \mathbb{E} \big[\big(\sum_{(j,k) \in L(n,i)} \Delta_{j,n}^{(l)} C^{(1)}\Delta_{k,n}^{(l)} C^{(2)}\mathds{1}_{\{\mathcal{I}_{j,n}^{(1)} \cap \mathcal{I}_{k,n}^{(2)} \neq \emptyset\}} \big)^2 \big| \mathcal{G}_{i-1}^n \big]+r_n\frac{K}{r_n^2}
\\&~~\leq \frac{2}{b_n^2}\sum_{i=1}^{r_n} K \frac{b_n^2}{r_n^2}+\frac{K}{r_n}
= \frac{K}{r_n}
\end{align*}
which vanishes as $r_n \rightarrow 0$ for $n \rightarrow \infty$. Lemma 2.2.12 from \cite{JacPro12} then yields
\begin{multline*}
\sum_{i=1}^{r_n} \zeta_i^n = \frac{1}{b_n}\sum_{i,j:\mathcal{I}_{i,n}^{(1)} \cup\mathcal{I}_{j,n}^{(2)}\subset (s,s+b_n]} \Delta_{i,n}^{(1)} C^{(1)}\Delta_{i,n}^{(2)} C^{(2)}\mathds{1}_{\{\mathcal{I}_{i,n}^{(1)} \cap \mathcal{I}_{j,n}^{(2)} \neq \emptyset\}}- \rho_{s}\sigma^{(1)}_{s}\sigma^{(2)}_{s}
\\+O_\mathbb{P}(r_n |\pi_n|_T/b_n) \overset{\mathbb{P}}{\longrightarrow} 0
\end{multline*}
which is equivalent to \eqref{cons_spot_kappa_exo_proof1} and therefore implies $\hat{\kappa}_n(s,+) \overset{\mathbb{P}}{\longrightarrow} \rho_{s}\sigma^{(1)}_{s}\sigma^{(2)}_{s}$.
\end{proof}

\begin{proof}[Proof of Proposition \ref{lemma_test_CoJ}] 
Let $S_p$, $p=1,\ldots,P$, denote the jump times of the $P$ biggest common jumps of $X$ on $[0,T]$. Similarly as in the proof of Proposition \ref{lemma_test_J} we define
\begin{align*}
&R(P,n,m)=4\sum_{p=1}^P \Delta_{i_p,n}^{(1)} X^{(1)} \Delta_{j_p,n}^{(2)} X^{(2)}\mathds{1}_{\{|\Delta_{i_p,n}^{(1)} X^{(1)} |> \beta |\mathcal{I}_{i_p,n}^{(1)}|^{\varpi} \}}\mathds{1}_{\{|\Delta_{j_p,n}^{(2)} X^{(2)} |> \beta |\mathcal{I}_{j_p,n}^{(2)}|^{\varpi} \}}
\\&~\times\Big[\Delta_{j_p,n}^{(2)} X^{(2)}\Big( \hat{\sigma}^{(1)}(t_{i_p,n}^{(1)},-) 
\sqrt{\widehat{\mathcal{L}}_{n,m}({\tau_{i_p,j_p,n}})}U_{n,(i_p,j_p),m}^{(1,2),-}\\&~~~~~~~~~~~~~~~~~~~~~~~~~~~~~~+\hat{\sigma}^{(1)}(t_{i_p,n}^{(1)},+)
\sqrt{\widehat{\mathcal{R}}_{n,m}({\tau_{i_p,j_p,n}})} U_{n,(i_p,j_p),m}^{(1,2),+}
\\&~~~~~~~~~~~~~~~~~~~~+\Big((\hat{\sigma}^{(1)}(t_{i_p,n}^{(1)},-))^2 (\widehat{\mathcal{L}}_{n,m}^{(1)}-\widehat{\mathcal{L}}_{n,m})(\tau_{i_p,j_p,n})
\\&~~~~~~~~~~~~~~~~~~~~~~~~~~~~~~+(\hat{\sigma}^{(1)}(t_{i_p,n}^{(1)},+))^2 (\widehat{\mathcal{R}}_{n,m}^{(1)}-\widehat{\mathcal{R}}_{n,m})({\tau_{i_p,j_p,n}}) \Big)^{1/2} U_{n,i_p,m}^{(1)}\Big)
\\&~~~~~~~~+\Delta_{i_p,n}^{(1)} X^{(1)} \Big(\hat{\sigma}^{(2)}(t_{j_p,n}^{(2)},-)\hat{\rho}(\tau_{i_p,j_p,n},-)\sqrt{\widehat{\mathcal{L}}_{n,m}({\tau_{i_p,j_p,n}}) }U_{n,(i_p,j_p),m}^{(1,2),-}
\\&~~~~~~~~~~~~~~~~~~~~~~~~~~~~~~+\hat{\sigma}^{(2)}(t_{j_p,n}^{(2)},+)\hat{\rho}(\tau_{i_p,j_p,n},+)\sqrt{\widehat{\mathcal{R}}_{n,m}(\tau_{i_p,j_p,n}) }U_{n,(i_p,j_p),m}^{(1,2),+}
\\&~~~~~~~~~~~~~~~~~~~~+\Big((\hat{\sigma}^{(2)}(t_{j_p,n}^{(2)},-))^2(1-(\hat{\rho}(\tau_{i_p,j_p,n},-))^2)\widehat{\mathcal{L}}_{n,m}(\tau_{i_p,j_p,n})
\\&~~~~~~~~~~~~~~~~~~~~~~~~~~~~~~+(\hat{\sigma}^{(2)}(t_{j_p,n}^{(2)},+))^2(1-(\hat{\rho}(\tau_{i_p,j_p,n},+))^2)\widehat{\mathcal{R}}_{n,m}({\tau_{i_p,j_p,n}}) \Big)^{1/2}U_{n,j_p,m}^{(2)}
\\&~~~~~~~~~~~~~~~~~~~~+\Big((\hat{\sigma}^{(2)}(t_{j_p,n}^{(2)},-))^2 (\widehat{\mathcal{L}}_{n,m}^{(2)}-
\widehat{\mathcal{L}}_{n,m})(\tau_{i_p,j_p,n})
\\&~~~~~~~~~~~~~~~~~~~~~~~~~~~~~~+(\hat{\sigma}^{(2)}(t_{j_p,n}^{(2)},+))^2 \widehat{\mathcal{R}}_{n,m}^{(2)}-
\widehat{\mathcal{R}}_{n,m})(\tau_{i_p,j_p,n})\Big)^{1/2}U_{n,j_p,m}^{(3)}\Big)\Big]
\end{align*}
where $(i_p,j_p)=(i_n^{(1)}(S_p),i_n^{(2)}(S_p))$ and further set
\begin{align*}
&R(P)=4\sum_{p=1}^P \Delta X^{(1)}_{S_p} \Delta X^{(2)}_{S_p}
\\&~~~~~~~~~ \times\Big[\Delta X^{(2)}_{S_p}\Big( \sigma_{S_p-}^{(1)}\sqrt{ 
\mathcal{L}({S_p})}U_{S_p}^{(1),-}+\sigma_{S_p}^{(1)}
\sqrt{\mathcal{R}({S_p})} U_{S_p}^{(1),+}
\\&~~~~~~~~~~~~~~~~~~~~~~~+\sqrt{(\sigma_{S_p-}^{(1)})^2(\mathcal{L}^{(1)}-\mathcal{L})({S_p})
+(\sigma_{S_p}^{(1)})^2(\mathcal{R}^{(1)}-\mathcal{R})({S_p}) } U_{S_p}^{(2)}\Big) 
\\&~~~~~~~~~~~+\Delta X^{(1)}_{S_p} \Big(\sigma_{S_p-}^{(2)}\rho_{S_p-}\sqrt{\mathcal{L}({S_p}) }U_{S_p}^{(1),-}+\sigma_{S_p}^{(2)}\rho_{S_p}\sqrt{\mathcal{R}({S_p}) }U_{S_p}^{(1),+}
\\&~~~~~~~~~~~~~~~~~~~~~~~+\sqrt{(\sigma_{S_p-}^{(2)})^2(1-(\rho_{S_p-})^2)\mathcal{L}({S_p})+(\sigma_{S_p}^{(2)})^2(1-(\rho_{S_p})^2)\mathcal{R}({S_p}) }U_{S_p}^{(3)}
\\&~~~~~~~~~~~~~~~~~~~~~~~+\sqrt{(\sigma_{S_p-}^{(2)})^2(\mathcal{L}^{(2)}-
\mathcal{L})({S_p})+(\sigma_{S_p}^{(2)})^2(\mathcal{R}^{(2)}-
\mathcal{R})({S_p})}U_{S_p}^{(4)}\Big)\Big].
\end{align*}

Using this notation we obtain
\begin{align}\label{CoJ_Test_Proof00}
\widetilde{\mathbb{P}} \big( \big\{\big| \frac{1}{M_n} \sum_{m=1}^{M_n} \mathds{1}_{\{R(P,n,m) \leq \Upsilon\}} -\widetilde{\mathbb{P}}(R(P)\leq \Upsilon|\mathcal{X})\big|>\varepsilon \big\} \cap \Omega_T^{(CoJ)}\big) \rightarrow 0
\end{align}
and
\begin{align*}
\widetilde{\mathbb{P}}(R(P) \leq \Upsilon |\mathcal{X}) \mathds{1}_{\Omega_T^{(CoJ)}} \overset{\mathbb{P}}{\longrightarrow} \widetilde{\mathbb{P}}(F_{2,T}^{(CoJ)} \leq \Upsilon |\mathcal{X})\mathds{1}_{\Omega_T^{(CoJ)}}
\end{align*}
as in Step 1 and Step 3 in the proof of Proposition \ref{lemma_test_J}. Here, Lemma \ref{lemma_conv_sigma_CoJ} is needed in the proof of \eqref{CoJ_Test_Proof00}.

Hence it remains to prove the equivalent to \eqref{test_theo_J_proof_A4} in Step 2 of the proof of Proposition \ref{lemma_test_J} which is
\begin{align*}
\lim_{P \rightarrow \infty} \limsup_{n \rightarrow \infty}  \widetilde{\mathbb{P}}\big(\big\{\big|\frac{1}{M_n} \sum_{m=1}^{M_n}(\mathds{1}_{\{R(P,n,m)\leq \Upsilon\}}-\mathds{1}_{\{\widehat{F}_{T,n,m}^{(CoJ)}\leq \Upsilon\}} )\big| >\varepsilon\big\}\cap \Omega_T^{(CoJ)} \big)=0,
\end{align*}
implied by
\begin{align}\label{CoJ_Test_Proof0}
\lim_{P \rightarrow \infty} \limsup_{n \rightarrow \infty} \frac{1}{M_n} \sum_{m=1}^{M_n} \widetilde{\mathbb{P}}(|R(P,n,m)-\widehat{F}_{T,n,m}^{(CoJ)}|>\varepsilon)=0
\end{align}
for all $\varepsilon>0$.

On the set $\Omega(q,P,n)$ on which the common jumps of $N(q)$ are among the $P$ largest common jumps and on which two different jumps of $N(q)$ are further apart than $2|\pi_n|_T$ it holds
\begin{align}\label{CoJ_Test_Proof1}
&\mathbb{E}\big[|R(P,n,m)-\widehat{F}_{T,n,m}^{(CoJ)}|\mathds{1}_{\Omega(q,P,n)} \big|\mathcal{F} \big] \nonumber
\\& \leq K\sum_{l=1,2} \sum_{i,j:t_{i,n}^{(l)} \wedge t_{j,n}^{(3-l)} \leq T} |\Delta_{i,n}^{(l)}(X^{(l)}-N^{(l)}(q))||\Delta_{j,n}^{(3-l)}X^{(3-l)}| \nonumber
\\&~~~~~~~~ \times \Big(|\Delta_{j,n}^{(3-l)}X^{(3-l)}| \hat{\sigma}^{(l)}(n,i)
\mathbb{E}\big[\sqrt{(\widehat{\mathcal{L}}_{n,m}^{(l)}+\widehat{\mathcal{R}}_{n,m}^{(l)})(t_{i,n}^{(l)})} \big| \mathcal{S} \big] \nonumber
\\&~~~~~~~~~~~~~+|\Delta_{i,n}^{(l)}(X^{(l)}-N^{(l)}(q))|\hat{\sigma}^{(3-l)}(n,j)
\mathbb{E}\big[\sqrt{(\widehat{\mathcal{L}}_{n,m}^{(3-l)}+\widehat{\mathcal{R}}_{n,m}^{(3-l)})(t_{j,n}^{(3-l)})} \big| \mathcal{S} \big]
 \Big) \nonumber
 \\& ~~~~~~~~\times \mathds{1}_{\{|\Delta_{i,n}^{(l)} X^{(l)} |> \beta |\mathcal{I}_{i,n}^{(l)}|^{\varpi}\wedge|\Delta_{j,n}^{(3-l)} X^{(3-l)} |> \beta |\mathcal{I}_{j,n}^{(3-l)}|^{\varpi} \}}\mathds{1}_{\{\mathcal{I}_{i,n}^{(l)}\cap \mathcal{I}_{j,n}^{(3-l)}\neq \emptyset \}}
\end{align}
with
\begin{align*}
\hat{\sigma}^{(l)}(n,i)=\big((\hat{\sigma}_n^{(l)}(t_{i,n}^{(l)},-))^2+(\hat{\sigma}_n^{(l)}(t_{i,n}^{(l)},+))^2\big)^{1/2}.
\end{align*}
Because of $\mathbb{P}(\Omega(q,P,n))\rightarrow \infty$ as $n ,P \rightarrow \infty$ for all $q>0$ it suffices to show that \eqref{CoJ_Test_Proof1} vanishes as first $n \rightarrow \infty$ and then $q \rightarrow \infty$ for proving \eqref{CoJ_Test_Proof0}. 

Reconsidering the notation introduced in Step 2 in the proof of \eqref{proof_theo_clt_CoJ_step1} we define the following terms
\begin{align*}
&Y_{(i,j),n}^{(l)}=\Big(\frac{1}{b_n} \sum_{k \neq i:\mathcal{I}_{k,n}^{(l)} \subset [t_{i,n}^{(l)}-b_n,t_{i,n}^{(l)}+b_n]} (\Delta_{(k,j),(1,1),n}^{(l,3-l)} X^{(l)} )^2  \Big)^{1/2},
\\& \widetilde{Y}_{(i,j),n}^{(l)}=\Big(\frac{1}{b_n} \sum_{k \neq i:\mathcal{I}_{k,n}^{(l)} \subset [t_{i,n}^{(l)}-b_n,t_{i,n}^{(l)}+b_n]} (\Delta_{(k,j),(1,1),n}^{(l\setminus 3-l)}, X^{(l)} )^2 \Big)^{1/2},
\end{align*}
$l=1,2$. Then the Minkowski inequality yields
\begin{align}\label{CoJ_Test_Proof2}
\begin{split}
 \hat{\sigma}^{(l)}(n,i) &\leq Y_{(i,j),n}^{(l)} +\widetilde{Y}_{(i,j),n}^{(l)}, 
 \\ \hat{\sigma}^{(3-l)}(n,j) &\leq Y_{(j,i),n}^{(3-l)} +\widetilde{Y}_{(j,i),n}^{(3-l)}
\end{split}
\end{align}
which allows to treat the increments in the estimation of $\sigma^{(l)},\sigma^{(3-l)}$ over intervals which do overlap with $\mathcal{I}_{j,n}^{(3-l)},\mathcal{I}_{i,n}^{(l)}$ and those which do not, separately.

We apply the inequalities \eqref{CoJ_Test_Proof2} to get an upper bound for the $\mathcal{S}$-conditional expectation of \eqref{CoJ_Test_Proof1}. First we get using the Cauchy-Schwarz inequality, iterated expectations and Lemma \ref{elem_ineq} the following bound for \eqref{CoJ_Test_Proof1} where we replaced the estimators for $\sigma^{(l)},\sigma^{(3-l)}$ with the first summands from \eqref{CoJ_Test_Proof2} 
\begin{align}\label{CoJ_Test_Proof3}
&K\sum_{i,j:t_{i,n}^{(l)} \wedge t_{j,n}^{(3-l)} \leq T} \mathbb{E}\big[|\Delta_{i,n}^{(l)}(X^{(l)}-N^{(l)}(q))| |\Delta_{j,n}^{(3-l)} X^{(3-l)}|^2 Y_{(i,j),n}^{(l)} 
\nonumber
\\& ~~~~~~+ |\Delta_{j,n}^{(3-l)} X^{(3-l)}||\Delta_{i,n}^{(l)}(X^{(l)}-N^{(l)}(q))|^2
Y_{(j,i),n}^{(3-l)}\big| \mathcal{S}\big] \sqrt{2 n |\pi_n|_T}
 \mathds{1}_{\{\mathcal{I}_{i,n}^{(l)}\cap \mathcal{I}_{j,n}^{(3-l)}\neq \emptyset \}}\nonumber
\\& ~\leq K  \sum_{i,j:t_{i,n}^{(l)} \wedge t_{j,n}^{(3-l)} \leq T} \big[\big((K_q|\mathcal{I}_{i,n}^{(l)}|^2+|\mathcal{I}_{i,n}^{(l)}|+e_q|\mathcal{I}_{i,n}^{(l)}|)(|\mathcal{I}_{j,n}^{(3-l)}|/b_n)\big)^{1/2} \nonumber
\big(K|\mathcal{I}_{j,n}^{(3-l)}| 
\big)^{1/2}
\\& ~~~~~~+\big((K|\mathcal{I}_{j,n}^{(3-l)}|)(|\mathcal{I}_{i,n}^{(l)}|/b_n)\big)^{1/2} \big(K_q|\mathcal{I}_{i,n}^{(l)}|^4+|\mathcal{I}_{i,n}^{(l)}|^2+e_q|\mathcal{I}_{i,n}^{(l)}|\big)^{1/2}\big] \nonumber
\\& ~~~~~~~~~~~~~~~~~~\times\sqrt{n |\pi_n|_T} \mathds{1}_{\{\mathcal{I}_{i,n}^{(l)}\cap \mathcal{I}_{j,n}^{(3-l)}\neq \emptyset \}}\nonumber
\\&~ \leq K(K_q(|\pi_n|_T+(|\pi_n|_T)^3)+1+e_q)^{1/2}(|\pi_n|_T/b_n)^{1/2} \nonumber
\\&~~~~~~\times \sqrt{n} \sum_{i,j:t_{i,n}^{(l)} \wedge t_{j,n}^{(3-l)} \leq T} \big(|\mathcal{I}_{i,n}^{(l)}|^{1/2} |\mathcal{I}_{j,n}^{(3-l)}|+|\mathcal{I}_{i,n}^{(l)}| |\mathcal{I}_{j,n}^{(3-l)}|^{1/2}\big)
 \mathds{1}_{\{\mathcal{I}_{i,n}^{(l)}\cap \mathcal{I}_{j,n}^{(3-l)}\neq \emptyset \}}
\end{align}
for $l=1,2$ which vanishes as $n \rightarrow \infty$ for any $q >0$ because of Condition \ref{cond_testproc_CoJ}(ii) and ${|\pi_n|_T/b_n\overset{\mathbb{P}}{\longrightarrow} 0}$.

Further we obtain by treating increments over $\mathcal{I}_{i,n}^{(l)}\cap \mathcal{I}_{j,n}^{(3-l)}$ and increments over non-overlapping parts of $\mathcal{I}_{i,n}^{(l)}, \mathcal{I}_{j,n}^{(3-l)}$ differently the following bound for the $\mathcal{S}$-conditional expectation of \eqref{CoJ_Test_Proof1} where we replaced the estimators for $\sigma^{(l)},\sigma^{(3-l)}$ with the second summands from \eqref{CoJ_Test_Proof2} 
\begin{align}\label{CoJ_Test_Proof4}
&K  \sum_{i,j:t_{i,n}^{(l)} \wedge t_{j,n}^{(3-l)} \leq T} \big(\big[
\mathbb{E}[|\Delta_{(i,j),(1,1),n}^{(l\setminus 3-l)}(X^{(l)}-N^{(l)}(q))| |\Delta_{j,n}^{(3-l)} X^{(3-l)}|^2 \widetilde{Y}_{(i,j),n}^{(l)}\nonumber
\\&~~~~~~~~~~~~~~~+ |\Delta_{(i,j),(1,1),n}^{(l,3-l)}(X^{(l)}-N^{(l)}(q))| |\Delta_{(j,i),(1,1),n}^{(3-l\setminus l)} X^{(3-l)}|^2 \widetilde{Y}_{(j,i),n}^{(l)}
| \mathcal{S}] \nonumber
\\&~~+ \mathbb{E} [ |\Delta_{(j,i),(1,1),n}^{(3-l \setminus l)} X^{(3-l)}||\Delta_{i,n}^{(l)}(X^{(l)}-N^{(l)}(q))|^2 \widetilde{Y}_{(i,j),n}^{(3-l)}\nonumber
\\&~~~~~~~~~~~~~~~+|\Delta_{(j,i),(1,1),n}^{(3-l,l )} X^{(3-l)}||\Delta_{(i,j),(1,1),n}^{(l\setminus 3-l)}(X^{(l)}-N^{(l)}(q))|^2\widetilde{Y}_{(j,i),n}^{(3-l)} |\mathcal{S}]  \big] 
\sqrt{n |\pi_n|_T}
\nonumber
\\& ~~+\mathbb{E} [ |\Delta_{(i,j),(1,1),n}^{(l,3-l)} (X^{(l)}-N(q)^{(l)})||\Delta_{(i,j),(1,1),n}^{(l,3-l)}X^{(3-l)}|^2 \widetilde{Y}_{(i,j),n}^{(l)}| \mathcal{S} ] \nonumber
\\&~~~~~~~~~~~~~~~~~~~~~~~~~~~~~~~~~~ \times\mathbb{E}[\sqrt{(\widehat{\mathcal{L}}_{n,m}^{(l)} \nonumber
+\widehat{\mathcal{R}}_{n,m}^{(l)})(t_{i,n}^{(l)}\wedge t_{j,n}^{(3-l)})}| \mathcal{S} ]
\nonumber
\\& ~~+ \mathbb{E} [|\Delta_{(i,j),(1,1),n}^{(l,3-l)} X^{(3-l)}||\Delta_{(i,j),(1,1),n}^{(l,3-l)}(X^{(l)}-N^{(l)}(q)|^2
\big)\widetilde{Y}_{(j,i),n}^{(3-l)}| \mathcal{S}] \nonumber
\\& ~~~~~~~~~~~~~~~~~~~~~~~~~~~~~~~~~~ \times\mathbb{E}[\sqrt{(\widehat{\mathcal{L}}_{n,m}^{(3-l)}
+\widehat{\mathcal{R}}_{n,m}^{(3-l)})(t_{i,n}^{(l)}\wedge t_{j,n}^{(3-l)})}| \mathcal{S} ]\big)
  \mathds{1}_{\{\mathcal{I}_{i,n}^{(l)}\cap \mathcal{I}_{j,n}^{(3-l)}\neq \emptyset \}} \nonumber
\\& \leq K  \sum_{i,j:t_{i,n}^{(l)} \wedge t_{j,n}^{(3-l)} \leq T}
\big(\big[(K_q|\mathcal{I}_{i,n}^{(l)}|+1+e_q)|\mathcal{I}_{i,n}^{(l)}|)^{1/2} 
|\mathcal{I}_{j,n}^{(3-l)}| \nonumber
\\&~~~~~~+(K_q|\mathcal{I}_{i,n}^{(3-l)}|+1+e_q)|\mathcal{I}_{i,n}^{(3-l)}|)
|\mathcal{I}_{j,n}^{(l)}|^{1/2}\big] \sqrt{n |\pi_n|_T}
\mathds{1}_{\{\mathcal{I}_{i,n}^{(1)}\cap \mathcal{I}_{j,n}^{(2)}\neq \emptyset \}}\nonumber
\\&~~~+\big[((K_q|\mathcal{I}_{i,n}^{(l)} \cap \mathcal{I}_{j,n}^{(3-l)}|^2+|\mathcal{I}_{i,n}^{(l)} \cap \mathcal{I}_{j,n}^{(3-l)}|^{1/2}+e_q)|\mathcal{I}_{i,n}^{(l)} \cap \mathcal{I}_{j,n}^{(3-l)}|)^{1/3} |\mathcal{I}_{i,n}^{(l)} \cap \mathcal{I}_{j,n}^{(3-l)}|^{2/3} \nonumber
\\& ~~~~~~~~~~~~~~~~~~~~\times\mathbb{E}[\sqrt{(\widehat{\mathcal{L}}_{n,m}^{(l)}
+\widehat{\mathcal{R}}_{n,m}^{(l)})(t_{i,n}^{(l)}\wedge t_{j,n}^{(3-l)})}| \mathcal{S} ] \nonumber
\\&~~~~~~+ |\mathcal{I}_{i,n}^{(l)} \cap \mathcal{I}_{j,n}^{(3-l)}|^{1/2}((K_q|\mathcal{I}_{i,n}^{(l)} \cap \mathcal{I}_{j,n}^{(3-l)}|^3+|\mathcal{I}_{i,n}^{(l)} \cap \mathcal{I}_{j,n}^{(3-l)}|+e_q)|\mathcal{I}_{i,n}^{(l)} \cap \mathcal{I}_{j,n}^{(3-l)}|)^{1/2}\nonumber
\\& ~~~~~~~~~~~~~~~~~~~~\times\mathbb{E}[\sqrt{(\widehat{\mathcal{L}}_{n,m}^{(3-l)}
+\widehat{\mathcal{R}}_{n,m}^{(3-l)})(t_{i,n}^{(l)}\wedge t_{j,n}^{(3-l)})}| \mathcal{S} ] \big]
\big).
\end{align}
Here, we used iterated expectations and
\begin{align*}
\mathbb{E}[\widetilde{Y}_{(i,j),n}^{(l)}|\mathcal{S}] \leq \Big( \frac{1}{b_n} \sum_{k \neq i:\mathcal{I}_{k,n}^{(l)}\subset[t_{i,n}^{(l)}-b_n,t_{i,n}^{(l)}+b_n]} K|\mathcal{I}_{k,n}^{(l)}|\Big)^{1/2} \leq K
\end{align*}
along with the similar bound for $\mathbb{E}[\widetilde{Y}_{(j,i),n}^{(3-l)}|\mathcal{S}]$.

The sum over the expression in the first set of square brackets in \eqref{CoJ_Test_Proof4} vanishes similarly as in \eqref{CoJ_Test_Proof3} while the sum over the expression in the second set of square brackets is bounded by
\begin{align}\label{CoJ_Test_Proof5}
&K (K_q(|\pi_n|_T)^{1/2}+e_q)^{1/3}\sum_{i,j:t_{i,n}^{(l)} \wedge t_{j,n}^{(3-l)} \leq T} |\mathcal{I}_{i,n}^{(l)} \cap \mathcal{I}_{j,n}^{(3-l)}| 
 \nonumber
\\&~~~~~~~~\times \sum_{l_1,l_2=-L_n}^{L_n} |\mathcal{I}_{i+l_1,n}^{(l)} \cap \mathcal{I}_{j+l_2,n}^{(3-l)}|\Big(\sum_{k_1,k_2=-L_n}^{L_n} |\mathcal{I}_{i+k_1,n}^{(l)} \cap \mathcal{I}_{j+k_2,n}^{(3-l)}| \Big)^{-1} \nonumber
\\&~~~~~~~~\times((n|\mathcal{I}_{i+l_1-1,n}^{(l)}|+n|\mathcal{I}_{i+l_1+1,n}^{(l)}|)^{1/2}+(n|\mathcal{I}_{j+l_2-1,n}^{(3-l)}|+n|\mathcal{I}_{j+l_2+1,n}^{(3-l)}|)^{1/2}) \nonumber
\\&~=K (K_q(|\pi_n|_T)^{1/2}+e_q)\sum_{i,j:t_{i,n}^{(l)} \wedge t_{j,n}^{(3-l)} \leq T} |\mathcal{I}_{i,n}^{(l)} \cap \mathcal{I}_{j,n}^{(3-l)}| \nonumber
\\&~~~~~~~~\times((n|\mathcal{I}_{i-1,n}^{(l)}|+n|\mathcal{I}_{i+1,n}^{(l)}|)^{1/2}+(n|\mathcal{I}_{j-1,n}^{(3-l)}|+n|\mathcal{I}_{j+1,n}^{(3-l)}|)^{1/2}) \nonumber
\\&~~~~~~~~\times \sum_{l_1,l_2=-L_n}^{L_n} |\mathcal{I}_{i+l_1,n}^{(l)} \cap \mathcal{I}_{j+l_2,n}^{(3-l)}|
\Big(\sum_{k_1,k_2=-L_n}^{L_n} |\mathcal{I}_{i+l_1+k_1,n}^{(l)} \cap \mathcal{I}_{j+l_2+k_2,n}^{(3-l)}| \Big)^{-1}\nonumber
\\&~~~~~~+O_\mathbb{P}(\sqrt{n}|\pi_n|_T) 
\end{align}
where the sum in the last line is less or equal than $4$ which can be shown similarly to \eqref{index_shift_trick}. The $O_\mathbb{P}(\sqrt{n}|\pi_n|_T) $-term is due to boundary effects. Hence \eqref{CoJ_Test_Proof5} is bounded by
\begin{gather*}
~~K (K_q(|\pi_n|_T)^{1/2}+e_q)\sqrt{n}\Big(\sum_{i\geq 2:t_{i,n}^{(l)}\leq T} |\mathcal{I}_{i,2,n}^{(l)}|^{3/2} +\sum_{j\geq 2:t_{j,n}^{(3-l)}\leq T} |\mathcal{I}_{j,2,n}^{(3-l)}|^{3/2} \Big)+O_\mathbb{P}(\sqrt{n}|\pi_n|_T) ~~
\end{gather*}
which vanishes as $n,q \rightarrow \infty$ by Condition \ref{cond_cons_CoJ2}(ii); compare also \eqref{bound_prob_square}.
\end{proof}

\subsection{Proofs regarding the Poisson setting}\label{sec:proof_detail_poiss}

\begin{lemma}\label{obs_sch1}
Consider an observation scheme given by $t_{0,n}^{(l)}=0$ and $$t_{i,n}^{(l)}=t_{i-1,n}^{(l)}+\alpha^{(l)}(n)^{-1} E_{i,n}^{(l)},~i \geq1,$$ where the $E^{(l)}_{i,n}$, $i,n \in \mathbb{N}$, are i.i.d.\ random variables with values in $\mathbb{R}_{\geq 0}$ and $\alpha^{(l)}(n)$ are positive functions with $\alpha^{(l)}(n) \rightarrow \infty$ as $n \rightarrow \infty$. 

If $E_{1,1}^{(l)} \in L^2(\Omega)$ it holds
\begin{align*}
\alpha^{(l)}(n)^{-1}\sum_{i: t_{i,n}^{(l)} \leq T} g\big(\alpha^{(l)}(n)\big|\mathcal{I}_{i,n}^{(l)}\big|\big) \overset{\mathbb{P}}{\longrightarrow} \frac{\mathbb{E}[g(E^{(l)}_{1,1})]}{\mathbb{E}[E^{(l)}_{1,1}]} T
\end{align*}
for all functions $g:\mathbb{R}_{\geq 0} \rightarrow \mathbb{R}$ with $g(E_{1,1}^{(l)}) \in L^1(\Omega)$.
\end{lemma}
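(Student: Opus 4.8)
\textbf{Proof plan for Lemma \ref{obs_sch1}.}

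The plan is to reduce the claim to a law of large numbers for i.i.d.\ sums combined with a renewal-type control on the (random) number of observation intervals up to $T$. Write $\alpha=\alpha^{(l)}(n)$, $E_i=E_{i,n}^{(l)}$, and $N_n(T)=\#\{i\ge 1:t_{i,n}^{(l)}\le T\}$, the number of complete intervals before $T$; note $\alpha|\mathcal{I}_{i,n}^{(l)}|=E_{i,n}^{(l)}$ for $i\ge 1$, so the sum in question equals $\alpha^{-1}\sum_{i=1}^{N_n(T)}g(E_{i,n}^{(l)})$. First I would establish the elementary two-sided bound $\sum_{i=1}^{N_n(T)}E_i \le \alpha T < \sum_{i=1}^{N_n(T)+1}E_i$, which just says that $N_n(T)$ is the last index for which the partial sum of the $E_i$ stays below $\alpha T$. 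Dividing by $N_n(T)$ and applying the strong law of large numbers to $\tfrac{1}{m}\sum_{i=1}^m E_i\to\mathbb{E}[E_{1,1}^{(l)}]$ (valid since $E_{1,1}^{(l)}\in L^1$, indeed $L^2$), together with $N_n(T)\to\infty$ in probability (which follows from $\alpha\to\infty$ and the same bound), yields $N_n(T)/\alpha \overset{\mathbb{P}}{\longrightarrow} T/\mathbb{E}[E_{1,1}^{(l)}]$. This is the standard elementary renewal theorem argument; here the $E_i$ are i.i.d.\ for fixed $n$ but the array structure causes no difficulty since the conclusion for each fixed distribution is uniform in the trivial sense that the distribution does not change with $n$.

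Next I would handle the numerator. By the SLLN applied to the i.i.d.\ sequence $g(E_i)$ (using $g(E_{1,1}^{(l)})\in L^1$), $\tfrac{1}{m}\sum_{i=1}^m g(E_i)\overset{\text{a.s.}}{\longrightarrow}\mathbb{E}[g(E_{1,1}^{(l)})]$. Combining this with the convergence $N_n(T)\overset{\mathbb{P}}{\longrightarrow}\infty$ and a Anscombe/Billingsley-type argument for randomly indexed sums — or, more simply here, sandwiching $\sum_{i=1}^{N_n(T)}g(E_i)$ between $\sum_{i=1}^{m}g(E_i)$ for $m$ slightly below and above the deterministic target $\lfloor \alpha T/\mathbb{E}[E_{1,1}^{(l)}]\rfloor$ and using that $N_n(T)$ concentrates there — gives
\begin{align*}
\frac{1}{N_n(T)}\sum_{i=1}^{N_n(T)} g(E_{i,n}^{(l)}) \overset{\mathbb{P}}{\longrightarrow} \mathbb{E}[g(E_{1,1}^{(l)})].
\end{align*}
Then I write
\begin{align*}
\alpha^{-1}\sum_{i=1}^{N_n(T)} g(E_{i,n}^{(l)}) = \frac{N_n(T)}{\alpha}\cdot\frac{1}{N_n(T)}\sum_{i=1}^{N_n(T)} g(E_{i,n}^{(l)}) \overset{\mathbb{P}}{\longrightarrow} \frac{T}{\mathbb{E}[E_{1,1}^{(l)}]}\cdot \mathbb{E}[g(E_{1,1}^{(l)})]
\end{align*}
by Slutsky, which is exactly the assertion.

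The main obstacle is the randomly-indexed-sum step: one must be careful that $g$ need not be nonnegative, so the simple monotone sandwiching used for $\sum E_i$ does not directly apply to $\sum g(E_i)$. The clean fix is to invoke the convergence $N_n(T)/\alpha\to T/\mathbb{E}[E_{1,1}^{(l)}]$ already proven, so that for any $\varepsilon>0$ the event $\{(1-\varepsilon)c\alpha \le N_n(T)\le (1+\varepsilon)c\alpha\}$ with $c=T/\mathbb{E}[E_{1,1}^{(l)}]$ has probability tending to $1$, and then control the maximal fluctuation $\max_{|m-c\alpha|\le \varepsilon c\alpha}\big|\tfrac1m\sum_{i=1}^m g(E_i)-\mathbb{E}[g(E_{1,1}^{(l)})]\big|$, which vanishes in probability as $n\to\infty$ and then $\varepsilon\to 0$ by Kolmogorov's maximal inequality (finite variance is not even needed — the a.s.\ convergence of $\tfrac1m\sum_{i=1}^m g(E_i)$ on the full sequence already gives uniform smallness of the tail fluctuations via the standard "fluctuation of Cesàro averages" estimate). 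Everything else is routine. Finally, for the special case $g(x)=x^2$ relevant to proving \eqref{lln_gamma} via $G_k$, one checks $E_{1,1}^{(l)}\sim Exp(1)$ (after the scaling) so that $\mathbb{E}[E_{1,1}^{(l)}]=1$ and $\mathbb{E}[(E_{1,1}^{(l)})^2]=2$, recovering $G(t)=\tfrac{2}{\lambda}t$; the computation of $G_k$ requires instead applying the lemma with the interval $\mathcal{I}_{i,k,n}^{(l)}$, i.e.\ with $E$ replaced by a sum of $k$ consecutive exponentials, which is a Gamma$(k,1)$ variable, and this is deferred to Section \ref{sec:proof_detail_poiss}.
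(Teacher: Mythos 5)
Your proof plan is correct, but it takes a genuinely different route from the paper. The paper's Step 1 proves the sharp concentration $\mathbb{P}(|N_T^{(l)}(n)-\lambda^{(l)}(n)|>\alpha^{(l)}(n)^{1/2+\varepsilon})\to 0$ via a central limit theorem for the partial sums of the $E_i$ — this is precisely where the assumption $E_{1,1}^{(l)}\in L^2$ enters — and then in Step 2 applies the weak law to the sum up to the deterministic index $\lambda^{(l)}(n)$ and shows the residual sum over the at most $2\alpha^{(l)}(n)^{1/2+\varepsilon}$ remaining indices is $o_{\mathbb{P}}(1)$ using that rate. You instead establish only $N_n(T)/\alpha\to T/\mathbb{E}[E^{(l)}_{1,1}]$ in probability by an elementary renewal argument and then pass to the random-index sum via an Anscombe-type step, which relies on nothing beyond almost-sure convergence of $S_m/m$ along one i.i.d.\ sequence (legitimate, since the distribution does not depend on $n$, so the triangular array structure is harmless). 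Your route is thus more elementary and slightly more general: it needs only $E_{1,1}^{(l)}\in L^1$, whereas the paper's CLT step genuinely uses second moments. One small simplification you could make: for the Anscombe step it is enough that $N_n(T)\to\infty$ in probability — on $\{N_n(T)\ge M_0\}$ the error is bounded by $\sup_{m\ge M_0}\lvert S_m/m-\mu\rvert$, whose distribution does not depend on $n$ and is small for $M_0$ large by the strong law, so the two-sided concentration around $c\alpha$ is not actually required there. Finally, a caveat on your closing remark about $G_k$: the intervals $\mathcal{I}_{i,k,n}^{(l)}$ overlap for consecutive $i$, so their lengths are not i.i.d., and the paper applies the lemma only after splitting the sum into $k$ arithmetic subsequences $i\equiv l\pmod{k}$, within each of which the lengths are i.i.d.\ $\Gamma(k,n\lambda)$; your phrasing glosses over this.
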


\begin{proof} This proof is based on the proof of Proposition 1 in \cite{HayYos08}. Set $m^{(l)}_1=\mathbb{E}[E^{(l)}_{1,1}]$, $m^{(l)}_g=\mathbb{E}[g(E^{(l)}_{1,1})]$ and
\begin{align*}
\lambda^{(l)}(n)=\lceil \alpha^{(l)}(n)T/m_1^{(l)}\rceil.
\end{align*}
Further define
\begin{align*}
N^{(l)}_T(n)=\sum_{i\in \mathbb{N} } \mathds{1}_{\{t_{i,n}^{(l)}\leq T\}}.
\end{align*}

\textit{Step 1.} We prove
\begin{align}\label{obs_sch1_proof1}
\mathbb{P}( |N^{(l)}_T(n)-\lambda^{(l)}(n)|>\alpha^{(l)}(n)^{1/2+ \varepsilon})\rightarrow 0
\end{align}
as $n \rightarrow \infty$ for $\varepsilon \in (0,1/2)$; compare Lemma 9 in \cite{HayYos08}. Denote $\nu^{(l)}=\Var(E_{1,1}^{(l)})$. The case $\nu^{(l)}=0$ is simple, so let us assume $\nu^{(l)}>0$ in the following. It holds
\begin{align*}
&N^{(l)}_T(n)-\lambda^{(l)}(n)>\alpha^{(l)}(n)^{1/2+ \varepsilon}
\Leftrightarrow \sum_{i=1}^{\lambda^{(l)}(n)+\alpha^{(l)}(n)^{1/2+ \varepsilon}} |\mathcal{I}_{i,n}^{(l)}|< T
\\& \Leftrightarrow \frac{\sum_{i=1}^{\lambda^{(l)}(n)+\alpha^{(l)}(n)^{1/2+ \varepsilon}} (E_{i,n}^{(l)}-m_1^{(l)})}{\sqrt{(\lambda^{(l)}(n)+\alpha^{(l)}(n)^{1/2+ \varepsilon})\nu^{(l)}}} < \frac{\alpha^{(l)}(n)T-(\lambda^{(l)}(n)+\alpha^{(l)}(n)^{1/2+ \varepsilon})m_1}{\sqrt{(\lambda^{(l)}(n)+\alpha^{(l)}(n)^{1/2+ \varepsilon})\nu^{(l)}}} 
\end{align*}
where the left hand side is approximately standard normal distributed by the classical central limit theorem while the right hand side is approximately
\begin{align*}
\frac{-\alpha^{(l)}(n)^{1/2+ \varepsilon}m_1}{\sqrt{(\lambda^{(l)}(n)+\alpha^{(l)}(n)^{1/2+ \varepsilon})\nu^{(l)}}} =O\big(\frac{-\alpha^{(l)}(n)^{1/2+ \varepsilon}}{\alpha^{(l)}(n)^{1/2 \vee (1/2+ \varepsilon)/2}} \Big) \leq O(-\alpha^{(l)}(n)^{\varepsilon/2})
\end{align*}
which tends to $- \infty$ as $n \rightarrow \infty$. Hence 
\begin{align}\label{obs_sch1_proof3}
\mathbb{P}( N^{(l)}_T(n)-\lambda^{(l)}(n)>\alpha^{(l)}(n)^{1/2+ \varepsilon})\rightarrow 0.
\end{align}
Analogously it follows
\begin{align*}
\mathbb{P}( N^{(l)}_T(n)-\lambda^{(l)}(n) < -\alpha^{(l)}(n)^{1/2+ \varepsilon})\rightarrow 0
\end{align*}
which together with \eqref{obs_sch1_proof3} yields \eqref{obs_sch1_proof1}.

\textit{Step 2.} By the weak law of large numbers it holds
\begin{align*}
\alpha^{(l)}(n)^{-1}\sum_{i=1}^{\lambda^{(l)}(n)} g\big(\alpha^{(l)}(n)\big|\mathcal{I}_{i,n}^{(l)}\big|\big)=\frac{\lambda^{(l)}(n)}{\alpha^{(l)}(n)} \frac{1}{\lambda^{(l)}(n)} \sum_{i=1}^{\lambda^{(l)}(n)} g\big(E_{i,n}^{(l)}\big) \overset{\mathbb{P}}{\longrightarrow}\frac{T}{m_1^{(l)}} m_g^{(l)}.
\end{align*}
Hence it remains to prove
\begin{align}\label{obs_sch1_proof2}
\alpha^{(l)}(n)^{-1}\sum_{i=\lambda^{(l)}(n)\wedge N^{(l)}_T(n)+1}^{\lambda^{(l)}(n)\vee N^{(l)}_T(n)} g\big(\alpha^{(l)}(n)\big|\mathcal{I}_{i,n}^{(l)}\big|\big) \overset{\mathbb{P}}{\longrightarrow} 0
\end{align}
as $n \rightarrow \infty$. Because of \eqref{obs_sch1_proof1} it suffices to prove the convergence \eqref{obs_sch1_proof2} restricted to the set 
$$\Omega^{(l)}(n,\varepsilon)=\{ |N^{(l)}_T(n)-\lambda^{(l)}(n)|\leq \alpha^{(l)}(n)^{1/2+ \varepsilon}\}.$$
On this set \eqref{obs_sch1_proof2} is bounded by
\begin{align*}
2\alpha^{(l)}(n)^{(1/2+\varepsilon)-1} \Big(\frac{1}{2 \alpha^{(l)}(n)^{1/2+ \varepsilon}}\sum_{i=\lambda^{(l)}(n)-\lfloor\alpha^{(l)}(n)^{1/2+ \varepsilon}\rfloor+1}^{\lambda^{(l)}(n)+\lceil\alpha^{(l)}(n)^{1/2+ \varepsilon}\rceil} \big|g\big(E^{(l)}_{i,n}\big)\big| \Big) \mathds{1}_{\Omega^{(l)}(n,\varepsilon)}
\end{align*}
which converges in probability to zero for $\varepsilon< 1/2$ as $n \rightarrow \infty$ because the term in parantheses converges by the weak law of large numbers in probability to $m^{(l)}_g$.
\end{proof}

\begin{proof}[Proof of \eqref{lln_gamma}]
Because of $|\mathcal{I}_{i,n}| \sim Exp(n \lambda)$ it holds $|\mathcal{I}_{i,k,n}| \sim \Gamma(k,n \lambda)$. Lemma \ref{obs_sch1} with $\alpha^{(l)}(n)=n/k$ and $E_{1,1}^{(l)} \sim \zeta/k$ with $\zeta \sim \Gamma(k, \lambda)$ yields
\begin{align*}
&G_{n,k}(t)=\frac{1}{k}\sum_{l=0}^{k-1}\frac{n}{k}\sum_{i:t_{ki+l,n}  \leq t} \left|\mathcal{I}_{ki+l,k,n} \right|^2
=\frac{1}{k}\sum_{l=0}^{k-1}\frac{k}{n}\sum_{i:t_{ki+l,n} \leq t} \big(\frac{n}{k}\left|\mathcal{I}_{ki+l,k,n} \right|\big)^2
\\&~~~~~~~~~~\overset{\mathbb{P}}{\longrightarrow}\frac{1}{k}\sum_{l=0}^{k-1} \frac{\mathbb{E}[(\zeta/k)^2]}{\mathbb{E}[\zeta/k]}t=\frac{k(k+1)/(k\lambda)^2}{k/(k\lambda)}t=\frac{k+1}{k\lambda}t.
\end{align*}
\end{proof}

\begin{lemma}\label{G_k_H_k_cons}
Under the Poisson sampling introduced in Example \ref{example_poisson_2d} we have the pointwise convergences
\begin{align*}
&\widetilde{G}_{k,n}(t)\overset{\mathbb{P}}{\longrightarrow} \widetilde{G}_k(t),
\\&H_{k,n}(t)\overset{\mathbb{P}}{\longrightarrow} H_k(t),
\end{align*}
for functions $\widetilde{G}_k,H_k$ which are linear in $t$ with positive slope.
\end{lemma}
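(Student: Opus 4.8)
The plan is to reduce both convergences to applications of Lemma \ref{obs_sch1}, just as the convergence \eqref{lln_gamma} for $G_k$ was handled in Section \ref{sec:proof_detail_poiss}. The extra difficulty here is that $\widetilde{G}_{k,n}(t)$ and $H_{k,n}(t)$ are double sums over pairs of overlapping intervals from the two independent Poisson samplings, so the summand is no longer a function of a single interval length; one must first rewrite each double sum as a single sum along one of the two time grids, weighted by a quantity governed by the superposition of the two Poisson processes.

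\textbf{Step 1: reduction to a single sum.} Fix $k'=1$ or $k'=k$; for readability I write $k$ for $k'$. For $H_{k,n}(t)$, note that the inner sum $\sum_{j\geq k: \mathcal{I}^{(2)}_{j,k,n}\cap\mathcal{I}^{(1)}_{i,k,n}\neq\emptyset} |\mathcal{I}^{(2)}_{j,k,n}|$ is, up to boundary effects of order $|\pi_n|_T$, equal to the length of the union of all $\mathcal{I}^{(2)}_{j,k,n}$ meeting $\mathcal{I}^{(1)}_{i,k,n}$, which differs from $|\mathcal{I}^{(1)}_{i,k,n}|$ by at most two overhanging intervals of the second grid; since the second Poisson process has intensity $n\lambda_2$, a $k$-block has expected length $k/(n\lambda_2)$, so
\[
H_{k,n}(t) = \frac{n}{k^3}\sum_{i\geq k: t^{(1)}_{i,n}\leq t} |\mathcal{I}^{(1)}_{i,k,n}|\Big(|\mathcal{I}^{(1)}_{i,k,n}| + O_{\mathbb{P}}\big(\tfrac{k}{n\lambda_2}\big)\Big) + O_{\mathbb{P}}(\sqrt{n}|\pi_n|_T)\cdot\text{(bounded)}.
\]
The first part is $\frac{1}{k}G_{k,n}^{(1)}(t)\cdot k$ type object plus a cross term $\frac{1}{k^2\lambda_2}\cdot\frac{n}{k}\sum |\mathcal{I}^{(1)}_{i,k,n}|$, and both converge by Lemma \ref{obs_sch1} applied to the first grid (with $\alpha(n)=n/k$, $E\sim\zeta/k$, $\zeta\sim\Gamma(k,\lambda_1)$). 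For $\widetilde{G}_{k,n}(t)$ one argues similarly: the overlap $|\mathcal{I}^{(1)}_{i,k,n}\cap\mathcal{I}^{(2)}_{j,k,n}|$ summed in $j$ against itself, $\sum_j |\mathcal{I}^{(1)}_{i,k,n}\cap\mathcal{I}^{(2)}_{j,k,n}|^2$, can be written as $\mathbb{E}$ of an integral over $\mathcal{I}^{(1)}_{i,k,n}$ of a stationary functional of the second point process (the forward-plus-backward distance to the nearest $k$-block endpoints of the second grid), hence concentrates around $c_2|\mathcal{I}^{(1)}_{i,k,n}|$ for an explicit constant $c_2$ depending on $k,\lambda_2$; then Lemma \ref{obs_sch1} on the first grid finishes it.

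\textbf{Step 2: controlling the fluctuations.} The heuristic replacements in Step 1 must be made rigorous by an $L^1$ or $L^2$ bound. The cleanest route is: condition on the first grid, and for each fixed $i$ show that $\sum_{j} |\mathcal{I}^{(1)}_{i,k,n}\cap\mathcal{I}^{(2)}_{j,k,n}|^2$ (resp. the $H$-summand) has conditional expectation $c_2|\mathcal{I}^{(1)}_{i,k,n}| + O(n^{-1})$ uniformly, using the renewal/stationarity structure of the second Poisson process, and conditional variance of smaller order; summing over $i\geq k$ with $t^{(1)}_{i,n}\leq t$ (there are $O_{\mathbb{P}}(n)$ such terms) and multiplying by $n/k^3$ gives a vanishing fluctuation after normalization, by an argument in the spirit of Lemma \ref{conv_cond_expec} together with Chebyshev. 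The moment bounds needed are all for Gamma random variables and for geometric-type counts, hence finite; the boundary terms are $O_{\mathbb{P}}(n|\pi_n|_T^2)=o_{\mathbb{P}}(1)$ by \eqref{poiss_mesh}.

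\textbf{Step 3: identifying the limits.} Carrying the constants through, $H_{k,n}(t)\overset{\mathbb{P}}{\longrightarrow} H_k(t)=a_k t$ and $\widetilde{G}_{k,n}(t)\overset{\mathbb{P}}{\longrightarrow}\widetilde{G}_k(t)=b_k t$ with $a_k,b_k>0$ explicit rational functions of $\lambda_1,\lambda_2,k$ (for $k=1$ these reduce to the constants in Proposition 1 of \cite{HayYos08}); in particular both limits are linear in $t$ with strictly positive slope, which is the assertion of the lemma. I expect Step 2 — making the concentration of the inner sum over the second grid uniform in $i$ and summable — to be the main obstacle, since it is the only place where the two independent samplings genuinely interact; everything else is a direct reuse of Lemma \ref{obs_sch1} and the Gamma-moment computations already used for \eqref{lln_gamma}.
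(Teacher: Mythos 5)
Your route is genuinely different from the paper's. The paper exploits the scaling of the Poisson process: at intensity $n\lambda$ restricted to $[0,t]$ the grid is equal in law to the intensity-$\lambda$ grid on $[0,nt]$ with interval lengths scaled by $1/n$, so $H_{k,n}(t) \overset{\mathcal{L}}{=} \frac{1}{k^3}\frac{\lfloor nt\rfloor}{n}\frac{1}{\lfloor nt\rfloor}\sum_{m=1}^{\lfloor nt\rfloor}Y_m+O_\mathbb{P}(n^{-1})$, where $Y_m$ collects the contributions from pairs whose reference point falls in $(m-1,m]$ of the \emph{unit-intensity} grids. The $(Y_m)_m$ form a fixed stationary, square-integrable, asymptotically uncorrelated sequence (no $n$-dependence at all), so a law of large numbers for stationary processes gives the claim with $H_k(t)=\frac{t}{k^3}\mathbb{E}[Y_1]$. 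This avoids computing any conditional expectations. Your approach — condition on the first grid, replace the inner sum over $j$ by its conditional expectation, feed the resulting single sum into Lemma \ref{obs_sch1}, control the fluctuations by Chebyshev — can in principle be carried through, and is in fact more computational in flavor (it would let one identify the limiting constants explicitly); but as written it has several genuine errors.

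First, in Step~1 for $H_{k,n}$ you equate $\sum_j|\mathcal{I}^{(2)}_{j,k,n}|\mathds{1}_{\{\text{overlap}\}}$ with the length of the union of the overlapping $\mathcal{I}^{(2)}_{j,k,n}$. But for $k\geq 2$ consecutive $k$-blocks $\mathcal{I}^{(2)}_{j,k,n}$ and $\mathcal{I}^{(2)}_{j+1,k,n}$ share $k-1$ unit intervals, so the sum of lengths is not the union length; each interior point of $\mathcal{I}^{(1)}_{i,k,n}$ lies in exactly $k$ of the overlapping $k$-blocks, and the correct leading term of the conditional mean is $k|\mathcal{I}^{(1)}_{i,k,n}|$ (plus an $\ell$-independent overhang term of size $O(n^{-1})$, by translation invariance of the Poisson process). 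Second, for $\widetilde{G}_{k,n}$ the claim that $\sum_j|\mathcal{I}^{(1)}_{i,k,n}\cap\mathcal{I}^{(2)}_{j,k,n}|^2$ concentrates around $c_2|\mathcal{I}^{(1)}_{i,k,n}|$ with $c_2=c_2(k,\lambda_2)$ is wrong, both in scaling and in functional form. Writing the sum as $\int_{I\times I}(k-N_2(x,y))_+\,dxdy$ with $N_2(x,y)\sim\text{Pois}(n\lambda_2|x-y|)$ the number of second-grid points between $x$ and $y$, its conditional expectation given $\ell=|\mathcal{I}^{(1)}_{i,k,n}|$ equals $2\int_0^\ell(\ell-u)\,\mathbb{E}[(k-\text{Pois}(n\lambda_2 u))_+]\,du=\frac{1}{(n\lambda_2)^2}h(n\lambda_2\ell)$ for a fixed, genuinely nonlinear $h$; since $n\lambda_2\ell\sim k\lambda_2/\lambda_1=O(1)$ here, the asymptotic linearization of $h$ is not available, and plugging in $c_2\ell$ would in any case make $\frac{n}{k^3}\sum_i$ diverge. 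The scaling form $n^{-2}h(n\lambda_2\ell)$ is still compatible with Lemma \ref{obs_sch1} after choosing the appropriate (nonlinear) $g$, but you need to use this form, not a linear one. Third, in Step~2 the Chebyshev argument cannot treat the fluctuations $V_i-\mathbb{E}[V_i|\text{grid}^{(1)}]$ as if they were orthogonal: for $|i-i'|\lesssim k$ the inner sums over $j$ share second-grid points and have nonvanishing conditional covariance; you need the fact that this dependence is short-range (exponentially decaying) so the variance of the normalized sum still tends to zero. These are all fixable, but they are precisely the places where the two independent samplings interact — the paper's stationarity argument sidesteps them entirely.
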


\begin{proof}
We will proof the claim for $H_{k,n}$ only, since the proof for $\widetilde{G}_{k,n}$ is similar. First observe
\begin{align*}
H_{k,n}(t)&=\frac{n}{k^3} \sum_{i,j:t_{i,n}^{(1)} \wedge t_{j,n}^{(2)} \leq t} |\mathcal{I}_{i,k,n}^{(1)}|| \mathcal{I}_{j,k,n}^{(2)}| \mathds{1}_{\{\mathcal{I}_{i,k,n}^{(1)} \cap \mathcal{I}_{j,k,n}^{(2)} \neq \emptyset\}}
\\ & \overset{\mathcal{L}}{=}\frac{1}{k^3}\frac{1}{n} \sum_{i,j:t_{i,1}^{(1)} \wedge t_{j,1}^{(2)} \leq nt} |\mathcal{I}_{i,k,1}^{(1)}|| \mathcal{I}_{j,k,1}^{(2)}| \mathds{1}_{\{\mathcal{I}_{i,k,1}^{(1)} \cap \mathcal{I}_{j,k,1}^{(2)} \neq \emptyset\}}
\\ & =\frac{1}{k^3} \frac{\lfloor nt \rfloor}{n} \frac{1}{\lfloor nt \rfloor} \sum_{m=1}^{\lfloor nt \rfloor} Y_m+O_\mathbb{P}(n^{-1})
\end{align*}
where $\overset{\mathcal{L}}{=}$ denotes equality in law and
\begin{align*}
Y_m=\sum_{i,j:t_{i-k,1}^{(1)} \wedge t_{j-k,1}^{(2)} \in (m-1,m]} |\mathcal{I}_{i,k,1}^{(1)}|| \mathcal{I}_{j,k,1}^{(2)}| \mathds{1}_{\{\mathcal{I}_{i,k,1}^{(1)} \cap \mathcal{I}_{j,k,1}^{(2)} \neq \emptyset\}},~~ m \in \mathbb{N}.
\end{align*}
Because the Poisson process has stationary increments, the sequence $Y_m$, $m \in \mathbb{N}$, is a stationary square integrable time series. Because further $Y_{m_1}$, $Y_{m_2}$ become asymptotically independent as $|m_1-m_2| \rightarrow \infty$, compare (A.51) in \cite{MarVet17}, it is possible to conclude $\Cov(Y_{m_1},Y_{m_2}) \rightarrow 0$ as $|m_1-m_2| \rightarrow \infty$. We then obtain by a law of large numbers for stationary processes, compare Theorem 7.1.1 in \cite{BroDav91},
\begin{align*}
H_{k,n}(t)\overset{\mathbb{P}}{\longrightarrow}\frac{t}{k^3} \mathbb{E}[Y_1].
\end{align*}
This yields the claim as clearly $\mathbb{E}[Y_1]>0$.
\end{proof}

\subsubsection{Proof of Condition \ref{cond_clt_J}(ii), \ref{cond_cons_CoJ2}(iii) and \ref{cond_clt_coJ} in the Poisson setting}\label{sec:proof_poiss_clt}

The convergences claimed in Condition \ref{cond_clt_J}(ii), \ref{cond_cons_CoJ2}(iii) and \ref{cond_clt_coJ} are all special cases of the general statement made in the following Lemma and can be easily verified by finding suitable functions $f^{(1)},f^{(2)},f$.

\begin{lemma}\label{lemma_poiss_cond_clt}
Let $d \in \mathbb{N}$ and $Z_n^{(1)}(s),Z_n^{(2)}(s),Z_n^{(3)}(s)$ be $\mathbb{R}^d$-valued random variables, which can be written as
\begin{align*}
&Z^{(l)}_n(s)=f^{(l)}\big(n(s-t^{(l)}_{i_n^{(l)}(s)-1,n}),n(t^{(l)}_{i_n^{(l)}(s),n}-s),(\sqrt{n}\Delta_{i_n^{(l)}(s)+j,n}\overline{W}^{(l)})_{j \in [k-1]},
\\&~~~~~(\sqrt{n}\Delta_{i_n^{(l)}(s)+j,k,n}^{(l)}\overline{W}^{(l)}\mathds{1}_{\{\mathcal{I}_{i_n^{(3-l)}(s)+i,k,n}^{(3-l)} \cap \mathcal{I}_{i_n^{(l)}(s)+j,k,n}^{(l)} \neq \emptyset\}} )_{i \in \{0,\ldots,k-1\}, j \in \mathbb{Z}}\big),\quad l=1,2,
\\&Z_n^{(3)}(s)=f\big(\big[(n|\mathcal{I}_{i_n^{(l)}(s)+i,n}^{(l)}|)_{i\in [k]}\big]_{l=1,2},
(n|\mathcal{I}_{i_n^{(1)}(s)+i,n}^{(1)} \cap \mathcal{I}_{i_n^{(2)}(s)+j,n}^{(2)}|)_{i,j \in [k]} \big)
\end{align*}
with $[k]=\{-k,\ldots,k\}$ for measurable functions $f^{(1)},f^{(2)},f$ and a fixed $k \in \mathbb{N}$. 

Then under the Poisson sampling introduced in Example \ref{example_poisson_2d} the integral
\begin{align}\label{lemma_clt1}
&\int_{[0,T]^{P_1+P_2+P_3}} g(x_1,\ldots,x_{P_1},x'_1,\ldots,x'_{P_2},x''_1,\ldots,x''_{P_3}) \mathbb{E} \Big[\prod_{p=1}^{P_1} h_p^{(1)}(Z_n^{(1)}(x_p)) \nonumber
\\& ~~~~~~\times \prod_{p=1}^{P_2} h_p^{(2)}(Z_n^{(2)}(x'_p))\prod_{p=1}^{P_3} h_p^{(3)}(Z_n^{(3)}(x''_p)) \Big]
d x_1 \ldots d x_{P_1} d x'_1 \ldots d x'_{P_2} d x''_1 \ldots d x''_{P_3}
\end{align}
converges for $n \rightarrow \infty$ to
\begin{align}\label{lemma_clt2}
&\int_{[0,T]^{P_1+P_2+P_3}} g(x_1,\ldots,x_{P_1},x'_1,\ldots,x'_{P_2},x''_1,\ldots,x''_{P_3})\prod_{p=1}^{P_1} \int h_p^{(1)}(y)\Gamma^{(1)}(dy) \nonumber
\\ &~~~~\times \prod_{p=1}^{P_2}\int h_p^{(2)}(y)\Gamma^{(2)}(dy)\prod_{p=1}^{P_3} \int h_p^{(3)}(y)\Gamma^{(3)}(dy) 
d x_1 \ldots d x_{P_1} d x'_1 \ldots d x'_{P_2} d x''_1 \ldots d x''_{P_3}
\end{align}
for all bounded continuous functions $g:\mathbb{R}^{P_1+P_2+P_3} \rightarrow \mathbb{R}$, $h_p^{(l)} : \mathbb{R}^d \rightarrow \mathbb{R}$ and all $P_1,P_2,P_3 \in \mathbb{N}$.
\end{lemma}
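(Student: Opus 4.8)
\textbf{Proof proposal for Lemma \ref{lemma_poiss_cond_clt}.}

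The plan is to exploit the independent-increments structure of the two independent Poisson processes together with a dominated-convergence / bounded-convergence argument on the outer integral, reducing everything to a single assertion: for fixed distinct times $s_1,\ldots,s_{P}$ (collecting all the $x_p,x_p',x_p''$), the vector of local observation variables $(Z_n^{(l)}(s_\cdot))$ attached to these times becomes asymptotically independent across the $s$'s, and for each individual $s$ the quantity $Z_n^{(l)}(s)$ converges in law to a limit $\Gamma^{(l)}(dy)$ that does \emph{not} depend on $s$. Concretely, I would first fix the arguments $x_1,\ldots,x_{P_1},x_1',\ldots,x_{P_2}',x_1'',\ldots,x_{P_3}''$ outside a Lebesgue-null set so that they are pairwise distinct, and show
\begin{align}\label{poiss_plan_1}
\mathbb{E}\Big[\prod_{p=1}^{P_1} h_p^{(1)}(Z_n^{(1)}(x_p))\prod_{p=1}^{P_2} h_p^{(2)}(Z_n^{(2)}(x_p'))\prod_{p=1}^{P_3} h_p^{(3)}(Z_n^{(3)}(x_p''))\Big]
\longrightarrow \prod_{p=1}^{P_1}\!\int\! h_p^{(1)}d\Gamma^{(1)}\prod_{p=1}^{P_2}\!\int\! h_p^{(2)}d\Gamma^{(2)}\prod_{p=1}^{P_3}\!\int\! h_p^{(3)}d\Gamma^{(3)}.
\end{align}
Since the $h_p^{(l)}$ are bounded, the integrand in \eqref{lemma_clt1} is uniformly bounded, so once \eqref{poiss_plan_1} is established pointwise a.e.\ the convergence of \eqref{lemma_clt1} to \eqref{lemma_clt2} follows by bounded convergence; the continuity and boundedness of $g$ play no further role than ensuring integrability.

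For the asymptotic independence across distinct times, I would use the scaling identity (as in the proof of Lemma \ref{G_k_H_k_cons}) that the observation scheme with intensity $n\lambda_l$ on $[0,T]$ has the same law, after the time change $t\mapsto nt$, as the unit-intensity scheme on $[0,nT]$; under this map the variables $Z_n^{(l)}(s)$ become functionals of the unit-rate Poisson points in a \emph{bounded} neighbourhood (of width $O(L$) intervals, i.e.\ $O_{\mathbb{P}}(1)$ real time) of the points $ns_1,\ldots,ns_P$. As $n\to\infty$ the quantities $n|s_i-s_j|\to\infty$ for $i\neq j$, so these neighbourhoods are eventually disjoint, and the independent-increments property of the Poisson processes makes the corresponding blocks of $Z_n^{(l)}$ exactly independent for $n$ large; this is the two-dimensional analogue of (A.51) in \cite{MarVet17}, to which I would refer. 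It therefore remains to identify, for a single fixed $s$, the limiting law of $Z_n^{(l)}(s)$. Here one uses that under unit-rate Poisson sampling the spacings $(n|\mathcal{I}_{i,n}^{(l)}|)_i$ are i.i.d.\ $Exp(\lambda_l)$, that the position of $s$ inside its containing interval is, conditionally on the interval lengths, uniform (size-biased sampling), and that the Brownian increments $\sqrt n\,\Delta_{i_n^{(l)}(s)+j,n}\overline W^{(l)}$ are, conditionally on the scheme, independent centred Gaussians with variances $n|\mathcal{I}^{(l)}_{i_n^{(l)}(s)+j,n}|$. Passing to the limit in the continuous functions $f^{(l)},f$ (which are bounded composed with the bounded $h_p^{(l)}$, so continuity suffices for weak convergence via the continuous mapping theorem) yields a limit law $\Gamma^{(l)}$ built from i.i.d.\ exponential interval lengths, one size-biased exponential split by an independent uniform into the $s-t_{\cdot-1}$ and $t_\cdot - s$ parts, and conditionally Gaussian increments — a law manifestly independent of $s$. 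The overlap indicators $\mathds{1}_{\{\mathcal{I}^{(3-l)}\cap\mathcal{I}^{(l)}\neq\emptyset\}}$ and the two-dimensional intersection lengths $n|\mathcal{I}^{(1)}_{\cdot}\cap\mathcal{I}^{(2)}_{\cdot}|$ are continuous functions of the (jointly independent across the two dimensions) exponential spacings and the uniform split, so they are covered by the same argument.

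The main obstacle I anticipate is the bookkeeping needed to make the "asymptotically disjoint neighbourhoods" step rigorous while keeping track of all index ranges: the variables $Z_n^{(l)}(s)$ reach $k$ intervals to each side in their own coordinate but, through the overlap indicators, can touch an a priori unbounded range of indices in the other coordinate. One must argue that, with probability tending to one, only $O_{\mathbb{P}}(1)$ intervals of the opposite process actually overlap a block of $k$ intervals (true because interval lengths are $O_{\mathbb{P}}(1/n)$ with exponential tails, so a window of $k$ consecutive length-$1/n$ intervals meets only $O_{\mathbb{P}}(1)$ intervals of the other process), which is what localises each $Z_n^{(l)}(s)$ to a genuinely bounded real-time window and hence decouples distinct $s$'s. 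Once this localisation is in hand, everything else is a routine weak-convergence computation for i.i.d.\ exponential spacings, and I would present that part briefly, citing the unit-rate reduction and the asymptotic-independence lemma from \cite{MarVet17} rather than repeating the estimates.
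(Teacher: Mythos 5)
Your overall architecture matches the paper's: show that $Z_n^{(l_1)}(s_1),\ldots,Z_n^{(l_P)}(s_P)$ become asymptotically independent for pairwise-distinct times by localizing each to a disjoint $O(1/n)$-wide window around $s_j$ (citing the two-dimensional analogue of (A.51) of \cite{MarVet17}), prove that the marginal law of $Z_n^{(l)}(s)$ converges to an $s$-independent $\Gamma^{(l)}$ via the scaling reduction $Z_n^{(l)}(s)\overset{\mathcal{L}}{=}Z_1^{(l)}(ns)$ to the unit-rate scheme, and then pass to the outer integral. Replacing the paper's quantitative estimate (integrating the conditioning error $(1-\mathbb{P}(\Omega_n))/\mathbb{P}(\Omega_n)$ outside an $\varepsilon$-tube around the diagonal, then $\varepsilon\to 0$) by bounded convergence is a valid simplification: the integrand is uniformly bounded by the sup-norms of $g$ and the $h_p^{(l)}$, so a.e.\ pointwise convergence of the inner expectation is enough.

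There is however one genuine gap, in the identification of $\Gamma^{(l)}$. You write that "continuity suffices for weak convergence via the continuous mapping theorem", but the lemma only assumes $f^{(1)},f^{(2)},f$ are \emph{measurable}, not continuous; and indeed there is no reason for an arbitrary measurable $f^{(l)}$ to be $\Gamma^{(l)}$-a.s.\ continuous, which is the hypothesis the continuous mapping theorem actually needs. (Even for smooth $f^{(l)}$, the building blocks already contain the indicator $\mathds{1}_{\{\mathcal{I}^{(3-l)}\cap\mathcal{I}^{(l)}\neq\emptyset\}}$, which is discontinuous in the spacings, so you would separately have to argue that the boundary set is $\Gamma^{(l)}$-null.) The paper avoids this entirely by exploiting stationarity at the level of \emph{equality} of laws rather than weak convergence: writing $\Omega(n,m,x)$ for the event that the window needed for $Z_1^{(l)}(mx)$ lies inside $[mx-nx,\infty)$, the random variables $Z_1^{(l)}(mx)\mathds{1}_{\Omega(n,m,x)}$ have the same law for every $m\geq n$, and $\mathbb{P}(\Omega(n,n,x))\to 1$; this gives an increasing sequence of sub-probability measures converging to $\Gamma^{(l)}$, and since the identity of laws passes through \emph{any} measurable $f^{(l)}$, no continuity is needed. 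Your explicit description of $\Gamma^{(l)}$ (i.i.d.\ exponential spacings, the size-biased containing interval split by an independent uniform, conditionally Gaussian increments) is correct and more concrete than the paper's existence argument, but it should be derived from this exact-coupling stationarity, not from the continuous mapping theorem.
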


Note that $\Gamma^{(1)},\Gamma^{(2)},\Gamma^{(3)}$ are probability measures on $\mathbb{R}^d$ which do not depend on $x_p,x_p',x_p''$ as in the general case in Conditions \ref{cond_clt_J}(ii), \ref{cond_cons_CoJ2}(iii) and \ref{cond_clt_coJ} because the Poisson process has stationary increments.

\begin{proof} First we show that any two random variables $Z_n^{(l_1)}(x_1)$ and $Z_n^{(l_2)}(x_2)$ with $l_1,l_2 \in \{1,2,3\}$, $x_1,x_2 \in [0,T]$, become asymptotically independent for $x_1 \neq x_2$ which yields that the expectation in \eqref{lemma_clt1} factorizes in the limit. Let $x_1 <x_2$ and define
\begin{align*}
&\Omega_n^-(l_1,x_1)= \big\{t^{(l_1)}_{i_n^{(l_1)}(t^{(3-l_1)}_{i^{(3-l_1)}_n(x_1)+k})+k,n}
\leq \frac{x_1+x_2}{2}
 \big\},
 \\&\Omega_n^+(l_2,x_2)= \big\{ \frac{x_1+x_2}{2}
\leq t^{(l_2)}_{i_n^{(l_2)}(t^{(3-l_2)}_{i^{(3-l_2)}_n(x_2)-k-1})-k-1,n} 
 \big\},
 \\&\Omega_n(l_1,l_2,x_1,x_2)=\Omega_n^-(l_1,x_1) \cap \Omega_n^+(l_2,x_2).
\end{align*}
Here, $\Omega_n(l_1,l_2,x_1,x_2)$ describes the subset of $\Omega$ on which the set of intervals used for the construction of $Z_n^{(l_1)}(x_1)$ and the set of intervals used for the construction of $Z_n^{(l_2)}(x_2)$ are separated by $(x_1+x_2)/2$. Then there exist measurable functions $g_1,g_2$ such that
\begin{align*}
&Z_n^{(l_1)}(x_1) \mathds{1}_{\Omega_n^-(l_1,x_1)}=g_1((N_n(t))_{t \in [0,(x_1+x_2)/2]},(\overline{W}^{(l_1)}_t)_{t \in [0,(x_1+x_2)/2]}) \mathds{1}_{\Omega_n^-(l_1,x_1)} , 
\\&Z_n^{(l_2)}(x_2) \mathds{1}_{\Omega_n^+(l_2,x_2)}=g_2((N_n(t)-N_n((x_1+x_2)/2))_{t \in [(x_1+x_2)/2,\infty)},
\\&~~~~~~~~~~~~~~~~~~~~~~~~~~~~~~~~~~~~~~~~~~~~~(\overline{W}^{(l_2)}_t-\overline{W}^{(l_2)}_{(x_1+x_2)/2})_{t \in [(x_1+x_2)/2,\infty)}) \mathds{1}_{\Omega_n^+(l_2,x_2)}
\end{align*}
where $N_n(t)=(N_n^{(1)}(t),N_n^{(2)}(t))^*$, $N_n^{(l)}(t)=\sum_{i \in \mathbb{N}} \mathds{1}_{\{t_{i,n}^{(l)} \leq t\}}$, $l=1,2$, denotes the Poisson processes which create the stopping times. These identities yield that the random variables $Z_n^{(l_1)}(x_1) \mathds{1}_{\Omega_n^-(l_1,x_1)}$, $\mathds{1}_{\Omega_n^-(l_1,x_1)}$ are independent from the random variables $Z_n^{(l_2)}(x_2) \mathds{1}_{\Omega_n^+(l_2,x_2)}$, $\mathds{1}_{\Omega_n^+(l_2,x_2)}$ because the processes $\overline{W}$ and $N_n(t)$, have independent increments. Hence we get
\begin{align*}
&\mathbb{E}[h^{(l_1)}_{p_1}(Z_n^{(l_1)}(x_1))h^{(l_2)}_{p_2}(Z_n^{(l_2)}(x_2))\mathds{1}_{\Omega_n^-(l_1,x_1)}\mathds{1}_{\Omega_n^+(l_2,x_2)} ]
\\&~~=\mathbb{E}[h^{(l_1)}_{p_1}(Z_n^{(l_1)}(x_1))\mathds{1}_{\Omega_n^-(l_1,x_1)}]\mathbb{E}[h^{(l_2)}_{p_2}(Z_n^{(l_2)}(x_2))\mathds{1}_{\Omega_n^+(l_2,x_2)} ]
\\&~~=\frac{\mathbb{E}[h^{(l_1)}_{p_1}(Z_n^{(l_1)}(x_1))\mathds{1}_{\Omega_n^-(l_1,x_1)}\mathds{1}_{\Omega_n^+(l_2,x_2)}]}{\mathbb{E}[\mathds{1}_{\Omega_n^+(l_2,x_2)}]}\frac{\mathbb{E}[h^{(l_2)}_{p_2}(Z_n^{(l_2)}(x_2))\mathds{1}_{\Omega_n^+(l_2,x_2)}\mathds{1}_{\Omega_n^-(l_1,x_1)} ]}{\mathbb{E}[\mathds{1}_{\Omega_n^-(l_1,x_1)}]}
\\&~~=\frac{\mathbb{E}[h^{(l_1)}_{p_1}(Z_n^{(l_1)}(x_1))\mathds{1}_{\Omega_n(l_1,l_2,x_1,x_2)}]\mathbb{E}[h^{(l_2)}_{p_2}(Z_n^{(l_2)}(x_2))\mathds{1}_{\Omega_n(l_1,l_2,x_1,x_2)}]}{\mathbb{E}[\mathds{1}_{\Omega_n(l_1,l_2,x_1,x_2)}]}
\end{align*}
which is equivalent to
\begin{multline}\label{poisson_clt_proof1}
\mathbb{E}[h^{(l_1)}_{p_1}(Z_n^{(l_1)}(x_1))h^{(l_2)}_{p_2}(Z_n^{(l_2)}(x_2))|\Omega_n(l_1,l_2,x_1,x_2)]
\\ =\mathbb{E}[h^{(l_1)}_{p_1}(Z_n^{(l_1)}(x_1))|\Omega_n(l_1,l_2,x_1,x_2)]\mathbb{E}[h^{(l_2)}_{p_2}(Z_n^{(l_2)}(x_2))|\Omega_n(l_1,l_2,x_1,x_2)].
\end{multline}
Further we obtain as in (A.51) of \cite{MarVet17}
\begin{align}\label{poisson_clt_proof2}
\mathbb{P}(\Omega_n(l_1,l_2,x_1,x_2)) \geq 1-\frac{K/n}{|x_1-x_2|}.
\end{align}
Denote $P=P_1+P_2+P_3$, $A_T(\varepsilon)=\{(x_1,\ldots, x_k) \in [0,T]^k |\exists i,j : |x_i-x_j| \leq \varepsilon\}$ and $B_T(\sigma)=\{(x_1,\ldots x_k) \in [0,T]^k:x_{\sigma(1)}\leq x_{\sigma(2)} \leq \ldots \leq x_{\sigma(k)}\}$ where $\sigma$ denotes a permutation of $\{1,\ldots,k\}$. Using \eqref{poisson_clt_proof1}, \eqref{poisson_clt_proof2}, the inequality
\begin{align*}
|\mathbb{E}[X]-\mathbb{E}[X|A]| \leq 2K\frac{1-\mathbb{P}(A)}{\mathbb{P}(A)}
\end{align*}
which holds for any bounded random variable $X \leq K$ and any set $A$ with $\mathbb{P}(A)>0$ together with the boundedness of $g, h_p^{(l)}$, $l=1,2,3$, yields
\begin{align*}
&\Big|\eqref{lemma_clt1} - \int_{[0,T]^{P_1+P_2+P_3}} g(x_1,\ldots,x_{P_1},x'_1,\ldots,x'_{P_2},x''_1,\ldots,x''_{P_3})   \prod_{p=1}^{P_1} \mathbb{E} \big[h_p^{(1)}(Z_n^{(1)}(x_p))\big] \nonumber
\\& ~~~\times \prod_{p=1}^{P_2}\mathbb{E} \big[ h_p^{(2)}(Z_n^{(2)}(x'_p))\big]\prod_{p=1}^{P_3} \mathbb{E} \big[h_p^{(3)}(Z_n^{(3)}(x''_p)) \big]
d x_1 \ldots d x_{P_1} d x'_1 \ldots d x'_{P_2} d x''_1 \ldots d x''_{P_3} \Big|
\\&~ \leq K \lebesgue^{P \otimes } (A_T(\varepsilon))
\\&~~+\sum_{\sigma \in S_P}\Big|\eqref{lemma_clt1} - \int_{B_T(\sigma)\setminus A_T(\varepsilon)} g(x_1,\ldots,x_P)   \mathbb{E} \big[\prod_{p=1}^{P_1} h_p^{(1)}(Z_n^{(1)}(x_p))\prod_{p=P_1+1}^{P_1+P_2} h_p^{(2)}(Z_n^{(2)}(x_p)) \nonumber
\\& ~~~~~\times \prod_{p=P_1+P_2+1}^{P} h_p^{(3)}(Z_n^{(3)}(x_p)) |\bigcap_{i=2}^P \Omega(l_{\sigma({i-1})},l_{\sigma(i)},x_{\sigma({i-1})},x_{\sigma({i})})\big) \big]
d x_1  \ldots d x_{P} \Big|
\\&~~+\sum_{\sigma \in S_P}\Big|\int_{B_T(\sigma)\setminus A_T(\varepsilon)} g(x_1,\ldots,x_{P})   \prod_{l=1}^3
\\&~~~~~\times  \prod_{p=\sum_{m <l}P_m+ 1}^{\sum_{m \leq l}P_m} \mathbb{E} \big[h_p^{(l)}(Z_n^{(l)}(x_p))|\bigcap_{i=2}^P \Omega(l_{\sigma({i-1})},l_{\sigma(i)},x_{\sigma({i-1})},x_{\sigma({i})})\big)\big] d x_1  \ldots d x_{P}
\\&~~~~~~~~- \int_{B_T(\sigma)\setminus A_T(\varepsilon)} g(x_1,\ldots,x_{P})   \prod_{l=1}^3 \prod_{p=\sum_{m <l}P_m+ 1}^{\sum_{m \leq l}P_m} \mathbb{E} \big[h_p^{(l)}(Z_n^{(l)}(x_p))\big] d x_1  \ldots d x_{P} \Big|
\\ &~ \leq K \lebesgue^{P \otimes } (A_T(\varepsilon))
\\&~~~~+\sum_{\sigma \in S_P} K \int_{B_T(\sigma) \setminus A_T(\varepsilon)} \frac{1-\mathbb{P}\big(\bigcap_{i=2}^P \Omega(l_{\sigma({i-1})},l_{\sigma(i)},x_{\sigma({i-1})},x_{\sigma({i})})\big)}{\mathbb{P}\big(\bigcap_{i=2}^P \Omega(l_{\sigma({i-1})},l_{\sigma(i)},x_{\sigma({i-1})},x_{\sigma({i})})\big)}dx_1 \ldots d x_P
\\&~~~~+\sum_{\sigma \in S_P} K \int_{B_T(\sigma) \setminus A_T(\varepsilon)}
\sum_{j=1}^P \frac{1-\mathbb{P}\big(\bigcap_{i=2}^P \Omega(l_{\sigma({i-1})},l_{\sigma(i)},x_{\sigma({i-1})},x_{\sigma({i})})\big)}{\mathbb{P}\big(\bigcap_{i=2}^P \Omega(l_{\sigma({i-1})},l_{\sigma(i)},x_{\sigma({i-1})},x_{\sigma({i})})\big)}
dx_1 \ldots d x_P
\\ &~ \leq K \lebesgue^{P \otimes } (A_T(\varepsilon))
\\&~~~~+\sum_{\sigma \in S_P} K  \int_{B_T(\sigma) \setminus A_T(\varepsilon)} \frac{(P+1)\sum_{i=2}^k \mathbb{P}( \Omega(l_{\sigma({i-1})},l_{\sigma(i)},x_{\sigma({i-1})},x_{\sigma({i})})^c)}{1-\sum_{i=2}^P \mathbb{P}( \Omega(l_{\sigma({i-1})},l_{\sigma(i)},x_{\sigma({i-1})},x_{\sigma({i})}))^c
}dx_1 \ldots d x_P
\\&~ \leq K\lebesgue^{P \otimes } (A_T(\varepsilon)) +\binom{P}{2}K(P+1) \frac{PK/n}{\varepsilon} \Big(1- P\frac{K/n}{\varepsilon} \Big)^{-1}
\end{align*}
for all $\varepsilon>0$ where $\lebesgue^{P \otimes }$ denotes the Lebesgue measure in $\mathbb{R}^P$. This term vanishes as $n \rightarrow \infty$ and then $\varepsilon \rightarrow 0$. Next observe that it holds 
\begin{align*}
&\Big|\eqref{lemma_clt2} - \int_{[0,T]^{P_1+P_2+P_3}} g(x_1,\ldots,x_{P_1},x'_1,\ldots,x'_{P_2},x''_1,\ldots,x''_{P_3})   \prod_{p=1}^{P_1} \mathbb{E} \big[h_p^{(1)}(Z_n^{(1)}(x_p))\big] \nonumber
\\& ~~~\times \prod_{p=1}^{P_2}\mathbb{E} \big[ h_p^{(2)}(Z_n^{(2)}(x'_p))\big]\prod_{p=1}^{P_3} \mathbb{E} \big[h_p^{(3)}(Z_n^{(3)}(x''_p)) \big]
d x_1 \ldots d x_{P_1} d x'_1 \ldots d x'_{P_2} d x''_1 \ldots d x''_{P_3} \Big|
\\ &~\leq K \varepsilon^P +  K T^P \sum_{l=1}^3 \sum_{p=1}^{P_l}\sup_{x \in [\varepsilon,T]} \Big|\int h_p^{(l)}(y) \Gamma^{(l)}(dy)-\mathbb{E}[h_p^{(l)}(Z_n^{(l)}(x))] \Big|.
\end{align*}
Hence it remains to show that there exists some measure $\Gamma^{(l)}$ on $\mathbb{R}^d$ with
\begin{align}\label{poisson_clt_proof3}
\sup_{x \in [\varepsilon,T]}\Big|\int h_p^{(l)}(y) \Gamma^{(l)}(dy)-\mathbb{E}[h_p^{(l)}(Z_n^{(l)}(x))] \Big| \rightarrow 0
\end{align}
as $n \rightarrow \infty$ for all $\varepsilon >0$.

For proving \eqref{poisson_clt_proof3} observe that it holds $Z_n^{(l)}(x)\overset{\mathcal{L}}{=} Z_1^{(l)}(nx)$. Since the Poisson processes generating the observation times in $\pi_1$ and the Brownian motion $\overline{W}^{(l)}$ are independent stationary processes the law of $Z_1^{(l)}(nx)$ depends on $n$ only through the fact that all occuring intervals are bounded to the left by $0$. Hence if $\Omega(n,m,x)$ denotes the set on which all intervals needed for the construction of $Z^{(l)}_1(mx)$ are within $[mx-nx,\infty)$, it follows that all members of the sequence $(Z^{(l)}_1(mx)\mathds{1}_{\Omega(n,m,x)}))_{m \geq n}$ have the same law. This yields because of $\mathbb{P}(\Omega(n,m,x))=\mathbb{P}(\Omega(n,n,x))$, $m \geq n$, and
\begin{align*}
\lim_{n \rightarrow \infty} \mathbb{P}(\Omega(n,n,x))=1,~x>0,
\end{align*}
that the sequence $(Z^{(l)}_1(nx))_{n \in \mathbb{N}}$ converges in law for $x >0$. Hence the sequence $(Z^{(l)}_n(x))_{n \in \mathbb{N}}$ converges in law for $x>0$. By the stationarity of the processes the law of the limit, which we denote by $\Gamma^{(l)}$, does not depend on $x$. Finally we obtain the uniform convergence in \eqref{poisson_clt_proof3} because of $\mathbb{P}(\Omega(n,n,x))\leq \mathbb{P}(\Omega(n,n,\varepsilon))$ for $x \geq \varepsilon$. 
\end{proof}

In the proof of Lemma \ref{lemma_poiss_cond_clt} we obtained $$Z_n^{(l)}(s)\mathds{1}_{\Omega^{(l)}_n(s)}\overset{\mathcal{L}}{=}Z^{(l)} \mathds{1}_{\widetilde{\Omega}^{(l)}_n(s)}$$ for a random variable $Z^{(l)}\sim \Gamma^{(l)} $ and sequences $(\Omega^{(l)}_n(s))_n$, $(\widetilde{\Omega}^{(l)}_n(s))_n$ with ${\Omega^{(l)}_n(s) \uparrow \Omega}$, $\widetilde{\Omega}^{(l)}_n(s) \uparrow \widetilde{\Omega}$. Hence if the limit law would admit a part which is singular to the Lebesgue measure, this would also be true for the law of the $Z_n^{(l)}(s)$ with $n$ sufficiently large. In all three cases considered in this paper it can be shown that $Z_n^{(l)}(s)$ has no atom. Hence in Condition \ref{cond_clt_J}(ii) and Condition \ref{cond_clt_coJ} we obtain $\Gamma(x,\{0\})=0$. 

Further by Theorem 6.7 from \cite{Bil99} there exist random variables $(\widetilde{Z}^{(l)}, (\widetilde{Z}_n^{(l)})_{n \in \mathbb{N}})$ defined on a probability space $(\Omega',\mathcal{F}',\mathbb{P}')$ with
\begin{align*}
(\widetilde{Z}^{(l)}, (\widetilde{Z}_n^{(l)})_{n \in \mathbb{N}})\overset{\mathcal{L}}{=}(Z^{(l)}, (Z_n^{(l)})_{n \in \mathbb{N}})
\end{align*}
and $\widetilde{Z}^{(l)}_n \rightarrow \widetilde{Z}$ almost surely.
An application of Fatou's lemma then yields
\begin{gather*}
\mathbb{E}[\|Z^{(l)}\|^p]=\mathbb{E}[\|\widetilde{Z}^{(l)}\|^p]
\leq \liminf _{n \in \mathbb{N}} \mathbb{E}[\|\widetilde{Z}^{(l)}_n(s)\|^p]
\leq \sup _{n \in \mathbb{N}} \mathbb{E}[\|\widetilde{Z}^{(l)}_n(s)\|^p]
=\sup _{n \in \mathbb{N}} \mathbb{E}[\|Z^{(l)}_n(s)\|^p]
\end{gather*}
for $p \geq 0$. For the cases in Condition \ref{cond_clt_J}(ii), \ref{cond_cons_CoJ2}(iii) and \ref{cond_clt_coJ} it can be shown that the supremum is finite for all $p \geq 0$ and hence $\Gamma^{(l)}$ has finite moments. This yields the requirement of uniformly bounded first and second moments in Condition \ref{cond_clt_J}(ii), \ref{cond_cons_CoJ2}(iii) and \ref{cond_clt_coJ}.

\subsubsection{Proof of Condition \ref{cond_testproc_J}(i) and \ref{cond_testproc_CoJ}(i) in the Poisson setting}\label{sec:proof_poiss_test}

Note that it holds
\begin{align*}
(\hat{\xi}_{k,n,1,-}(s),\hat{\xi}_{k,n,1,+}(s))\overset{\mathcal{L}_\mathcal{S}}{=}(\xi_{k,n,-} (U_{n}(s)),\xi_{k,n,+} (U_{n}(s))) 
\end{align*}
with $U_{n}(s) \sim \mathcal{U}[t_{i_n(s)-L_n-1},t_{i_n(s)+L_n}]$ where $\mathcal{L}_\mathcal{S}$ denotes equality of the $\mathcal{S}$-conditional distributions. Hence the proof of Condition \ref{cond_testproc_J}(i) in the Poisson setting is identical to the proof of (4.3) in \cite{MarVet17}. The same holds true for Condition \ref{cond_testproc_CoJ}(i) as $\widehat{Z}_{n,1}(s)\overset{\mathcal{L}_\mathcal{S}}{=}Z_{n}(V_{n}(s))$ with
\begin{align*}
V_{n}(s)\sim \mathcal{U}[t^{(1)}_{i_n^{(1)}(s)-L_n-1,n}\vee t^{(2)}_{i^{(2)}_n(s)-L_n-1,n}, t^{(1)}_{i_n^{(1)}(s)+L_n,n}\wedge t^{(2)}_{i^{(2)}_n(s)+L_n,n}].
\end{align*}

\newpage

\FloatBarrier

\end{document}